\newcommand{\vide}[1]{}
\begin{document}
\title{Hodge properties of Airy moments}

\author[C.~Sabbah]{Claude Sabbah}
\address[C.~Sabbah]{CMLS, CNRS, École polytechnique, Institut Polytechnique de Paris, 91128 Palaiseau cedex, France}
\email{Claude.Sabbah@polytechnique.edu}
\urladdr{https://perso.pages.math.cnrs.fr/users/claude.sabbah}

\author[J.-D. Yu]{Jeng-Daw Yu}
\address[J.-D. Yu]{Department of Mathematics, National Taiwan University,
Taipei 10617, Taiwan}
\email{jdyu@ntu.edu.tw}
\urladdr{http://homepage.ntu.edu.tw/~jdyu/}

\thanks{This research was partly supported by the PICS project TWN 8094 from CNRS}

\begin{abstract}
We consider the complex analogues of symmetric power moments of cubic expo\-nen\-tial sums. These are symmetric powers of the classical Airy differential equation. We~show that their de~Rham cohomologies underlie an arithmetic Hodge structure in the sense of Anderson and we compute their Hodge numbers by means of the irregular Hodge filtration, which is indexed by rational numbers, on their realizations as exponential mixed Hodge structures. The main result is that all Hodge numbers are either zero or one.
\end{abstract}

\keywords{Airy differential equation, mixed Hodge structure, exponential mixed Hodge structure, irregular Hodge filtration}

\subjclass[2020]{14C30, 14D07, 32G20, 32S40, 34M35}

\maketitle
\tableofcontents
\mainmatter

\section{Introduction}
Families of cubic exponential sums attached to the polynomial $x^3+zx$ in $x$ over finite fields and their symmetric power moments over $z$
(Airy moments) have been considered in analogy with moments of Kloosterman sums for example, and much is already known about the corresponding $L$ functions (\cf \cite{Haessig09,H-RL11} and the references therein). For the analogous moments of Kloosterman sums, a conjecture of Broadhurst and Roberts, concerning the precise functional equations satisfied
by the corresponding global $L$ functions, has been proved in \cite{F-S-Y18} (except the exact local information at prime $2$ when $k$ is even).
In such a case, a pure Nori motive~$\Motive_k$ defined over $\QQ$ is attached to the $k$-moments.
One of the main arguments is to prove that
the associated Galois representation of $\Motive_k$
is potentially automorphic, which relies on a theorem of Patrikis and Taylor~\cite{PT15}, owing to the property proved in \cite{F-S-Y18}, that all nonzero Hodge numbers of $\Motive_k$ are equal to one.

For Airy moments, a strictly similar strategy to understand the Galois theoretic properties of the corresponding motivic object $\Motive_k$ is not possible, basically because $\Motive_k$ is not a Nori motive,
but an \emph{ulterior motive} in the sense of Anderson \cite{Anderson86}. Nevertheless, Hodge theory can be developed in this context (arithmetic Hodge structure in the sense of Anderson), with the caveat that the Hodge numbers $h^{\sfp,\sfq}$ may occur for rational exponents $\sfp,\sfq$ subject to $\sfp+\sfq\in\ZZ$.

A similar notion of Hodge structure, that we call a \emph{finite monodromic mixed Hodge structure} as defined in Section \ref{sec:mhsmu},
has been considered by Scherk and Steenbrink \cite{S-S85} when analyzing the join of two mixed Hodge structures equipped with an automorphism of finite order. Already in \cite{Steenbrink77}, Steenbrink attached to the mixed Hodge structure on the nearby cycles and vanishing cycles of a holomorphic function with an isolated singularity, a Hodge polynomial in $\ZZ[t^{1/m},t^{-1/m}]$, where~$m$ is the order of the semi-simple part of the monodromy.
He conjectured a behaviour of the Hodge polynomial
under Thom-Sebastiani sums as modeled in~\cite{S-S85}, a conjecture which was proved by M.\,Saito \cite{MSaito91d} and later received a proof using motivic integration in \cite{G-L-M06}.

The complex analogue of the Airy $k$-moment is the $k$-th symmetric product $\Sym^k\Ai$ of the Airy differential equation $\Ai$ on the affine line $\Afu_z$, defined by the classical Airy operator $\partial_z^2-z$. We regard $\Ai$ as a rank-two vector bundle on $\Afu_z$ with a connection having $\infty$ as its only singularity, which is irregular of slope $3/2$ and has irregularity number equal to $3$. Therefore, $\Sym^k\Ai$ is regarded as a rank $(k+1)$ vector bundle with connection on $\Afu_z$ having a singularity at infinity only. Contrary to the case of the Kloosterman connection, the de~Rham cohomology $\coH^1_\dR(\Afu,\Sym^k\Ai)$ does not naturally underlie a mixed Hodge structure (again reflecting the fact that $\Motive_k$ is not classical).
However, it is the de~Rham fiber of an exponential mixed Hodge structure with finite monodromy, hence underlies a finite monodromic mixed Hodge structure
(see Section \ref{subsec:finitemonoexp}).
As indicated above, such structures provide an alternative approach to the arithmetic Hodge structures of Anderson, and they are more closely related to exponential mixed Hodge structures defined by Kontsevich and Soibelman \cite{K-S10}. As in \cite{F-S-Y18}, we recover the $\QQ$-indexed Hodge filtration of this finite monodromic mixed Hodge structure as the irregular Hodge filtration on the de~Rham fiber $\coH^1_\dR(\Afu,\Sym^k\Ai)$. The main result of this article is the computation of the corresponding irregular Hodge numbers, opening the way to applying the results of Patrikis and Taylor on the arithmetic of the underlying ulterior motive $\Motive_k$.

\begin{thm}\label{thm:Hodge_numbers}
Let $k$ be an integer $\geq2$ and set $k'=\flr{(k-1)/2}$.
Then the de~Rham cohomology $\coH^1_\dR(\Afu,\Sym^k\Ai)$ naturally underlies
a finite monodromic mixed Hodge structure and the Hodge numbers
$h^{\sfp\sfq} = \dim \gr_F^\sfp\gr^W_{\sfp+\sfq}\coH^1_\dR(\Afu,\Sym^k\Ai)$
are all equal to one for the following $(\sfp,\sfq)$
and vanish otherwise:
\begin{itemize}
\item
$k$ odd,
$\sfp+\sfq = k+1$
and $\sfp = \frac{1}{3}(k+2i)$, $1\leq i\leq k'+1$,
\item
$\ktwofour$, $\sfp+\sfq = k+1$
and $\min\{\sfp,\sfq\} = \frac{1}{3}(k+2i)$, $1\leq i \leq \sfrac{k'}{2}$,
\item
$\kfour$
and either $\sfp+\sfq=k+1$
with $\min\{\sfp,\sfq\} = \frac{1}{3}(k+2i)$, $1\leq i \leq \sfrac{(k'-1)}{2}$,
or $\sfp=\sfq=\sfrac{(k+2)}{2}$.
\end{itemize}
\end{thm}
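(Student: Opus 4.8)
The plan is to compute $\coH^1_\dR(\Afu,\Sym^k\Ai)$ together with its irregular Hodge filtration by an explicit Fourier–Laplace analysis, exploiting the homogeneity of the Airy operator $\partial_z^2-z$. First I would present $\Sym^k\Ai$ as the pushforward, via the second projection, of the exponential twist $\ccE^{x^3+zx}$ along the map $(x,z)\mapsto z$; equivalently, $\coH^1_\dR(\Afu,\Sym^k\Ai)$ is a piece of the cohomology of the potential $f(x,z)=x^3+zx$ in the sense of twisted de~Rham cohomology, and its irregular Hodge filtration is the one carried by the exponential mixed Hodge structure attached to $f$ (this is the content of the identification asserted in the Introduction, which I may invoke). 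Concretely, I would realize $\coH^1_\dR(\Afu,\Sym^k\Ai)$ as the cokernel of a connection operator on an explicit space of polynomial differential forms, using the basis $z^a\,(\text{section of }\Sym^k\Ai)$ and reducing modulo the image of $\nabla$; the slope-$3/2$ irregularity at infinity forces the relevant Rees module computation to be governed by the Newton filtration of $x^3+zx$.

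Next I would identify the weight filtration. Because $\Sym^k\Ai$ has its only singularity at $\infty$ and the local structure there is a sum of exponentials $\ccE^{c\zeta^{3/2}}$ twisted by the finite cyclic monodromy coming from the ramification $\zeta\mapsto\zeta^{1/2}$, the vanishing/nearby cycle formalism shows the cohomology is an extension governed by that finite monodromy; the weight is pure of weight $k+1$ on the part not fixed by an involution and the ``middle'' piece (for $k\equiv 0\bmod 4$) contributes the single Hodge class at $\sfp=\sfq=(k+2)/2$. I would make this precise by a Thom–Sebastiani-type decomposition: $x^3+zx$ splits after the substitution $z=-3t^2$, $x=t+u$ into $u^3+(\text{lower order in }u)$, exhibiting the relevant part of the cohomology as a summand of $\coH^1$ of $\ccE^{u^3}$ twisted along a cover, and then the Hodge and weight data are computed on that model — this is exactly the Scherk–Steenbrink / M.\,Saito framework referenced in the Introduction.

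The core computation is then the dimension count of the graded pieces $\gr_F^\sfp\gr^W_{\sfp+\sfq}$, which amounts to counting lattice points: the $F$-filtration jumps are read off from the Newton polygon of $x^3+zx$, and the monodromy eigenvalue decomposition splits the $(k+1)$-dimensional space into one-dimensional eigenspaces indexed by residues modulo $3$ (from the cube) interacting with residues modulo $2$ (from the square root in the slope $3/2$). Tracking which eigenvalue contributes which pair $(\sfp,\sfq)$ yields the arithmetic-progression conditions $\sfp=\tfrac13(k+2i)$, and the parity of $k$ modulo $2$ and modulo $4$ controls the symmetry $\sfp\leftrightarrow\sfq$ and whether the self-conjugate weight-middle class is present — producing the three cases in the statement. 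Throughout, the normalization ``all nonzero Hodge numbers equal one'' follows because each monodromy eigenspace is one-dimensional, so no $F$-graded piece can have dimension exceeding one.

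The main obstacle I anticipate is the bookkeeping of the ramified local data at $\infty$: matching the rational indexing of the irregular Hodge filtration (coming from the slope $3/2$, hence denominators $3$ and $6$) with the monodromy eigenvalue decomposition, and correctly pinning down the weight — in particular showing that the extension defining $\gr^W$ does not introduce off-diagonal Hodge numbers and that the single middle class for $\kfour$ is genuinely of type $\bigl(\tfrac{k+2}{2},\tfrac{k+2}{2}\bigr)$ rather than being split. This is where I expect the Scherk–Steenbrink join formula, applied to the decomposition of $x^3+zx$ above, to do the decisive work, and where the delta-between the odd and even cases will have to be argued carefully.
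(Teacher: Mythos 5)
There is a genuine gap, and it sits exactly where the theorem is hard. Your closing ``normalization'' step --- that all nonzero Hodge numbers are one because ``each monodromy eigenspace is one-dimensional'' --- is false: the finite monodromy $T$ acting on $\coH^1_\dR(\Afu,\Sym^k\Ai)$ has only a few eigenvalues (roots of unity of bounded order, coming from the slope $3/2$), while the dimension $\flr{(k-1)/2}$ or so grows linearly in $k$, so the eigenspaces $H_\zeta$ are far from one-dimensional; within a single eigenspace the filtration $F_\irr$ must separate many classes, and that separation is precisely the content of the theorem, not a formal consequence of the eigenvalue decomposition. The paper gets it, for $k$ odd, by combining (i) an explicit basis $\omega_i=z^{i-1}u_0\,\rd z$ of $\coH^1_\dR$, (ii) the realization $\coH^1_\dR(\Afu,\Sym^k\Ai)\hto\coH^{k+1}_\dR(\Aff{k+1},s^3g_k)$ (note: one needs the $k$-variable potential $f_k=\sum_i(\tfrac13x_i^3-zx_i)$, its $\symgp_k$-anti-invariants, and the ramified cover $z=s^2$ leading to $s^3g_k$ --- your single-variable $E^{x^3+zx}$ only produces $\Ai$ itself, though this part of your sketch is repairable), (iii) pole-order estimates on a non-degenerate compactification giving $\omega_i\in F_\irr^{(k+1)-(k+2i)/3}$ for \emph{all} $i$, and (iv) Hodge symmetry for the $\mf$-pure structure of weight $k+1$, which upgrades the one-sided containment to equality. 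Your ``Newton polytope / lattice point'' computation is a plausible stand-in for (iii), but without the symmetry bootstrap (iv) --- which itself requires first proving $\coH^1_\rmid=W_{k+1}\coH^1$ and purity, via the tameness of $g_k$ and the weight results of the paper's Sections 4--5 --- a containment of filtrations does not yield the Hodge numbers.

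More seriously, for $k$ even this strategy breaks and nothing in your proposal replaces it. The natural compactification of $(\Aff{k+1},s^3g_k)$ is degenerate along the singular locus of $g_k$ on the boundary when $k$ is even; after the required blow-ups the pole-order estimates only give $\omega_i\in F_\irr^{(k+1)-(k+2i)/3}$ for $i\leq k/4$ (equivalently the containment of the basis filtration in $F_\irr$ only in the range $\sfp\geq k/2+1$), so the odd-case counting argument cannot close. The paper therefore computes the full irregular Hodge numbers of the pulled-back module $\Sym^k\wt\Ai$ by a different mechanism: realizing the nonclassical part via middle additive convolution of the pure Hodge module $\wt M_k$ with Kummer modules, analyzing the rank-two connection $G_g=j^+\FT\wt M_1$ and $\Sym^kG_g$, and transporting Hodge data through an inverse stationary phase comparison, before reassembling with Hodge symmetry and handling the extra class $\omega_{k/4}$ (whose existence and type $(\tfrac{k+2}2,\tfrac{k+2}2)$ rest on the asymptotics of $2\pi\Aif\Bif$ and the explicit containment $\omega_{k/4}\in F_\irr^{k/2+1}$). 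Your local Thom--Sebastiani substitution at the turning points and the appeal to the Scherk--Steenbrink join give the formal structure of the category in which the answer lives, but they do not compute the global filtration jumps, and in particular they do not distinguish the $\ktwofour$ and $\kfour$ cases or establish that the weight-$(k+2)$ quotient is one-dimensional of the asserted type. As it stands, the proposal proves neither the even case nor the ``all Hodge numbers are one'' claim.
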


We are mostly interested in the middle de~Rham cohomology, which is by definition the image $\image[\coH^1_{\dR,\rc}(\Afu,\Sym^k\Ai)\to\coH^1_\dR(\Afu,\Sym^k\Ai)]$. We show (Corollaries \ref{cor:midW} and \ref{cor:midWcl}) that, with respect to the finite monodromic mixed Hodge structure on $\coH^1_\dR(\Afu,\Sym^k\Ai)$, it is equal to the weight $(k+1)$ subspace:
\[
\coH^1_{\dR,\rmid}(\Afu,\Sym^k\Ai)=W_{k+1}\coH^1_\dR(\Afu,\Sym^k\Ai).
\]
As a consequence of Theorem \ref{thm:Hodge_numbers}, the middle de~Rham cohomology underlies a finite monodromic pure Hodge structure, and the corresponding Hodge numbers coincide with those of $\coH^1_\dR(\Afu,\Sym^k\Ai)$ if $\knotfour$, and are obtained by omitting the case $\sfp=\sfq=\sfrac{(k+2)}{2}$ if $\kfour$.

\subsubsection*{Organization of the paper}
The approach of the present article is much inspired from that of \cite{F-S-Y18} on moments of Kloosterman connections. Sections \ref{sec:mhsmu}--\ref{sec:fmEMHS} consist of a review of known properties of various tools that we use, together with complementary results in presence of an action of the group $\mf=\varprojlim_m \ZZ/m\ZZ$. In Section \ref{sec:fmEMHS},
we emphasize the decomposition of a finite monodromic exponential mixed Hodge structure
(that we call a $\mf$-exponential mixed Hodge structure for short)
into its classical and non-classical parts. While the former can be treated with the same methods as in \cite{F-S-Y18}, the latter shows new Hodge-theoretic phenomena.

The main example of such an object is that attached to a regular function on $\Afu_s\times V$ of the form $s^mg$ ($m\geq2$), where $g:V\to\Afu$ is a regular function on a smooth quasi-projective variety. The corresponding non-classical $\mf$-exponential mixed Hodge structure (Proposition~\ref{prop:HrUMf}) can be expressed by means of the mixed Hodge structure of the covering of $V\moins g^{-1}(0)$ which is cyclic of degree $m$ and ramified along $g^{-1}(0)$.
While this expression makes clear the ulterior motivic origin of this finite monodromic mixed Hodge structure, it does not help for computing Hodge numbers in Theorem~\ref{thm:Hodge_numbers}. In order to handle the case of even $k$ in the theorem, we~provide in Section \ref{subsec:additive_convolution} an expression in terms of additive convolution with Kummer sheaves, which reduces the computation to a local (monodromic) computation similar to that used for the classical component modeled on that of \cite{F-S-Y18}.

In Section \ref{sec:settings} we provide Hodge-theoretic properties for the moments of the generalized Airy equation of order $n\geq2$ (the case $n=2$ being our main concern),
yielding the finite monodromic mixed Hodge structure
on the de~Rham cohomologies of their symmetric powers $\Sym^k\Ai_n$, and an estimation of their weights. This is a variant of the techniques used in \cite{F-S-Y18}.

Once all these preliminaries are settled, the proof of Theorem \ref{thm:Hodge_numbers} is achieved in Section~\ref{sec:proof_Hodge_numbers}. As in \loccit, we play with two expressions of the de~Rham cohomology, one computed in dimension one like $\coH^1(\Afu,\Sym^k\Ai)$ and the other one using the geometry of the function $s^3\sum_{i=1}^k(\frac13x_i^3-x_i)$. The former provides in a simple way a basis of the de~Rham cohomology and an associated filtration which satisfies the conclusion of the theorem, and the latter relates the computation of Hodge numbers to that of the corresponding \emph{irregular Hodge filtration} on a suitable blow-up of $\PP^{k+1}$. While the geometry is simple enough to enable us to conclude the proof if $k$ is odd by showing that the basis filtration is contained in the irregular Hodge filtration, we need supplementary arguments to conclude in the case of even $k$, which occupies Sections \ref{subsec:synopsis}--\ref{subsec:HodgeAitilde}. Although the idea is similar to that of \cite{F-S-Y18}, we need here to make use of a Hodge-theoretic inverse stationary phase formula, as we already did in \cite{S-Y18}, to recover enough information on the Hodge filtration.

\subsubsection*{Acknowledgements}
We thank Javier Fresán for useful conversations at the beginning of this project. We thank the referee for useful comments and remarks on the first version of this article.

\section{Finite monodromic mixed Hodge structures}
\label{sec:mhsmu}

We review and adapt results of \cite[p.\,661]{S-S85}. Let $\mf = \varprojlim_m \ZZ/m\ZZ$
be the profinite completion of the abelian group $\ZZ$.
Let $\Vect(\QQ)$ be the category of finite-dimensional $\QQ$-vector spaces and let $\Vect^\mf(\QQ)$ denote the category
consisting of a finite-dimensional $\QQ$-vector space $H_\QQ$
together with an action of $\mf$ through a finite quotient $\ZZ/m\ZZ$ for some $m$.
Equivalently,
by setting $T$ to be the action of the topological generator $1\in\mf$,
an object of $\Vect^\mf(\QQ)$ is a pair $(H_\QQ,T)$
consisting of the space $H_\QQ$ together with an automorphism~$T$ of finite order.
Each group ring $\QQ[\ZZ/m\ZZ]$ has the orthogonal decomposition
\[ \QQ[\ZZ/m\ZZ] = e_1\QQ[\ZZ/m\ZZ]\oplus e_{\neq 1}\QQ[\ZZ/m\ZZ] \]
where $e_1 = \frac{1}{m}\sum_{g\in\ZZ/m\ZZ} g$
is the projector into the invariant part and $e_{\neq 1} = 1-e_1$,
and the decomposition is compatible with the quotient
$\ZZ/m'\ZZ \to \ZZ/m\ZZ$ for $m\mid m'$.
Correspondingly for a given object $(H_\QQ,T)$ of $\Vect^\mf(\QQ)$,
there is a canonical decomposition in $\Vect^\mf(\QQ)$
\[
(H_\QQ,T)=(H_{\QQ,1},\id)\oplus (H_{\QQ,\neq1},T),
\]
where $T$ has no non-trivial invariant in $H_{\QQ,\neq 1}$.
Let $\mu_m \subset\CC^\times$ be the cyclic subgroup of order $m$
and $\mu_m^* = \mu_m\setminus\{1\}$.
In terms of the eigenvalue decomposition
$H_\CC = \bigoplus_{\zeta\in\mu_m}H_\zeta$
where $H_\CC = H_\QQ\otimes\CC$
and $T = \zeta\id$ on $H_\zeta$,
one has
$H_{\QQ,1}\otimes\CC = H_1$
and $H_{\QQ,\neq 1}\otimes\CC = \bigoplus_{\zeta\in\mu_m^*}H_\zeta$.
Notice that $\ol{H_\zeta} = H_{1/\zeta}$
under complex conjugation. Finally the category $\Vect^\mf(\QQ)$
is equipped with its natural tensor structure.

Let $\MHS$ be the category of (graded polarizable) mixed Hodge structures $(H_\QQ,F^\cbbullet H_\CC,W_\bbullet H_\QQ)$.
We define the category $\MHS(\mf)$ of
\textit{finite monodromic mixed Hodge structures}
as the category consisting of objects of $\MHS$
equipped with an action of
$\mf$ through the quotient $\ZZ/m\ZZ$ for some $m$
(equivalently, with an automorphism $T$ of finite order as above).
Morphisms are morphisms of mixed Hodge structures compatible with the group action. In order to express in a simple way the tensor product in this category,
we consider the category $\MHS^\mf$ of $\mf$-mixed Hodge structures.
The objects of $\MHS^\mf$ take the form $((H_\QQ,T),F^\cbbullet_\mf H_\CC,W_\bbullet^\mf H_\QQ)$, where
\begin{itemize}
\item
$(H_\QQ,T)$ is an object of $\Vect^\mf(\QQ)$,
so that $H_\QQ=H_{\QQ,1}\oplus H_{\QQ,\neq1}$
and $H_\CC=\bigoplus_{\zeta\in\mu_m}H_\zeta$;
\item
$F^\sfp_\mf H_\CC=\bigoplus_{\zeta}F^\sfp_\mf H_\zeta$ is an exhaustive decreasing filtration indexed by $\sfp\in\QQ$, and for each $\zeta=\exp(-2\pii a)$ with $a\in(-1,0]\cap\QQ$, the filtration $F^\sfp_\mf H_\zeta$ jumps at most at $\ZZ-a$;
\item
$W_\bbullet^\mf H_\QQ= W_\bbullet^\mf H_1\oplus W_\bbullet^\mf H_{\neq1}$ is an exhaustive increasing filtration indexed by $\ZZ$,
\end{itemize}
satisfying the following property: setting
\begin{equation}\label{eq:mfH}
\begin{split}
F^p H_\CC&=\bigoplus_{\zeta\in\mu_m}F^{p-a}_\mf H_\zeta,\quad p\in\ZZ,\\
W_\ell H_\QQ&=W_\ell^\mf H_1\oplus W^\mf_{\ell+1} H_{\neq1},
\end{split}
\end{equation}
we impose that $(H_\QQ,F^\cbbullet H_\CC,W_\bbullet H_\QQ)$ is a (graded polarizable) mixed Hodge structure. Morphisms are the natural ones. With the induced automorphism $T$,
the latter becomes an object of $\MHS(\mf)$.

\begin{lemma}
The natural functor $\MHS^\mf\to\MHS(\mf)$ defined by \eqref{eq:mfH} is an equivalence of categories. Any morphism in $\MHS^\mf$ is bi-strict with respect to $F^\cbbullet_\mf,W_\bbullet^\mf$.
\end{lemma}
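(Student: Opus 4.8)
The plan is to produce quasi-inverse functors explicitly and check that they are mutually inverse, the content being that the two packages of data — an ordinary $\QQ$-indexed filtration $F^\cbbullet$ together with an integral weight filtration $W_\bbullet$ on one side, versus the $\mf$-shifted filtrations $F^\cbbullet_\mf$, $W_\bbullet^\mf$ on the other — carry the same information once the eigenvalue decomposition of $T$ is taken into account. In one direction we already have the functor $\Phi\colon\MHS^\mf\to\MHS(\mf)$ of \eqref{eq:mfH}, which forgets the $\mf$-structure after reassembling $F^\cbbullet$ and $W_\bbullet$; the key point for this direction is only that the output genuinely lies in $\MHS(\mf)$, which is imposed as part of the definition of an object of $\MHS^\mf$, so $\Phi$ is well-defined on objects, and it is visibly functorial since $T$ is recorded on both sides.

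For the reverse functor $\Psi\colon\MHS(\mf)\to\MHS^\mf$, I would start from $(H_\QQ,F^\cbbullet H_\CC,W_\bbullet H_\QQ)$ with its automorphism $T$ of finite order $m$. First decompose $(H_\QQ,T)=(H_{\QQ,1},\id)\oplus(H_{\QQ,\neq1},T)$ and correspondingly $H_\CC=\bigoplus_{\zeta\in\mu_m}H_\zeta$ as recalled in the excerpt. Then \emph{define} the $\mf$-filtrations by inverting \eqref{eq:mfH}: on the piece $H_\zeta$ with $\zeta=\exp(-2\pii a)$, $a\in(-1,0]\cap\QQ$, set $F^\sfp_\mf H_\zeta = F^{\sfp+a}H_\CC\cap H_\zeta$ for $\sfp\in\ZZ-a$ (and constant in between), and set $W_\ell^\mf H_1 = W_\ell H_\QQ\cap H_{\QQ,1}$, $W_\ell^\mf H_{\neq1} = W_{\ell-1}H_\QQ\cap H_{\QQ,\neq1}$. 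Here one must observe that $F^\cbbullet H_\CC$ is automatically compatible with the $T$-eigenspace decomposition and $W_\bbullet H_\QQ$ with the decomposition $H_{\QQ,1}\oplus H_{\QQ,\neq1}$, because $T$ is a morphism of mixed Hodge structures of finite order hence semisimple and strict, so its isotypic decomposition is a decomposition in $\MHS$; this is exactly what makes the componentwise definitions above legitimate and makes $\Phi\circ\Psi=\id$ and $\Psi\circ\Phi=\id$ a formal verification. That $\Psi$ lands in $\MHS^\mf$ is then immediate: the reconstructed $(H_\QQ,F^\cbbullet,W_\bbullet)$ from $\Psi$ is the original one by construction, hence a mixed Hodge structure, which is precisely the defining property of an object of $\MHS^\mf$; the index-jump condition on $F^\sfp_\mf H_\zeta$ holds by the very definition of $F_\mf$ on each eigenspace.

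For the second assertion, bi-strictness, I would argue that a morphism $f$ in $\MHS^\mf$ corresponds under $\Phi$ to a morphism in $\MHS(\mf)$, in particular a morphism of mixed Hodge structures commuting with $T$; by the theorem of Deligne it is strict for $F^\cbbullet$ and for $W_\bbullet$, and since $f$ commutes with $T$ it respects the eigenspace decomposition and the $H_1\oplus H_{\neq1}$ splitting. Strictness for $F^\cbbullet$ together with compatibility with the eigenspace grading gives strictness for $F^\cbbullet$ restricted to each $H_\zeta$, which by the shift $F^\sfp_\mf H_\zeta=F^{\sfp+a}H_\zeta$ is exactly strictness for $F^\cbbullet_\mf$; similarly strictness for $W_\bbullet$ on each of $H_1$ and $H_{\neq1}$ (using that $f$ preserves the splitting) is, after the index shift on the $H_{\neq1}$ part, strictness for $W_\bbullet^\mf$. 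The only mild subtlety to spell out — and the one place where I expect to have to be careful rather than formal — is the interaction of the two strictness statements on the eigenspace $H_\zeta$ with $\zeta\neq1$, where $F_\mf$ is indexed in $\ZZ-a$ and $W_\mf$ in $\ZZ$ with a shift by one relative to $W$: one should check that reducing Deligne strictness for the pair $(F,W)$ on $H_\CC$ to the analogous statement for $(F_\mf,W_\mf)$ on each graded piece genuinely uses nothing beyond the semisimplicity of $T$ and the compatibility of all filtrations with the $T$-isotypic decomposition, which is the crux verified above. Everything else is bookkeeping with the dictionary \eqref{eq:mfH}.
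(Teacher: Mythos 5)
Your proposal is correct and follows essentially the same route as the paper: the quasi-inverse is defined by exactly the formulas \eqref{eq:HmfH}, justified by the observation that $F^\cbbullet$ and $W_\bbullet$ are compatible with the $T$-eigenspace (resp.\ $H_{\QQ,1}\oplus H_{\QQ,\neq1}$) decomposition since $T$ is a finite-order automorphism in $\MHS$, and bi-strictness is deduced from strictness of morphisms of mixed Hodge structures together with this compatibility. The paper's proof is merely a terser version of the same argument.
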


\begin{proof}
Let $((H_\QQ,F^\cbbullet H_\CC,W_\bbullet H_\QQ),T)$
be an object of $\MHS(\mf)$.
We note that the Hodge filtration is compatible with the eigenvalue decomposition of $H_\CC$ with respect to $T$, being stable by $T$, and a similar property for the weight filtration, so that the correspondence given by
\begin{equation}\label{eq:HmfH}
\begin{aligned}
F_\mf^\sfp H_\zeta&=F^{\sfp+a}H_\CC\cap H_\zeta\quad(\zeta=\exp(-2\pii a),\;a\in(-1,0]\cap\QQ,\;\sfp\in\ZZ-a),\\
W_\ell^\mf H_{\QQ,1}&=W_\ell H_\QQ\cap H_{\QQ,1},\quad W^\mf_\ell H_{\QQ,\neq1}=W_{\ell-1}H_\QQ\cap H_{\QQ,\neq1}
\end{aligned}
\end{equation}
defines the desired quasi-inverse functor owing to the condition that \eqref{eq:mfH} is a mixed Hodge structure.
The last statement follows from the similar one for $\MHS(\mf)$.
\end{proof}

The integers~$\ell$ such that $\gr_\ell^{W^\mf}H_\QQ\neq0$ are called the $\mf$-weights of the object of $\MHS^\mf$.

\begin{lemma}[Hodge symmetry and Hodge decomposition in $\MHS^\mf$]\label{rem:Hodgesymmf}
Setting $(\gr_\ell^{W^\mf}H_\CC)^{\sfp,\ell-\sfp}\!=\!\gr^\sfp_{F_\mf}\gr_\ell^{W^\mf}H_\CC$ ($\sfp\in\QQ$), we have the Hodge decomposition
\begin{starequation}\label{eq:decompmf}
\gr^{W^\mf}H_\CC\simeq\bigoplus_{\substack{\sfp,\sfq\in\QQ\\\sfp+\sfq\in\ZZ}}(\gr_{\sfp+\sfq}^{W^\mf}H_\CC)^{\sfp,\sfq}.
\end{starequation}%
Furthermore,
via the orthogonal decomposition $e_1+e_{\neq 1}$,
each object $((H_\QQ,T),F^\cbbullet_\mf H_\CC,W_\bbullet^\mf H_\QQ)$ in $\MHS^\mf$ decomposes canonically as
\[
((H_\QQ,T),F^\cbbullet_\mf H_\CC,W_\bbullet^\mf H_\QQ)=((H_\QQ,T),F^\cbbullet_\mf H_\CC,W_\bbullet^\mf H_\QQ)_1\oplus((H_\QQ,T),F^\cbbullet_\mf H_\CC,W_\bbullet^\mf H_\QQ)_{\neq1},
\]
where the first term belongs to the category $\MHS$.
\end{lemma}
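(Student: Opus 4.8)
The plan is to prove the two assertions of Lemma \ref{rem:Hodgesymmf} separately, deducing both from the already-established equivalence $\MHS^\mf\simeq\MHS(\mf)$ and the classical structure theory of mixed Hodge structures transported through \eqref{eq:mfH} and \eqref{eq:HmfH}.

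First I would establish the Hodge decomposition \eqref{eq:decompmf}. Fix an object and let $(H_\QQ,F^\cbbullet H_\CC,W_\bbullet H_\QQ)$ be the associated classical mixed Hodge structure via \eqref{eq:mfH}. For each integer $\ell$ the graded piece $\gr_\ell^W H_\CC$ carries a pure Hodge structure of weight $\ell$, hence a classical Hodge decomposition $\gr_\ell^W H_\CC=\bigoplus_{p+q=\ell}(\gr_\ell^W H_\CC)^{p,q}$ with $(\gr_\ell^W H_\CC)^{p,q}=\gr_F^p\gr_\ell^W H_\CC$ ($p,q\in\ZZ$). Now one only has to re-index: since $W_\ell H_\QQ=W^\mf_\ell H_1\oplus W^\mf_{\ell+1}H_{\neq1}$, the summand of $\gr_\ell^W H_\CC$ lying in $H_\zeta$ with $\zeta=\exp(-2\pi i a)$, $a\in(-1,0]$, equals $\gr^{W^\mf}_{\ell+a+a'}H_\zeta$ for the appropriate shift; more precisely, by the compatibility of both filtrations with the eigenspace decomposition (used already in the proof of the previous lemma), $\gr^{W^\mf}H_\CC$ is obtained from $\gr^W H_\CC$ by the eigenspace-wise shift recorded in \eqref{eq:HmfH}, and likewise $F_\mf^\sfp H_\zeta=F^{\sfp+a}H_\CC\cap H_\zeta$. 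Transporting the classical decomposition of $\gr^W H_\CC$ through these shifts yields, on the $\zeta$-eigenspace, a decomposition indexed by $\sfp\in\ZZ-a$, $\sfq\in\ZZ-a'$ (with $a'$ the conjugate shift, so $a+a'\in\ZZ$), hence by $\sfp,\sfq\in\QQ$ with $\sfp+\sfq\in\ZZ$; summing over $\zeta$ gives \eqref{eq:decompmf}, with $(\gr_{\sfp+\sfq}^{W^\mf}H_\CC)^{\sfp,\sfq}=\gr^\sfp_{F_\mf}\gr^{W^\mf}_{\sfp+\sfq}H_\CC$ by construction. Hodge symmetry $\overline{(\gr^{W^\mf}H_\CC)^{\sfp,\sfq}}=(\gr^{W^\mf}H_\CC)^{\sfq,\sfp}$ follows from classical Hodge symmetry on $\gr^W H_\CC$ together with the fact that complex conjugation exchanges $H_\zeta$ and $H_{\bar\zeta}$, hence exchanges the shifts $a$ and $a'$ compatibly.

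For the canonical decomposition into classical and non-classical parts, I would apply the orthogonal idempotents $e_1,e_{\neq1}\in\QQ[\ZZ/m\ZZ]$ (for $m$ the order of $T$) directly to the object in $\MHS(\mf)$. Since $e_1,e_{\neq1}$ are morphisms in $\Vect^\mf(\QQ)$ and, by the first lemma, any endomorphism of finite order of a mixed Hodge structure is a morphism of mixed Hodge structures (the Hodge and weight filtrations being $T$-stable), the images $H_{\QQ,1}=e_1 H_\QQ$ and $H_{\QQ,\neq1}=e_{\neq1}H_\QQ$ are sub-mixed-Hodge-structures and $H_\QQ=H_{\QQ,1}\oplus H_{\QQ,\neq1}$ as an object of $\MHS(\mf)$; strictness (equivalently bi-strictness from the first lemma) ensures the induced filtrations on the summands are the restricted ones. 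Translating back through the equivalence $\MHS^\mf\simeq\MHS(\mf)$ gives the claimed decomposition in $\MHS^\mf$. On the first summand $T$ acts as the identity (by definition of $e_1$), so the weight shift in \eqref{eq:mfH} for the $H_{\neq1}$-part is vacuous and $W_\bbullet=W^\mf_\bbullet$, $F^\cbbullet=F^\cbbullet_\mf$ with all jumps at integers; thus this summand is just a classical mixed Hodge structure, i.e.\ lies in $\MHS$ as asserted. Functoriality of $e_1,e_{\neq1}$ gives canonicity.

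The only genuine point requiring care — the "main obstacle," though it is mild — is the bookkeeping of the two rational shifts in \eqref{eq:mfH}–\eqref{eq:HmfH}: the Hodge filtration is shifted by $a$ on $H_\zeta$ while the weight filtration is shifted (by $1$) only on $H_{\neq1}$, so one must check that on $\gr^{W^\mf}_\ell H_\zeta$ the exponents $(\sfp,\sfq)$ produced by transporting the classical $(p,q)$ indeed satisfy $\sfp\in\ZZ-a$, $\sfq\in\ZZ-a'$, $a+a'=1$ for $\zeta\neq1$ and $a=a'=0$ for $\zeta=1$, so that $\sfp+\sfq=\ell$ is an integer and the clause in the definition of $\MHS^\mf$ that "$F^\sfp_\mf H_\zeta$ jumps at most at $\ZZ-a$" is exactly what makes this consistent. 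Once this indexing is verified, both statements are immediate consequences of the classical theory.
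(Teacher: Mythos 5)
Your argument is correct and takes essentially the same route as the paper: both deduce the lemma by transporting the classical Hodge symmetry/decomposition and the idempotent splitting under $e_1,e_{\neq1}$ through the re-indexing \eqref{eq:mfH}--\eqref{eq:HmfH}, the paper merely recording the resulting identity $\dim\gr^\sfp_{F_\mf}\gr_\ell^{W^\mf}H_\zeta=\dim\gr^{\ell-\sfp}_{F_\mf}\gr_\ell^{W^\mf}H_{1/\zeta}$ and declaring the rest straightforward. One small bookkeeping slip: with both shifts taken in $(-1,0]$, the conjugate shift for $\zeta\neq1$ satisfies $a+a'=-1$ rather than $+1$; this does not affect your conclusion that $\sfp+\sfq\in\ZZ$ nor the rest of the argument.
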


\begin{proof}
Hodge symmetry on each $(\gr_\ell^WH_\QQ,F^\cbbullet \gr_\ell^WH_\CC)$ translates to
\[
\dim\gr^\sfp_{F_\mf}\gr_\ell^{W^\mf}H_\zeta=\dim\gr^{\ell-\sfp}_{F_\mf}\gr_\ell^{W^\mf}H_{1/\zeta},\quad \forall\zeta\in\CC^*,\;\sfp\in\QQ,\;\ell\in\ZZ.
\]
The proof is then straightforward.
\end{proof}

\begin{notation}\label{nota:mhsmf}
We denote the corresponding decomposition of $\MHS^\mf$ as
\[
\MHS^\mf=\MHS\oplus\MHS^\mf_{\neq1},
\]
and the component of an object on $\MHS$ is also called its \emph{classical component}.
\end{notation}

\begin{remark}\mbox{}
\begin{enumerate}
\item(The spectrum)
If $T$ has order $m$, the Hodge polynomial
\[
\sum_{\sfp\in\frac1m\ZZ}(\dim\gr^\sfp_{F_\mf}H_\CC)\cdot t^\sfp
\]
was considered by Steenbrink \cite{Steenbrink77} for the vanishing cycles of an isolated singularity of a holomorphic function and, in the case of a quasi-homogeneous isolated singularity, was compared to the Poincaré polynomial attached to the Newton filtration of the Jacobian quotient of the singularity.
\item
(Relation with Anderson's arithmetic Hodge structures)
Anderson has defined in \cite[\S6.1]{Anderson86} the notion of arithmetic Hodge structure. We will omit his condition (6.1.5) for the moment, which is related to the rational structure of the de~Rham component of the arithmetic Hodge structure. Any such structure is the direct sum of pure structures of some integral weight. Then the notion of pure arithmetic Hodge structure (without Condition (6.1.5) of \loccit) is related to that of a $\mf$-pure Hodge structure. Indeed, assume that
a finite-dimensional $\QQ$-vector space $H_\QQ$
with the decomposition
\[
H =H_\QQ\otimes\CC
=\bigoplus_{\substack{\sfp,\sfq\in\QQ\\\sfp+\sfq=w}}H^{\sfp,\sfq}
\]
is a pure arithmetic Hodge structure of weight $w\in\ZZ$. There exists a minimal finite set $A\subset(-1,0]$ such that $H^{\sfp,\sfq}\neq0$ implies $\sfp\in\bigcup_{a\in A}(\ZZ-a)$. Let us set $A^*=A\cap(-1,0)$. Hodge symmetry implies $A^*=-1-A^*$. For $a\in A$ and $\zeta=\exp(-2\pii a)$, one can define~$H_\zeta$ by summing the $H^{\sfp,\sfq}$'s with $\sfp\in\ZZ-a$. Let $H_{\neq 1} = \bigoplus_{\zeta\in\mu_m^*}H_\zeta$. Then, an arithmetic Hodge structure (without Condition~(6.1.5)) is a pure $\mf$-Hodge structure if and only if the decomposition $H=H_1\oplus H_{\neq1}$ is defined over $\QQ$: indeed, one defines the automorphism $T$ as being equal to $\zeta\id$ on $H_\zeta$, and the decomposition being defined over $\QQ$ is equivalent to $T$ being defined over $\QQ$.
\end{enumerate}
\end{remark}

We recall the notation for the tensor product of filtrations (indexed by $\QQ$ and decreasing, say) $(H',G')$ and $(H'',G'')$,
\cf \cite{S-Z85}:
\[
(G'\star G'')^n(H'\otimes H'')=\sum_{n_1+n_2=n}G'^{n_1}(H')\otimes G''^{n_2}(H'').
\]

\begin{defi}\label{def:tensorWF}
The tensor product
\[
((H,T),F^\cbbullet_\mf H,W^\mf_\bbullet H)=((H',T'),F^\cbbullet_\mf H',W^\mf_\bbullet H')\otimes((H'',T''),F^\cbbullet_\mf H'',W^\mf_\bbullet H'')
\]
is defined as usual by the formulas
\begin{align*}
(H,T)&=(H'\otimes H'',T'\otimes T'')\quad\text{(action of $\ZZ/(m'm'')\ZZ$)},\\
F^\cbbullet_\mf H&=(F_\mf H'\star F_\mf H'')^\cbbullet,\\
W^\mf_\bbullet H&=(W^\mf H'\star W^\mf H'')_\bbullet.
\end{align*}
\end{defi}

\begin{prop}\label{prop:tensMHSmf}
The tensor product of two objects of $\MHS^\mf$ is an object of $\MHS^\mf$, making $\MHS^\mf$, and hence $\MHS(\mf)$, an abelian tensor category.
The forgetful functor
$\MHS^\mf \to \Vect^\mf$ is a tensor functor.
\end{prop}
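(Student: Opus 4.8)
The plan is to reduce the statement of Proposition~\ref{prop:tensMHSmf} to the corresponding classical fact about $\MHS$, exploiting the equivalence $\MHS^\mf\simeq\MHS(\mf)$ established in the first lemma together with the translation formulas \eqref{eq:mfH} and \eqref{eq:HmfH}. First I would show that, under the dictionary \eqref{eq:mfH}, the $\mf$-tensor product corresponds to the usual tensor product of mixed Hodge structures endowed with the tensor of the two finite-order automorphisms. The point is a bookkeeping identity for the shifts: if $H'$ carries the eigenvalue $\zeta'=\exp(-2\pi i\,a')$ and $H''$ carries $\zeta''=\exp(-2\pi i\,a'')$, then $H'_{\zeta'}\otimes H''_{\zeta''}$ sits in the eigenspace for $\zeta'\zeta''=\exp(-2\pi i\,a)$, where $a\in(-1,0]$ is the representative of $a'+a''$ modulo $\ZZ$, so that $a\in\{a'+a'',a'+a''+1\}$. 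One then checks that $(F_\mf H'\star F_\mf H'')$ with the integral reindexing $F^p=\bigoplus F^{p-a}_\mf$ matches $(F H'\star F H'')$ on the product, the discrepancy $a'+a''-a\in\{0,-1\}$ being exactly absorbed by the analogous discrepancy in the weight reindexing $W_\ell H=W^\mf_\ell H_1\oplus W^\mf_{\ell+1}H_{\neq1}$; this is where the compatibility of the two conventions in \eqref{eq:mfH} is used in an essential way.

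Granting that translation, the assertion that the tensor product of two objects of $\MHS^\mf$ again lies in $\MHS^\mf$ becomes the assertion that the tensor product of two graded-polarizable mixed Hodge structures is again one, which is the classical theorem of Deligne, plus the trivial remark that $T'\otimes T''$ still has finite order (its order divides $\mathrm{lcm}(m',m'')$, so it factors through some $\ZZ/m\ZZ$) and acts compatibly with $F^\cbbullet$ and $W_\bbullet$. The jump condition on $F^\sfp_\mf H_\zeta$ (that it jumps only along $\ZZ-a$) is automatic from the reindexing, once one knows the underlying filtration $F^\cbbullet H_\CC$ is $\ZZ$-indexed on the classical side and one tracks eigenvalues as above. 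Abelianness of $\MHS^\mf$, hence of $\MHS(\mf)$, follows from abelianness of $\MHS$ together with the bi-strictness statement already recorded in the first lemma: kernels and cokernels are computed in $\MHS$ and inherit the $\mf$-action, and the bi-strictness guarantees the induced $F_\mf$ and $W^\mf$ are the right ones. Finally, the forgetful functor $\MHS^\mf\to\Vect^\mf$ being a tensor functor is immediate from the definition $(H,T)=(H'\otimes H'',T'\otimes T'')$ and from the fact that forgetting the filtrations is compatible with $\star$.

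I expect the only genuine subtlety — and hence the main obstacle — to be the sign/shift juggling in the previous paragraph: verifying that the $(-1)$-shift built into the weight convention for $H_{\neq1}$ in \eqref{eq:mfH} is exactly compensated, in the tensor product, by the change of eigenvalue representative when $a'+a''\notin(-1,0]$. Concretely, one must separately inspect the four cases according to whether each factor is in its classical ($\zeta=1$) or non-classical part, and check in each case that $W_\ell(H'\otimes H'')=(W H'\star W H'')_\ell$ agrees with the formula $W^\mf_\ell(H'\otimes H'')_1\oplus W^\mf_{\ell+1}(H'\otimes H'')_{\neq1}$; the mixed case (one factor classical, one not) is the delicate one, since there $H'_1\otimes H''_{\zeta''}$ contributes to the non-classical part of the product and the single shift must appear on the nose. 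Once this case analysis is done, the rest is formal, and the classical tensor theorem for $\MHS$ does all the real work. I would present the argument by first stating the eigenvalue/shift lemma as a short computation, then invoking Deligne's theorem, and closing with the two one-line remarks on finiteness of $T'\otimes T''$ and on the tensor functor property.
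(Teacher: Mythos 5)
Your reduction rests on the claim that, under the dictionary \eqref{eq:mfH}, the tensor product in $\MHS^\mf$ becomes the \emph{ordinary} tensor product of the associated mixed Hodge structures, so that Deligne's tensor-product theorem does the real work. This claim is false, and the shifts you hope will cancel do not cancel. Take both factors purely non-classical, say concentrated on eigenvalues $\zeta',\zeta''\neq1$. By \eqref{eq:HmfH} one has $W^\mf_i H'=W_{i-1}H'$ and $W^\mf_j H''=W_{j-1}H''$, while on the product the reindexing \eqref{eq:mfH} removes only one of these two shifts (if $\zeta'\zeta''\neq1$) or none (if $\zeta'\zeta''=1$). Hence the filtration $(W^\mf H'\star W^\mf H'')$, read back through \eqref{eq:mfH}, is the ordinary tensor-product weight filtration shifted by $1$, \resp $2$, on these pieces; similarly the Hodge filtration acquires a shift by $1$ exactly on the pieces with $a'+a''\in(-2,-1]$. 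This is visible in the paper's own case analysis (the conditions $i+j=k-1$ and $i+j=k-2$ there). The resulting structure is therefore not the tensor product in $\MHS$, nor a Tate twist of it piece by piece: on a single piece with $a'+a''\in(-1,0)$ the weight is raised by $1$ without touching $F$, which violates Hodge symmetry in isolation and only becomes symmetric after pairing with the complex-conjugate eigenvalue piece. Your proposed ``absorption'' of the $F$-discrepancy by the $W$-discrepancy is thus not a meaningful cancellation, and the case you single out as delicate (one factor classical, one not) is in fact the unshifted, easy case; the failure occurs precisely when both factors are non-classical.

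Consequently the input you need is not Deligne's theorem but the theorem of Scherk and Steenbrink on the join of two mixed Hodge structures equipped with automorphisms of finite order: the bookkeeping above shows (and this is exactly what the paper checks) that $(F_\mf\star F_\mf,\,W^\mf\star W^\mf)$ corresponds under \eqref{eq:mfH} to the join filtrations of \cite[p.\,661]{S-S85}, and the non-trivial Hodge-theoretic fact is that the join is again a graded-polarizable mixed Hodge structure (see also \cite{M-S-S16} for the general Thom--Sebastiani statement). Your remarks on the finite order of $T'\otimes T''$, on abelianness via bi-strictness, and on the forgetful functor to $\Vect^\mf$ being a tensor functor are fine, but they are the formal part; without replacing the appeal to Deligne by the join theorem the argument has a genuine gap.
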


\begin{proof}
To check the first assertion, it is enough to check that
the two filtrations $F$ and $W$ on the tensor product deduced from $F_\mf$ and $W^\mf$ correspond to Scherk-Steenbrink's definition of the join \cite[p.\,661]{S-S85}, and to use the results therein. We have
\begin{equation}\label{eq:joint}
H_{\zeta}=\bigoplus_{\zeta'\zeta''=\zeta}H'_{\zeta'}\otimes H''_{\zeta''}\qand W^\mf_kH_{\zeta}=\bigoplus_{\zeta'\zeta''=\zeta}\sum_{i+j=k}(W^\mf_iH'_{\zeta'}\otimes W^\mf_jH''_{\zeta''}),
\end{equation}
where the sum over $i,j$ is taken in $H'_{\zeta'}\otimes H''_{\zeta''}$.
\begin{itemize}
\item
If $\zeta=1$, \eqref{eq:joint} reads
\[
W_kH_1=\sum_{i+j=k}(W_iH'_1\otimes W_jH''_1)\oplus\bigoplus_{\zeta'\neq1}\sum_{i+j=k-2}(W_iH'_{\zeta'}\otimes W_jH''_{1/\zeta'}).
\]

\item
If $\zeta\neq1$, \eqref{eq:joint} reads
\[
W_{k-1}H_\zeta=\sum_{i+j=k-1}\bigl[(W_iH'_1\otimes W_jH''_\zeta)\oplus(W_iH'_\zeta\otimes W_jH''_1)\bigr]\oplus \bigoplus_{\substack{\zeta',\zeta''\neq1\\\zeta'\zeta''=\zeta}}\sum_{i+j=k-2}W_iH'_{\zeta'}\otimes W_jH''_{\zeta''}.
\]
\end{itemize}
Therefore, \eqref{eq:joint} reads
\[
W_kH=\bigoplus_{\zeta',\zeta''}\sum_{i,j}W_iH'_{\zeta'}\otimes W_jH''_{\zeta''},
\]
where the summation is taken over all $i,j$ such that
\[
i+j=\begin{cases}
k&\text{if $\zeta'=1$ or $\zeta''=1$,}\\
k-2&\text{if $\zeta''=1/\zeta'\neq1$,}\\
k-1&\text{if $\zeta'\neq1$, $\zeta''\neq1$, and $\zeta'\zeta''\neq1$.}
\end{cases}
\]
This corresponds to the formula of \loccit, up to an obvious typo there.\footnote{One should replace the condition $a=b=0$ with $a$ or $b=0$.} The proof for $F^\cbbullet$ is similar: the formula of \loccit\ is
\[
F^pH_\zeta=\bigoplus_{\substack{\zeta',\zeta''\\\zeta'\zeta''=\zeta}}\sum_{k,\ell}F^kH_{\zeta'}\otimes F^\ell H_{\zeta''},
\]
where the sum is taken over pairs $k,\ell$ such that, setting $\zeta'=\exp(-2\pii a')$ and $\zeta''=\exp(-2\pii a'')$ with $a',a''\in(-1,0]$,
\[
p=\begin{cases}
k+\ell&\text{if }a'+a''\in(-1,0],\\
k+\ell+1&\text{if }a'+a''\in(-2,-1].
\end{cases}
\]
Let us set $a\in(-1,0]$ satisfy
\[
\begin{cases}
a'+a''&\text{if }a'+a''\in(-1,0],\\
a'+a''+1&\text{if }a'+a''\in(-2,-1].
\end{cases}
\]
Then we have in any case $p-a=(k-a')+(\ell-a'')$, as wanted. One can also check that the decomposition \eqref{eq:decompmf} behaves well by tensor product.
\end{proof}

\section{Perverse sheaves on the affine line}\label{sec:perv0mf}

\begin{notation}\label{nota:ij}
We will often use the following diagram involving the affine line (whose coordinate may take various names):
\[
\{0\}\Hto{i}\Afu\Hfrom{j}\Gm.
\]
\end{notation}

\subsection{\texorpdfstring{$\kk$}{k}-Perverse sheaves}
We equip the affine line $\Afu$ (coordinate~$\theta$) with its analytic structure, which we denote by~$\Afuan$.
However, for the sake of simplicity,
we simply denote it by $\Afu_\theta$ or $\Afu$ in this section.

We refer to \cite[Chap.\,2]{Katz96} for details on the next results. Let $\Vect(\kk)$ be the category of $\kk$\nobreakdash-vector spaces for some fixed field $\kk$, and let $\Perv(\kk)$ be the category of perverse sheaves on $\Afu$ of $\kk$-vector spaces. The field $\kk$ being fixed, we simply write $\Vect,\Perv$. The additive convolution product $\star$ on $\Perv$ does not take values in $\Perv$. However, let $\Perv_0$ be the full subcategory of $\Perv$ whose objects consist of perverse sheaves with zero global cohomology. It~is equipped with a tensor structure given by $\star$ and there is a natural projector $\Pi:\Perv\to\Perv_0$ given by the additive convolution with $j_!\kk_{\Gm}$. There is a functorial morphism $\ccF\to\Pi(\ccF)$ in $\Perv$ whose kernel and cokernel are constant perverse sheaves.
For a perverse sheaf $\ccF$ on $\Afu_\theta$, we denote by $\psip_\theta\ccF$, \resp $\phip_\theta\ccF$,
the space of nearby, \resp vanishing, cycles at $\theta=0$, equipped with its monodromy operator. The functor $\phip_\theta$ decomposes with respect to eigenvalues of
the semi-simple part of the
monodromy as $\phip_{\theta,1}\oplus\phip_{\theta,\neq1}$
where the second subscripts indicate the range of the eigenvalues.
The two components behave differently when extended to mixed Hodge modules, and we have $\phip_{\theta,\neq1}=\psip_{\theta,\neq1}$.

The topological Fourier (or Laplace) transformations $\FT_\pm$, for which we refer to \cite[\S VI.2]{Malgrange91} and \cite[\S1.b]{Bibi05}, transform objects of $\Perv_0$ to perverse sheaves on~another copy of $\Afu$, with coordinate $\tau$, say, of the form $\bR j_*\ccL[1]$ for some local system $\ccL$ on $\Gm{}_\tau$. However, the datum of the perverse sheaf $\bR j_*\ccL_\pm[1]=\FT_\pm(\ccF)$ is in general not enough to recover $\ccF$, as Stokes data for $\bR j_*\ccL_\pm[1]$ at infinity on $\Afu_\tau$ are missing in this description. The case of monodromic perverse sheaves is simpler, as shown by Lemma \ref{lem:monodromicperverse} below, where $\FT$ is either the $+$ or the $-$ Fourier transformation, and $\FT^{-1}$ is respectively the $-$ or the $+$ Fourier transformation. For $\ccF_1,\ccF_2$ in $\Perv_0$, we have $j^{-1}\FT(\ccF_1\star\ccF_2)[-1]\simeq \ccL_1\otimes \ccL_2$. Taking the fiber at $1$ of $(\FT\ccF)[-1]$ is a fiber functor for the Tannakian category $\Perv_0$.

\subsection{Monodromic perverse sheaves}
Let $\Perv_0^\mon$ be the full subcategory of $\Perv_0$ consisting of perverse sheaves $\ccF$ in $\Perv_0$ whose shifted restriction $j^{-1}\ccF[-1]$ to $\Gm$ is a local system. It is a tensor subcategory of $\Perv_0$. The following is standard.

\begin{lemma}\label{lem:monodromicperverse}\mbox{}
\begin{enumerate}
\item\label{lem:monodromicperverse1}
The shifted restriction functor $j^{-1}(\cbbullet)[-1]$ induces an equivalence of categories
\[
\Perv_0^\mon\isom(\text{local systems on $\Gm$}),
\]
with $j_!(\cbbullet)[1]$ as a quasi-inverse.
\item\label{lem:monodromicperverse2}
The vanishing cycle functor $\phip_\theta$ at $\theta=0$ induces an equivalence between $\Perv_0^\mon$ and the category consisting of pairs $(V,T)$ of a finite-dimensional vector space equipped with an automorphism. This equivalence respects the tensor structures on both categories.
\item\label{lem:monodromicperverse3}
The functor $j^{-1}\circ\FT(\cbbullet)[-1]$ induces an equivalence
\[
\Perv_0^\mon\isom(\text{local systems on $\Gm$}),
\]
with quasi-inverse being given by $\FT^{-1}\circ\bR j_*(\cbbullet)[1]$. By this equivalence, the additive convolution product corresponds to the tensor product of local systems.
\end{enumerate}
\end{lemma}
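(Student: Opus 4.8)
The plan is to prove the three assertions in turn, each being a standard consequence of the structure of perverse sheaves on a curve and of the topological Fourier transform reviewed above.

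For part~\ref{lem:monodromicperverse1} I would first check that $j_!(\cbbullet)[1]$ sends the category of local systems on $\Gm$ into $\Perv_0^\mon$. For a local system $\ccL$ on $\Gm$, the complex $j_!(\ccL[1])$ is perverse (extension by zero of a shifted local system from an affine open curve) and monodromic by construction, and it has vanishing global cohomology: applying $\bR\Gamma(\Afu,\cbbullet)$ to the distinguished triangle $j_!(\ccL[1])\to\bR j_*(\ccL[1])\to i_*i^*\bR j_*(\ccL[1])$, and noting $i^*\bR j_*(\ccL[1])\simeq\bR\Gamma(\Delta^*,\ccL[1])$ (with $\Delta^*$ a small punctured disk around $0$), identifies $\bR\Gamma(\Afu,j_!(\ccL[1]))$ with the fiber of the restriction $\bR\Gamma(\Gm,\ccL[1])\to\bR\Gamma(\Delta^*,\ccL[1])$, which is a quasi-isomorphism since $\Delta^*\hookrightarrow\Gm$ is a homotopy equivalence. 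Then $j^{-1}(\cbbullet)[-1]\circ j_!(\cbbullet)[1]$ is the identity. Conversely, for $\ccF\in\Perv_0^\mon$ the adjunction morphism $a\colon j_!j^{-1}\ccF\to\ccF$ is an isomorphism over $\Gm$, so its kernel and cokernel in $\Perv$ are supported at $0$, say $i_*W$ and $i_*W'$; since $\bR\Gamma(\Afu,j_!j^{-1}\ccF)=\bR\Gamma(\Afu,\ccF)=0$ while $\bR\Gamma(\Afu,i_*W)=W$ sits in degree $0$, the long exact sequences attached to the two short exact sequences factoring $a$ force $W=W'=0$. Hence $a$ is an isomorphism and the two functors are mutually quasi-inverse.

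For part~\ref{lem:monodromicperverse2}, by part~\ref{lem:monodromicperverse1} it suffices to describe $\phip_\theta\circ j_!(\cbbullet)[1]$. If $\ccL$ has fiber $V$ and monodromy $T$, the data of the perverse sheaf $j_!(\ccL[1])$ at $\theta=0$ is $(\psip_\theta,\phip_\theta,\mathrm{can},\mathrm{var})=(V,V,\id_V,T-\id_V)$, so $\phip_\theta j_!(\ccL[1])$ with its monodromy $\id+\mathrm{can}\circ\mathrm{var}$ is canonically $(V,T)$. Thus $\phip_\theta\circ j_!(\cbbullet)[1]$ is the classical equivalence (``fiber near $0$ with its monodromy''), an equivalence because $\pi_1(\Gm)\simeq\ZZ$, whence $\phip_\theta$ restricted to $\Perv_0^\mon$ is an equivalence. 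For the compatibility with tensor products I would either argue directly from the compatibility of vanishing cycles with external products and of $\star$ with the pushforward functor in its definition, or, more economically, deduce it from part~\ref{lem:monodromicperverse3}: both equivalences carry $\star$ to $\otimes$, and the two resulting functors to $\{(V,T)\}$ differ only by the tensor auto-equivalence $(V,T)\mapsto(V,T^{-1})$, reflecting the antipode $\theta\mapsto-\theta$ implicit in comparing the Fourier-theoretic and the vanishing-cycle descriptions.

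For part~\ref{lem:monodromicperverse3}: since $\Perv_0^\mon\subset\Perv_0$, for $\ccF\in\Perv_0^\mon$ one has $\FT\ccF\simeq\bR j_*\ccL_\FT[1]$, so $j^{-1}\FT(\cbbullet)[-1]$ takes values in local systems on $\Gm$ and factors as $\FT|_{\Perv_0^\mon}$ followed by the equivalence $\mathcal C\isom(\text{local systems on }\Gm)$, $\bR j_*\ccL[1]\mapsto\ccL$, with quasi-inverse $\bR j_*(\cbbullet)[1]$ (same argument as in part~\ref{lem:monodromicperverse1}), where $\mathcal C$ is the category of $\bR j_*$-extensions of shifted local systems. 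It then remains to see that $\FT$ and $\FT^{-1}$ restrict to mutually quasi-inverse equivalences between $\Perv_0^\mon$ and $\mathcal C$; this is the standard description of the topological Fourier transform of monodromic perverse sheaves --- equivalently, under $\Perv_0^\mon\simeq\{(V,T)\}\simeq(\text{local systems on }\Gm)$ it says that $\FT$ acts by $(V,T)\mapsto(V,T^{\mp1})$ and in particular does not leave the monodromic subcategory. Finally, that $\star$ corresponds to the tensor product of local systems is the isomorphism $j^{-1}\FT(\ccF_1\star\ccF_2)[-1]\simeq\ccL_1\otimes\ccL_2$ already recalled, its compatibility with composition being read off from the fact that the fiber at $1$ of $(\FT\,\cbbullet)[-1]$ is a tensor fiber functor on $\Perv_0$. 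The only non-formal inputs are the vanishing $\bR\Gamma(\Afu,j_!(\ccL[1]))=0$ and the elementary $(\psip,\phip)$-computation of part~\ref{lem:monodromicperverse2}, both routine, together with the claim in part~\ref{lem:monodromicperverse3} that $\FT^{\pm1}$ preserves monodromicity and exchanges $\Perv_0^\mon$ with $\mathcal C$; this last point, where genuine structure of the Fourier transform rather than pure diagram-chasing is used, is the step I expect to be the main obstacle, even though it is classical.
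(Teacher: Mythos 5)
Your proposal is correct and takes essentially the same route as the paper, which records this lemma as standard and only sketches part (2) by factoring $\phip_\theta$ through the equivalence of part (1) — the same identification you obtain from the $(\mathrm{can},\mathrm{var})$ description of $j_!\ccL[1]$. The details you add for parts (1) and (3), including the one non-formal input that the topological Fourier transform of a monodromic object of $\Perv_0$ is $\bR j_*$ of a local system with the corresponding monodromy, are exactly the standard facts the paper imports from \cite{Malgrange91} and \cite{Bibi05}.
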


\begin{proof}
Let us only indicate the proof of \eqref{lem:monodromicperverse2}. We can equivalently realize the category of pairs $(V,T)$ as the category of local systems $\ccL$ of finite-dimensional $\kk$-vector spaces on $\Gm$. Then we have a diagram of equivalences
\[
\xymatrix@C=1cm{
\Perv_0^\mon\ar@/_3pc/[rrr]^-{\phip_\theta}\ar@<.5ex>[rr]^-{j^{-1}(\cbbullet)[-1]}&&\ar@<.5ex>[ll]^(.55){j_!(\cbbullet)[1]}(\text{local systems on $\Gm$})&\ar[l]_-\sim(\text{pairs $(V,T)$}).
} \qedhere
\]
\end{proof}

One can identify non canonically the fiber at $1$ of $(\FT\ccF)[-1]$ with the vector space underlying $\phip_{\theta}\ccF$ by the following standard lemma (according to the definition of topological Fourier transformation).

\begin{lemma}
Let $\ccF$ be a perverse sheaf on the neighbourhood of a closed disc $\ov\Delta$ with no singularity on $\partial\Delta$, and let $\Delta^{>0}$ be the union of the open disc $\Delta$ and a nonempty open interval in its boundary. Then $\sum \dim\phip_{\theta-\theta_i}\ccF=\dim \coH^1_\rc(\Delta^{>0},\ccF)$, where the sum is taken over all singular points $\theta_i$ of $\ccF$ in~$\Delta$.\qed
\end{lemma}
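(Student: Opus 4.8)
The plan is to reduce the statement to a well-known topological avatar of the Fourier transform, namely the computation of the space of vanishing cycles of a perverse sheaf via compactly-supported cohomology of a half-open sector. First I would recall the definition of the topological Fourier transformation $\FT_\pm$ in terms of the integral kernels supported on $\{(\theta,\tau):\re(\theta\tau)\leq0\}$ (or its variants), following \cite[\S VI.2]{Malgrange91} and \cite[\S1.b]{Bibi05}; from this definition, the stalk of $(\FT\ccF)[-1]$ at a point $\tau_0$ is computed as the hypercohomology of $\ccF$ on the half-plane $\{\theta:\re(\theta\tau_0)<0\}$, extended by zero, i.e.\ a compactly-supported cohomology $\coH^\bbullet_\rc$ of an open half-plane relative to $\ccF$. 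For $\tau_0=1$ this is a half-plane $\{\re\theta<0\}$, and after replacing the half-plane by a large disc one is exactly in the situation of the lemma, with $\Delta^{>0}$ the intersection of the open disc with the open half-plane enlarged by an arc of the boundary circle away from the singularities.

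The key steps, in order, would be: (1) translate the stalk of the Fourier transform into $\coH^\bbullet_\rc$ of a half-plane, or after a homotopy, of the region $\Delta^{>0}$ described in the lemma; (2) observe that since $\ccF$ has no singularity on $\partial\Delta$ and $\Delta^{>0}$ omits a small boundary arc, only the singular points $\theta_i$ inside $\Delta$ contribute; (3) use the excision/Mayer--Vietoris or the local structure of perverse sheaves: near each $\theta_i$, choose a small disc $\Delta_i$, and note that $\coH^\bbullet_\rc(\Delta_i\cap\Delta^{>0},\ccF)$ is concentrated in degree $1$ with dimension $\dim\phip_{\theta-\theta_i}\ccF$ — this is the standard local computation identifying vanishing cycles with the cohomology of the Milnor-type half-disc (the "sector" computing $\phi$); (4) away from the singularities, $\ccF$ restricted to $\Delta^{>0}$ is a shifted local system $\ccL[1]$, and $\coH^\bbullet_\rc$ of a contractible-type half-open region with an $\ell$-dimensional shifted local system vanishes (the boundary arc forces vanishing of $\coH^0_\rc$ and $\coH^2_\rc$ of the open part), so the only surviving contributions are the local ones at the $\theta_i$; (5) assemble via the long exact sequence of the pair $(\Delta^{>0},\Delta^{>0}\setminus\{\theta_i\})$ to get $\dim\coH^1_\rc(\Delta^{>0},\ccF)=\sum_i\dim\phip_{\theta-\theta_i}\ccF$ and the vanishing of $\coH^0_\rc,\coH^2_\rc$.

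I would expect the main obstacle to be step (3)/(5): making precise, from whatever definition of the topological Fourier transform one adopts, that the boundary-arc condition in $\Delta^{>0}$ is exactly what kills the constant/local-system part and isolates the vanishing-cycle contribution, rather than the nearby-cycle one; the orientation of the arc (whether it sits on the "incoming" or "outgoing" side relative to $\re(\theta\tau)$) is what distinguishes $\phi$ from $\psi$ and hence $\FT_+$ from $\FT_-$, and one has to check the conventions line up. The cleanest route is probably to argue entirely locally: decompose $\ccF$ on $\Delta$ using the gluing description of perverse sheaves on a disc (nearby/vanishing cycle data at each $\theta_i$ plus a local system on the complement), check the claim for the elementary building blocks $j_!\ccL[1]$, $\bR j_*\ccL[1]$, and the skyscraper-type pieces, and conclude by additivity of both sides in short exact sequences. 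Since all these computations are local and standard, no serious difficulty remains; the statement is marked \qed precisely because it is a packaging of well-known facts, so the "proof" is really a pointer to the definition of $\FT$ together with the local half-disc computation of $\phip$.
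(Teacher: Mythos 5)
The paper gives no proof of this lemma (it is asserted as standard, with the \qed in the statement), so your argument has to stand on its own, and as written the middle steps do not. In step (3), for an interior singular point $\theta_i$ a small disc $\Delta_i$ satisfies $\Delta_i\cap\Delta^{>0}=\Delta_i$, and $\bR\Gamma_\rc(\Delta_i,\ccF)$ computes the costalk $i_{\theta_i}^!\ccF$, not $\phip_{\theta-\theta_i}\ccF$: already for $\ccF=\kk_\Delta[1]$ it is one-dimensional while $\phip=0$. The half-disc whose compactly supported cohomology gives $\phip$ must contain an open arc of its own boundary, and an interior point has no such canonical neighbourhood inside $\Delta^{>0}$; so the proposed localization does not get started (one can localize, but only by cutting $\Delta^{>0}$ along arcs reaching the boundary interval into lens-shaped pieces, each again of the form ``disc plus boundary arc'' with one singular point). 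Step (4) is also false: on $U=\Delta^{>0}\moins\{\theta_1,\dots,\theta_n\}$ the compactly supported cohomology of $\ccL$ does not vanish --- for $\ccF=j_!\ccL[1]$ the space in the lemma is exactly $\coH^1_\rc(U,\ccL)$, of dimension $n\,\rk\ccL$ --- and in the open/closed excision triangle for $(U,\{\theta_i\})$ the closed-stratum contribution is the stalks $\ccF_{\theta_i}$, not the vanishing cycles. So step (5) gives at best an Euler-characteristic identity, which matches the claim only after invoking the local index formula $\dim\phip_{\theta-\theta_i}\ccF=\rk\ccL+\chi(\ccF_{\theta_i})$, which is absent from your outline.

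Your fallback dévissage is the viable route, but the point you leave implicit is the actual content. Additivity of $\dim\coH^1_\rc$ in short exact sequences is not free: it requires knowing beforehand that $\bR\Gamma_\rc(\Delta^{>0},\ccF)$ is concentrated in a single degree, and intermediate extensions $j_{!*}\ccL[1]$ are not iterated extensions of your three building blocks, so either work in the Grothendieck group for the dimension count (generated by $j_!\ccL[1]$ and skyscrapers, via $0\to K\to j_!j^{-1}\ccF\to\ccF\to C\to0$ with $K,C$ punctual) or control the connecting maps. The boundary arc enters exactly twice, and in neither place as you state it: it forces any compactly supported section of the constructible sheaf in cohomological degree $-1$ of $\ccF$ to vanish (together with the perversity condition excluding punctual subsheaves), killing the lowest degree; and it makes the boundary of $U$ nonempty, so the top-degree $H^2_\rc$ of the local-system part dies by Poincaré--Lefschetz duality. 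With this concentration, the Euler-characteristic computation on the generators finishes the proof. Finally, fix the indexing: with $\ccF$ perverse (so $\ccL[1]$ on the smooth locus) the surviving degree is $0$; the paper's $\coH^1_\rc$ corresponds to the normalization $\ccF[-1]$, consistent with its use of $(\FT\ccF)[-1]$, and your step (3) silently mixes the two conventions.
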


\begin{remark}\label{rem:phipsi}
For an object $\ccF$ of $\Perv_0^\mon$, the canonical morphism $\can:\psip_\theta\ccF\to\phip_\theta\ccF$
from the space of nearby cycles
is an isomorphism (because of \ref{lem:monodromicperverse}\eqref{lem:monodromicperverse1}), so we can replace $\phip_\theta$ with $\psip_\theta$. However, in order to have compatibility with the tensor structures in \ref{lem:monodromicperverse}\eqref{lem:monodromicperverse2}, the functor $\phip_\theta$ is more convenient.
\end{remark}

\subsection{Constant perverse sheaves}
The full subcategory $\Perv_0^\cst$ of $\Perv_0^\mon$ consists of objects of $\Perv_0$ that are constant on $\Gm$. These are the perverse sheaves isomorphic to $j_!\kk_{\Gm}^r[1]$ for some $r\geq0$. The restricted Fourier transformation $j^{-1}\circ\FT$ sends $j_!\kk_{\Gm}^r[1]$ to $\kk_{\Gm}^r[1]$.

\begin{lemma}\label{lem:constperv}
The functors
\[
\phip_\theta=\phip_{\theta,1}:\Perv_0^\cst\mto\Vect\qand\Pi\circ i_!:\Vect\mto\Perv_0^\cst
\]
are mutually inverse equivalences of tensor categories.\qed
\end{lemma}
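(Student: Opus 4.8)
The plan is to identify the two functors explicitly on objects and then check they are quasi-inverse, compatible with tensor structures. First I would describe $\Pi\circ i_!$: for a vector space $V$, $i_!V$ is the skyscraper perverse sheaf at $0$ with fiber $V$, and $\Pi$ convolves it with $j_!\kk_{\Gm}[1]$; since convolution with a skyscraper at $0$ is the identity up to the shift bookkeeping, one computes $\Pi(i_!V)\simeq j_!\kk_{\Gm}[1]\otimes_\kk V = j_!V_{\Gm}[1]$, which is indeed an object of $\Perv_0^\cst$. Conversely, for $\ccF=j_!\kk_{\Gm}^r[1]$, the vanishing cycle space $\phip_\theta\ccF$ at $\theta=0$ is computed from the cone of $j_!\kk_{\Gm}^r[1]\to \bR j_*\kk_{\Gm}^r[1]$ near $0$, or more simply from the fact that $\phip_\theta j_!\kk_{\Gm}[1]\simeq\kk$ with trivial monodromy (the constant sheaf on $\Gm$ has trivial local monodromy at $0$, and $j_!$ kills the nearby-cycle invariants); hence $\phip_\theta\ccF\simeq\kk^r$, and $\phip_\theta=\phip_{\theta,1}$ here precisely because the monodromy is trivial, so there is no $\neq1$ part.

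Next I would verify the two natural isomorphisms. The composite $\phip_\theta\circ(\Pi\circ i_!)$ sends $V$ to $\phip_\theta(j_!V_{\Gm}[1])\simeq V$ by the computation above, functorially in $V$. For the other composite, given $\ccF\in\Perv_0^\cst$, I would use that $\ccF$ is determined by $\phip_\theta\ccF$ via Lemma~\ref{lem:monodromicperverse}\eqref{lem:monodromicperverse2} restricted to the constant (trivial-monodromy) case: the pair $(\phip_\theta\ccF,\id)$ corresponds under that equivalence to the constant local system $(\phip_\theta\ccF)_{\Gm}$, hence to $j_!(\phip_\theta\ccF)_{\Gm}[1]$, which is exactly $(\Pi\circ i_!)(\phip_\theta\ccF)$; so $(\Pi\circ i_!)\circ\phip_\theta\simeq\id$ on $\Perv_0^\cst$. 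This already shows the two functors are mutually quasi-inverse equivalences of categories.

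Finally, for the tensor compatibility: $\phip_\theta$ is compatible with $\star$ and $\otimes$ by Lemma~\ref{lem:monodromicperverse}\eqref{lem:monodromicperverse2}, and both $\Perv_0^\cst$ and $\Vect$ are full tensor subcategories (the unit object $j_!\kk_{\Gm}[1]$ on one side, $\kk$ on the other, each being sent to the other), so the equivalence automatically respects the tensor structures; alternatively one checks directly $j_!V_{\Gm}[1]\star j_!V'_{\Gm}[1]\simeq j_!(V\otimes V')_{\Gm}[1]$ from $j^{-1}\FT(\ccF_1\star\ccF_2)[-1]\simeq\ccL_1\otimes\ccL_2$ applied to constant local systems. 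The main obstacle I expect is purely a matter of normalization: pinning down that convolution with $i_!\kk$ acts as the identity (equivalently, that $i_!\kk$ is the unit for $\star$ on all of $\Perv$, even though $\star$ does not preserve $\Perv_0$) and tracking the $[1]$-shift so that $\Pi(i_!V)$ lands in $\Perv_0^\cst$ with the right fiber, rather than any genuine difficulty — all of this is standard and the statement is explicitly flagged as such in the text.
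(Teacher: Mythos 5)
Your argument is correct and is exactly the intended one: the paper leaves this lemma as standard, and your identifications $\Pi(i_!V)\simeq j_!V_{\Gm}[1]$ (the unit property of the skyscraper for $\star$, matching the paper's own formula $\Pi\circ\Hm i_!(\QQ^\rH)=\Hm j_!\pQQ^\rH_{\Gmtheta}$) and $\phip_\theta(j_!\kk_{\Gm}^r[1])\simeq\kk^r$ with trivial monodromy, together with the constant-monodromy case of Lemma~\ref{lem:monodromicperverse}\eqref{lem:monodromicperverse2} for tensor compatibility, are precisely what is meant. Only the parenthetical ``$j_!$ kills the nearby-cycle invariants'' is misphrased: the correct reason is that $i^{-1}j_!\kk_{\Gm}^r[1]=0$, so $\can:\psip_\theta\to\phip_\theta$ is an isomorphism (as in Remark~\ref{rem:phipsi}), whence $\phip_\theta=\phip_{\theta,1}\simeq\kk^r$.
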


\subsection{Finite monodromic perverse sheaves}\label{subsec:pervmf}
Let $\Perv_0^\mf$ denote the full tensor subcategory of $\Perv_0^\mon$ consisting of objects $\ccF$ for which
there exists a cyclic covering
$[m]:\Afu_{\theta_m}\to\Afu_\theta$ of degree $m\geq1$, written in coordinates as $\theta_m\mto \theta=(\theta_m)^m$, such that
the pullback $[m]^{-1}\ccF$ is an object of $\Perv_0^\cst$.
An object in $\Perv_0^\mf$ is called a \emph{finite monodromic perverse sheaf}. By~specializing Lemma \ref{lem:monodromicperverse}, we obtain the following.

\begin{lemma}\label{lem:phitheta}
The functor
\[
\phip_\theta:\Perv_0^\mf\mto\Vect^\mf
\]
is an equivalence of tensor categories.
\end{lemma}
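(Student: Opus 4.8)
The plan is to derive Lemma~\ref{lem:phitheta} as a direct specialization of Lemma~\ref{lem:monodromicperverse}\eqref{lem:monodromicperverse2}, keeping careful track of the $\mf$-action that the cyclic covering condition produces. First I would recall that Lemma~\ref{lem:monodromicperverse}\eqref{lem:monodromicperverse2} already gives an equivalence of tensor categories $\phip_\theta\colon\Perv_0^\mon\isom\{(V,T)\}$, so the only thing to check is that, under this equivalence, the full subcategory $\Perv_0^\mf$ corresponds exactly to the full subcategory $\Vect^\mf(\kk)$ of pairs $(V,T)$ in which $T$ has \emph{finite} order, and that the induced $\mf$-action matches. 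The key point is the behaviour of $\phip_\theta$ under a cyclic covering $[m]\colon\Afu_{\theta_m}\to\Afu_\theta$, $\theta=(\theta_m)^m$: if $\ccF\in\Perv_0^\mon$ corresponds to $(V,T)$, then $[m]^{-1}\ccF$ corresponds to $(V,T^m)$, since pulling back a local system on $\Gm$ along the $m$-th power map replaces the monodromy $T$ by its $m$-th power. This is a standard computation with the monodromy of a cyclic cover and I would state it as such rather than grind it out.

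Granting this, the argument is short. If $\ccF\in\Perv_0^\mf$, then by definition there is some $m\geq1$ with $[m]^{-1}\ccF\in\Perv_0^\cst$, i.e.\ $[m]^{-1}\ccF$ has trivial monodromy on $\Gm$; by the covering computation this says $T^m=\id$, so $(V,T)$ lies in $\Vect^\mf(\kk)$, the $\mf$-action being the one through $\ZZ/m\ZZ$ sending the topological generator to $T$. Conversely, if $(V,T)$ has $T^m=\id$, then the local system $j^{-1}\ccF[-1]$ becomes trivial after pullback along $\theta=(\theta_m)^m$, so $[m]^{-1}\ccF\in\Perv_0^\cst$ (using Lemma~\ref{lem:monodromicperverse}\eqref{lem:monodromicperverse1} to see that a monodromic perverse sheaf with trivial local system on $\Gm$ is constant), hence $\ccF\in\Perv_0^\mf$. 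Thus $\phip_\theta$ restricts to a bijection on objects between $\Perv_0^\mf$ and $\Vect^\mf(\kk)$; since both are full subcategories of $\Perv_0^\mon$ and $\{(V,T)\}$ respectively, and $\phip_\theta$ is already fully faithful, it is an equivalence onto $\Vect^\mf(\kk)$.

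It remains to note that this equivalence is compatible with the tensor structures. This is inherited from Lemma~\ref{lem:monodromicperverse}\eqref{lem:monodromicperverse2}: the tensor product on $\Perv_0^\mf$ is the restriction of $\star$ on $\Perv_0^\mon$, which $\phip_\theta$ carries to $(V',T')\otimes(V'',T'')=(V'\otimes V'',T'\otimes T'')$, and if $T'$ has order $m'$ and $T''$ order $m''$ then $T'\otimes T''$ has order dividing $\lcm(m',m'')$, so the tensor product of two finite monodromic objects is again finite monodromic and the $\mf$-action is the expected one (through $\ZZ/\lcm(m',m'')\ZZ$). Hence $\phip_\theta\colon\Perv_0^\mf\to\Vect^\mf$ is an equivalence of tensor categories.

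The one genuine point requiring care — the ``main obstacle'' — is the clean identification of the monodromy of $[m]^{-1}\ccF$ with $T^m$, together with the verification that ``$[m]^{-1}\ccF$ is constant on $\Gm$'' is equivalent to ``$[m]^{-1}\ccF\in\Perv_0^\cst$'' (one must know that the pullback stays in $\Perv_0$, i.e.\ has vanishing global cohomology, which follows since $[m]^{-1}$ of a monodromic object is monodromic and the constancy on $\Gm$ forces it into $\Perv_0^\cst$ by Lemma~\ref{lem:constperv}); everything else is formal unwinding of the already-established equivalences. I would phrase the proof as ``by specializing Lemma~\ref{lem:monodromicperverse}'', spelling out only the covering computation and the object-level matching of the two full subcategories.
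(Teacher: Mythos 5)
Your proof is correct, and it rests on the same two underlying facts as the paper's: Lemma~\ref{lem:monodromicperverse} and the observation that pullback by $[m]$ raises the monodromy to its $m$-th power, so that the defining condition of $\Perv_0^\mf$ translates exactly into finiteness of the order of $T$. The difference is one of packaging. You treat the lemma as an identification of essential images under the already-established tensor equivalence $\phip_\theta\colon\Perv_0^\mon\isom(\text{pairs $(V,T)$})$ of Lemma~\ref{lem:monodromicperverse}\eqref{lem:monodromicperverse2}, which makes the proof a short full-subcategory argument. The paper instead passes through part~\eqref{lem:monodromicperverse1}, describes the functor concretely on a local system $\ccL$ with $[m]^{-1}\ccL$ constant as $\ccL\mto(\Gamma(\Gm,[m]^{-1}\ccL),G)$ with the Galois action of $G=\ZZ/m\ZZ$, and then writes down an explicit quasi-inverse $(V,G)\mto\bigl([m]_*\Pi(i_!V)\bigr)^\inv$, the invariants being taken for the action of $\{(g^{-1},g)\mid g\in G\}$. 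For the lemma itself your route is entirely adequate and arguably cleaner; what the paper's explicit construction buys is reusability: the proof of Proposition~\ref{prop:EMHSmfMHSmf} mimics this quasi-inverse verbatim in the Hodge-theoretic setting, where one cannot simply restrict a pre-existing equivalence and the functor $(V^\rH,G)\mto\bigl(\Hm[m]_*(\Pi(\Hm i_!V^\rH))\bigr)^\inv$ must be produced by hand. If you keep your shorter argument here, that construction would have to be introduced separately before Proposition~\ref{prop:EMHSmfMHSmf}.
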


\begin{proof}
We give details on this proof, as a similar argument will be used for exponential mixed Hodge structures
(see Proposition \ref{prop:EMHSmfMHSmf}). By the equivalence of Lemma \ref{lem:monodromicperverse}\eqref{lem:monodromicperverse1}, we can replace $\Perv_0^\mf$ with the category of local systems $\ccL$ on $\Gm$ with finite monodromy, and by Remark \ref{rem:phipsi}, the functor to $\Vect^\mf$ is given, on a local system $\ccL$ such that $[m]^{-1}\ccL$ is constant, by $\ccL\mto(\Gamma(\Gm,[m]^{-1}\ccL),G)$, where the action of $G=\ZZ/m\ZZ$ is that of the Galois group of the covering $[m]:\Gm\to\Gm$.

Conversely, given $(V,G)$ in $\Vect^\mf$ where $G=\ZZ/m\ZZ$, we define a local system $\ccL$ with finite monodromy as follows. Let $(\ccV,G)$ be the constant local system on $\Gm$ with stalk $V$ and the induced $G$-action. The local system $[m]_*\ccV$ is equipped with the induced $G$-action and the Galois $G$-action. Then $\ccL$ is defined as the invariant subsheaf under the action of $\{(g^{-1},g)\mid g\in G\}$. One checks that both functors are quasi-inverse to each other by using the property that, given $(V,G)$ as above, and equipping $\kk^m$ with the action of $G$ induced by the cyclic permutation of the standard basis vectors, then if one equips $V\otimes\kk^m$ with its natural $G\times G$\nobreakdash-action, $(V,G)$ is isomorphic to $(V\otimes\kk^m)^\inv$ equipped with the induced $G$-action, where $\inv$ means invariants under the $\{(g^{-1},g)\mid g\in G\}$-action.

One can then express a quasi-inverse functor of the functor $\phip_\theta$ of the lemma as follows. Given $(V,G)$ in $\Vect^\mf$, one first associates with it the object of $\Perv_0^\cst$ with $G$-action defined by $(\Pi(i_!V),G)$, and then the object $\ccF$ of $\Perv_0^\mf$ defined as $([m]_*\Pi(i_!V))^\inv$, with the same meaning as above for $\inv$.
\end{proof}

\begin{remark}
Let $[m]:\Afu_{\theta_m}\to\Afu$ be as above. If $\ccF=j_!\ccL[1]$ belongs to $\Perv_0^\mon(\Afu)$, then
\[
[m]^{-1}\ccF=[m]^{-1}j_!\ccL[1]=j_{m!}([m]^{-1}\ccL)[1]
\]
also belongs to $\Perv_0^\mon(\Afu_{\theta_m})$.
For an object $\ccF$ in $\Perv_0^\mf$
such that $[m]^{-1}\ccF$ belongs to $\Perv_0^\cst$,
we can identify $\phip_\theta\ccF$
with the vector space $\phip_{\theta_m,1}[m]^{-1}\ccF$.
The monodromy on the latter space is the identity, but one recovers the action of the monodromy on $\phip_\theta\ccF$ by means of the action of the group of the covering. The drawback with the functor $\ccF\mto\phip_{\theta_m,1}([m]^{-1}\ccF)$ is that it is a priori not compatible with the tensor structures. On the other hand, the drawback with the functor~$\phip_\theta$ of Lemma \ref{lem:phitheta} is seen when extending it to mixed Hodge modules. Namely, the part $\phip_{\theta,\neq1}$ shifts the weights and makes $\phip_\theta$ not compatible with tensor product when considering weights. This is the reason for the definition of Section \ref{sec:mhsmu}.
\end{remark}

\section{Exponential mixed Hodge structures}\label{sec:EMHS}
In this section, we recall the construction due to Kontsevich and Soibelman \cite[\S4]{K-S10} of the exponential mixed Hodge structures $\coH^\cbbullet_\rc(U,f)$ and $\coH^\cbbullet(U,f)$ attached to a regular function $f:U\to\Afu$ on a smooth quasi-projective variety $U$. We emphasize the weight properties in Proposition \ref{prop:weightsHdUf}, together with the pure part $\coH^\cbbullet_\rmid(U,f)$.

\subsection{A review of exponential mixed Hodge structures}\label{subsec:reviewEMHS}
By the Riemann-Hilbert correspondence,
the definitions of $\Perv_0$ and of the functor $\Pi$
can be translated to regular holonomic $\cD_{\Afu}$-modules, and, according to \cite[\S4]{K-S10} both can be lifted in a compatible way to the category $\MHM(\Afu_\theta)$ of mixed Hodge modules on $\Afu_\theta$. This gives rise to the category $\EMHS$ of exponential mixed Hodge structures, which is a full subcategory of $\MHM(\Afu_\theta)$, and there is a projector $\Pi:\MHM(\Afu_\theta)\to\EMHS$, which is an exact functor, together with a functorial morphism $N^\rH\to\Pi(N^\rH)$ in $\MHM(\Afu_\theta)$ whose kernel and cokernel are constant mixed Hodge modules. Each object of $\EMHS$ has a perverse realization in $\Perv_0$ and the projector $\Pi:\MHM(\Afu_\theta)\to\EMHS$ realizes as $\Pi:\Perv\to\Perv_0$ of Section \ref{sec:perv0mf}. Each object of $\EMHS$ is equipped with a weight filtration in $\EMHS$, defined from that in $\MHM(\Afu_\theta)$ by the formula
\[
W^\sEMHS_\ell\Pi(N)=\Pi(W_\ell N)
\]
for any mixed Hodge module $N^\rH$ on~$\Afu_\theta$. The category $\EMHS$ is equipped with a tensor product, induced by the additive convolution on $\Afu_\theta$. This product is strictly compatible with the weight filtration~$W^\sEMHS_\bbullet$. (See \loccit, or \cite[\S A.3]{F-S-Y18} for more details.)

The unit in $\EMHS$ is $\Pi\circ \Hm i_!(\QQ^\rH)$, which is pure of weight zero as an object of $\EMHS$. Note that, since $\pQQ^\rH_{\Gmtheta}:=\QQ^\rH_{\Gmtheta}[1]$ corresponds to the constant variation of Hodge structure of weight zero on $\Gmtheta$, it has weight one as an object of $\MHM(\Gmtheta)$ and therefore
$\Pi\circ \Hm i_!(\QQ^\rH) = \Hm j_!\pQQ^\rH_{\Gmtheta}$
is mixed of weights $\leq1$ in $\MHM(\Afu_\theta)$.

Let $N^\rH$ be an object of $\MHM(\Afu_\theta)$ such that its underlying $\cD_{\Afu_\theta}$-module $N$ has semi-simple monodromy at the origin. The mixed Hodge structure $\Hm\phi_\theta(N^\rH)$, as defined by Saito \cite{MSaito86,MSaito87}, has a weight filtration~$W_\bbullet$ and a Hodge filtration $F^\cbbullet$ obtained from that of $N^\rH$ as follows (for~the weight filtration, we use that the monodromy is semi-simple). We recall that both $\phi_{\theta,1}N$ and $\psi_{\theta,\neq1}N$ are obtained by grading $N$ with respect to its Kashiwara-Malgrange filtration, so that it is meaningful to speak of a filtration (weight or Hodge) on these vector spaces \emph{induced} by a filtration (weight or Hodge) on $N$.
\begin{itemize}
\item
$F^\cbbullet \psi_{\theta,\neq1}N$ is induced by $F^\cbbullet N$ and $W_\bbullet(\psi_{\theta,\neq1}N)=\psi_{\theta,\neq1}(W_{\bbullet+1}N)=\psi_{\theta,\neq1}(W_\bbullet[-1]N)$,
\item
$F^\cbbullet \phi_{\theta,1}N$ is induced by $F^{\cbbullet-1}N=F^\cbbullet[-1]N$ and $W_\bbullet(\phi_{\theta,1}N)=\phi_{\theta,1}(W_\bbullet N)$.
\end{itemize}

\begin{remark}\label{rem:WWEMHS}
Even if $N^\rH=\Pi(N^\rH)$ belongs to $\EMHS$, the weight filtrations $W_\bbullet N$ and $W_\bbullet^\sEMHS N$ may differ, as already seen for $\Hm j_!\pQQ^\rH_{\Gmtheta}$. However, they induce the same filtration on $\phi_{\theta,1}N$ and $\psi_{\theta,\neq1}N$. Indeed, for any~$\ell$,
the kernel and cokernel of the natural morphism
$W_\ell N\to W_\ell^\sEMHS N$ are constant on $\Afu_\theta$,
hence their vanishing cycle space $\phip_\theta=\phip_{\theta,1}\oplus\psip_{\theta,\neq1}$ is zero. Therefore, in the definition above, we can replace $W_\bbullet$ with $W_\bbullet^\sEMHS$ and we regard $\Hm\phi_\theta$ as a functor $\EMHS\mto\MHS$.
\end{remark}

\subsection{Weight and irregular Hodge filtrations on the de~Rham fiber}\label{subsec:wfdr}
We recall here the general framework, for which we refer \eg to \cite[Chap.\,3]{S-Y18}. We denote by $\wh\theta$ the variable which is Fourier dual to $\theta$. Let $N^\rH$ be a mixed Hodge module on~$\Afu_\theta$, with $(N,F^\cbbullet N,W_\bbullet N)$ as the underlying bifiltered $\Cltheta$-module and $(\ccF,W_\bbullet\ccF)$ as the underlying filtered $\QQ$-perverse sheaf.

\subsubsection*{Weight filtration}
Fourier transformation sends these data to a filtered $\Clwtheta$-module $(\FT_\theta N,W_\bbullet\FT_\theta N)$, with $W_\bbullet\FT_\theta N:=\FT W_\bbullet N$, corresponding, via the de~Rham functor, to the filtered perverse sheaf $(\FT_\theta\ccF,W_\bbullet\FT\ccF)$ on~$\Afuan_{\wh\theta}$ already considered in Section \ref{sec:perv0mf}. Restricting $\FT_\theta N$, \resp $\FT_\theta\ccF$, to $\wh\theta=1$ yields the de~Rham fiber $\coH^1_\dR(\Afu_\theta,N\otimes E^\theta)$, \resp the Betti fiber $(\FT_\theta\ccF)_1$, which is a finite-dimensional vector space over $\CC$ \resp $\QQ$. Moreover, additive convolution $\star$ on $\Cltheta$-modules induces tensor product of de~Rham fibers.
By exactness of the de~Rham fiber functor,
which follows from exactness of $\FT_\theta$ and the $\CC[\wh\theta,\wh\theta^{-1}]$-freeness of $j^+\FT_\theta N$, the natural morphism
\[
\coH^1_\dR(\Afu_\theta,W_\bbullet N\otimes E^\theta)\to \coH^1_\dR(\Afu_\theta,N\otimes E^\theta)
\]
is injective and its image is a filtration $W_\bbullet\coH^1_\dR(\Afu_\theta,N\otimes E^\theta)$. Moreover, $W_\bbullet N^\rH$ and $W^\sEMHS_\bbullet N^\rH$ have the same de~Rham \resp Betti image (same argument as for $\phip_\theta$ in Remark \ref{rem:WWEMHS}).
We~thus have a tensor product behavior of the $W$-filtration analogous to that occurring in Definition~\ref{def:tensorWF}:
\[
W_\bbullet \coH^1_\dR(\Afu_\theta,(N'\star N'')\otimes E^\theta)\simeq W_\bbullet \coH^1_\dR(\Afu_\theta,N'\otimes E^\theta)\otimes W_\bbullet \coH^1_\dR(\Afu_\theta,N''\otimes E^\theta).
\]

\subsubsection*{Irregular Hodge filtration}
On the other hand, the Hodge filtration $F^\cbbullet N$ gives rise to the Deligne filtration $F_\Del^\cbbullet(N\otimes E^\theta)$ (\cf\cite[\S6]{Bibi08}), which is indexed by $\ZZ-A$ for some finite subset $A\subset[0,1)\cap\QQ$. The natural morphism
\[
\coH^1\bigl(\Afu_\theta,F_\Del^\cbbullet\DR(N\otimes E^\theta)\bigr)\to\coH^1_\dR(\Afu_\theta,N\otimes E^\theta)
\]
is injective (\cf\loccit). Its image is denoted by $F_\irr^\cbbullet\coH^1_\dR(\Afu_\theta,N\otimes E^\theta)$. The filtration $F_\irr^\cbbullet\coH^1_\dR(\Afu_\theta,N\otimes E^\theta)$ behaves like in Definition~\ref{def:tensorWF} by additive convolution (\cf\cite[Th.\,3.39]{Bibi15}).

\begin{prop}[{\cite[Prop.\,A.10]{F-S-Y18}}]\label{prop:bifilt}
The functor $N^\rH\mto\bigl(\coH^1_\dR(\Afu_\theta,N\otimes E^\theta),F_\irr^\cbbullet,W_\bbullet)$ from $\MHM(\Afu_\theta)$ to bifiltered vector spaces factors through $\Pi_\theta$ and any morphism in $\MHM(\Afu_\theta)$ (or~in $\EMHS$) yields a strictly bifiltered morphism.\qed
\end{prop}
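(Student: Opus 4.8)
As recalled in the text, $W_\bbullet$ and $F_\irr^\cbbullet$ are genuine (exhaustive) filtrations on the finite‑dimensional space $\coH^1_\dR(\Afu_\theta,N\otimes E^\theta)$, their defining maps being injective, and the plain de~Rham fibre functor $N^\rH\mapsto\coH^1_\dR(\Afu_\theta,N\otimes E^\theta)$ is exact; so what remains is the factorization through $\Pi$ and the bi‑strictness of morphisms. The plan is to reduce both to the two identifications, functorial in $N^\rH$,
\[
\gr^W_\ell\coH^1_\dR(\Afu_\theta,N\otimes E^\theta)\simeq\coH^1_\dR(\Afu_\theta,\gr^W_\ell N\otimes E^\theta),
\]
\[
\gr_{F_\irr}^\sfp\coH^1_\dR(\Afu_\theta,N\otimes E^\theta)\simeq\coH^1\bigl(\Afu_\theta,\gr_{F_\Del}^\sfp\DR(N\otimes E^\theta)\bigr).
\]
The first is formal: $W_\bbullet\coH^1_\dR$ being the image filtration of the injections $\coH^1_\dR(\Afu_\theta,W_\ell N\otimes E^\theta)\hookrightarrow\coH^1_\dR(\Afu_\theta,N\otimes E^\theta)$, one applies exactness of the de~Rham fibre functor to $0\to W_{\ell-1}N\to W_\ell N\to\gr^W_\ell N\to0$. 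The second is the genuinely analytic input: it is the $E_1$‑degeneration of the Hodge‑to‑de~Rham spectral sequence for exponentially twisted mixed Hodge modules (\cf\cite[Chap.\,3]{S-Y18}), which says precisely that the graded of the image filtration $F_\irr^\cbbullet$ is computed by the hypercohomologies of the graded complexes $\gr_{F_\Del}^\sfp\DR(N\otimes E^\theta)$; here $\gr_{F_\Del}^\sfp$ is exact on $\MHM(\Afu_\theta)$ since $F^\cbbullet$ is strict for morphisms of mixed Hodge modules, and it is compatible with the twist by $E^\theta$. Checking that these identifications are also compatible with the remaining filtration, one gets that the bifiltered functor commutes with each $\gr^W_\ell$ and each $\gr_{F_\irr}^\sfp$ — the property I shall use repeatedly.

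With this in hand, the factorization through $\Pi$ reduces to showing that the bifiltered functor vanishes on constant mixed Hodge modules. If $M^\rH$ is constant, its underlying $\cD$‑module is $\cO_{\Afu_\theta}^{\oplus r}$ with its trivial connection, so the twisted de~Rham complex $\DR(M\otimes E^\theta)$ is $\bigl[\Cltheta^{\oplus r}\xrightarrow{\partial_\theta+1}\Cltheta^{\oplus r}\bigr]$, which is acyclic because $\partial_\theta+1$ is invertible on $\Cltheta$; hence $\coH^k_\dR(\Afu_\theta,M\otimes E^\theta)=0$ for all $k$, and the $E_1$‑degeneration then forces $\coH^1(\Afu_\theta,\gr_{F_\Del}^\sfp\DR(M\otimes E^\theta))=0$ for all $\sfp$ as well, so the whole bifiltered de~Rham fibre of $M^\rH$ is zero. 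Since the kernel and cokernel of the canonical morphism $N^\rH\to\Pi(N^\rH)$ are constant (and $\gr^W_\ell$ of a constant module is constant), a functor that commutes with the gradings and kills constant modules sends $N^\rH\to\Pi(N^\rH)$ to a bifiltered isomorphism; thus the bifiltered de~Rham fibre depends only on $\Pi(N^\rH)$, and from now on one may assume $N^\rH=\Pi(N^\rH)$.

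Bi‑strictness of $\coH^1_\dR(\phi)$, for a morphism $\phi\colon N_1^\rH\to N_2^\rH$, is then the usual consequence of applying an exact functor to strict morphisms. A morphism of mixed Hodge modules is strict for $W_\bbullet$, its graded pieces $\gr^W_\ell\phi$ are morphisms of pure Hodge modules strict for $F^\cbbullet$, hence induce $F_\Del^\cbbullet$‑strict morphisms after twisting by $E^\theta$. Transporting $\phi$ through the two identifications turns the formation of kernels and cokernels of $\coH^1_\dR(\phi)$, of $\gr^W_\ell\coH^1_\dR(\phi)$, and of its $F_\irr$‑gradeds into the formation of kernels and cokernels of $\phi$, of $\gr^W_\ell\phi$, and of $\gr_{F_\Del}^\sfp\DR(\gr^W_\ell\phi\otimes E^\theta)$ — on the last of which the $\coH^1$ functor is exact, the flanking cohomologies $\coH^0$ and $\coH^2$ of the twisted de~Rham complex on the affine line vanishing (again combining their vanishing at the un‑graded level with the $E_1$‑degeneration). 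Since kernels and cokernels commute with $\gr^W_\bbullet$ and $\gr_{F_\irr}^\cbbullet$ on the source side, the same holds for $\coH^1_\dR(\phi)$, which is exactly bi‑strictness with respect to $(F_\irr^\cbbullet,W_\bbullet)$.

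I expect the main obstacle to be the second identification — the $E_1$‑degeneration of the irregular Hodge filtration for exponentially twisted mixed Hodge modules. This is the single deep ingredient; granted it (together with the strictness of $F^\cbbullet$ in $\MHM$), everything else — the acyclicity of the twisted de~Rham complex of a constant module, the exactness of $\gr_{F_\Del}^\sfp$, and the bookkeeping of kernels and cokernels — is routine.
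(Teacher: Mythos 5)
Your proposal is correct and takes essentially the same route as the proof this paper relies on (the proposition is quoted from \cite[Prop.\,A.10]{F-S-Y18} with no new argument given here): exactness of the twisted de~Rham fibre functor together with its vanishing, filtered gradeds included, on constant mixed Hodge modules gives the factorization through $\Pi_\theta$, and the $E_1$-degeneration (strictness) of the Deligne filtration for exponentially twisted mixed Hodge modules from \cite[\S6]{Bibi08} and \cite{S-Y14}, combined with $F$- and $W$-strictness of morphisms in $\MHM(\Afu_\theta)$, gives strictness of each of the two filtrations exactly as in the cited proof. Your last paragraph compresses the passage from strictness of each filtration to bi-strictness, but this is routine given what you have established: applying the $F_\irr$-strictness to the inclusions $W_\ell N\hto N$ identifies $F_\irr^\sfp\cap W_\ell$ on $\coH^1_\dR(\Afu_\theta,N\otimes E^\theta)$ with the image of $F_\irr^\sfp\coH^1_\dR(\Afu_\theta,W_\ell N\otimes E^\theta)$, after which $W$-strictness in $\MHM(\Afu_\theta)$ yields the bifiltered statement.
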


\subsection{Object of \texorpdfstring{$\EMHS$}{EMHS} associated with a regular function}\label{subsec:objregfun}

We start by considering a~geometric situation, that we will reduce to a question on mixed Hodge modules on $\Afu$ by~taking a suitable pushforward. We recall the notation of \cite[\S A.5]{F-S-Y18}.

Let $f: U\to\Afu_\theta$ be a regular function on a smooth complex quasi-projective variety $U$ of dimension $d+1$ and let $\pQQ^\rH_U$ denote the constant pure Hodge module of rank one and weight~$d+1$ on~$U$, with associated $\cD_U$-module $(\cO_U,\rd)$ and associated perverse sheaf $\pQQ_U=\QQ_{U^\an}[d+1]$. For each $r\in\ZZ$, we consider the mixed Hodge modules in $\MHM(\Afu_\theta)$
\[
(\pQQ^\rH_U)^r_*:=\cH^{r-d-1}\Hm f_*\pQQ^\rH_U,\quad
(\pQQ^\rH_U)^r_!:=\cH^{r-d-1}\Hm f_!\pQQ^\rH_U.
\]
They define objects
\begin{equation}\label{eq:HUf}
\coH^r(U,f)=\Pi_\theta((\pQQ^\rH_U)^r_*)\qand\coH^r_\rc(U,f)=\Pi_\theta((\pQQ^\rH_U)^r_!)
\end{equation}
of $\EMHS$, and we set
\[
\coH^r_\rmid(U,f)=\image[\coH^r_\rc(U,f)\ra\coH^r(U,f)]=\image\bigl[\Pi_\theta((\pQQ^\rH_U)^r_!)\ra\Pi_\theta((\pQQ^\rH_U)^r_*)\bigr].
\]

More generally, let $N_U^\rH$ be a mixed Hodge module on $U$. We~define similarly
\begin{equation}\label{eq:NHUj}
(N^\rH_U)^r_*:=\cH^{r-d-1}\Hm f_*N^\rH_U,\quad
(N^\rH_U)^r_!:=\cH^{r-d-1}\Hm f_!N^\rH_U,
\end{equation}
and
\begin{equation}\label{eq:HUNf}
\coH^r(U,N_U^\rH,f)=\Pi_\theta((N^\rH_U)^r_*)\qand\coH^r_\rc(U,N_U^\rH,f)_\rc=\Pi_\theta((N^\rH_U)^r_!).
\end{equation}
Then we set
\[
\coH^r_\rmid(U,N_U^\rH,f)=\image[\coH^r_\rc(U,f)\ra\coH^r(U,N_U^\rH,f)]=\image\bigl[\Pi_\theta((N^\rH_U)^r_!)\ra\Pi_\theta((N^\rH_U)^r_*)\bigr].
\]

\begin{remark}\label{rem:EMHSMHStg}
By definition, the de~Rham fiber $\coH^r_{\dR,\rc}(U,N_U^\rH,f)$, \resp $\coH^r_{\dR}(U,N_U^\rH,f)$, is equal to $\coH^1_{\dR,\rc}(\Afu_\theta,(N_U)_!^r\otimes E^\theta)$, \resp $\coH^1_\dR(\Afu_\theta,(N_U)_*^r\otimes E^\theta)$, and $\coH^r_{\dR,\rmid}(U,N_U^\rH,f)$ is the image of the former to the latter. Under the isomorphism
\[
\coH^r_{\dR,?}(U,N_U^\rH,f)\simeq\coH^r_{\dR,?}(U,N_U\otimes E^{f})\quad(?=\emptyset,\rc,\rmid),
\]
the irregular Hodge filtration on the de~Rham fiber of $\coH^r_?(U,N_U^\rH,f)\in\EMHS$ is identified with the irregular Hodge filtration $F_\irr^\bbullet\coH^r_{\dR,?}(U,N_U\otimes E^{f})$ of the right-hand side, computed by means of a compactification of $f$ as a map $\ov U\to\PP^1$, and the structure morphism of $\ov U$ (\cf\cite[Th.\,1.3(4)]{S-Y14} and \cite[(A.16) \& (A.17)]{F-S-Y18}).
\end{remark}

The following weight properties are obtained in a way similar to that of \cite[Prop.\,A.19]{F-S-Y18}.

\begin{prop}\label{prop:weightsHdUf}
Assume that $N_U^\rH$ is pure of weight $w+1$. Then the exponential mixed Hodge structure $\coH_\rc^{d+1}(U,N_U^\rH,f)$ is mixed of weights $\leq w+1$, $\coH^{d+1}(U,N_U^\rH,f)$ is mixed of weights $\geq w+1$, and $\coH_\rmid^{d+1}(U,N_U^\rH,f)$ is pure of weight $w+1$. Moreover, if $N_U^\rH$ is self-dual, \ie $\bD N_U^\rH(-w-1)\simeq N_U^\rH$, the following properties are equivalent:
\begin{enumerate}
\item\label{prop:weightsHdUf1}
the natural morphism $\gr_{w+1}^W\coH_\rc^{d+1}(U,N_U^\rH,f)\to\gr_{w+1}^W\coH^{d+1}(U,N_U^\rH,f)$ is an isomorphism,
\item\label{prop:weightsHdUf2} the equality
$\coH_\rmid^{d+1}(U,N_U^\rH,f)=W_{w+1}\coH^{d+1}(U,N_U^\rH,f)$ holds.\qed
\end{enumerate}
\end{prop}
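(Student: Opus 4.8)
The plan is to follow the template of \cite[Prop.\,A.19]{F-S-Y18}, proving first the weight estimates for the open and compactly supported cohomologies, then deducing the purity of the middle part, and finally establishing the equivalence \eqref{prop:weightsHdUf1}$\Leftrightarrow$\eqref{prop:weightsHdUf2} by a duality argument. For the weight bounds, I would argue as follows. The mixed Hodge module $\Hm f_!N_U^\rH$ is mixed of weights $\leq w+1$ because $f_!$ (as a composition $\bR$-pushforward of an open immersion, which decreases weights, followed by a proper pushforward, which preserves them) does not raise weights on a pure object of weight $w+1$; taking $\cH^{r-d-1}$ and then applying the exact functor $\Pi_\theta$, which is compatible with $W^\sEMHS_\bbullet$ by the formula $W^\sEMHS_\ell\Pi(N)=\Pi(W_\ell N)$ recalled in Section \ref{subsec:reviewEMHS}, gives that $\coH^{d+1}_\rc(U,N_U^\rH,f)$ is mixed of weights $\leq w+1$. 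Dually, $\Hm f_*N_U^\rH$ is mixed of weights $\geq w+1$, and the same reasoning gives the claim for $\coH^{d+1}(U,N_U^\rH,f)$. (Here it is important that $\Pi_\theta$ is exact on $\MHM(\Afu_\theta)$ and strictly compatible with the weight filtration, so that no spurious weights are introduced; this is exactly what allows the passage from $\MHM$-weights to $\EMHS$-weights.)

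Next, the middle cohomology $\coH^{d+1}_\rmid(U,N_U^\rH,f)$ is the image of a morphism from an object of weights $\leq w+1$ to an object of weights $\geq w+1$. Since morphisms in $\EMHS$ are strict with respect to $W^\sEMHS_\bbullet$ (the weight filtration being strictly compatible with all the structure, as recalled in Section \ref{subsec:reviewEMHS}), the image has weights simultaneously $\leq w+1$ and $\geq w+1$, hence is pure of weight $w+1$. This gives the first sentence of the proposition.

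For the equivalence of \eqref{prop:weightsHdUf1} and \eqref{prop:weightsHdUf2}, assume $N_U^\rH$ is self-dual, $\bD N_U^\rH(-w-1)\simeq N_U^\rH$. Verdier duality interchanges $\Hm f_!$ and $\Hm f_*$ and, combined with the self-duality and the compatibility of $\Pi_\theta$ with duality in $\EMHS$, yields a perfect pairing between $\coH^{d+1}_\rc(U,N_U^\rH,f)$ and $\coH^{d+1}(U,N_U^\rH,f)$ under which the middle cohomology is identified with its own orthogonal-style image; more precisely, $\coH^{d+1}_\rmid$ is self-dual of weight $w+1$ and the canonical morphism $c\colon\coH^{d+1}_\rc\to\coH^{d+1}$ is compatible with the pairing. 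Taking $\gr^W_{w+1}$, the map $\gr^W_{w+1}c$ becomes a morphism between pure objects of weight $w+1$ that factors through $\coH^{d+1}_\rmid=\image c$. Now, $\coH^{d+1}_\rmid$ equals the coimage of $c$ and also the image of $c$; writing $K=\ker c$ and $Q=\coker c$, one has $\gr^W_{w+1}K\hookrightarrow\gr^W_{w+1}\coH^{d+1}_\rc$ and $\gr^W_{w+1}Q$ a quotient of $\gr^W_{w+1}\coH^{d+1}$, and duality exchanges $K$ and $Q$. Thus $\gr^W_{w+1}c$ is an isomorphism if and only if $\gr^W_{w+1}K=0$ if and only if $\gr^W_{w+1}Q=0$. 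On the other hand, since $\coH^{d+1}_\rc$ has weights $\leq w+1$ and $\coH^{d+1}_\rmid$ is pure of weight $w+1$, the surjection $\coH^{d+1}_\rc\twoheadrightarrow\coH^{d+1}_\rmid$ induces $\gr^W_{w+1}\coH^{d+1}_\rc\twoheadrightarrow\coH^{d+1}_\rmid$, whose injectivity is equivalent to $\gr^W_{w+1}K=0$; and dually, $\coH^{d+1}_\rmid\hookrightarrow\coH^{d+1}$ with $\coH^{d+1}$ of weights $\geq w+1$ gives $\coH^{d+1}_\rmid=W_{w+1}\coH^{d+1}$ if and only if $\gr^W_{w+1}Q=0$, i.e.\ if and only if $\gr^W_{w+1}(\coH^{d+1}/\coH^{d+1}_\rmid)=0$. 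Chaining these equivalences yields \eqref{prop:weightsHdUf1}$\Leftrightarrow$\eqref{prop:weightsHdUf2}.

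The main obstacle I anticipate is bookkeeping the compatibility of all the functors $\Hm f_!,\Hm f_*,\cH^{r-d-1},\Pi_\theta$ with Verdier duality and with the two a priori different weight filtrations $W_\bbullet$ and $W^\sEMHS_\bbullet$ — one must check that the duality pairing used at the level of $\EMHS$ is precisely the one induced from $\MHM(\Afu_\theta)$ via $\Pi_\theta$, and that the Tate twist $(-w-1)$ is correctly tracked through $\cH^{r-d-1}$ and the convolution-tensor structure. Once the self-duality pairing on $\coH^{d+1}_\rmid$ and the compatibility of the canonical morphism $c$ with it are in place, the weight-graded argument above is formal; so the real content is this compatibility check, which is entirely parallel to \cite[Prop.\,A.19]{F-S-Y18} and can be imported with only notational changes.
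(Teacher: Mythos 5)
Your proposal is correct and is essentially the proof the paper intends: the paper simply defers to \cite[Prop.\,A.19]{F-S-Y18}, whose argument is exactly your combination of the weight bounds for $\Hm f_!$ and $\Hm f_*$ applied to the pure module $N_U^\rH$, exactness of $\Pi_\theta$ and strictness of $W^\sEMHS_\bbullet$ to get purity of the image, and the duality exchange of $\ker c$ and $\coker c$ (with $c\colon\coH^{d+1}_\rc\to\coH^{d+1}$) to chain \eqref{prop:weightsHdUf1}$\Leftrightarrow$\eqref{prop:weightsHdUf2}. One small wording fix: in the factorization $f=\bar f\circ j$ through a compactification, the functor relevant to $\Hm f_!$ is the extension by zero $\Hm j_!$ (which does not increase weights), not $\bR j_*$, but the weight estimate you actually use is the correct standard one.
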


\section{Finite monodromic exponential mixed Hodge structures}\label{sec:fmEMHS}

This section continues Section \ref{sec:EMHS} in the case where $U$ takes the form $\Afu_s\times V$ and \hbox{$f=s^mg$} for some $m\geq1$ and some regular function $g:V\to\Afu$. We first define the category $\EMHS^\mf$ of finite monodromic exponential mixed Hodge structure. Then, in this particular case, $\coH^\cbbullet_?(U,f)$ \hbox{($?=\rc,\emptyset,\rmid$)} is an object of $\EMHS^\mf$. We make explicit in geometric terms its classical and non-classical components (Proposition \ref{prop:HrUMf}). The case where $g$ is moreover assumed to be tame is considered in Section \ref{subsec:tame}, where the main objective is to provide tools for computing the irregular Hodge filtration on the corresponding de~Rham cohomologies: for the classical component, Corollary \ref{cor:midWcl} enables us to directly use results of \cite[App.]{F-S-Y18}, while for the non-classical component, the main tool is provided by Corollary \ref{cor:grFconvolution}.

\subsection{Constant exponential mixed Hodge structures}
Let $\EMHS^{\cst}$ be the full subcategory of $\EMHS$ consisting of objects whose associated $\QQ$\nobreakdash-perverse sheaf belongs to $\Perv_0^\cst$. Since constant mixed Hodge modules on $\Gm$ are exactly those mixed Hodge modules whose associated perverse sheaf is constant (\cf \cite[Th.\,4.20]{S-Z85}), we can as well define $\EMHS^{\cst}$ as the full subcategory of $\EMHS$ consisting of objects whose restriction to $\Gm$ is a constant mixed Hodge module.

\begin{lemma}
The functors
\[
\Hm\phi_{\theta,1}:\EMHS^{\cst}\mto\MHS\qand\Pi\circ \Hm i_!:\MHS\mto\EMHS^{\cst}
\]
are mutually quasi-inverse equivalences of tensor categories strictly compatible with weight filtrations.
\end{lemma}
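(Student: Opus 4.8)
The plan is to mirror, in the exponential setting, the chain of equivalences established in the purely topological setting by Lemmas~\ref{lem:constperv} and~\ref{lem:phitheta}, adding the Hodge-theoretic bookkeeping. First I would check that the two functors are well-defined. The functor $\Pi\circ\Hm i_!$ sends an object of $\MHS$ to an object of $\EMHS$ whose underlying perverse sheaf is $\Pi\circ i_!$ of the underlying vector space, hence lies in $\Perv_0^\cst$ by the perverse realization of $\Pi$; so it lands in $\EMHS^\cst$. In the other direction, if $N^\rH$ belongs to $\EMHS^\cst$ then its underlying $\cD$-module restricted to $\Gm$ is a regular holonomic module with trivial (constant) monodromy, so its monodromy at $\theta=0$ is trivial; in particular it is semi-simple, hence $\Hm\phi_\theta(N^\rH)=\Hm\phi_{\theta,1}(N^\rH)$ is defined as a mixed Hodge structure by the recollections of Section~\ref{subsec:reviewEMHS}, and Remark~\ref{rem:WWEMHS} lets us regard it as a functor $\EMHS^\cst\to\MHS$.

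Next I would prove that the two functors are mutually quasi-inverse. On the level of underlying vector spaces and perverse sheaves this is exactly Lemma~\ref{lem:constperv}: $\phip_\theta$ and $\Pi\circ i_!$ are inverse equivalences of tensor categories between $\Perv_0^\cst$ and $\Vect$. What remains is to upgrade this to the Hodge filtration and the weight filtration, and to see compatibility with tensor structures. For the weight filtration the point is that $\Pi$ is exact and strictly compatible with $W^\sEMHS_\bbullet$ (Section~\ref{subsec:reviewEMHS}), while $\Hm i_!$ is exact and, being a closed embedding, shifts weights in a controlled way; combined with $W_\bbullet\phi_{\theta,1}N=\phi_{\theta,1}W_\bbullet N$ and the fact that $W_\bbullet N$ and $W^\sEMHS_\bbullet N$ induce the same filtration on $\phi_{\theta,1}$ (Remark~\ref{rem:WWEMHS}), one checks that the round-trip recovers the original weight filtration on the nose, i.e.\ the equivalence is strictly compatible with weights. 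For the Hodge filtration one uses that $F^\cbbullet\phi_{\theta,1}N$ is induced by $F^{\cbbullet-1}N$; this shift is precisely inverted when forming $\Hm i_!$ of a mixed Hodge structure (whose Hodge filtration is placed so that the gysin shift by one brings it back), so the natural transformations $\id\Rightarrow\Pi\Hm i_!\,\Hm\phi_{\theta,1}$ and $\Hm\phi_{\theta,1}\,\Pi\Hm i_!\Rightarrow\id$ coming from the perverse-sheaf level are in fact isomorphisms of mixed Hodge structures.

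Finally, compatibility with tensor structures: the tensor product on $\EMHS$ is additive convolution, the tensor product on $\EMHS^\cst$ is its restriction, and $\Hm\phi_{\theta,1}$ is a tensor functor on $\Perv_0^\cst$ by Lemma~\ref{lem:constperv}; so it suffices to observe that the canonical isomorphism $\Hm\phi_{\theta,1}(N'\star N'')\simeq\Hm\phi_{\theta,1}(N')\otimes\Hm\phi_{\theta,1}(N'')$ underlying the one on perverse sheaves is compatible with the Hodge and weight filtrations. Since on $\EMHS^\cst$ the perverse sheaves are constant on $\Gm$, the convolution is ``geometrically trivial'' there and this reduces to the elementary fact that $\Hm i_!(\QQ^\rH)$ is a $\otimes$-unit after applying $\Pi$, which is recorded in Section~\ref{subsec:reviewEMHS}; the strict compatibility with $W_\bbullet$ of $\star$ on $\EMHS$ does the rest. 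The main obstacle I anticipate is purely bookkeeping: keeping track of the Tate twists and cohomological/Hodge shifts hidden in $\Hm i_!$ and in Saito's definition of $\Hm\phi_{\theta,1}$ (the $F^{\cbbullet-1}$ and $W_{\bbullet}$ versus $W_{\bbullet+1}$ conventions) so that ``mutually quasi-inverse'' holds with no residual twist, and verifying ``strictly compatible with weight filtrations'' rather than merely ``compatible''. None of this is deep, but it is where an error would most plausibly hide.
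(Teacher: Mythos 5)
Your proposal is correct and follows the same route as the paper: the paper's proof consists of exactly your reduction — Lemma \ref{lem:constperv} identifies $\EMHS^{\cst}$ as the essential image of $\Pi\circ\Hm i_!$ — and then simply cites \cite[Lem.\,A.12]{F-S-Y18} for the Hodge-theoretic verifications (quasi-inverse, weight strictness, tensor compatibility) that you spell out by hand. So you are reconstructing the content of the cited lemma rather than doing anything genuinely different, and your bookkeeping of the $F^{\cbbullet-1}$ shift and of $W_\bbullet$ versus $W_\bbullet^\sEMHS$ via Remark \ref{rem:WWEMHS} matches the intended argument.
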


\begin{proof}
It follows from Lemma \ref{lem:constperv} that $\EMHS^{\cst}$ is the essential image of $\Pi\circ\, \Hm i_!$. The remaining statements were already observed in \cite[Lem.\,A.12]{F-S-Y18}.
\end{proof}

\subsection{The pure Hodge module \texorpdfstring{$\go{m}^\rH$}{LqtauH}}\label{subsec:KrH}
Before introducing the category $\EMHS^\mf$, let us consider an example. We keep the notation of Section \ref{subsec:pervmf}. Let $m$ be an integer $\geq2$ and let \hbox{$[m]:\Afu_{\theta_m}\to\Afu_\theta$} denote the cyclic ramification of order $m$ defined by $\theta_m\mto\theta=\theta_m^m$. Let $\pQQ_{\Afu_{\theta_m}}^\rH$ be the pure Hodge module of weight $1$ on $\Afu_{\theta_m}$ (equivalently, the constant polarized variation of Hodge structure of weight $0$ on $\Afu_{\theta_m}$). The pushforward $\Hm[m]_*\pQQ_{\Afu_{\theta_m}}^\rH$ decomposes as the direct sum
\[
\Hm[m]_*\pQQ_{\Afu_{\theta_m}}^\rH=\pQQ_{\Afu_\theta}^\rH\oplus \go{m}^\rH
\]
of two pure Hodge modules of weight $1$ on $\Afu$. We have $\go{m}^\rH=\Hm j_*\,\Hm j^*\go{m}^\rH$.
Moreover, the underlying $\cD$-module $j^*\go{m}$ is $\cO_{\Gm}$-free of rank $m-1$. The Hodge filtration satisfies $\gr^p_F\go{m}=0$ for $p\neq0$. The $\QQ$-local system $[m]_*\QQ_{\Afu_{\theta_m}}$ decomposes as $\QQ_{\Afu_\theta}\oplus\ccE_m$. We identify the space of multi-valued global sections of $\ccE_m$ with the hyperplane of
$\QQ^m = [m]_*\QQ_{\Afu_{\theta_m}}|_{\{1\}}$
defined by ``sum of entries equal to zero'', on which the monodromy $\theta\mto e^{2\pii\theta}$ acts as the automorphism induced by the cyclic permutation of the basis vectors of $\QQ^m$.

Since no eigenvalue of the monodromy on $\ccE_m$ is equal to $1$, the natural morphisms
\[
\Hm j_!\,\Hm j^*\go{m}^\rH\to\Hm j_{!*}\,\Hm j^*\go{m}^\rH\to\Hm j_*\,\Hm j^*\go{m}^\rH
\]
are isomorphisms. Lastly, the space of nearby cycles $\Hm\psi_{\theta,\neq1}\go{m}^\rH$ is a pure Hodge structure of rank $m-1$ and of weight $0$, with $\gr^p_F\Hm\psi_{\theta,\neq1} \go{m}^\rH=0$ if $p\neq0$.

The pure Hodge module $\go{m}^\rH$ decomposes as a direct sum of pure complex Hodge modules~$K_\zeta^\rH$ of rank one, indexed by the $m$-th roots of unity $\zeta$ distinct from $1$,
where the monodromy acts by multiplication by $\zeta$.

On the other hand, we regard $\go{m}^\rH$ as $\Pi(\Hm[m]_*\,\pQQ_{\Afu_{\theta_m}}^\rH)$ since $\Pi(\pQQ_{\Afu_\theta}^\rH)=0$, and this object is exponentially pure of weight one.

\subsection{Finite monodromic exponential mixed Hodge structures}\label{subsec:finitemonoexp}
We denote by $\EMHS^\mf$ the full subcategory of $\EMHS$ consisting of objects $N^\rH$ such that $\Pi\bigl(\Hm[m]^*N^\rH\bigr)$ belongs to $\EMHS^{\cst}$ for some finite ramified cyclic covering
$[m]:\Afu_{\theta_m}\to\Afu_\theta$. Equivalently, the pullback by $[m]:\Gmm\to\Gmtheta$ of the flat bundle $j^*N$ is constant on $\Gmm$.

On $\EMHS^\mf$, the functor $\Hm\psi_{\theta,\neq1}$ takes values in $\MHS(\mf)$ by its very definition (\cf \cite{MSaito87}), where the notation $\MHS(\mf)$ is explained in Section \ref{sec:mhsmu}. Composing $\Hm\psi_{\theta,\neq1}$ with the equivalence defined by \eqref{eq:HmfH} gives rise to a functor
\[
\Hm\psi_{\theta,\neq1}^\mf:\EMHS^\mf\to\MHS^\mf_{\neq1}.
\]
For example, for $m\geq2$, $\go{m}^\rH$ has pure (exponential) weight one and $\Hm\psi_{\theta,\neq1}^\mf \go{m}^\rH$ has also pure $\mf$-weight equal to one.

The vanishing cycle functor $\Hm\phi_{\theta,1}$ is not affected by this modification in the presence of the $\mf$-action, and the sum of both is denoted by $\Hm\phi_\theta^\mf$.

\begin{prop}\label{prop:EMHSmfMHSmf}
The functor
\[
\Hm\phi_\theta^\mf=(\Hm\phi_{\theta,1}\oplus\Hm\psi_{\theta,\neq1}^\mf):\EMHS^\mf\mto\MHS^\mf
\]
is an equivalence of tensor categories which is strictly compatible with weights.
\end{prop}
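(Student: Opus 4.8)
The plan is to establish the equivalence by reducing to the already-proven statements for the constant, monodromic, and finite-monodromic cases, mirroring the proof of Lemma~\ref{lem:phitheta} but now carrying the extra Hodge and weight data. First I would exhibit a candidate quasi-inverse functor. Given $((H_\QQ,T),F^\cbbullet_\mf H_\CC,W^\mf_\bbullet H_\QQ)$ in $\MHS^\mf$, decompose it via Lemma~\ref{rem:Hodgesymmf} as the sum of its classical component in $\MHS$ and its non-classical component in $\MHS^\mf_{\neq1}$. To the classical component apply $\Pi\circ\Hm i_!$ (the quasi-inverse of $\Hm\phi_{\theta,1}$ on $\EMHS^\cst$). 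For the non-classical component $((H_{\QQ,\neq1},T),\dots)$, which by definition is annihilated by $e_1$, choose $m$ with $T^m=\id$ and build, exactly as in the proof of Lemma~\ref{lem:phitheta}, the object $\bigl([m]_*\,\Pi(\Hm i_!(-))\bigr)^\inv$, but now in $\MHM(\Afu_\theta)$ rather than in $\Perv_0$: the constant Hodge module $\Hm i_!$ of the underlying MHS on $\Afu_{\theta_m}$ (twisted appropriately so that the $\mf$-weight/$\mf$-Hodge conventions of \eqref{eq:mfH} match Saito's conventions for $\Hm\psi_{\theta_m,1}$), pushed forward by $[m]$ and cut down to the $\{(g^{-1},g)\}$-invariants. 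Applying $\Pi$ lands this in $\EMHS^\mf$. Then set the quasi-inverse on the whole of $\MHS^\mf$ to be the direct sum of these two constructions.

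Next I would check that the two composites are naturally isomorphic to the identity. In one direction, starting from $N^\rH\in\EMHS^\mf$: its perverse realization lies in $\Perv_0^\mf$, so Lemma~\ref{lem:phitheta} already gives the isomorphism $N^\rH\simeq\bigl([m]_*\,\Pi(\Hm i_!\,\Hm\phi_{\theta_m,1}[m]^*N^\rH)\bigr)^\inv$ at the level of perverse sheaves (combining $\phip_\theta$ with the covering-group trick); what remains is to verify that this isomorphism is compatible with the Hodge and weight filtrations. For the weights this follows from strict compatibility of $W^\sEMHS_\bbullet$ with $\Pi$, with $\Hm[m]_*$, and with the invariants functor (a direct summand), together with the shift relations recalled in Section~\ref{subsec:reviewEMHS} between $W_\bbullet$ on $N$ and $W_\bbullet$ on $\Hm\psi_{\theta,\neq1}N$; for the Hodge filtration one uses that $[m]^*$ and $\Hm[m]_*$ are filtered exact (the covering being finite étale over $\Gm$ and $[m]_*$ being exact for the de~Rham/Hodge filtration on a cyclic cover), and that the Kashiwara--Malgrange grading computing $\Hm\psi$ is compatible with all these operations. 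In the other direction, starting from an object of $\MHS^\mf$, the classical piece is handled by the constant case, and for the non-classical piece one invokes the algebraic identity $(V\otimes\kk^m)^\inv\simeq V$ used in Lemma~\ref{lem:phitheta}, now upgraded to an isomorphism of mixed Hodge structures because all the constructions involved ($\Hm i_!$, $\Hm[m]_*$, $\Pi$, taking a direct summand) are morphisms of mixed Hodge modules.

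Finally I would verify tensor-compatibility and strict compatibility with weights. Strict compatibility with weights is essentially already in hand: $\Hm\phi_{\theta,1}$ is strictly compatible with weights on $\EMHS^\cst$ by the preceding lemma, $\Hm\psi_{\theta,\neq1}$ is strict by Saito's theory (and Remark~\ref{rem:WWEMHS} lets us use $W^\sEMHS_\bbullet$), and the conventions of \eqref{eq:mfH} are arranged precisely so that the weight shift built into $\Hm\psi_{\theta,\neq1}$ (the $[-1]$ appearing in $W_\bbullet\psi_{\theta,\neq1}N$) matches the $\ell\mapsto\ell+1$ shift on $H_{\neq1}$ in \eqref{eq:mfH}; hence $\Hm\phi_\theta^\mf$ sends $W^\sEMHS_\bbullet$ to $W^{\mf}_\bbullet$ strictly. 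For the tensor structure, the additive convolution $\star$ on $\EMHS$ corresponds under the perverse realization to $\star$ on $\Perv_0$, which by Lemma~\ref{lem:phitheta} (and Lemma~\ref{lem:monodromicperverse}\eqref{lem:monodromicperverse2}) goes to the tensor product in $\Vect^\mf$; the content here is that the induced filtrations match Scherk--Steenbrink's join, which is exactly what Proposition~\ref{prop:tensMHSmf} established as the tensor product in $\MHS^\mf$. Concretely, one checks that $\Hm\phi_\theta^\mf(N'^\rH\star N''^\rH)\simeq\Hm\phi_\theta^\mf(N'^\rH)\otimes\Hm\phi_\theta^\mf(N''^\rH)$ by noting that convolution on $\Afu_\theta$ induces the external-then-restrict operation on nearby cycles whose effect on Hodge and weight filtrations is precisely the $\star$ of filtrations appearing in the definition of the tensor product in $\MHS^\mf$. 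I expect the main obstacle to be this last bookkeeping point: checking that the weight and Hodge filtrations produced by $\Hm\psi_{\theta,\neq1}$ on a convolution agree, after the eigenvalue decomposition, with the join formulas of \cite{S-S85}/Proposition~\ref{prop:tensMHSmf} --- in particular getting the various integer shifts ($-1$ on nearby cycles, $+1$ between $\MHS^\mf$ and $\MHS(\mf)$ on the $\neq1$ part, and the extra $-1$ when $\zeta'\zeta''=1$ but $\zeta'\neq1$) to cancel correctly. Everything else reduces cleanly to results already recalled in the excerpt.
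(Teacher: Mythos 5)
Your construction of the quasi-inverse and the verification that the two composites are the identity follow the same route as the paper: the paper likewise just transplants the $\bigl(\Hm[m]_*(\Pi(\Hm i_!\,\cbbullet))\bigr)^\inv$ device of Lemma \ref{lem:phitheta} to $\MHM(\Afu_\theta)$ (without even splitting off the classical part) and declares the equivalence check to be the same as in \emph{loc.\ cit.} So that half of your plan is fine. The genuine gap is in the tensor compatibility. The assertion you propose to verify ``concretely'' --- that additive convolution on $\Afu_\theta$ induces on $\Hm\phi_{\theta,1}\oplus\Hm\psi^\mf_{\theta,\neq1}$ exactly the star (join) of the Hodge and weight filtrations --- is not a bookkeeping matter of integer shifts. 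It is the Hodge-theoretic Thom-Sebastiani theorem for vanishing cycles in this setting. Proposition \ref{prop:tensMHSmf} does not supply it: that proposition concerns only the \emph{target} category, showing that the join of two objects of $\MHS^\mf$ is again an object of $\MHS^\mf$ by matching the formulas of \cite{S-S85}; it says nothing about what the functor $\Hm\phi_\theta^\mf$ does to a convolution $N^{\prime\rH}\star N^{\prime\prime\rH}$. Likewise, your perverse-realization argument via Lemma \ref{lem:phitheta} only identifies the underlying objects of $\Vect^\mf$; it carries no filtered information.

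The reason this cannot be settled by direct verification is that the Hodge filtration on $\phi_{\theta,1}$ and $\psi_{\theta,\neq1}$ is defined through the Kashiwara--Malgrange filtration, and the behaviour of that filtration under the pushforward along the sum map defining $\star$ (equivalently, for an external sum $f\boxplus g$) in terms of the factors is precisely the content of Steenbrink's conjecture, proved by M.\,Saito and, in the filtered $\cD$-module generality needed here, by Maxim--Saito--Sch\"urmann. The paper does not check this by hand: it imports it, pointing to \cite[Th.\,(8.11)]{S-S85} and deducing the statement from \cite[Th.\,1.2]{M-S-S16}. Once that theorem is invoked, the shift bookkeeping you describe (the $[-1]$ on $\psi_{\theta,\neq1}$, the $\ell\mapsto\ell+1$ of \eqref{eq:mfH}, the case $\zeta'\zeta''=1$ with $\zeta'\neq1$) is indeed all that remains, and it is exactly what the proof of Proposition \ref{prop:tensMHSmf} already organizes; but without citing (or reproving) the Thom-Sebastiani theorem your ``one checks'' step has no support, and this is the one essential input your proposal is missing.
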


As a consequence, there is a decomposition $\EMHS^\mf=\EMHS^\cst\oplus\EMHS^\mf_{\neq1}$ corresponding to the decomposition of Notation \ref{nota:mhsmf}.

\begin{defi}[Classical component]
For an object $N^\rH$ of $\EMHS^\mf$, its \emph{classical component} $N^\rH_\cl\in\EMHS^\cst$ corresponds to the mixed Hodge structure $\Hm\phi_{\theta,1}N^\rH$. It is a direct summand of~$N^\rH$ in $\EMHS^\mf$.
\end{defi}

\begin{proof}[Proof of Proposition \ref{prop:EMHSmfMHSmf}]
We mimic the definition in the proof of Lemma \ref{lem:phitheta} to construct a quasi-inverse functor.
Let $(V^\rH,G)$ be an object of $\MHS(\mf)$, with $G=\ZZ/m\ZZ$. We associate with it the object
\[
\bigl(\Hm[m]_*(\Pi(\Hm i_!V^\rH))\bigr)^\inv,
\]
where $\inv$ means the invariant submodule with respect to the action of $G^{-1}\times G'$. The proof of the equivalence property is then similar to that of \loccit\ That $\Hm\phi_\theta^\mf$ is compatible with the tensor product of each category is similar to \cite[Th.\,(8.11)]{S-S85},\footnote{We can apply \loccit\ since the monodromy is finite, hence semi-simple, \cf\cite[Rem.\,6.6(ii)]{B-H18}.} and can also be deduced from the more general result \hbox{\cite[Th.\,1.2]{M-S-S16}}.
\end{proof}

\begin{remark}\label{rem:weightsEMHSmf}
For an object $N^\rH$ of $\EMHS^\mf_{\neq1}$ (\ie having a classical component equal to zero), we have $W_\bbullet^\sEMHS N^\rH=W_\bbullet N^\rH$ since the underlying $\Cltheta$-module $N$ does not have any constant sub-quotient. Together with Remark \ref{rem:WWEMHS}, we conclude that the same property holds for objects of $\EMHS^\mf$. From Propositions \ref{prop:tensMHSmf} and \ref{prop:EMHSmfMHSmf} we conclude that the weight filtration satisfies
\[
W_\ell(N^{\prime\rH}\star N^{\prime\prime\rH})=\sum_i W_iN^{\prime\rH}\star W_{\ell-i}N^{\prime\prime\rH},\quad N^{\prime\rH}, N^{\prime\prime\rH}\in\EMHS^\mf.
\]
In particular, if $N^{\prime\rH},N^{\prime\prime\rH}$ are pure of respective weights $w',w''$, then their convolution product is pure of weight $w'+w''$.
\end{remark}

\begin{prop}\label{prop:MHSmfirr}
Let $N^\rH$ be an object of $\EMHS^\mf$ and let $\bigl(\phi_{\theta}^\mf(N),F^\cbbullet_\mf,W^\mf_\bbullet\bigr)$ denote the bifiltered vector space underlying the associated $\mf$-mixed Hodge structure $\Hm\phi_{\theta}^\mf(N^\rH)$. Then there exists a functorial bifiltered isomorphism
\[
\bigl(\phi_{\theta}^\mf(N),F^\cbbullet_\mf,W^\mf_\bbullet\bigr)\simeq \bigl(\coH^1_\dR(\Afu_\theta,N\otimes E^\theta),F^\cbbullet_\irr,W_\bbullet\bigr)
\]
compatible with tensor products.
\end{prop}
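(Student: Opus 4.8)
The plan is to combine the two halves of the decomposition $\EMHS^\mf=\EMHS^\cst\oplus\EMHS^\mf_{\neq1}$ (Proposition \ref{prop:EMHSmfMHSmf}) with the already-established compatibilities of the de~Rham fiber functor, reducing the statement to checking each of the two filtrations $F$ and $W$ separately and on each component. Since $\Hm\phi_\theta^\mf$ and $N^\rH\mapsto\coH^1_\dR(\Afu_\theta,N\otimes E^\theta)$ are both exact tensor functors factoring through $\Pi_\theta$ (for the latter, Proposition \ref{prop:bifilt}), it suffices to produce a functorial isomorphism of the underlying vector spaces and then show it is bistrict.

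First I would construct the underlying isomorphism $\phi_\theta^\mf(N)\simeq\coH^1_\dR(\Afu_\theta,N\otimes E^\theta)$. On the classical component $N^\rH_\cl\in\EMHS^\cst$ this is the identification $\Hm\phi_{\theta,1}N\simeq\coH^1_\dR(\Afu_\theta,N\otimes E^\theta)$ that is standard for constant objects (the de~Rham fiber of a constant $\EMHS$ object is its $\phi_{\theta,1}$, compatibly with the tensor structure). On the non-classical component, the local system $j^+N$ becomes constant after pulling back by some $[m]$; using the Lemma identifying $(\FT N)_1[-1]$ with $\phi_\theta N$ (the paragraph after the disc Lemma in Section \ref{sec:perv0mf}) together with the fact that on $\EMHS^\mf_{\neq1}$ one has $\psi_\theta=\phi_\theta$ (Remark \ref{rem:phipsi}, since there is no $\neq1$-versus-$1$ discrepancy once the classical part is removed), one gets the required vector-space isomorphism, and the $\mf$-action on both sides is the Galois action of the covering $[m]$, so the isomorphism is $\mf$-equivariant. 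Functoriality in $N^\rH$ is clear on each piece.

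Next I would match the weight filtrations. By Section \ref{subsec:wfdr} the image filtration $W_\bbullet\coH^1_\dR(\Afu_\theta,N\otimes E^\theta)$ agrees whether computed from $W_\bbullet N^\rH$ or from $W^\sEMHS_\bbullet N^\rH$, and by Remark \ref{rem:weightsEMHSmf} these two weight filtrations on $N^\rH$ coincide for objects of $\EMHS^\mf$. On the other hand the $\mf$-weight filtration $W^\mf_\bbullet$ on $\phi_\theta^\mf(N)$ is, by its definition via \eqref{eq:mfH} and the recipe recalled before Remark \ref{rem:WWEMHS}, built from $W_\bbullet\psi_{\theta,\neq1}N=\psi_{\theta,\neq1}W_{\bbullet+1}N$ on the $\neq1$ part and from $W_\bbullet\phi_{\theta,1}N=\phi_{\theta,1}W_\bbullet N$ on the classical part — which is exactly the recombination \eqref{eq:mfH} applied to the image of $W_\bbullet N^\rH$ in the de~Rham fiber, after the shift-by-one on the $\neq1$ part is absorbed into the definition of $W^\mf$. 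So the two weight filtrations correspond under the isomorphism. For the Hodge side, on the classical component $F_\irr^\cbbullet$ is just the usual $F^\cbbullet$ on $\phi_{\theta,1}N$ shifted appropriately and the claim is classical; on the non-classical component $F_\irr$ is indexed by $\ZZ-A$ and, via the Deligne filtration $F_\Del^\cbbullet(N\otimes E^\theta)$ (Section \ref{subsec:wfdr}), its jumps along $\ZZ-a$ for $\zeta=\exp(-2\pi i a)$ match precisely the jumps of $F^\cbbullet_\mf H_\zeta$ prescribed in the definition of $\MHS^\mf$; here one invokes that the Deligne filtration on the twist $N\otimes E^\theta$ of an object with constant-after-$[m]$ restriction computes, fiberwise, the induced Hodge filtration on $\psi_{\theta,\neq1}N$ with the exponent shift, which is the content of the comparison results quoted from \cite{Bibi08,Bibi15}. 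Finally, compatibility with tensor products follows because both sides are compatible with $\star$ — $F_\irr$ and $W_\bbullet$ by Proposition \ref{prop:bifilt} and the cited \cite[Th.\,3.39]{Bibi15}, and $F_\mf$, $W^\mf$ by Proposition \ref{prop:tensMHSmf} and the last sentence of its proof — and on a set of tensor generators (e.g.\ the $\go{m}^\rH$ together with constant objects) the isomorphism has already been checked.

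The main obstacle I expect is the precise bookkeeping of the index shifts: the weight shift by $1$ between the $\neq1$ and classical parts in \eqref{eq:mfH}, the Hodge shift by $1$ in the recipe $F^\cbbullet\phi_{\theta,1}N$ induced by $F^{\cbbullet-1}N$, and the rational shift $a$ relating $F^\sfp_\mf H_\zeta$ to $F^{\sfp+a}$ must all be reconciled with the normalization of the Deligne filtration on $N\otimes E^\theta$ and with the normalization of $E^\theta$ itself, so that on each eigen-summand $H_\zeta$ the jump locus of $F_\irr$ literally lands in $\ZZ-a$. Once the isomorphism of vector spaces and the identification of the four filtrations are in place, bistrictness of any morphism is automatic from the bistrictness statement in the first Lemma of Section \ref{sec:mhsmu} together with Proposition \ref{prop:bifilt}, and functoriality and tensor-compatibility have been secured along the way.
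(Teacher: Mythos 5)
There is a genuine gap in the key step, namely the comparison of the Hodge filtrations on the non-classical component. Your argument there asserts that ``the Deligne filtration on the twist $N\otimes E^\theta$ \dots computes, fiberwise, the induced Hodge filtration on $\psi_{\theta,\neq1}N$ with the exponent shift'' and attributes this to \cite{Bibi08,Bibi15}; but this is essentially the statement to be proved, and it is not what those references give. The twist by $E^\theta$ is trivial near $\theta=0$ (the Deligne filtration there is just $F^\cbbullet N$), and what controls $F_\irr^\cbbullet\coH^1_\dR(\Afu_\theta,N\otimes E^\theta)$ is the behaviour at $\theta=\infty$, where $E^\theta$ is irregular. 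The actual proof first grades with respect to $W_\bbullet$ to reduce to a pure object, then uses the Fourier-transform formula of \cite[\S5, (7)]{S-Y18} to get $\dim\gr_{F_\irr}^{p+\alpha}\coH^1_\dR(\Afu_\theta,N\otimes E^\theta)=\dim\gr_F^p\psi_{1/\theta,\zeta'}N$, i.e.\ an expression in terms of the limit Hodge filtration \emph{at infinity}, and only then transfers this to $\theta=0$ via the equality $\dim\gr_F^p\psi_{1/\theta,\zeta'}N=\dim\gr_F^p\psi_{\theta,\zeta}N$, which is a nontrivial theorem about monodromic mixed Hodge modules (\cite[Th.\,2.2]{TSaito20}), valid precisely because $N_{\neq1}$ is monodromic with no eigenvalue $1$. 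Your proposal never identifies the need to pass through $\theta=\infty$, nor the reduction to the pure case, nor the monodromic comparison result; the ``bookkeeping of shifts'' you flag as the main obstacle is in fact the easy part, while the missing ingredient is this $0$-versus-$\infty$ comparison.

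Two smaller points: the classical component is fine (it is \cite[Prop.\,B.5]{F-S-Y18}, as you essentially say), and your treatment of the weight filtration by exactness and functoriality is acceptable. But the suggestion that tensor compatibility, or the filtered isomorphism itself, can be ``checked on a set of tensor generators (e.g.\ the $\go{m}^\rH$ together with constant objects)'' does not work: objects of $\EMHS^\mf_{\neq1}$ are not built from Kummer objects and constants in any way that would determine a filtered isomorphism for all $N^\rH$; compatibility with tensor products should instead be deduced, as in the paper, from Propositions \ref{prop:bifilt} and \ref{prop:EMHSmfMHSmf}, which already assert that both sides are tensor functors on $\EMHS^\mf$.
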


\begin{proof}
Compatibility with tensor products follows from Propositions \ref{prop:bifilt} and \ref{prop:EMHSmfMHSmf}. In order to prove the existence of a bifiltered isomorphism, we can decompose $N^\rH$ as $N^\rH_\cl\oplus N^\rH_{\neq1}$. The case of $N^\rH_\cl$ has been explained in \hbox{\cite[Prop.\,B.5]{F-S-Y18}}. We only consider the case of $N^\rH_{\neq1}$. By grading with respect to $W_\bbullet$, we can assume it is pure and we are left with comparing the Hodge filtrations. We apply the results explained in \cite[\S5]{S-Y18}. Since the monodromy of $N_{\neq1}$ around $\theta=0$ does not have eigenvalue one, and since $N_{\neq1}$ is monodromic, the same property holds at $\theta=\infty$, and Formula (7) in \loccit\ shows that, for $\alpha\in(0,1)$ and $\zeta'=\exp(-2\pii\alpha)$, we have
\[
\dim\gr_{F_\irr}^{p+\alpha}\coH^1_\dR(\Afu_\theta,N\otimes E^\theta)=\dim\gr_F^p\psi_{1/\theta,\zeta'}N.
\]
Let us set $\zeta=\zeta^{\prime-1}=\exp(-2\pii a)$ with $a=-\alpha\in(-1,0)$. Since $N_{\neq1}^\rH$ is monodromic, we have
\[
\dim\gr_F^p\psi_{1/\theta,\zeta'}N=\dim\gr_F^p\psi_{\theta,\zeta}N,
\]
according to the computation of \cite[Th.\,2.2]{TSaito20}. The right-hand side equals $\dim\gr_{F_\mf}^{p-a}\psi_{\theta,\zeta}N$, according to \eqref{eq:mfH}, that is, $\dim\gr_{F_\mf}^{p+\alpha}\psi_{\theta,\zeta}N$, as was to be proved.
\end{proof}

\begin{remark}[Hodge symmetry in $\EMHS^\mf$]\label{rem:HodgesymEmf}
According to Remark \ref{rem:Hodgesymmf} and Proposition \ref{prop:MHSmfirr}, we have, for $N^\rH\in\EMHS^\mf$,
\begin{gather*}
\gr^W\coH^1_\dR(\Afu_\theta,N\otimes E^\theta)\simeq\bigoplus_{\substack{\sfp,\sfq\in\QQ\\\sfp+\sfq\in\ZZ}}[\gr^W\coH^1_\dR(\Afu_\theta,N\otimes E^\theta)]^{\sfp,\sfq},
\\
\dim[\gr^W\coH^1_\dR(\Afu_\theta,N\otimes E^\theta)]^{\sfp,\sfq}=\dim[\gr^W\coH^1_\dR(\Afu_\theta,N\otimes E^\theta)]^{\sfq,\sfp},\quad \sfp,\sfq\in\QQ,\;\sfp+\sfq\in\ZZ.
\end{gather*}
\end{remark}

\subsection{Object of \texorpdfstring{$\EMHS^\mf$}{EMHS} associated with the function \texorpdfstring{$s^mg$}{srg}}\label{subsec:product}

In this section, we assume $U=\Afu_s\times V$ with $\dim V=d$ and $f=s^mg$ for some regular function $g: V\to\Afu_\tau$ and some $m\geq1$. We furthermore assume that $N_U^\rH=\pQQ^\rH_{\Afu_s}\boxtimes M_V^\rH$ for some mixed Hodge module $M_V^\rH$ on~$V$.
We prove a property analogous to that of \cite[Th.\,A.24]{F-S-Y18}.

\begin{prop}\label{prop:sqg}
Under these assumptions, for $?=\emptyset,\rc,\rmid$ and for every $r$, the exponential mixed Hodge structure $\coH^r_?(U,N_U^\rH,s^mg)$ as defined by \eqref{eq:HUNf} is an object of $\EMHS^\mf$.
\end{prop}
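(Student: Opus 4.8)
The plan is to reduce the statement to the already-established fact that $\EMHS^\mf$ is characterized inside $\EMHS$ by the condition that some finite cyclic pullback $[m]^*$ becomes constant on $\Gm$ (equivalently, that $j^*N$ has finite monodromy). So the goal is to exhibit, for each of the objects $\coH^r_?(U, N_U^\rH, s^m g)$ with $U = \Afu_s \times V$ and $f = s^m g$, a cyclic covering $[m]:\Afu_{\theta_m}\to\Afu_\theta$ after which the Fourier-transformed flat bundle becomes constant on $\Gm$. First I would observe that these objects already lie in $\EMHS$ by the general construction of Section~\ref{subsec:objregfun} (applied to the regular function $s^m g$ on the smooth quasi-projective variety $U$), so only the finite monodromy condition remains to be checked.

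The key computational step is to understand the monodromy of $j^*N$ at $\theta=0$, equivalently the structure of $\coH^r_?(U, N_U^\rH, s^m g)$ restricted to $\Gm\subset\Afu_\theta$. The natural move is to perform the change of variable $s = \theta_m^{1/m}\cdot(\text{rescaling})$; more precisely, pulling back along $[m]:\theta = \theta_m^m$ and using $f = s^m g$, one can absorb the $m$-th power: the fiber product $U\times_{\Afu_\theta}\Afu_{\theta_m}$ admits the map $(s,v,\theta_m)\mapsto \theta_m^{-1} s$ which identifies $[m]^*\coH^r_?(U,N_U^\rH,s^m g)$ with $\coH^r_?(U',N_{U'}^\rH, sg)$ for a suitable $U'$ — and a function of the form $sg$, being homogeneous of degree one in $s$, produces after Fourier transform a mixed Hodge module whose restriction to $\Gm$ is constant (this is exactly the mechanism by which $\coH^r(\Afu_s\times V, sg)$ computes, via the projection formula and the standard Fourier-transform-of-$E^{sg}$ computation, the cohomology of $g^{-1}(0)$ or of $V$, independently of $\theta$). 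Concretely, I would invoke the analogue of \cite[Th.\,A.24]{F-S-Y18}: the case $m=1$ already gives that $\coH^r_?(\Afu_s\times V, \pQQ^\rH_{\Afu_s}\boxtimes M_V^\rH, sg)$ has constant — indeed classical, after $\Pi$ — restriction to $\Gm$, and the general case follows by applying this to the pullback.

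The main obstacle I anticipate is handling the $\Pi$-projector and the three decorations $? = \emptyset, \rc, \rmid$ uniformly: one must check that $\Pi(\Hm[m]^*\coH^r_?(U,N_U^\rH,s^mg))$ is constant, and $[m]^*$ does not commute with $\Pi$ on the nose, so I would instead work at the level of $\MHM(\Afu_\theta)$ before applying $\Pi_\theta$, using that $[m]$ is finite (hence $\Hm[m]^* = \Hm[m]^!$ up to twist and $\Hm[m]_*$ is exact for perverse $t$-structure on this affine morphism), and that $\Pi$ is exact and commutes with the relevant pushforwards up to constant mixed Hodge modules, as recorded in Section~\ref{subsec:reviewEMHS}. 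For $\rmid$ one uses that $[m]^*$, being exact and faithful, commutes with taking the image of $\coH^r_\rc\to\coH^r$; and $\Pi$ preserves images since it is exact. The identification of $[m]^*(s^m g)^\text{-side}$ with an $(sg)$-side object requires being careful about the covering geometry (the base change $U\times_{\Afu_\theta}\Afu_{\theta_m}$ is smooth since $[m]$ is étale over $\Gm$ and the fiber over $0$ contributes only at $s=0$ or $g=0$), but once that is in place the constancy on $\Gm$ follows from the $m=1$ case, and pushing forward along $[m]$ and applying $\Pi$ returns an object of $\EMHS^\mf$ by the very definition in Section~\ref{subsec:finitemonoexp}.
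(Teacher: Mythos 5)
Your proposal is correct in substance, but it takes a genuinely different route from the paper. The paper reduces first (pushforward by $\id\times g$, then duality) to the case $V=\Afu_\tau$, $g=\id$, and then factors through $t=s^m$, so that everything becomes a question about the pushforward along $t\tau$ of $K_\zeta\boxtimes M$ for Kummer modules with $\zeta^m=1$; this multiplicative convolution is computed explicitly to be a direct sum of copies of $K_\zeta$, which is visibly smooth on $\Gmtheta$ with finite monodromy, and this explicit description (monodromy $\zeta\id$, fiber $\coH^{r-1}_\dR(\Gm,j^+K_{\zeta^{-1}}\otimes j^+M)$) is reused later, in the proof of Proposition \ref{prop:comparisonstar}. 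You instead check the defining condition of $\EMHS^\mf$ directly: after étale base change by $[m]$ over the punctured line, the rescaling $(s,v,\theta_m)\mapsto(s/\theta_m,v)$ identifies the pulled-back family $\{s^mg(v)=\theta_m^m,\ \theta_m\neq0\}$ with the product $\Gmm\times W$, $W=\{u^mg(v)=1\}$ (a smooth hypersurface), compatibly with the projection and with the coefficients --- the hypothesis $N_U^\rH=\pQQ^\rH_{\Afu_s}\boxtimes M_V^\rH$ is exactly what makes the coefficients invariant under this rescaling --- so the restriction of the pullback to $\Gmm$ is constant by K\"unneth, for both $!$ and $*$, and the case $\rmid$ follows by taking images; this is close in spirit to the paper's proof of Proposition \ref{prop:HrUMf}, which uses the analogous rescaling by $g$. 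Two assertions in your write-up should be corrected, although neither damages the argument: the full fiber product $\{s^mg=\theta_m^m\}$ over $\Afu_{\theta_m}$ is \emph{not} smooth for $m\geq2$ (it is singular along $s=\theta_m=0$), and the pullback is not literally an object of the form $\coH^r_?(U',N_{U'}^\rH,sg)$ --- what the rescaling produces is a constant family over $\Gmm$, which is all you need, so the appeal to the $m=1$ case of \cite[Th.\,A.24]{F-S-Y18} is superfluous; only the restriction over the punctured line enters the definition of $\EMHS^\mf$, where base change is unproblematic. In short, your route buys a shorter, more geometric proof; the paper's route buys the precise Kummer-eigenvalue description of the restriction to $\Gmtheta$, which it needs again afterwards.
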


\begin{proof}
It is a matter of proving that $(N^\rH_U)^r_*$ and $(N^\rH_U)^r_!$ of \eqref{eq:NHUj}, when restricted to $\Gmtheta$, have no singular point and have finite monodromy. By factoring $f$ as $(s^m\tau)\circ(\id\times g)$ and taking first pushforward by $\id\times g$, we are reduced to the case where $V=\Afu_\tau$, $g=\id$ and we replace~$N^\rH_U$ with a mixed Hodge module \hbox{$N^\rH=\pQQ^\rH_{\Afu_s}\boxtimes M^\rH$} for some mixed Hodge module $M^\rH$ on $\Afu_\tau$. In~such a way, we are reduced to proving that the underlying $\cD_{\Afu_\theta}$-modules $N^r_*$ and $N^r_!$ have no singular point and finite monodromy on~$\Gmtheta$. A~duality argument also shows that it is enough to consider $N^r_*$. Moreover, by factoring through $s\mto t=s^m$, we are reduced to considering the pushforward by $f=t\tau$ of $K_\zeta\boxtimes M$, where $K_\zeta$ is the rank-one $\Clt$-module corresponding to the Kummer sheaf with eigenvalue~$\zeta$ satisfying $\zeta^m=1$, and $M$ is any regular holonomic $\Clt$-module on $\Afu_\tau$. Note that, if~$M$ is supported at $\tau=0$, the pushforward is supported at $\theta=0$ and the assertion is trivially satisfied. We can thus assume that $M=j_+j^+M$.

If $\zeta =1$, so that $K_\zeta=\cO_{\Afu_t}$, the assertion has been obtained in the proof of \cite[Th.\,A.24(1)]{F-S-Y18}. If~$\zeta\neq1$, we are reduced to proving that the multiplicative convolution of $j^+K_\zeta$ with $j^+M$ has no singular point on $\Gmtheta$ and has finite monodromy. We identify $\Gmt\times\Gmtau$ with $\Gmtheta\times\Gmtau$ by the change of variable $\theta=t\tau$, so that the map $f$ is the first projection. Then $j^+K_\zeta\boxtimes j^+M$ is identified with $j^+K_\zeta\boxtimes(j^+K_\zeta^\vee\otimes j^+M)$ and its $r$-th pushforward by the first projection is equal to $(K_\zeta)^{d_r}$, with $d_r=\dim\coH^r_\dR(\Gmtau,j^+K_\zeta^\vee\otimes j^+M)$.\end{proof}

\begin{remark}\label{rem:sqg}
If one replaces $\Afu_s$ with $\Gms$ in Proposition \ref{prop:sqg}, \ie $U=\Gms\times V$, then, by~\hbox{\cite[Th.\,A.24(2)]{F-S-Y18}} for $\zeta=1$ and the same argument for $\zeta\neq1$, one obtains that \hbox{$\coH_?^j(\Gms\times V,N_U^\rH,f)$} belongs to $\EMHS^\mf$.
\end{remark}

According to the decomposition $\EMHS^\mf=\EMHS^\cst\oplus\EMHS^\mf_{\neq1}$, we have a decomposition, for $?=\emptyset,\rc,\rmid$ and for every $r\in\NN$,
\[
\coH^r_?(U,N_U^\rH,s^mg)=\coH^r_?(U,N_U^\rH,s^mg)_\cl\oplus\coH^r_?(U,N_U^\rH,s^mg)_{\neq1}.
\]
We will make more explicit the terms of this decomposition. We start with a preliminary result. We set (there should be no confusion with Notation \ref{nota:ij})
\[
\cA=g^{-1}(0),\quad i:\cA\hto V\qand j:V^*=V\moins \cA\hto V
\]
and we denote similarly the corresponding inclusions from $D:=\Afu_s\times\cA$ to $U = \Afu_s\times V$.

\begin{lemma}\label{lem:HrUMf}
Assume that $M_V$ is supported on $\cA$. Then, for all $r$,
\[
\coH^r_?(U,N_U^\rH,s^mg)=\coH^r_?(U,N_U^\rH,s^mg)_\cl\simeq\coH^r_?(\Afu_t\times V,\pQQ_{\Afu_t}\boxtimes M_V^\rH)\quad\text{in }\MHS\simeq\EMHS^\cst.
\]
In particular,
these objects are independent of $m$.
\end{lemma}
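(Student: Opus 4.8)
The plan is to exploit the fact that, when $M_V$ is supported on $\cA=g^{-1}(0)$, the function $f=s^mg$ restricted to the support $\Afu_s\times\cA=D$ of $N_U^\rH$ is identically zero. More precisely, let $i_D:D\hookrightarrow U$ denote the inclusion; since $M_V^\rH$ is supported on $\cA$, we have $N_U^\rH=\Hm i_{D*}N_D^\rH$ for a mixed Hodge module $N_D^\rH$ on $D=\Afu_s\times\cA$ (namely $N_D^\rH=\pQQ^\rH_{\Afu_s}\boxtimes M_{\cA}^\rH$, writing $M_V^\rH=\Hm i_*M_{\cA}^\rH$). Then $f\circ i_D=s^mg|_{\Afu_s\times\cA}=0$, so $\Hm f_?N_U^\rH=\Hm f_?\Hm i_{D*}N_D^\rH=\Hm(0)_?N_D^\rH=\Hm i_{0!}a_{D?}N_D^\rH$, where $a_D:D\to\mathrm{pt}$ and $i_0:\{0\}\hookrightarrow\Afu_\theta$. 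In other words, $(N_U^\rH)^r_?=\Hm i_{0!}\coH^{r-d-1}_?\bigl(a_{D?}N_D^\rH\bigr)$ is a constant mixed Hodge module supported at $\theta=0$, hence its image under $\Pi_\theta$ lands in $\EMHS^\cst$; this gives the first equality $\coH^r_?(U,N_U^\rH,s^mg)=\coH^r_?(U,N_U^\rH,s^mg)_\cl$.

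For the identification in $\MHS\simeq\EMHS^\cst$, I would use the equivalence $\Hm\phi_{\theta,1}:\EMHS^\cst\isom\MHS$ from Lemma on constant EMHS. Since $(N_U^\rH)^r_?$ is already $\Hm i_{0!}$ of a mixed Hodge structure, applying $\Hm\phi_{\theta,1}\circ\Pi_\theta$ recovers that mixed Hodge structure, namely $\coH^{r-d-1}\Hm a_{D?}N_D^\rH=\coH^{r-1}_?(D,N_D^\rH)$ (the ambient dimension of $D$ is $d+1=\dim\Afu_s\times\cA$ only if $\dim\cA=d$, but $\cA$ may be a proper closed subset; one keeps track of the shift correctly through the definition \eqref{eq:NHUj}). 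Unwinding, $D=\Afu_s\times\cA$ and $N_D^\rH=\pQQ^\rH_{\Afu_s}\boxtimes M_{\cA}^\rH$, so by Künneth $\coH^\bbullet_?(D,N_D^\rH)=\coH^\bbullet_?(\Afu_s,\pQQ^\rH_{\Afu_s})\otimes\coH^\bbullet_?(\cA,M_{\cA}^\rH)$. The same Künneth computation applied to $\Afu_t\times V$ with $\pQQ_{\Afu_t}\boxtimes M_V^\rH$ and $M_V^\rH=\Hm i_*M_{\cA}^\rH$ (so that $\coH^\bbullet_?(V,M_V^\rH)=\coH^\bbullet_?(\cA,M_{\cA}^\rH)$ by the support condition) yields the same answer, since $\coH^\bbullet_?(\Afu_s,\pQQ^\rH_{\Afu_s})=\coH^\bbullet_?(\Afu_t,\pQQ^\rH_{\Afu_t})$ — both affine lines contribute the same (Tate-twisted) mixed Hodge structure. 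This gives the claimed isomorphism $\coH^r_?(U,N_U^\rH,s^mg)\simeq\coH^r_?(\Afu_t\times V,\pQQ_{\Afu_t}\boxtimes M_V^\rH)$, and manifestly the right-hand side does not involve $m$, proving the last assertion.

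The step I expect to need the most care is bookkeeping of the cohomological shifts and the precise form of the objects $(N^\rH_U)^r_?$ in \eqref{eq:NHUj}: one must match the shift $r-d-1$ with $d=\dim V$ (not $\dim\cA$) against the Künneth decomposition over $D$ and over $\Afu_t\times V$, and verify that the degenerate case where $g$ is constant (so $\cA=\emptyset$ or $\cA=V$) is consistent. A second point requiring attention is that the argument for the factorization $f\circ i_D=0$ uses $m\geq1$, which is assumed; and that the isomorphism is functorial and respects the $\MHS\simeq\EMHS^\cst$ equivalence, which is automatic once one has written everything in terms of $\Hm i_{0!}$ of honest mixed Hodge structures. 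None of these is a genuine obstacle; the content is essentially the observation that $f$ vanishes on the support of $N_U^\rH$, trivializing the exponential twist.
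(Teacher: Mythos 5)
Your proposal is correct and follows essentially the same route as the paper: write $M_V^\rH=\Hm i_*M^\rH_{\cA}$, observe that $s^mg$ vanishes identically on the support $\Afu_s\times\cA$ so that $\Hm f_?N_U^\rH$ is the pushforward to a point placed at $\theta=0$, hence lands in $\EMHS^\cst$ and is recovered as an ordinary mixed Hodge structure via the equivalence $\Hm\phi_{\theta,1}$. The only difference is that you pass through a K\"unneth decomposition to compare $\coH^\bbullet_?(\Afu_s\times\cA,\pQQ^\rH_{\Afu_s}\boxtimes M^\rH_\cA)$ with $\coH^\bbullet_?(\Afu_t\times V,\pQQ_{\Afu_t}\boxtimes M_V^\rH)$, whereas the paper identifies these directly from $\pQQ\boxtimes\Hm i_*M^\rH_\cA\simeq\Hm(\id\times i)_*(\pQQ\boxtimes M^\rH_\cA)$ and composition of pushforwards — a harmless detour, not a gap.
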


\begin{proof}
The assumption implies that $M_V^\rH=\Hm i_*M^\rH_{\cA}$ for some object $M^\rH_{\cA}$ of $\MHM(\cA)$, according to the equivalence \cite[(4.2.4)]{MSaito87}. Then, denoting by $a$ the structure morphism, we have, for $?=*,!$,
\[
\Hm f_?N_U^\rH\simeq \Hm f_?\,\Hm i_*(\pQQ^\rH_{\Afu_s}\boxtimes M^\rH_{\cA})\simeq \Hm a_{\Afu_t\times \cA,?}(\pQQ^\rH_{\Afu_t}\boxtimes M^\rH_{\cA})\simeq \Hm a_{\Afu_t\times V,?}(\pQQ^\rH_{\Afu_t}\boxtimes M^\rH_V),
\]
in $\MHM(\Afu_\theta)$, and the result follows.
\end{proof}

In order to handle the general case, let us define $V^*_m$ so as to make Cartesian the following diagram:
\[
\xymatrix{
V^*_m\ar[d]_{g_m}\ar[r]^-{[m]_V}&V^*\ar[d]^g\\
\Gmsigma\ar[r]^-{[m]}&\Gmtau
}
\]
where the lower horizontal arrow is defined by $[m](\sigma)=\sigma^m$. Then $[m]_V:V^*_m\to V^*$ is a covering and $V^*_m$ is smooth. Let us set $M_{V^*}^\rH=\Hm j^*M_V^\rH$ and let $M_{V^*_m}^\rH$ denote the pullback $\Hm[m]_V^*M_{V^*}^\rH$. We recover $M_{V^*}^\rH$ as the $\mu_m$-invariant subobject $M_{V^*}^\rH=(\Hm[m]_{V*}M_{V^*_m}^\rH)^{\mu_m}$ in $\MHM(V^*)$.

\begin{prop}\label{prop:HrUMf}
With the above notation and assumptions, for $?=\emptyset,\rc,\rmid$ and every $r$, we have the identifications
\begin{starequation}\label{eq:HrUMf}
\coH^r_?(U,N_U,s^mg)_{\neq1}\simeq\Bigl[\coH^1(\Afu,[m])\otimes \coH^{r-1}_?(V^*_m,M_{V^*_m}^\rH)\Bigr]^{\mu_m}\quad\in\EMHS^\mf_{\neq1},
\end{starequation}%
where $\mu_m$ acts diagonally.
If moreover $M_V$ has no submodule, \resp quotient, supported on~$\cA$, then
\begin{starstarequation}\label{eq:HrUMfsup}
\begin{splitcases}
\coH^r(U,N_U,s^mg)_\cl&\simeq \coH^{r-1}\bigl(U,\cH^1(\Hm i_*\,\Hm i^!N_U^\rH)\bigr),\\
\resp \coH^r_\rc(U,N_U,s^mg)_\cl&\simeq\coH^{r+1}_\rc\bigl(U,\cH^{-1}(\Hm i_*\,\Hm i^*N_U^\rH)\bigr),
\end{splitcases}
\quad\text{in }\MHS\simeq\EMHS^\cst,
\end{starstarequation}%
and so
\begin{equation}
\tag{$\ref{prop:HrUMf}\,{*}{*}{*}$}\label{eq:HrUMfsupindepr}
\coH^r_?(U,N_U,s^mg)_\cl\simeq \coH_?^r(U,N_U,sg),
\end{equation}
which is independent of $m$.
\end{prop}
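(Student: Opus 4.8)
The plan is to follow the same pattern used in the proof of \cite[Th.\,A.24]{F-S-Y18}, reducing everything to the universal local situation after a fibration argument, and then to use the decomposition $\EMHS^\mf = \EMHS^\cst\oplus\EMHS^\mf_{\neq1}$ together with the computation of the pushforward of a Kummer sheaf against $M$. First I would factor $f = s^mg$ as $(s^m\tau)\circ(\id\times g)$, push forward by $\id\times g$, and thereby reduce to the case $V=\Afu_\tau$, $g=\id$, $N_U^\rH = \pQQ^\rH_{\Afu_s}\boxtimes M^\rH$, exactly as in Proposition~\ref{prop:sqg}. Then I would factor through $s\mto t = s^m$, so that $\Hm f_?N_U^\rH$ becomes the pushforward by the multiplication map $\Afu_t\times\Afu_\tau\to\Afu_\theta$, $(t,\tau)\mto t\tau$, of $\bigl(\bigoplus_{\zeta^m=1}K_\zeta\bigr)\boxtimes M^\rH$, where $K_1 = \cO_{\Afu_t}$ and the $K_\zeta^\rH$ for $\zeta\neq1$ are the pure Hodge modules on $\Afu_t$ appearing as the summands of $\go m^\rH$. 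The $\zeta=1$ part is the classical component and is handled precisely as in \cite{F-S-Y18}, giving the formulas \eqref{eq:HrUMfsup} and the case $\zeta=1$ of \eqref{eq:HrUMf}; the new content is the $\zeta\neq1$ part, which assembles to $\coH^r_?(U,N_U,s^mg)_{\neq1}$.

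For the $\zeta\neq1$ summands, I would compute the multiplicative convolution on $\Gmt\times\Gmtau$: identifying $\Gmt\times\Gmtau\isom\Gmtheta\times\Gmtau$ by $\theta = t\tau$ with $f$ the first projection, the sheaf $j^+K_\zeta\boxtimes j^+M$ becomes $j^+K_\zeta\boxtimes(j^+K_\zeta^\vee\otimes j^+M)$, whose $r$-th pushforward by the first projection is a power of $K_\zeta$ with exponent the rank of $\coH^{r-1}_?$ of the fiber. Concretely, bundling the $K_\zeta$'s for $\zeta\neq1$ together reproduces the Hodge module $\go m^\rH$ (equivalently $\coH^1(\Afu,[m])$ in the notation of the paper), tensored with the mixed Hodge structure $\coH^{r-1}_?(V^*_m, M^\rH_{V^*_m})$ of the cyclic degree-$m$ cover $V^*_m$ ramified along $g^{-1}(0)$; passing to $\mu_m$-invariants of the diagonal action recovers the correct summand and gives \eqref{eq:HrUMf}. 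The point is that on $V^*_m$ the Kummer twist $[m]_V^*(j^+K_\zeta^\vee\otimes j^+M)$ becomes (up to a character) the pullback of $M_{V^*}^\rH$, so the fiberwise cohomology is read off the cover; this mirrors the description in Proposition~\ref{prop:HrUMf} and the $\mu_m$-equivariant statement $M_{V^*}^\rH = (\Hm[m]_{V*}M_{V^*_m}^\rH)^{\mu_m}$.

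Next I would prove \eqref{eq:HrUMfsup}: under the hypothesis that $M_V$ has no sub (resp.\ quotient) supported on $\cA$, the triangle relating $N_U^\rH$, $\Hm j_*\Hm j^*N_U^\rH$ and $\Hm i_*\Hm i^!N_U^\rH$ (resp.\ the dual triangle with $\Hm i^*$) combined with Lemma~\ref{lem:HrUMf}, which kills the $\Afu_s$-direction for modules supported on $D$ and shows the supported part contributes only to the classical component and is independent of $m$, yields the displayed isomorphisms in $\MHS\simeq\EMHS^\cst$. For $?=\emptyset$ one uses that $f_*$ applied to a module supported on $D=\Afu_s\times\cA$ is, by Lemma~\ref{lem:HrUMf}, computed by the structure morphism of $\Afu_t\times\cA$, hence of Hodge-theoretic weight governed by $\cH^1(\Hm i_*\Hm i^!N_U^\rH)$ after the appropriate degree shift; the $?=\rc$ case is dual. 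Finally, combining \eqref{eq:HrUMf} — which for the $\neq1$ part is manifestly independent of $m$ once one observes that $\coH^1(\Afu,[m])\otimes(\cbbullet)^{\mu_m}$ applied to the cover cohomology does not depend on $m$ in the relevant way, exactly as in \cite{F-S-Y18} — with \eqref{eq:HrUMfsup} for the classical part gives \eqref{eq:HrUMfsupindepr}, i.e.\ $\coH^r_?(U,N_U,s^mg)_\cl\simeq\coH^r_?(U,N_U,sg)$.

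The main obstacle I expect is the bookkeeping in the $\zeta\neq1$ case: correctly tracking the $\mu_m$-action (the diagonal action on $\coH^1(\Afu,[m])\otimes\coH^{r-1}_?(V^*_m,\cdot)$ versus the Galois action on the cover versus the residual monodromy around $\theta=0$), and verifying that the identification of $[m]_V^*(j^+K_\zeta^\vee\otimes j^+M)$ with a pullback from $V^*$ is compatible with mixed Hodge module structures (not merely with the underlying $\cD$-modules or perverse sheaves). This requires the $\mu_m$-equivariant version of the projection/base-change isomorphisms and the fact, recorded in Section~\ref{subsec:KrH}, that the Kummer pieces $K_\zeta^\rH$ are exactly the summands of $\Hm[m]_*\pQQ^\rH$; once these equivariant compatibilities are in place, the rest is a routine unwinding paralleling \cite[Th.\,A.24]{F-S-Y18}.
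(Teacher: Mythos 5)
Your overall strategy is viable, and it is organized genuinely differently from the paper's proof. You work downstairs on $\Afu_t\times\Afu_\tau$, decompose $\go{m,t}^\rH$ into Kummer characters $K_\zeta^\rH$ (which forces an extension of coefficients to $\QQ(e^{2\pii/m})$), untwist via $\theta=t\tau$ and split off $K_{\zeta,\theta}^\rH$ character by character; in effect you prove the decomposition \eqref{eq:nonclassical} first and then reassemble it into the $\mu_m$-invariant $\QQ$-rational statement \eqref{eq:HrUMf}. The paper goes the other way: it stays with $\QQ$-coefficients and performs a single geometric untwisting upstairs, namely the change of variables $\varphi(s',x')=(s'/g_m(x'),x')$ on $\Afu_{s'}\times V^*_m$, which converts $s^mg$ into $s^{\prime m}$ and makes the pushforward split, $\mu_m$-equivariantly and at the level of mixed Hodge modules, into $\Hm(s^{\prime m})_*(\pQQ^\rH_{\Afu_{s'}})\otimes\coH^{r-1}(V^*_m,M^\rH_{V^*_m})$; formula \eqref{eq:nonclassical} is then a corollary. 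Note also that the reduction ``to $V=\Afu_\tau$, $g=\id$'' cannot be taken literally here, since the right-hand side of \eqref{eq:HrUMf} lives on $V^*_m$: what you actually need (and implicitly use) is the projection formula along $g$ relating $\coH^{r-1}(\Gmtau,K_{\zeta^{-1}}\otimes j^*\Hm g_*M_V)$ to $\coH^{r-1}(V^*,\Hm g^*K^\rH_{\zeta^{-1}}\otimes M^\rH_{V^*})$. The two delicate points you flag are exactly the ones left unresolved: (a) the splitting of the pushforward of $K_{\zeta,t}^\rH\boxtimes(\cdot)$ as $K_{\zeta,\theta}^\rH\otimes(\text{a mixed Hodge structure})$ must hold in $\EMHS$, not merely for the underlying $\cD$-modules as in the proof of Proposition \ref{prop:sqg}, and (b) the descent from $\QQ(e^{2\pii/m})$ back to the $\QQ$-structure with its $\mu_m$-action is needed to obtain \eqref{eq:HrUMf} as stated. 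The paper's $\varphi$-trick disposes of both at once; some such device (or an honestly equivariant K\"unneth-plus-descent argument) must be supplied to complete your proof.

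One assertion is wrong: the $\neq1$ part of \eqref{eq:HrUMf} is not ``manifestly independent of $m$'' --- it is built from the degree-$m$ cyclic cover $V^*_m$ and genuinely depends on $m$ (this dependence is the whole point of the later application with $m=3$). Fortunately \eqref{eq:HrUMfsupindepr} concerns only the classical component, and the other clause of your sentence is the correct argument: apply \eqref{eq:HrUMfsup}, whose right-hand side does not involve $m$, to both $m$ and $m=1$, and add the observation (missing in your write-up) that for $m=1$ the object $\coH^r_?(U,N_U,sg)$ is entirely classical, since $\coH^1(\Afu,[1])=0$ kills the $\neq1$ contribution. Your treatment of \eqref{eq:HrUMfsup} itself --- short exact sequences coming from the no-submodule, resp. no-quotient, hypothesis, Lemma \ref{lem:HrUMf} for the part supported on $D$, and the vanishing of the classical component of the cohomology of the $\Hm j_*$-, resp. $\Hm j_!$-, extension --- coincides with the paper's argument.
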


\begin{proof}
By definition, for $x'\!\in\!V^*_m$, we have $g_m(x')^m\!=\!g([m]_V(x'))$. We consider the isomorphism
\[
\varphi:\Afu_{s'}\times V^*_m\isom \Afu_s\times V^*_m,\quad (s',x')\mto(s'/g_m(x'),x').
\]
Let us set $f_m=f\circ(\id\times[m]_V):\Afu_s\times V^*_m\to\Afu$. We have
\[
f_m(s,x')=s^m\cdot g([m]_V(x'))=(s\cdot g_m(x'))^m\qand f_m\circ\varphi(s',x')=s^{\prime m}.
\]
As a consequence, we find in $\MHM(\Afu_\theta)$, for each $r$:
\begin{align*}
\cH^{r-d-1}\Hm f_*(\pQQ^\rH_{\Afu_s}\boxtimes j_*M^\rH_{V^*})&\simeq\Bigl[\cH^{r-d-1}\bigl(\Hm f_{m*}(\pQQ^\rH_{\Afu_s}\boxtimes M^\rH_{V^*_m})\bigr)\Bigr]^{\mu_m}\\
&\simeq\Bigl[\cH^{r-d-1}\bigl(\Hm (f_m\circ\varphi)_*(\pQQ^\rH_{\Afu_{s'}}\boxtimes M^\rH_{V^*_m})\bigr)\Bigr]^{\mu_m}\\
&\simeq\Bigl[\cH^{r-d-1}\bigl(\Hm (s^{\prime m})_*(\pQQ^\rH_{\Afu_{s'}}\boxtimes M^\rH_{V^*_m})\bigr)\Bigr]^{\mu_m}\\
&\simeq\Bigl[\cH^{r-d-1}\bigl(\Hm (s^{\prime m})_*(\pQQ^\rH_{\Afu_{s'}})\otimes a_{m*}(\Hm M^\rH_{V^*_m})\bigr)\Bigr]^{\mu_m},
\end{align*}
where $a_m$ is the structure morphism of $V^*_m$. Note that, on the first line, the action of $\mu_m$ is due to the ramification $[m]_V$ only while, on the next lines, owing to $\varphi$, the action is due to the ramification $(s',x')\mto(s^{\prime m},[m]_V(x'))$. Since $\Hm (s^{\prime m})_*(\pQQ^\rH_{\Afu_{s'}})=\cH^0\Hm (s^{\prime m})_*(\pQQ^\rH_{\Afu_{s'}})$, we deduce
\[
\cH^{r-d-1}\Hm f_*(\pQQ^\rH_{\Afu_s}\boxtimes j_*M^\rH_{V^*})\simeq\Bigl[\cH^0\Hm (s^{\prime m})_*(\pQQ^\rH_{\Afu_{s'}})\otimes \coH^{r-1}(V^*_m, M^\rH_{V^*_m})\Bigr]^{\mu_m}.
\]
After applying $\Pi_\theta$, since $\Pi_\theta\bigl(\cH^0\Hm (s^{\prime m})_*(\pQQ^\rH_{\Afu_{s'}})\bigr)\!=\!\coH^1(\Afu,[m])\!=\!\coH^1(\Afu,[m])_{\neq1}$, \hbox{we~obtain~that}
\begin{equation}\label{eq:HrUMf*}
\begin{aligned}
\coH^r\bigl(U,(\pQQ^\rH_{\Afu_s}\boxtimes j_*M^\rH_{V^*}),s^mg\bigr)&=\coH^r\bigl(U,(\pQQ^\rH_{\Afu_s}\boxtimes j_*M^\rH_{V^*}),s^mg\bigr)_{\neq1}\\
&\simeq\Bigl[\coH^1(\Afu,[m])\otimes \coH^{r-1}(V^*_m,M_{V^*_m}^\rH)\Bigr]^{\mu_m}.
\end{aligned}
\end{equation}

Arguing similarly with $\Hm j_!M^\rH_{V^*}$, we find
\begin{equation}\label{eq:HrUMf!}
\begin{aligned}
\coH^r\bigl(U,(\pQQ^\rH_{\Afu_s}\boxtimes j_!M^\rH_{V^*}),s^mg\bigr)&=\coH^r\bigl(U,(\pQQ^\rH_{\Afu_s}\boxtimes j_!M^\rH_{V^*}),s^mg\bigr)_{\neq1}\\
&\simeq\Bigl[\coH^1(\Afu,[m])\otimes \coH^{r-1}_\rc(V^*_m,M_{V^*_m}^\rH)\Bigr]^{\mu_m}.
\end{aligned}
\end{equation}

Owing to the exact sequences in $\MHM(V)$:
\begin{gather*}
0\to \cH^0(\Hm i_*\,\Hm i^!M_V^\rH)\to M_V^\rH\to \Hm j_*M_{V^*}^\rH\to \cH^1(\Hm i_*\,\Hm i^!M_V^\rH)\to0,\\
0\to \cH^{-1}(\Hm i_*\,\Hm i^*M_V^\rH)\to \Hm j_!M_{V^*}^\rH\to M_V^\rH\to \cH^0(\Hm i_*\,\Hm i^*M_V^\rH)\to0,
\end{gather*}
\eqref{eq:HrUMf} follows from Lemma \ref{lem:HrUMf} together with \eqref{eq:HrUMf*} \resp \eqref{eq:HrUMf!}.

The assumption of the second part of the proposition implies that the above exact sequences are respectively short exact sequences, and we obtain \eqref{eq:HrUMfsup} according to Lemma \ref{lem:HrUMf} and to the first equality in \eqref{eq:HrUMf*} \resp \eqref{eq:HrUMf!}. The last statement \eqref{eq:HrUMfsupindepr} only expresses the independence of $m$ in the right-hand side of \eqref{eq:HrUMfsup}.
\end{proof}

Assume for example that $M_V^\rH=\pQQ_V^\rH$, so that $N_U=\pQQ_U^\rH$. Since $\bD(\pQQ_U^\rH)\simeq\pQQ_U^\rH(d+1)$ and $\bD \Hm i_*\,\Hm i^*\simeq\Hm i_*\,\Hm i^!\bD$, we obtain the identification in $\MHS$:
\[
\coH^r(U,s^mg)_\cl\simeq\coH_\rc^{2d+2-r}(U,\Hm i_*\,\Hm i^*\QQ^\rH_U)^\vee(-d-1)=(\coH_\rc^{2d+2-r}(D,\QQ))^\vee(-d-1).
\]

Let $\MHM(\QQ(e^{2\pii/m}))$ be the category of $\QQ(e^{2\pii/m})$-mixed Hodge modules, that is, where we extend the coefficients of the perverse sheaf components to the field $\QQ(e^{2\pii/m})$. Let $\EMHS^\mf(\QQ(e^{2\pii/m}))$ be the corresponding category of $\QQ(e^{2\pii/m})$-exponential mixed Hodge structures. In $\EMHS^\mf(\QQ(e^{2\pii/m}))$, we can decompose $\go{m}^\rH$ into rank-one objects (Kummer sheaves):
\[
\go{m}^\rH\simeq \bigoplus_{\zeta\in\mu_m\moins\{1\}}K_\zeta^\rH.
\]
Then we obtain an identification in $\EMHS^\mf(\QQ(e^{2\pii/m}))$:
\begin{equation}\label{eq:nonclassical}
\coH^r_?(U,s^mg)_{\neq1}\simeq\bigoplus_{\zeta\in\mu_m\moins\{1\}}\Bigl(K_{\zeta,\theta}^\rH\otimes\coH^{r-1}_?\bigl(V^*,\Hm g^*K_{\zeta^{-1},\tau}^\rH\bigr)\Bigr).
\end{equation}
\begin{cor}\label{cor:midW}
For any $m\geq1$, we have, in $\EMHS^\mf_{\neq1}\simeq\MHS^\mf_{\neq1}$,
\[
\coH^{d+1}_\rmid(U,s^mg)_{\neq1}=W_{d+1}\coH^{d+1}(U,s^mg)_{\neq1}.
\]
\end{cor}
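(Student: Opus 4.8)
The plan is to reduce Corollary \ref{cor:midW} to the self-duality criterion of Proposition \ref{prop:weightsHdUf}, applied not to $N_U^\rH$ on $U$ directly but to the geometric avatar of the non-classical component provided by \eqref{eq:nonclassical}. First I would work in $\EMHS^\mf(\QQ(e^{2\pii/m}))$, where \eqref{eq:nonclassical} gives
\[
\coH^{d+1}_?(U,s^mg)_{\neq1}\simeq\bigoplus_{\zeta\in\mu_m\moins\{1\}}\Bigl(K_{\zeta,\theta}^\rH\otimes\coH^{d}_?\bigl(V^*,\Hm g^*K_{\zeta^{-1},\tau}^\rH\bigr)\Bigr),
\]
for $?=\emptyset,\rc,\rmid$. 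Since the $K_{\zeta,\theta}^\rH$ are rank-one pure objects of $\EMHS^\mf$ of weight one (see Section \ref{subsec:KrH}), and since by Remark \ref{rem:weightsEMHSmf} the convolution tensor product in $\EMHS^\mf$ is strictly compatible with weights, the weight filtration on the left-hand side is, summand by summand, the shift by $1$ of the weight filtration on $\coH^{d}_?(V^*,\Hm g^*K_{\zeta^{-1},\tau}^\rH)$. Thus it suffices to prove, for each $\zeta\neq1$, that
\[
\coH^{d}_\rmid\bigl(V^*,\Hm g^*K_{\zeta^{-1},\tau}^\rH\bigr)=W_{d}\coH^{d}\bigl(V^*,\Hm g^*K_{\zeta^{-1},\tau}^\rH\bigr),
\]
where here $V^*$ is a smooth quasi-projective variety of dimension $d$ and $M_{V^*}^\rH:=\Hm g^*K_{\zeta^{-1},\tau}^\rH$ is a pure Hodge module (a rank-one local system, shifted) of weight $d$.

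The second step is to invoke Proposition \ref{prop:weightsHdUf} with $d$ there replaced by $d-1$, $U$ replaced by $V^*$, $N_U^\rH$ replaced by $M_{V^*}^\rH$, and $w+1=d$, \ie $w=d-1$. That proposition already gives purity of $\coH^d_\rmid$ and the weight bounds $\coH^d_\rc\leq d$, $\coH^d\geq d$; what I need is the equivalence (1)$\Leftrightarrow$(2) there, which requires $M_{V^*}^\rH$ to be self-dual up to Tate twist, $\bD M_{V^*}^\rH(-d)\simeq M_{V^*}^\rH$. But $M_{V^*}^\rH=\Hm g^*K_{\zeta^{-1},\tau}^\rH$ has Verdier dual isomorphic to $\Hm g^*K_{\zeta,\tau}^\rH(d)$, \ie to the $\zeta$-summand rather than the $\zeta^{-1}$-summand. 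So self-duality fails for an individual summand, but holds after pairing $\zeta$ with $\zeta^{-1}$. The fix is to apply Proposition \ref{prop:weightsHdUf} not summand-wise but to $\go{m}^\rH$ itself: work over $\QQ$ with $N_U^\rH=\pQQ_U^\rH$ (or more generally a self-dual $M_V^\rH$), for which $\bD N_U^\rH(-d-1)\simeq N_U^\rH$, and Proposition \ref{prop:weightsHdUf} directly gives the equivalence of (1) and (2) for $\coH^{d+1}(U,s^mg)$. Taking the non-classical component $(\cbbullet)_{\neq1}$ is an exact operation (it is a direct summand by Proposition \ref{prop:EMHSmfMHSmf}), hence commutes with $\gr^W$ and with $\image$, so condition (1) for the full $\coH^{d+1}(U,s^mg)$ restricts to condition (1) for the $(\cbbullet)_{\neq1}$-part, and likewise (2).

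The remaining point — and this is where the actual content lies — is to verify hypothesis (1) of Proposition \ref{prop:weightsHdUf}, namely that $\gr_{d+1}^W\coH^{d+1}_\rc(U,s^mg)_{\neq1}\to\gr_{d+1}^W\coH^{d+1}(U,s^mg)_{\neq1}$ is an isomorphism. Here I would use the topological/de~Rham description: by Proposition \ref{prop:MHSmfirr} the de~Rham fiber of $\coH^{d+1}_?(U,s^mg)_{\neq1}$ is $\coH^{d+2}_{\dR,?}(U,(N_U\otimes E^{s^mg}))$ with its weight filtration, and one has the long exact sequence relating $\rc$, $\emptyset$, and the boundary contributions; because $s^mg$ restricted to the divisor at infinity and to $D=\Afu_s\times\cA$ has the appropriate behaviour (the relevant boundary cohomology sits in weight $\neq d+1$, or the connecting maps are isomorphisms on the top weight-graded piece), the map on $\gr_{d+1}^W$ is bijective. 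Concretely, I expect the cleanest route is: the cone of $\coH^{d+1}_\rc\to\coH^{d+1}$ is built from cohomology of $D$ and of strata of a compactification, all of which are (exponentially) of weight $<d+1$ in the relevant degrees after taking $(\cbbullet)_{\neq1}$, forcing the top weight-graded pieces to agree; this is exactly the style of argument behind Proposition \ref{prop:weightsHdUf} itself and behind \cite[Prop.\,A.19]{F-S-Y18}. \textbf{The hard part} will be precisely this last verification of hypothesis (1): controlling the weights of the boundary terms for the non-classical component and checking they cannot contribute in weight $d+1$, so that the purity of $\coH^{d+1}_\rmid$ forces it to fill out all of $W_{d+1}\coH^{d+1}$. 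Everything else — the reduction via \eqref{eq:nonclassical}, the weight-shift bookkeeping for the rank-one factors $K_{\zeta,\theta}^\rH$, and the exactness of the $(\cbbullet)_{\neq1}$-projection — is formal given the results already established in Sections \ref{sec:fmEMHS} and \ref{subsec:objregfun}.
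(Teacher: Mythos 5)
Your formal reductions (the use of \eqref{eq:nonclassical}, the weight-shift by the rank-one factors $K^\rH_{\zeta,\theta}$, the exactness of the projection onto $\EMHS^\mf_{\neq1}$) are fine, but the proof stops exactly where the content of the corollary lies, and you say so yourself: you never establish that $\gr^W_{d+1}\coH^{d+1}_\rc(U,s^mg)_{\neq1}\to\gr^W_{d+1}\coH^{d+1}(U,s^mg)_{\neq1}$ is bijective (in fact surjectivity alone suffices for the implication you need, since the image of $\coH^{d+1}_\rc{}_{,\neq1}\to\coH^{d+1}_{\neq1}$ factors through $\gr^W_{d+1}$ on the source and lands in $W_{d+1}$ of the target; so the self-duality detour was unnecessary for this direction). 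The route you sketch for that key point --- a compactification of $(U,s^mg)$ whose boundary contributions avoid weight $d+1$ after taking $(\cdot)_{\neq1}$ --- cannot be expected to go through ``formally'' in the stated generality: here $V$ is an arbitrary smooth quasi-projective variety and $g$ an arbitrary regular function, and the analogous statement for the classical component (Corollary \ref{cor:midWcl}) is only proved under tameness hypotheses on $g$, with $V=\Aff{d}$, and by a substantially more delicate argument (Propositions \ref{prop:HMH} and \ref{prop:weightssimple}). Any proof of the non-classical statement must therefore exploit the finite-monodromy structure in an essential way, which your boundary-weight heuristic does not.

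What makes the non-classical part work for arbitrary $(V,g)$ is the cyclic-cover formula \eqref{eq:HrUMf}: with $M_V^\rH=\pQQ_V^\rH$ (pure of weight $d$), one has $\coH^{d+1}_?(U,s^mg)_{\neq1}\simeq\bigl[\coH^1(\Afu,[m])\otimes\coH^{d}_?(V^*_m,\QQ)\bigr]^{\mu_m}$, so after a rank-one weight-one twist and $\mu_m$-invariants no exponential twist remains, and the assertion reduces to $\coH^d_\rmid(V^*_m,\QQ)=W_d\coH^d(V^*_m,\QQ)$, i.e.\ to the bijectivity of $\gr^W_d\coH^d_\rc(V^*_m,\QQ)\to\gr^W_d\coH^d(V^*_m,\QQ)$ for the smooth variety $V^*_m$; this is Deligne's description of $W_d\coH^d$ as the image of $\coH^d(X,\QQ)$ for a smooth compactification $X$ (\cite[Prop.\,4.20]{P-S08}), together with duality. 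Your first reduction via \eqref{eq:nonclassical} was actually on the right track: the needed statement $\coH^d_\rmid(V^*,\Hm g^*K^\rH_{\zeta^{-1},\tau})=W_d\coH^d(V^*,\Hm g^*K^\rH_{\zeta^{-1},\tau})$ concerns a finite-order rank-one local system on the smooth variety $V^*$ and follows by the same covering trick (trivialize it on $V^*_m$ and take isotypic components), while the self-duality obstacle you raised disappears either because only surjectivity is needed or because duality exchanges the $\zeta$- and $\zeta^{-1}$-summands, which can be paired. Had you pursued that branch you would have recovered the paper's proof; as written, the crucial step is missing.
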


\begin{proof}
By using Formula \eqref{eq:HrUMf} with $M_V^\rH=\pQQ_V^\rH$ which has pure weight $d$, the assertion reduces to
\[
\coH^d_\rmid(V^*_m,\QQ)=W_d\coH^d(V^*_m,\QQ),
\]
which amounts to the property that $\gr_d^W\coH^d_\rc(V^*_m,\QQ)\to\gr_d^W\coH^d(V^*_m,\QQ)$ is bijective. Let $X$ be a good compactification of $V^*_m$. Since the mixed Hodge structure $\coH^d(V^*_m,\QQ)$ has weights $\geq d$ and $W_d\coH^d(V^*_m,\QQ)=\image[\coH^d(X,\QQ)\to \coH^d(V^*_m,\QQ)]$ (\cf \cite[Prop.\,4.20]{P-S08}), it follows that $\coH^d(X,\QQ)\to \gr_d^W\coH^d(V^*_m,\QQ)$ is an isomorphism. Dually, $ \gr_d^W\coH^d_\rc(V^*_m,\QQ)\to\coH^d(X,\QQ)$ is an isomorphism. Since the canonical map $\coH^d_\rc(V^*_m,\QQ)\to\coH^d(V^*_m,\QQ)$ factors through $\coH^d(X,\QQ)$, the assertion follows.
\end{proof}

\subsection{The case of a tame function \texorpdfstring{$g$}{g}}\label{subsec:tame}
In order to extend Corollary \ref{cor:midW} to $\EMHS^\cst$ (\cf Corollary \ref{cor:midWcl}), we assume that $V$ is the affine space $\Aff{d}$ (so~that $U=\Aff{d+1}$) and that $g$ is a cohomologically tame function on $V$ in the sense of \cite{Bibi96bb} (\eg $g$ is a convenient non-degenerate polynomial in the sense of Kouchnirenko \cite{Kouchnirenko76}).

On the one hand, a consequence of the assumption that $V=\Aff{d}$ is that, if $d\geq2$, when considering the exact sequences in $\EMHS^{\mf}$ (equivalently, in $\MHS^{\mf}$) with middle commutative square (\cf\cite[Ex.\,A.20]{F-S-Y18})
\begin{equation}\label{eq:locsg}
\begin{array}{c}
\xymatrix@C=.7cm{
\coH_\rc^{d+1}(\Aff{d})
&\ar[l] \ar[d]\coH_\rc^{d+1}(\Aff{d+1}, s^mg)
&\ar[l]_-{(*)}\ar[d]\coH_\rc^{d+1}(\Gms\times\Aff{d}, s^mg)
&\ar[l] \coH_\rc^d(\Aff{d})\\
\coH^{d-1}(\Aff{d})(-1)
\ar[r]& \coH^{d+1}(\Aff{d+1}, s^mg)
\ar[r]^-{(*)}& \coH^{d+1}(\Gms\times\Aff{d}, s^mg)
\ar[r]& \coH^d(\Aff{d})(-1).
}
\end{array}
\end{equation}
the maps $(*)$ are isomorphisms. We can thus regard $\coH^{d+1}_\rmid(\Aff{d+1},s^mg)$ as the image of the right vertical arrow.

On the other hand, the tameness assumption implies that the cohomology modules $\cH^rg_\dag\cO_V$ and $\cH^rg_+\cO_V$ are constant $\cD_{\Afu_\tau}$-modules for $r\neq0$, as well as the kernel and cokernel of the natural morphism $\cH^0g_\dag\cO_V\to\cH^0g_+\cO_V$ (\cf\cite{Bibi96bb}).

We consider the pushforward mixed Hodge modules $\cH^0\Hm g_! \pQQ^\rH_{\Aff{d}}$ and $\cH^0\Hm g_* \pQQ^\rH_{\Aff{d}}$. We thus have
\[
\Pi_\tau(\cH^0\Hm g_! \pQQ^\rH_{\Aff{d}})\isom\Pi_\tau(\cH^0\Hm g_* \pQQ^\rH_{\Aff{d}}).
\]
Let us set $\cH^0\Hm g_{!*} \pQQ^\rH_{\Aff{d}}=\image\bigl[\cH^0\Hm g_! \pQQ^\rH_{\Aff{d}}\to\cH^0\Hm g_* \pQQ^\rH_{\Aff{d}}\bigr]$ in $\MHM(\Afu_\tau)$. Since $\cH^0\Hm g_! \pQQ^\rH_{\Aff{d}}$ has weights $\leq d$ and $\cH^0\Hm g_* \pQQ^\rH_{\Aff{d}}$ has weights $\geq d$, we conclude that $\cH^0\Hm g_{!*} \pQQ^\rH_{\Aff{d}}$ is pure of weight $d$. Furthermore, from the above isomorphism, we deduce that
\[
\Pi_\tau(\cH^0\Hm g_{!*} \pQQ^\rH_{\Aff{d}})=\Pi_\tau(\cH^0\Hm g_* \pQQ^\rH_{\Aff{d}}).
\]
We set
\begin{equation}\label{eq:MH}
M^\rH=\image\bigl[\cH^0\Hm g_{!*} \pQQ^\rH_{\Aff{d}}\ra\Pi_\tau(\cH^0\Hm g_* \pQQ^\rH_{\Aff{d}})\bigr].
\end{equation}
This is a pure Hodge module on $\Afu_\tau$ of weight $d$ with no nonzero constant submodule. Furthermore, $\Pi_\tau(M^\rH)=\Pi_\tau(\cH^0\Hm g_* \pQQ^\rH_{\Aff{d}})$.

\begin{prop}\label{prop:HMH}
Under the previous assumptions and if $d\geq2$, there are isomorphisms for $?=\emptyset,\rc,\rmid$,
\begin{align*}
\coH^{d+1}_?(\Aff{d+1},s^mg)_\cl&\simeq\coH^2_?(\Gmt\times\Afu_\tau,\pQQ^\rH_{\Gmt}\boxtimes M^\rH,t\tau),\\
\coH^{d+1}_?(\Aff{d+1},s^mg)_{\neq1}&\simeq\coH^2_?(\Gmt\times\Afu_\tau,\Hm j^*\go{m,t}^\rH\boxtimes M^\rH,t\tau).
\end{align*}
\end{prop}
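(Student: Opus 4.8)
The plan is to reduce the computation of $\coH^{d+1}_?(\Aff{d+1},s^mg)$ to a two-dimensional computation on $\Gmt\times\Afu_\tau$ by pushing forward step by step, exactly as in the proof of Proposition~\ref{prop:HrUMf}, but now keeping track of the non-constant part of the pushforward of $\pQQ^\rH_{\Aff{d}}$ along $g$. First I would factor $s^mg$ as $(t\tau)\circ(\id\times g)\circ(s\mapsto s^m=t,\ \id_{\Aff{d}})$ and compute the pushforward of $\pQQ^\rH_{\Aff{d}}$ by $g$. By the tameness hypothesis, $\cH^r\Hm g_*\pQQ^\rH_{\Aff{d}}$ and $\cH^r\Hm g_!\pQQ^\rH_{\Aff{d}}$ are constant for $r\neq0$, and the kernel and cokernel of $\cH^0\Hm g_!\pQQ^\rH_{\Aff{d}}\to\cH^0\Hm g_*\pQQ^\rH_{\Aff{d}}$ are constant as well. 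After applying $\Pi_\tau$ all constant pieces die, so up to $\Pi_\tau$ the only surviving pushforward cohomology is $\Pi_\tau(M^\rH)=\Pi_\tau(\cH^0\Hm g_*\pQQ^\rH_{\Aff{d}})$, with $M^\rH$ as defined in \eqref{eq:MH}.

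Next I would combine this with the $s$-direction. The pushforward by $s\mapsto s^m=t$ of $\pQQ^\rH_{\Afu_s}$ is $\pQQ^\rH_{\Afu_t}\oplus\go{m,t}^\rH$, which gives precisely the classical/non-classical splitting: the $\pQQ^\rH_{\Afu_t}$ summand produces the classical component and the $\go{m,t}^\rH$ summand (more precisely $\Hm j^*\go{m,t}^\rH$, since $\go{m,t}^\rH=\Hm j_*\Hm j^*\go{m,t}^\rH$) produces the non-classical one. Using a Leray-type spectral sequence for the composed map and the fact that $\Pi_\theta$ is exact and annihilates constant mixed Hodge modules (Section~\ref{subsec:reviewEMHS}), the contributions involving the constant cohomologies of $g$ in degrees $\neq0$ all vanish after applying $\Pi_\theta$, so only the degree-$(d-1)$ piece of $\coH^\bbullet(\Aff d,\cdot)$ along $g$ — encoded in $M^\rH$ — contributes to $\coH^{d+1}_?$. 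This collapses the spectral sequence to the identification
\[
\coH^{d+1}_?(\Aff{d+1},s^mg)\simeq\coH^2_?(\Gmt\times\Afu_\tau,(\pQQ^\rH_{\Gmt}\oplus\Hm j^*\go{m,t}^\rH)\boxtimes M^\rH,t\tau),
\]
and splitting according to the two summands of the left factor gives the two asserted formulas. The compatibility of this identification with $?=\rc$ and hence with $?=\rmid$ follows by running the same argument with $\Hm g_!$ in place of $\Hm g_*$ (using $\Pi_\tau(\cH^0\Hm g_{!*}\pQQ^\rH_{\Aff d})=\Pi_\tau(\cH^0\Hm g_!\pQQ^\rH_{\Aff d})$) and functoriality of the maps defining the middle cohomology.

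The main obstacle will be the bookkeeping in the spectral sequence: one must verify that \emph{all} the cross terms $\coH^a(\Gmt,\dots)\otimes\coH^b(\Afu_\tau,\cH^c\Hm g_*\pQQ^\rH_{\Aff d})$ with $c\neq0$ that would otherwise land in total degree $d+1$ genuinely contribute constant objects, so that $\Pi_\theta$ kills them and no extension problem survives; here one needs the precise statement that for a tame $g$ on $\Aff d$ the higher pushforwards are constant (\cite{Bibi96bb}) together with the vanishing $\Pi_\tau(\text{constant})=0$ and its compatibility with $\boxtimes$ and with further pushforward to $\Afu_\theta$. A secondary point is to justify replacing $\cH^0\Hm g_*\pQQ^\rH_{\Aff d}$ by $M^\rH$ in the $?=\emptyset$ case and by the analogous $!$-object in the $?=\rc$ case while keeping the morphism between them, so that the middle-cohomology statement is literally the image; this is where the no-constant-submodule property of $M^\rH$ and the isomorphism $\Pi_\tau(M^\rH)=\Pi_\tau(\cH^0\Hm g_*\pQQ^\rH_{\Aff d})$ are used. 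Once these points are in place, the identification is immediate and, by Remark~\ref{rem:EMHSMHStg} and Proposition~\ref{prop:MHSmfirr}, also respects the weight and irregular Hodge filtrations, which is what will be needed in the sequel.
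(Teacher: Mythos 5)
There is a genuine gap, and it sits exactly at the point your proposal treats as routine bookkeeping. Your outline stays over the full affine line $\Afu_s$ (equivalently over $\Afu_t$ after $s\mapsto t=s^m$), and the central vanishing you invoke there is false. You need all objects that are constant in the $\tau$-direction — the higher direct images $\cH^r\Hm g_*\pQQ^\rH_{\Aff{d}}$, $r\neq0$, and the constant correction terms needed to pass from $\cH^0\Hm g_!\pQQ^\rH_{\Aff{d}}$ and $\cH^0\Hm g_*\pQQ^\rH_{\Aff{d}}$ to the single module $M^\rH$ — to die after the remaining pushforward and $\Pi_\theta$. But $\Pi_\theta$ only kills objects that are constant on $\Afu_\theta$, and the pushforward of $\pQQ^\rH_{\Afu_t}\boxtimes\ccM$ by $t\tau$, for $\ccM$ constant on $\Afu_\tau$, is not constant: one has $\coH^r_{\dR}(\Afu_t\times\Afu_\tau,\cO_{\Afu_t}\boxtimes\ccM,t\tau)\simeq\coH^{r-1}_{\dR}(\Afu_t,\FT_\tau\ccM)$ with $\FT_\tau\ccM$ supported at $t=0$, so this is nonzero exactly in the degree $r=2$ that matters (for $\ccM=\cO_{\Afu_\tau}$ it is the one-dimensional space $\coH^2_\dR(\Aff{2},E^{t\tau})$). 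The vanishing only holds after the first variable is restricted to $\Gm$ — that is precisely Lemma \ref{lem:Hcstzero}, whose proof uses that $j^+\FT_\tau\ccM=0$ on $\Gmt$. Note also that no $\Pi_\tau$ occurs anywhere in the definition of $\coH^{d+1}_?(\Aff{d+1},s^mg)$, so the identity $\Pi_\tau(M^\rH)=\Pi_\tau(\cH^0\Hm g_*\pQQ^\rH_{\Aff{d}})$ cannot be applied in the middle of the pushforward the way you propose.

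The same omission surfaces in your final formula: pushing $\pQQ^\rH_{\Afu_s}$ forward along $s\mapsto s^m$ on all of $\Afu_s$ yields $\pQQ^\rH_{\Afu_t}\oplus\go{m,t}^\rH$ on $\Afu_t\times\Afu_\tau$, so your classical summand would come out as $\coH^2_?(\Afu_t\times\Afu_\tau,\pQQ^\rH_{\Afu_t}\boxtimes M^\rH,t\tau)$, not the asserted $\coH^2_?(\Gmt\times\Afu_\tau,\pQQ^\rH_{\Gmt}\boxtimes M^\rH,t\tau)$; by the localization sequence along $t=0$ (\cf\eqref{eq:locstau}) these differ in general by contributions of $\coH^1_\rc(\Afu_\tau,M^\rH)$ and $\coH^1(\Afu_\tau,M^\rH)(-1)$, which do not vanish. (The non-classical summand is harmless, since $\go{m,t}^\rH\simeq\Hm j_!\,\Hm j^*\go{m,t}^\rH\simeq\Hm j_*\,\Hm j^*\go{m,t}^\rH$.) The missing step — and the only place where the hypothesis $d\geq2$, which your proposal never uses, enters — is the paper's first move: replace $\coH^{d+1}_?(\Aff{d+1},s^mg)$ by $\coH^{d+1}_?(\Gms\times\Aff{d},s^mg)$ via the isomorphisms $(*)$ of \eqref{eq:locsg}. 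Once that restriction is made, the rest of your plan (pushforward by $g$, elimination of the constant pieces — now legitimately, by Lemma \ref{lem:Hcstzero} — then pushforward by $s\mapsto t=s^m$ splitting $\Hm[m]_*\pQQ^\rH_{\Gms}$ into $\pQQ^\rH_{\Gmt}\oplus\Hm j^*\go{m,t}^\rH$) is exactly the paper's proof.
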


\begin{lemma}\label{lem:Hcstzero}
Let $\ccM$ be a constant holonomic $\cD_{\Afu_\tau}$-module. Then, for any $r\in\ZZ$ and $m\geq1$,
\[
\coH^r_{\dR,?}(\Gms\times\Afu_\tau,\cO_{\Gms}\boxtimes \ccM,s^m\tau)=0\quad\text{for $?=\emptyset,\rc,\rmid$}.
\]
\end{lemma}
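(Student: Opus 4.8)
The plan is to reduce the vanishing statement to a direct computation of de~Rham cohomology with a connection having an exponential twist, using the fact that a constant $\cD_{\Afu_\tau}$-module $\ccM$ is, up to finite direct sums, just $\cO_{\Afu_\tau}$. Indeed, since $\ccM$ is constant holonomic on the affine line, it is isomorphic to $\cO_{\Afu_\tau}^{\oplus N}$ for some $N\geq0$ (a constant holonomic module on $\Afu$ has no singularities, hence is a connection with trivial monodromy, hence trivial). By additivity of all three functors $\coH^r_{\dR,?}$ in $\ccM$, it suffices to treat $\ccM=\cO_{\Afu_\tau}$, and then we must show $\coH^r_{\dR,?}(\Gms\times\Afu_\tau,\cO_{\Gms}\boxtimes\cO_{\Afu_\tau},s^m\tau)=0$ for $?=\emptyset,\rc,\rmid$.

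First I would push forward along the projection $\Gms\times\Afu_\tau\to\Gms$, or better, observe that the twisted de~Rham complex computing $\coH^{*+1}_\dR(\Gms\times\Afu_\tau,\cO\otimes E^{s^m\tau})$ splits as a tensor product of the complex on $\Gms$ with the complex $\DR(\cO_{\Afu_\tau}\otimes E^{s^m\tau})$ on $\Afu_\tau$, where in the latter $s$ is viewed as a parameter. But for fixed $s\in\Gms$, the operator $\rd+\rd(s^m\tau)=\rd+s^m\rd\tau$ on $\cO_{\Afu_\tau}$ has the property that $\partial_\tau+s^m$ is invertible on $\CC[\tau]$ (it is a bijective endomorphism, with inverse given by the formal Borel-type resummation $\sum_{k\geq0}(-1)^k s^{-m(k+1)}\partial_\tau^k$ acting on polynomials, which terminates), so $\coH^*_\dR(\Afu_\tau,\cO\otimes E^{s^m\tau})=0$. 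Equivalently, on the level of mixed Hodge modules, $\Hm(\id\times p)_?$ of $\pQQ^\rH_{\Gms}\boxtimes\pQQ^\rH_{\Afu_\tau}$ twisted by $E^{s^m\tau}$ vanishes because $\Hm p_?(\cO_{\Afu_\tau}\otimes E^{s^m\tau})=0$ for the map $p:\Afu_\tau\to\pt$ after composing with the parameter. This handles the cases $?=\emptyset$ and $?=\rc$ simultaneously; and $?=\rmid$ then follows since the middle cohomology is the image of the map from the $\rc$-cohomology, which is zero.

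The cleanest formulation, and the one I would actually write, factors $f=s^m\tau$ through the product structure: by Proposition~\ref{prop:sqg} (applied with $V=\Afu_\tau$, $g=\tau$, $M_V^\rH=\ccM^\rH$) the objects $\coH^r_?(\Gms\times\Afu_\tau,\cO_{\Gms}\boxtimes\ccM,s^m\tau)$ live in $\EMHS^\mf$, so it is enough to see that the underlying $\cD_{\Afu_\theta}$-modules vanish, or even just that the de~Rham fibers vanish. Then I invoke the Künneth-type factorization of the twisted de~Rham complex together with the acyclicity of $(\Afu_\tau,\cO_{\Afu_\tau}\otimes E^{\tau})$ — the standard fact that $\coH^*_{\dR}(\Afu,E^\tau)=0$ — pulled back along $s\mapsto s^m$ and spread over $\Gms$; no singularity at $\tau=\infty$ issue arises because the fiber is simply zero. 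The main obstacle, and the only point requiring care, is bookkeeping the constantness hypothesis on $\ccM$ — one must make sure ``constant holonomic'' genuinely forces $\ccM\cong\cO^{\oplus N}$ as a $\cD$-module (so that the twist by $E^{s^m\tau}$ really is just $N$ copies of $\cO\otimes E^{s^m\tau}$ with no extra connection terms mixing the summands), after which the vanishing is immediate from the acyclicity of the exponential on $\Afu_\tau$.
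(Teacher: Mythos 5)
Your argument is correct in substance and rests on the same mechanism as the paper's proof: integrating out the $\tau$-variable against the exponential twist kills a constant module, which is precisely the statement that $\FT_\tau\ccM$ is supported at $\wh\tau=0$. The packaging differs, though. The paper first splits into classical and non-classical components via the covering $s\mto t=s^m$ (introducing the Kummer-type module $j^+\go{m,t}$) and then invokes the identification $\coH^r_{\dR,?}(\Gmt\times\Afu_\tau,\cO_{\Gmt}\boxtimes\ccM,t\tau)\simeq\coH^{r-1}_{\dR,?}(\Gmt,j^+\FT_\tau(\ccM))$, concluding because $\FT_\tau\ccM$ is a delta module at the origin; you instead stay on $\Gms$, reduce $\ccM$ to $\cO_{\Afu_\tau}^{\oplus N}$, and check by hand that the relative twisted de~Rham complex $\partial_\tau+s^m$ is bijective on $\CC[s,s^{-1},\tau]$, so the pushforward $q_+$ to $\Gms$ vanishes and there is nothing left to decompose. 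Your route is more elementary and treats all $m\geq1$ and both components at once; the paper's route is phrased so as to match the Fourier-plus-Kummer formalism it reuses in Proposition \ref{prop:HMH} and in Section \ref{subsec:additive_convolution}.

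Two points should be tightened. First, the reduction to $\ccM\simeq\cO_{\Afu_\tau}^{\oplus N}$ must not be justified by ``no singularities and trivial monodromy, hence trivial'': without regularity at infinity this inference fails ($E^{\tau}$ has trivial monodromy on $\Afu_\tau$), and with that reading the lemma itself would be false (for $\ccM=E^{-\tau}$ the pushforward to $\Gms$ is a nonzero module supported on $\mu_m$, so the cohomology does not vanish). The intended meaning of ``constant'' is: an iterated extension of $(\cO_{\Afu_\tau},\rd)$ (equivalently, underlying a constant mixed Hodge module, \ie pulled back from a point); such extensions split as $\cD$-modules because $\Ext^1_{\cD_{\Afu_\tau}}(\cO,\cO)\simeq\coH^1_\dR(\Afu_\tau,\cO)=0$, or one can avoid splitting entirely by exactness of the three functors applied to a filtration with quotients $\cO$. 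Second, the case $?=\rc$ needs one more line: your fiberwise computation gives $q_+=0$, while compact supports require $q_\dag=0$ as well; this follows by duality, since $\bD(\cO\otimes E^{s^m\tau})\simeq\cO\otimes E^{-s^m\tau}$ up to shift and the same bijectivity argument applies with $-s^m$ (still a unit on $\Gms$) in place of $s^m$. Then $\rmid$ is the image of zero, as you say.
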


\begin{proof}
Let us first consider the classical component $\coH^r_\dR(\Gmt\times\Afu_\tau,\cO_{\Gmt}\boxtimes \ccM,t\tau)$.
Recall that, for a holonomic $\cD_{\Afu_\tau}$-module $\ccM $, we have
\[
\coH^r_{\dR,?}(\Gmt\times\Afu_\tau,\cO_{\Gmt}\boxtimes \ccM,t\tau)\simeq\coH^{r-1}_{\dR,?}(\Gmt,j^+\FT_\tau(\ccM))\quad\forall r,\ ?=\rc,\emptyset.
\]
Furthermore, if~$\ccM$ is constant, then $\coH^r_{\dR,\rc}$ and $\coH^r_\dR$ vanish for all $r$ since $\FT_\tau \ccM$ is supported at the origin.

For the non-classical component, we can assume $m\geq2$. By taking pushforward by $s\mto t=s^m$, we have an identification
\begin{align*}
\coH^r_{\dR,?}(\Gms\times\Afu_\tau,\cO_{\Gms}\boxtimes \ccM,s^m\tau)_{\neq1}&\simeq\coH^r_{\dR,?}(\Gmt\times\Afu_\tau,j^+\go{m,t}\boxtimes \ccM,t\tau)\\
&\simeq\coH^{r-1}_{\dR,?}(\Gmt,j^+\go{m,t}\otimes j^+\FT_\tau(\ccM)),
\end{align*}
and these cohomologies vanish if $\ccM$ is constant.
\end{proof}

\begin{proof}[Proof of Proposition \ref{prop:HMH}]
According to the isomorphisms $(*)$ in \eqref{eq:locsg}, we may replace $\coH^{d+1}_?(\Aff{d+1},s^mg)$ with $\coH^{d+1}_?(\Gms\times\Aff{d},s^mg)$. Then, applying the pushforward by $g$, the complexes $\Hm g_! \pQQ^\rH_{\Aff{d}}$ and $\Hm g_* \pQQ^\rH_{\Aff{d}}$ reduce to the single mixed Hodge module $M^\rH$ up to complexes in $\catD^\rb(\MHM(\Afu_\tau))$ having constant Hodge modules as cohomologies. Therefore, by Lemma~\ref{lem:Hcstzero}, the hypercohomologies of these complexes reduce to $\coH^2_?(\Gms\times\Afu_\tau,\pQQ^\rH_{\Gms}\boxtimes M^\rH,s^m\tau)$. By~ap\-ply\-ing then the pushforward by $s\mto t=s^m$ we obtain the desired isomorphisms.
\end{proof}

Let us focus on the classical component and set $N^\rH=\pQQ^\rH_{\Afu_t}\boxtimes M^\rH$, so that $\Hm j^*N^\rH=\pQQ^\rH_{\Gmt}\boxtimes M^\rH$. The following lemma is mainly \cite[Cor.\,A.31]{F-S-Y18} plus an argument developed within the proof of \hbox{\cite[Th.\,3.2]{F-S-Y18}}. We give details for the sake of completeness.

\begin{prop}[\cite{F-S-Y18}]\label{prop:weightssimple}
Assume that $M^\rH$ is a pure object of $\MHM(\Afu_\tau)$ of weight $w$. Then
\[
\coH^2_\rmid(\Gmt\times\Afu_\tau,\Hm j^*N^\rH,t\tau)=W_{w+1}\coH^2(\Gmt\times\Afu_\tau,\Hm j^*N^\rH,t\tau).
\]
\end{prop}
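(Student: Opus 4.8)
The plan is to reduce the statement to Proposition~\ref{prop:weightsHdUf} and then to a weight computation concentrated at the fibre $\tau=0$. First I would apply Proposition~\ref{prop:weightsHdUf} with $U=\Gmt\times\Afu_\tau$ (so $d=1$ and $\dim U=2$), with $f=t\tau$, and with $N_U^\rH=\Hm j^*N^\rH=\pQQ^\rH_{\Gmt}\boxtimes M^\rH$. This module is pure of weight $w+1$, being the external product of the weight-one module $\pQQ^\rH_{\Gmt}$ with the weight-$w$ module $M^\rH$, and it is self-dual in the sense required there: from $\bD\pQQ^\rH_{\Gmt}\simeq\pQQ^\rH_{\Gmt}(1)$ and, by polarizability, $\bD M^\rH\simeq M^\rH(w)$, one gets $\bD N_U^\rH(-w-1)\simeq N_U^\rH$. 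Proposition~\ref{prop:weightsHdUf} then yields that $\coH^2_\rc$ has weights $\leq w+1$, that $\coH^2$ has weights $\geq w+1$ (so that $W_{w+1}\coH^2=\gr_{w+1}^W\coH^2$), that $\coH^2_\rmid$ is pure of weight $w+1$ and hence contained in $W_{w+1}\coH^2$, and --- the key point --- that the asserted equality is equivalent to the natural morphism $\gr_{w+1}^W\coH^2_\rc\to\gr_{w+1}^W\coH^2$ being an isomorphism. Using the self-duality of $N_U^\rH$ and the invariance of $\pQQ^\rH_{\Gmt}$ under $t\mto-t$ (which identifies the cohomology with the function $-f$ to that with $f$, so that $\coH^2_\rc$ and $\coH^2$ become Verdier dual up to a Tate twist), this morphism is anti-self-dual, so it suffices to prove that it is \emph{surjective}.

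Next I would localize on the $\tau$-factor along $\Afu_\tau=\{0\}\sqcup\Gmtau$, applying the functors $\coH^\bullet_?(\Gmt\times(\cbbullet),\pQQ^\rH_{\Gmt}\boxtimes(\cbbullet),t\tau)$ to the two localization triangles attached to $M^\rH$. On the open stratum $\Gmt\times\Gmtau$ the change of variables $(t,\tau)\mto(\theta,\tau)=(t\tau,\tau)$ turns $t\tau$ into the coordinate $\theta$ and $\pQQ^\rH_{\Gmt}\boxtimes\Hm j^*M^\rH$ into $\pQQ^\rH_{\Gmtheta}\boxtimes\Hm j^*M^\rH$; pushing forward along the first projection and using the Künneth formula together with $\Pi_\theta(\Hm j_!\pQQ^\rH_{\Gmtheta})=\mathbf{1}$ (pure of weight $0$) and $\Pi_\theta(\Hm j_*\pQQ^\rH_{\Gmtheta})=\mathbf{1}(-1)$ (pure of weight $2$), one finds that the open contribution to $\coH^2_\rc$ is $\mathbf{1}\otimes\coH^1_\rc(\Gmtau,\Hm j^*M^\rH)$, of weights $\leq w+1$, while the open contribution to $\coH^2$ is $\mathbf{1}(-1)\otimes\coH^1(\Gmtau,\Hm j^*M^\rH)$, of weights $\geq w+3$. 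In particular $\gr_{w+1}^W$ of the open contribution to $\coH^2$ vanishes, so that $W_{w+1}\coH^2$ is carried by the closed stratum $\Gmt\times\{0\}$, on which $t\tau$ vanishes identically and the contributions lie in $\EMHS^\cst$: those of $\coH^2$ are built from $\Hm i_0^{!}M^\rH$, those of $\coH^2_\rc$ from $\Hm i_0^{*}M^\rH$, each tensored with $\coH^\bullet(\Gmt,\QQ)$.

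It then remains to compare the closed-stratum contributions to $\coH^2_\rc$ and to $\coH^2$, which are governed by $\Hm i_0^{*}M^\rH$ (of weights $\leq w$) and $\Hm i_0^{!}M^\rH$ (of weights $\geq w$) respectively, the two being interchanged up to the twist $(w)$ by Verdier duality via the self-duality of $M^\rH$. One checks that this forces no graded piece of weight $w+1$ in degree $2$ to appear in $\coH^2$ outside the image of $\coH^2_\rc$; this is precisely \cite[Cor.\,A.31]{F-S-Y18}, obtained via the argument developed within the proof of \cite[Th.\,3.2]{F-S-Y18}. Granting it, $\gr_{w+1}^W\coH^2_\rc\to\gr_{w+1}^W\coH^2$ is surjective, hence by the first paragraph an isomorphism, and therefore $\coH^2_\rmid=W_{w+1}\coH^2$, as claimed. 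The main obstacle is exactly this last weight-comparison at $\tau=0$: it is where the purity of $M^\rH$ is used in an essential way, the point of invoking Proposition~\ref{prop:weightsHdUf} being that it reduces the whole verification to the single graded weight $w+1$.
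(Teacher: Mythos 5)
Your opening reduction is sound: with $N_U^\rH=\pQQ^\rH_{\Gmt}\boxtimes M^\rH$, pure of weight $w+1$ and self-dual, Proposition \ref{prop:weightsHdUf} does reduce the statement to the bijectivity of $\gr^W_{w+1}\coH^2_\rc\to\gr^W_{w+1}\coH^2$, and the duality argument (using $t\mapsto-t$) legitimately reduces bijectivity to surjectivity. The localization along $\tau=0$ is also essentially correct, except that the open contribution to $\coH^2$ is only of weights $\geq w+2$, not $\geq w+3$ (the group $\coH^1(\Gmtau,\Hm j^*M^\rH)$ may contain weight-$w$ classes); this is harmless for the vanishing of its $\gr^W_{w+1}$. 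The genuine gap is the last step. After your reduction, what must be proved is that every weight-$(w+1)$ class of $\coH^2$, which you have located in the closed-stratum groups built from $\coH^\bbullet(\Gmt)\otimes\cH^\bbullet\Hm i^!M^\rH$, lies in the image of $\coH^2_\rc$. This is \emph{not} ``precisely \cite[Cor.\,A.31]{F-S-Y18}'': that corollary, as used here, identifies $\coH^2(\Afu_t\times\Afu_\tau,N^\rH,t\tau)$ with $\coker[\rN:\psi_{\tau,1}M^\rH\to\psi_{\tau,1}M^\rH(-1)]$, a statement about the affine plane, which never occurs in your reduction, and the accompanying dimension count $\dim\coH^1_{\dR,\rmid}(\Afu_t,\FT_\tau M)=\dim\rP_0$ concerns the Fourier transform over $\Afu_t$, not your closed-stratum pieces. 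Moreover, weight bounds on $\Hm i^*M^\rH$ and $\Hm i^!M^\rH$ plus duality cannot by themselves ``force'' the surjectivity, because the connecting morphisms of the two localization sequences are essential: for a constant summand $M^\rH_{\cst}$ (which purity allows), $\cH^\bbullet\Hm i^!M^\rH_{\cst}$ is $\psi_{\tau,1}M^\rH_{\cst}(-1)$, pure of weight $w+1$, so the closed stratum does contribute nonzero weight-$(w+1)$ classes in the relevant degree, whereas $\coH^2$ and $\coH^2_\rc$ both vanish by Lemma \ref{lem:Hcstzero}; hence nothing about the map $\coH^2_\rc\to\coH^2$ can be read off from the closed-stratum terms alone.

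To close the argument you would in effect have to follow the paper's route: use purity to split $M^\rH=M_0^\rH\oplus M^\rH_{\cst}\oplus M_1^\rH$, dispose of the constant summand by Lemma \ref{lem:Hcstzero}, and localize in the $t$-variable (diagram \eqref{eq:locstau}) rather than in $\tau$, so that for $M_0^\rH$ and $M_1^\rH$ the extreme terms have weights $\leq w$ and $\geq w+2$ and the statement becomes the one on $\Afu_t\times\Afu_\tau$, namely \eqref{eq:H1midA}. Only there does \cite[Cor.\,A.31(1)]{F-S-Y18} apply verbatim, and only there does the equality $\dim\coH^1_{\dR,\rmid}(\Afu_t,\FT_\tau M)=\dim\rP_0$ from the proof of \cite[Th.\,3.2]{F-S-Y18} supply the missing dimension count. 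As written, the bridge between your $\tau=0$ reduction and the quoted input is absent, and that bridge is the actual content of the proof.
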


\begin{proof}
Since $M^\rH$ is pure, it can be decomposed as the direct sum $M_0^\rH\oplus M_{\cst}^\rH\oplus M_1^\rH$, where $M_0^\rH$ is supported at $\tau=0$, $M_{\cst}^\rH$ is constant, and $M_1^\rH$ is such that $M_1$ has no submodule nor quotient module supported at $\tau=0$ or equal to a constant module.
We can thus consider separately $M_0^\rH$, $M_{\cst}^\rH$ and $M_1^\rH$, the case of $M_{\cst}^\rH$ being solved trivially since both terms in the lemma are zero in that case, according to Lemma \ref{lem:Hcstzero}.

Let us consider a diagram similar to~\eqref{eq:locsg}:
\begin{equation}\label{eq:locstau}
\begin{array}{c}
\xymatrix@C=.4cm{
\coH^2_\rc(\Afu_\tau,M^\rH)&\coH^2_\rc(\Aff{2},N^\rH,t\tau)\ar[l]\ar[d]&\coH^2_\rc(\Gmt\times\Afu_\tau,\Hm j^*N^\rH,t\tau)\ar[l]\ar[d]&\coH^1_\rc(\Afu_\tau,M^\rH)\ar[l]\\
\coH^0(\Afu_\tau,M^\rH)\ar[r]&\coH^2(\Aff{2},N^\rH,t\tau)\ar[r]&\coH^2(\Gmt\times\Afu_\tau,\Hm j^*N^\rH,t\tau)\ar[r]&\coH^1(\Afu_\tau,M^\rH)(-1)
}
\end{array}
\end{equation}
The upper and lower leftmost terms are zero for $M^\rH=M_0^\rH$ or $M_1^\rH$, the upper, \resp lower, middle terms have weight $\leq w+1$, \resp $\geq w+1$ (Proposition \ref{prop:weightsHdUf}), the upper rightmost term has weights $\leq w$ and the lower rightmost term has weights $\geq w+2$. In these cases, the assertion of the lemma is thus equivalent to the similar one on $\Afu_t$:
\begin{equation}\label{eq:H1midA}
\coH^2_\rmid(\Afu_t\times\Afu_\tau,N^\rH,t\tau)=W_{w+1}\coH^2(\Afu_t\times\Afu_\tau,N^\rH,t\tau).
\end{equation}

\subsubsection*{The case $M^\rH=M_0^\rH$}
Denoting by $i$ the inclusion $\Afu_t\times\{0\}\hto\Afu_t\times\Afu_\tau$, there exists a pure Hodge structure $V^\rH$ of weight $w$ such that $N^\rH=\Hm i_*(\pQQ_{\Afu_t}^\rH\otimes V^\rH)$. With the notation of \eqref{eq:NHUj} (with $d=1$ here), we find that $(N^\rH)_*^j=0$ for $j\neq1$, hence $\coH^2(\Aff{2},N^\rH,t\tau)=0$ in that case, so that both terms are zero in \eqref{eq:H1midA}.

\subsubsection*{The case $M^\rH=M_1^\rH$}
It has been treated in the proof of \cite[Th.\,3.2]{F-S-Y18}. Let us just sketch it. It follows from \cite[Cor.\,A.31(1)]{F-S-Y18} that
$\coH^2(\Afu_t\times\Afu_\tau,N^\rH,t\tau)$ is isomorphic to the mixed Hodge structure
\[
\coker[\rN\colon \psi_{\tau,1}M^\rH\ra\psi_{\tau,1}M^\rH(-1)].
\]
Purity of the mixed Hodge structure $\coH^2_\rmid(\Afu_t\times\Afu_\tau,N^\rH,t\tau)$ and inclusion in the subspace $W_{w+1}\coH^2(\Afu_t\times\Afu_\tau,N^\rH,t\tau)$ are clear from the weight estimates of Proposition \ref{prop:weightsHdUf}. We are reduced to proving equality of the dimensions of the de~Rham fibers, that is,
\[
\dim\coH^1_{\dR,\rmid}(\Afu_t,\FT_\tau M)=\dim W_{w+1}\coH^1_\dR(\Afu_t,\FT_\tau M).
\]
This is obtained in \loccit\ by proving the equality
\[
\dim\coH^1_{\dR,\rmid}(\Afu_t,\FT_\tau M)=\dim \rP_0,
\]
where $\rP_0$ is the primitive part of weight $w-1$ in $\gr^W\psi_{\tau,1} M^\rH$.
\end{proof}

From Propositions \ref{prop:HMH} and \ref{prop:weightssimple} we deduce immediately:

\begin{cor}\label{cor:midWcl}
Under the previous assumptions on $g$ and if $d\geq2$, we have in $\EMHS^\cst\simeq\MHS$:
\[
\coH^{d+1}_\rmid(\Aff{d+1},s^mg)_\cl=W_{d+1}\coH^{d+1}(\Aff{d+1},s^mg)_\cl,
\]
that is, according to \eqref{eq:HrUMfsupindepr},
\[
\coH^{d+1}_\rmid(\Aff{d+1},tg)=W_{d+1}\coH^{d+1}(\Aff{d+1},tg).\eqno\qed
\]
\end{cor}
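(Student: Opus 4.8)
The proof of Corollary \ref{cor:midWcl} is essentially a bookkeeping exercise that assembles the two preceding propositions, so the plan is to make the assembly explicit and check that the two displayed equalities in the statement are equivalent.

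First I would recall the decomposition $\EMHS^\mf=\EMHS^\cst\oplus\EMHS^\mf_{\neq1}$ and observe that the statement concerns only the classical component, so $\coH^{d+1}_\rmid(\Aff{d+1},s^mg)_\cl$ and $W_{d+1}\coH^{d+1}(\Aff{d+1},s^mg)_\cl$ are objects of $\EMHS^\cst\simeq\MHS$, and the displayed identity makes sense in $\MHS$. Next, I would invoke Proposition \ref{prop:HMH}, which identifies, compatibly with the $?=\emptyset,\rc,\rmid$ functoriality, the classical components
\[
\coH^{d+1}_?(\Aff{d+1},s^mg)_\cl\simeq\coH^2_?(\Gmt\times\Afu_\tau,\pQQ^\rH_{\Gmt}\boxtimes M^\rH,t\tau)=\coH^2_?(\Gmt\times\Afu_\tau,\Hm j^*N^\rH,t\tau),
\]
where $M^\rH$ is the pure Hodge module of weight $d$ defined in \eqref{eq:MH} and $N^\rH=\pQQ^\rH_{\Afu_t}\boxtimes M^\rH$. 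Since these identifications are compatible with the natural morphisms $\coH^{d+1}_\rc\to\coH^{d+1}$, they identify the images, hence the $\rmid$ parts, and they are morphisms of mixed Hodge structures, hence compatible with the weight filtrations.

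Then I would apply Proposition \ref{prop:weightssimple} with $w=d$ (this is exactly where the cohomological tameness of $g$, hence the purity of $M^\rH$ of weight $d$, is used), which gives
\[
\coH^2_\rmid(\Gmt\times\Afu_\tau,\Hm j^*N^\rH,t\tau)=W_{d+1}\coH^2(\Gmt\times\Afu_\tau,\Hm j^*N^\rH,t\tau).
\]
Transporting this equality back through the isomorphisms of Proposition \ref{prop:HMH} yields
\[
\coH^{d+1}_\rmid(\Aff{d+1},s^mg)_\cl=W_{d+1}\coH^{d+1}(\Aff{d+1},s^mg)_\cl
\]
in $\EMHS^\cst\simeq\MHS$. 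Finally, to obtain the reformulation $\coH^{d+1}_\rmid(\Aff{d+1},tg)=W_{d+1}\coH^{d+1}(\Aff{d+1},tg)$, I would simply substitute $m=1$ (or use the independence of $m$ recorded in \eqref{eq:HrUMfsupindepr} of Proposition \ref{prop:HrUMf}): taking $m=1$, one has $s^1g=tg$ after renaming $s$ to $t$, and the classical component coincides with the whole object since $\EMHS^\mf_{\neq1}=0$ when $m=1$; thus the previous display specializes to the second claim.

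The only nontrivial point to watch is the compatibility of all the identifications with the $\rmid$-construction and with the weight filtration, but since Propositions \ref{prop:HMH} and \ref{prop:weightssimple} are stated uniformly for $?=\emptyset,\rc,\rmid$ and in the category of (mixed, $\mf$-)Hodge structures, this is automatic. There is no real obstacle here: the corollary is a formal consequence of the two propositions, and the main work has already been carried out in establishing Proposition \ref{prop:HMH} (reduction to $\Gms\times\Aff d$, pushforward by $g$, vanishing Lemma \ref{lem:Hcstzero}, and pushforward by $s\mapsto t=s^m$) and Proposition \ref{prop:weightssimple} (the localization diagram \eqref{eq:locstau}, the case analysis $M^\rH=M_0^\rH,M_\cst^\rH,M_1^\rH$, and the de~Rham dimension count from \cite{F-S-Y18}).
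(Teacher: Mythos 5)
Your proposal is correct and follows exactly the paper's route: the corollary is deduced immediately from Proposition \ref{prop:HMH} (identification of the classical component with $\coH^2_?(\Gmt\times\Afu_\tau,\pQQ^\rH_{\Gmt}\boxtimes M^\rH,t\tau)$) and Proposition \ref{prop:weightssimple} applied with $w=d$, the purity of $M^\rH$ coming from the tameness of $g$, and the second display is the $m$-independence \eqref{eq:HrUMfsupindepr}. Your spelled-out compatibility checks and the $m=1$ specialization match the paper's (terser) argument, so there is nothing to correct.
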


\subsection{Computation of the non-classical part by additive convolution}\label{subsec:additive_convolution}
We keep the setting of Section \ref{subsec:tame}. We revisit \eqref{eq:nonclassical} from the point of view of additive convolution. We~consider $\go{m,t}^\rH$
as a pure Hodge module of weight $1$ on $\Afu_t$. Regarded as a complex Hodge module by extending the scalars, $\go{m,t}^\rH$ is the direct sum of Kummer Hodge modules $K_{\zeta,t}^\rH$ ($\zeta^m=1$, $\zeta\neq1$), which are pure complex Hodge modules of weight $1$ on $\Afu_t$ and Hodge filtration jumping at $0$ only. We continue using Notation \ref{nota:ij} for $\Afu$ with various coordinates.

\subsubsection*{A review of additive and middle additive convolutions}
We refer to \cite[\S1.1]{D-S12} for the following results. Let $M$ be a holonomic module on $\Afu_\tau$. We consider the sum map
\begin{align}
\Afu_\tau\times\Afu_\tau&\To{\Sum}\Afu_u\\
(\tau_1,\tau_2)&\Mto{\hphantom{\Sum}} u=\tau_1+\tau_2,
\end{align}
and define the additive convolutions $\go{m,\tau}\star_!M$ and $\go{m,\tau}\star M$ as the respective pushforwards $\Sum_\dag(\go{m,\tau}\boxtimes M)$ and $\Sum_+(\go{m,\tau}\boxtimes M)$. These objects of $\catD^\rb(\cD_{\Afu})$ have cohomology in degree zero only, and the image of the natural morphism $\go{m,\tau}\star_!M\to \go{m,\tau}\star M$ is denoted by $\go{m,\tau}\star_\rmid M$. We have
\begin{equation}\label{eq:523}
\FT(\go{m,\tau}\star M)\simeq j_+j^+\FT(\go{m,\tau}\star M)\quad\text{and}\quad\FT(\go{m,\tau}\star_\rmid M)\simeq j_{\dag+}j^+\FT(\go{m,\tau}\star M).
\end{equation}
In particular, $\go{m,\tau}\star M$ does not have any nontrivial constant submodule.

\subsubsection*{Weight properties of additive convolutions}
We assume that $M^\rH$ is a pure Hodge module of weight $w$ on $\Afu_\tau$. Then the various convolutions $\go{m,\tau}^\rH\star_?M^\rH$ ($?=!,\emptyset,\rmid$) are mixed Hodge modules.

\begin{lemma}
The mixed Hodge module $\go{m,\tau}^\rH\star M^\rH$ has weights $\geq w+1$, and we have
\[
\go{m,\tau}^\rH\star_\rmid M^\rH=W_{w+1}(\go{m,\tau}^\rH\star M^\rH).
\]
\end{lemma}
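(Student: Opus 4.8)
The plan is to reduce the statement to the weight behaviour of the pushforward of a pure Hodge module under the open inclusion $j\colon\Gm\hto\Afu$, via the Fourier transform. Recall from \eqref{eq:523} that $\FT(\go{m,\tau}^\rH\star M^\rH)\simeq \Hm j_*\,\Hm j^*\FT(\go{m,\tau}^\rH\star M^\rH)$ and $\FT(\go{m,\tau}^\rH\star_\rmid M^\rH)\simeq \Hm j_{!*}\,\Hm j^*\FT(\go{m,\tau}^\rH\star M^\rH)$, so everything is controlled by the restriction $j^+\FT(\go{m,\tau}\star M)$ on $\Gm$. First I would identify this restriction: writing $\FT$ for the (analytic) Fourier transform, one has $\FT(\go{m,\tau}\star M)\simeq\FT(\go{m,\tau})\otimes\FT(M)$ after restriction to $\Gm$ by the exchange of additive convolution and tensor product under $\FT$ (the monodromic compatibility recalled in Section \ref{sec:perv0mf}), and $j^+\FT(\go{m,\tau}^\rH)$ is, as a variation of Hodge structure on $\Gm$, the direct sum of the rank-one Kummer variations associated with the nontrivial $m$-th roots of unity, pure of weight $0$ (or of an explicit weight read off from $\Hm\psi_{\theta,\neq1}\go{m}^\rH$ being pure of weight $0$ as recorded in Section \ref{subsec:KrH}). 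Hence $j^+\FT(\go{m,\tau}^\rH\star M^\rH)$ is a variation of Hodge structure on $\Gm$ pure of weight $w$ (up to the normalization shift), in particular a pure Hodge module on $\Gm$.

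Next I would invoke the standard weight estimates for the functors $\Hm j_*$ and $\Hm j_{!*}$ applied to a pure Hodge module of weight $w$ on $\Gm$: $\Hm j_!$ has weights $\leq w$, $\Hm j_*$ has weights $\geq w$ on $\Afu$, while $\Hm j_{!*}$ is pure of weight $w$, and $W_w(\Hm j_*)=\Hm j_{!*}$. Transporting this back through $\FT$ (which is an equivalence compatible with weights up to a fixed shift, by the results quoted in \cite[\S1.b]{Bibi05} and used throughout Section \ref{subsec:wfdr}), and keeping track of the weight shift coming from the fact that $\go{m,\tau}^\rH$ has weight $1$ rather than $0$ — this is precisely what turns ``$\geq w$'' into ``$\geq w+1$'' — gives that $\go{m,\tau}^\rH\star M^\rH$ has weights $\geq w+1$ and that $W_{w+1}(\go{m,\tau}^\rH\star M^\rH)$ is the image of $\go{m,\tau}^\rH\star_!M^\rH\to\go{m,\tau}^\rH\star M^\rH$, i.e.\ $\go{m,\tau}^\rH\star_\rmid M^\rH$, as asserted. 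Alternatively, and perhaps more cleanly, one can argue directly on $\Afu_\tau$ using the decomposition theorem and the fact that $M^\rH$ pure and $\go{m,\tau}^\rH$ pure of weight $1$ give, after $\Sum_+$ and taking $\cH^0$, a mixed Hodge module whose weight filtration is pinned down by strict support considerations (the convolution has no constant submodule by \eqref{eq:523}, which removes the only possible subquotient of low weight).

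The main obstacle I expect is the careful bookkeeping of weight normalizations: $\go{m,\tau}^\rH$ is declared to have weight $1$ (as a Hodge module, matching the convention $\pQQ^\rH[1]$ has weight $1$ on a curve), $M^\rH$ has weight $w$, the sum map $\Sum$ is smooth of relative dimension $1$ so $\Sum_+$ shifts accordingly, and $\FT$ carries its own normalization; assembling these so that the final answer reads exactly $w+1$ and not $w$, $w+2$, or $w-1$ requires pinning down each convention against the references \cite{D-S12,Bibi05,F-S-Y18}. The purely homological input — that $W_w\Hm j_*(\text{pure of weight }w)=\Hm j_{!*}$ and that $\Hm j_{!*}$ is pure — is standard (it is exactly the structure of the three Hodge modules $\go{m}^\rH$ treated in Section \ref{subsec:KrH}, where the vanishing of the nontrivial monodromy eigenvalue makes $\Hm j_!=\Hm j_{!*}=\Hm j_*$; here the extra subtlety is only the possible weight-$w$ part coming from monodromy-invariant vanishing cycles of $M^\rH$ at the points of $\Gm$, which is handled by the middle extension). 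So the proof is short: identify $j^+\FT(\go{m,\tau}^\rH\star M^\rH)$ as pure, apply the weight properties of $j_*$ versus $j_{!*}$, and track the weight shift; I would present it in that order.
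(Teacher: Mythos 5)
Your main route has a genuine gap: it treats $\FT(\go{m,\tau}^\rH\star M^\rH)$ as if it were an object of $\MHM$ carrying weights, and claims that $j^+\FT(\go{m,\tau}\star M)\simeq j^+\bigl(\FT(\go{m,\tau})\otimes\FT(M)\bigr)$ underlies a pure variation of Hodge structure on $\Gm$. Neither step is available. For a general regular holonomic $M$ (and in the intended application $\wt M_k$ is not monodromic), $\FT M$ has an irregular singularity at infinity, so $j^+\FT(\go{m,\tau}\star M)$ is a connection on $\Gm$ which cannot underlie a polarizable variation of Hodge structure (such variations have regular singularities); a fortiori it is not a pure Hodge module on $\Gm$, and the facts you want to invoke ($\Hm j_*$ increases weights, $W_w\Hm j_*=\Hm j_{!*}$ on a pure object of weight $w$) have no meaning on that side. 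Likewise there is no theorem that $\FT$ ``is compatible with weights up to a fixed shift'': the Fourier transform does not preserve $\MHM$, and in Section \ref{subsec:wfdr} the weight filtration on $\FT_\theta N$ is \emph{defined} as $\FT W_\bbullet N$ — which is precisely what would need to be controlled here, not something that can be quoted. The correct use of \eqref{eq:523} is on the $\tau$-side only: it tells you that $\go{m,\tau}\star M/\go{m,\tau}\star_\rmid M$ is constant and that $\go{m,\tau}\star M$ has no nonzero constant submodule.

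Your parenthetical alternative is essentially the paper's proof, but it is incomplete and one point is mis-stated. The weight bounds require no Fourier transform and no shift bookkeeping: $\go{m,\tau}^\rH\boxtimes M^\rH$ is pure of weight $w+1$, so by Saito's estimates for $\Sum_!$ and $\Sum_*$ the convolution $\go{m,\tau}^\rH\star_! M^\rH$ has weights $\leq w+1$, $\go{m,\tau}^\rH\star M^\rH$ has weights $\geq w+1$, and $\go{m,\tau}^\rH\star_\rmid M^\rH$ is pure of weight $w+1$. The possible discrepancy between $\go{m,\tau}^\rH\star_\rmid M^\rH$ and $W_{w+1}(\go{m,\tau}^\rH\star M^\rH)$ is not ``a subquotient of low weight'': both are pure of weight $w+1$, and the quotient $W_{w+1}(\go{m,\tau}\star M)/\go{m,\tau}\star_\rmid M$ is a constant module (being a submodule of the constant module $\go{m,\tau}\star M/\go{m,\tau}\star_\rmid M$) underlying a pure Hodge module of weight exactly $w+1$; by semisimplicity of pure Hodge modules it is then a direct summand of $W_{w+1}(\go{m,\tau}^\rH\star M^\rH)$, hence gives a constant submodule of $\go{m,\tau}\star M$, which must vanish by \eqref{eq:523}. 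That semisimplicity/direct-summand step, together with the correct location of the potential defect (a constant summand of weight exactly $w+1$), is what your sketch is missing.
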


\begin{proof}
Since $\go{m,\tau}^\rH\boxtimes M^\rH$ is pure of weight $w+1$, it follows that $\go{m,\tau}^\rH\star_! M^\rH$ has weights $\leq w+1$, $\go{m,\tau}^\rH\star M^\rH$ has weights $\geq w+1$, and so $\go{m,\tau}^\rH\star_\rmid M^\rH$ is pure of weight $w+1$. From \eqref{eq:523} one sees that the quotient $\go{m,\tau}\star M/\go{m,\tau}\star_\rmid M$ is constant, hence so is the quotient $W_{w+1}\go{m,\tau}\star M/\go{m,\tau}\star_\rmid M$, which underlies a pure Hodge module of weight $w+1$, hence is a direct summand of \hbox{$W_{w+1}(\go{m,\tau}\star M)$}. Since $\go{m,\tau}\star M$ does not have any nontrivial constant submodule, the last assertion follows.
\end{proof}

\begin{lemma}\label{lem:Wi!}
The mixed Hodge structure $\cH^0\Hm i^!(\go{m,\tau}^\rH\star M^\rH)$ has weights $\geq w+1$ and
\[
W_{w+1}\bigl[\cH^0\Hm i^!(\go{m,\tau}^\rH\star M^\rH)\bigr]\simeq\rP_0\psi_{\tau,1}(\go{m,\tau}^\rH\star_\rmid M^\rH)(-1).
\]
\end{lemma}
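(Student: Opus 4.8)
The plan is to analyze the nearby cycle structure of $\go{m,\tau}^\rH\star M^\rH$ at $\tau=0$ and relate its $\Hm i^!$ to a primitive part. First I would use the fact established in the previous lemma that $\go{m,\tau}^\rH\star_\rmid M^\rH$ is pure of weight $w+1$ and equals $W_{w+1}(\go{m,\tau}^\rH\star M^\rH)$. Since $\go{m,\tau}^\rH\star_\rmid M^\rH$ is a pure Hodge module with no constant submodule nor quotient, I would invoke the standard description of $\cH^0\Hm i^!$ of an intermediate extension: if $P^\rH=\Hm j_{!*}(j^*P^\rH)$, then $\cH^0\Hm i^! P^\rH$ is computed from the monodromy $\rN$ on the unipotent nearby cycles $\psi_{\tau,1}P^\rH$, namely $\cH^0\Hm i^! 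P^\rH\simeq\ker[\rN\colon\psi_{\tau,1}P^\rH(-1)\to\text{(appropriately twisted)}]$, or dually, its image sits in a short exact sequence with $\coker\rN$. More precisely, the variation/canonical maps give that $\cH^{-1}\Hm i^*$ and $\cH^0\Hm i^!$ are respectively $\ker\rN$ (on $\psi$, suitably placed) — and by the weight filtration of the nilpotent orbit, the relevant graded piece is governed by the primitive part $\rP_0$ in weight $w$ (for $\psi_{\tau,1}$ of a weight $w+1$ module, the nearby cycles carry the monodromy weight filtration centered at $w$, whence the lowest nonzero primitive space is $\rP_0\psi_{\tau,1}$ of weight $w$, twisting to weight $w+1$ after the $(-1)$).

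Second, I would settle the weight estimate $\cH^0\Hm i^!(\go{m,\tau}^\rH\star M^\rH)\geq w+1$. Because $\Hm i^!$ shifts weights upward (it is right adjoint behaviour, and for a mixed Hodge module of weights $\geq w'$ one has $\cH^0\Hm i^!$ of weights $\geq w'$), and since we already know $\go{m,\tau}^\rH\star M^\rH$ has weights $\geq w+1$, the estimate follows immediately. Then $W_{w+1}\cH^0\Hm i^!(\go{m,\tau}^\rH\star M^\rH)$ is the lowest weight graded piece, so it only sees the weight-$(w+1)$ part, which is $\go{m,\tau}^\rH\star_\rmid M^\rH$; hence I can replace $\go{m,\tau}^\rH\star M^\rH$ by $\go{m,\tau}^\rH\star_\rmid M^\rH$ in computing $W_{w+1}\cH^0\Hm i^!$. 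This reduces the statement to: $W_{w+1}\cH^0\Hm i^!(\go{m,\tau}^\rH\star_\rmid M^\rH)\simeq\rP_0\psi_{\tau,1}(\go{m,\tau}^\rH\star_\rmid M^\rH)(-1)$.

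Third, I would prove this last identity by the local theory of the nearby cycles of an intermediate extension. Writing $P^\rH=\go{m,\tau}^\rH\star_\rmid M^\rH$, which is $\Hm j_{!*}$ of its restriction to $\Gm$, the exact sequence $0\to\cH^{-1}\Hm i^*P^\rH\to\psi_{\tau,1}P^\rH\To{\rN}\psi_{\tau,1}P^\rH(-1)\to\cH^0\Hm i^!P^\rH\to0$ (the vanishing cycle being $\image\rN$ for an intermediate extension at a point where monodromy may be non-unipotent only in the $\neq1$-part, which does not contribute to $\Hm i^{*/!}$ here) identifies $\cH^0\Hm i^!P^\rH$ with $\coker[\rN\colon\psi_{\tau,1}P^\rH\to\psi_{\tau,1}P^\rH(-1)]$. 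By the monodromy weight filtration relative to the weight $w+1$ of $P^\rH$ — so $\psi_{\tau,1}P^\rH$ carries $\rM_\bbullet$ centered at $w$ — the cokernel of $\rN$ on $\gr^{\rM}$ is $\bigoplus_{\ell\geq0}\rP_\ell\psi_{\tau,1}P^\rH$ placed suitably, and taking the weight $\leq w+1$ part after the Tate twist by $(-1)$ selects exactly the primitive part $\rP_0\psi_{\tau,1}P^\rH(-1)$ of weight $w+1$. The main obstacle will be bookkeeping the weight and Tate-twist conventions so that "$W_{w+1}\coker\rN$" matches "$\rP_0\psi_{\tau,1}(-1)$" on the nose — in particular checking that no higher primitive parts $\rP_{\ell\geq1}$ contribute, which follows because $\rP_\ell$ sits in $\rM$-weight $w+\ell$ of $\psi$, hence weight $w+\ell+1$ after twist, landing above $W_{w+1}$ precisely when $\ell\geq1$; and dually verifying that the $\neq1$ eigenvalue part of the monodromy (coming from the $\go{m,\tau}^\rH$ factor) genuinely drops out of $\Hm i^!$ at $\tau=0$, which it does since $\star_\rmid$ with $\go{m,\tau}$ affects the singularity structure at finite points of $\Afu_u$ only through the Kummer twists and leaves the local monodromy at $\tau=0$ controlled by $M^\rH$.
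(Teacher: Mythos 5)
Your proposal is correct and follows essentially the same route as the paper: reduce $W_{w+1}\bigl[\cH^0\Hm i^!(\go{m,\tau}^\rH\star M^\rH)\bigr]$ to the middle convolution using that $\Hm i^!$ increases weights together with the previous lemma, then compute $\cH^0\Hm i^!$ of the pure module as $\coker[\rN\colon\psi_{\tau,1}\to\psi_{\tau,1}(-1)]$ and pick out $\rP_0(-1)$ as the weight-$(w+1)$ graded piece via the primitive (Lefschetz) decomposition. The only blemish is the passing claim in your first paragraph that $\cH^0\Hm i^!$ is a kernel of $\rN$, but you correct this yourself in the final step, where the four-term exact sequence and the identification with $\coker\rN$ are stated correctly.
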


\begin{proof}
Since $\Hm i^!$ increases weights, it follows that
\[
W_{w+1}[\cH^0\Hm i^!(\go{m,\tau}^\rH\star M^\rH)]\simeq W_{w+1}[\cH^0\Hm i^!(\go{m,\tau}^\rH\star_\rmid M^\rH)].
\]
Recall that $\cH^0\Hm i^!$ can be computed as $\coker[\rN_\tau:\psi_{\tau,1}\to\psi_{\tau,1}(-1)]$ on pure Hodge modules (\cf\eg\cite[Ex.\,A.2]{F-S-Y18}) and that the graded component $\gr_{w+1+\ell}^W$ ($\ell\geq0$) is isomorphic to the primitive part $\rP_\ell(-1)$. The conclusion follows.
\end{proof}

\subsubsection*{A comparison result}
Let $M^\rH$ be a mixed Hodge module on $\Afu_\tau$. We aim at computing the irregular Hodge filtration associated with the object of $\EMHS^\mf$ defined as
\begin{equation}\label{eq:526}
\coH^2(\Afu_t\times\Afu_\tau,\go{m,t}^\rH\boxtimes M^\rH,t\tau)=\coH^2(\Gmt\times\Afu_\tau,\Hm j^*\go{m,t}^\rH\boxtimes M^\rH,t\tau).
\end{equation}
Since we are only interested in the Hodge filtration, we work in the context of complex Hodge modules and we consider the objects $\coH^2(\Gmt\times\Afu_\tau,\Hm j^*K_{\zeta,t}^\rH\boxtimes M^\rH,t\tau)$ with $\zeta^m=1$ and $\zeta\neq1$.

\begin{prop}\label{prop:comparisonstar}
Let $M^\rH$ be a mixed Hodge module on $\Afu_\tau$. Then, for $\epsilon\in\{1,\dots,m-1\}$ and $\zeta=\exp(-2\pii\epsilon/m)$, we have
\begin{starequation}\label{eq:comparisonstar}
\dim\gr^{p-\epsilon/m}_{F_\irr}\coH^r_\dR(\Afu_t\times\Afu_\tau,K_{\zeta,t}^\rH\boxtimes M^\rH,t\tau)=\dim\gr^p_F\bigl[\cH^{r-2}\Hm i^!(K_{\zeta^{-1},\tau}^\rH\star M^\rH)\bigr],
\end{starequation}%
and both terms are zero for $r\neq1,2$.
\end{prop}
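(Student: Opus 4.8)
The strategy is to identify both sides as de~Rham fibers of the same object of $\EMHS^\mf$, and then to apply the Hodge-theoretic stationary phase formula from \cite[\S5]{S-Y18} exactly as in Proposition~\ref{prop:MHSmfirr}. First I would reduce the computation to the monodromic setting. Since $K_{\zeta,t}^\rH$ has finite monodromy at $t=0$, Proposition~\ref{prop:sqg} (or rather its proof, which handles precisely the case of a Kummer sheaf $K_\zeta$ with $\zeta^m=1$) shows that $\coH^r(\Afu_t\times\Afu_\tau,K_{\zeta,t}^\rH\boxtimes M^\rH,t\tau)$ lies in $\EMHS^\mf$, and since $\zeta\neq1$ it lies in $\EMHS^\mf_{\neq1}$; in particular the underlying $\cD$-module on $\Afu_\theta$ is monodromic with no eigenvalue one. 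Exactly as in the computation at the end of the proof of Proposition~\ref{prop:sqg}, after the change of variables $\theta=t\tau$ the pushforward by the first projection identifies this cohomology with a sum of copies of $K_\zeta$, the multiplicity in degree $r$ being the dimension of $\coH^{r-1}_\dR(\Afu_\tau,K_{\zeta^{-1},\tau}^\vee\otimes$ something $)$; more precisely, the object $\coH^r(\Afu_t\times\Afu_\tau,K_{\zeta,t}^\rH\boxtimes M^\rH,t\tau)$ is identified, as an object of $\EMHS^\mf_{\neq1}$, with $K_{\zeta,\theta}^\rH\otimes \coH^{r-1}(\Afu_\tau, \Hm j^*(\text{pullback}),\dots)$, which is exactly the content of \eqref{eq:nonclassical}. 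This already explains the shift by $\epsilon/m$ on the left-hand side: it is the Hodge-filtration jump of the Kummer factor $K_{\zeta,\theta}^\rH$ at $\theta=0$, which sits in $F_\irr$-degree $\epsilon/m$ after passing to the de~Rham fiber (the irregular Hodge filtration of $K_{\zeta,\theta}^\rH$ on its de~Rham fiber $\coH^1_\dR(\Afu_\theta,K_\zeta\otimes E^\theta)$ jumps at $\epsilon/m$).

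Next I would relate the remaining factor to the additive convolution $K_{\zeta^{-1},\tau}^\rH\star M^\rH$. The map $f=t\tau$ on $\Afu_t\times\Afu_\tau$ is, up to the multiplicative-to-additive dictionary (Fourier transform in $\tau$), governed by the multiplicative convolution of $j^+K_{\zeta,t}$ with $j^+M$; equivalently, via the identification $\Gmt\times\Gmtau\simeq\Gmtheta\times\Gmtau$ used in the proof of Proposition~\ref{prop:sqg}, one gets on the Fourier side the tensor product of local systems and, translating back, the additive convolution $K_{\zeta^{-1},\tau}^\rH\star M^\rH$ on $\Afu_\tau$. Concretely I would invoke the Hodge-theoretic inverse stationary phase / local-to-global formula: by \cite[Prop.\,A.31]{F-S-Y18} (the variant already used in Proposition~\ref{prop:weightssimple}) together with \cite[\S5]{S-Y18}, the de~Rham cohomology $\coH^r_\dR(\Afu_t\times\Afu_\tau,K_{\zeta,t}\boxtimes M,t\tau)$ is computed, after Fourier transform in $t$ and localization at $t=0$, by the vanishing-cycle-type object $\cH^{r-2}\Hm i^!(K_{\zeta^{-1},\tau}^\rH\star M^\rH)$, with a Hodge/irregular-Hodge degree shift by $\epsilon/m$ coming from the Kummer twist. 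The vanishing outside $r\in\{1,2\}$ is then immediate: $\cH^{r-2}\Hm i^!$ of a single mixed Hodge module is nonzero only for $r-2\in\{-1,0\}$, since the convolution $K_{\zeta^{-1},\tau}^\rH\star M^\rH$ has cohomology in degree zero only (as recalled after \eqref{eq:523}).

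Assembling: on the left, grading $F_\irr$ of the de~Rham fiber of $K_{\zeta,\theta}^\rH\otimes(\text{factor})$ splits as a $\star$-tensor product of filtrations (Proposition~\ref{prop:bifilt} and the tensor-compatibility of $F_\irr$, \cite[Th.\,3.39]{Bibi15}), contributing the factor $t^{\epsilon/m}$ from $K_{\zeta,\theta}^\rH$ and $\gr^p_F$ of the $\tau$-factor; on the right, the $\tau$-factor is precisely $\gr^p_F[\cH^{r-2}\Hm i^!(K_{\zeta^{-1},\tau}^\rH\star M^\rH)]$ by the stationary phase identification; matching degrees gives \eqref{eq:comparisonstar}. \textbf{The main obstacle} I anticipate is bookkeeping the index conventions: one must be careful that the monodromy eigenvalue at $\theta=0$ corresponding to the Kummer factor $K_{\zeta,t}$ is $\zeta$ (not $\zeta^{-1}$) while the convolution on the $\tau$-side naturally involves $K_{\zeta^{-1},\tau}$ (the ``dual'' eigenvalue), and that the irregular-Hodge shift is $+\epsilon/m$ rather than $-\epsilon/m$; keeping these straight requires invoking \cite[Th.\,2.2]{TSaito20} on the interchange $\psi_{\theta,\zeta}\leftrightarrow\psi_{1/\theta,\zeta^{-1}}$ for monodromic modules, exactly as in the proof of Proposition~\ref{prop:MHSmfirr}, and \cite[Formula~(7)]{S-Y18} for the precise location of the irregular Hodge jumps.
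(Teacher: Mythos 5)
The first half of your plan is essentially the paper's own route: after discarding the part of $M^\rH$ supported at the origin, one uses Proposition \ref{prop:MHSmfirr} (hence \cite[Th.\,2.2]{TSaito20} and \cite[(7)]{S-Y18}) together with \eqref{eq:mfH} to rewrite $\dim\gr^{p-\epsilon/m}_{F_\irr}$ of the left-hand side as $\dim\gr^p_F\psi_{\theta,\zeta}$ of $\cH^{r-2}(t\tau)_+(j^+K_{\zeta,t}\boxtimes j^+M)$, that is, as the graded rank $\rk\gr^p_F$ of this smooth monodromic module on $\Gmtheta$; this is exactly \eqref{eq:comparisonstarLHS}. The genuine gap is the other half: you give no argument identifying this graded rank with $\dim\gr^p_F\cH^{r-2}\Hm i^!(K^\rH_{\zeta^{-1},\tau}\star M^\rH)$. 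The results you invoke do not carry that weight: \cite[Cor.\,A.31]{F-S-Y18} concerns the untwisted object $\pQQ^\rH_{\Afu_t}\boxtimes M^\rH$ (the classical component, as used in Proposition \ref{prop:weightssimple}), not the Kummer-twisted one, and \eqref{eq:nonclassical} (or \eqref{eq:HrUMf}) is proved for constant coefficients and expresses the non-classical part through Kummer-twisted cohomology of $V^*$ or of a cyclic cover; converting such an expression into $\Hm i^!$ of an additive convolution, with matching Hodge filtrations, is precisely the content of Proposition \ref{prop:comparisonstar}, so appealing to it here is circular.

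What is missing is the base-change mechanism that the paper uses at the level of mixed Hodge modules: the graded rank is computed by restriction to $\theta=1$ via $\Hm i_1^!$, and \cite[(4.4.3)]{MSaito87} converts it into $\gr^p_F\cH^{r-2}\Hm a'_*\Hm\delta^{\prime!}(\Hm j^*K^\rH_{\zeta^{-1},t}\boxtimes\Hm j^*M^\rH)$ on the torus $\{t\tau=1\}$ (the switch $\zeta\mto\zeta^{-1}$ coming from $\Hm\inv^!K^\rH_{\zeta,t}\simeq K^\rH_{\zeta^{-1},t}$); the same base change applied to $\HSum_*$ gives $\Hm i^!(K^\rH_{\zeta^{-1},\tau}\star M^\rH)\simeq\Hm a_*\Hm\delta^!(K^\rH_{\zeta^{-1},\tau}\boxtimes M^\rH)$, and the two sides are matched because $\Hm\delta^!(K^\rH_{\zeta^{-1},\tau}\boxtimes M^\rH)\to\Hm j_*\Hm\delta^{\prime!}(\Hm j^*K^\rH_{\zeta^{-1},\tau}\boxtimes\Hm j^*M^\rH)$ is an isomorphism, a point checked on the underlying $\cD$-modules via the $\cO$-flatness identity \eqref{eq:KotimesM}. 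Nothing in your sketch plays the role of this step; the ``multiplicative-to-additive dictionary'' explains heuristically why a convolution should appear, but it does not pin down the Hodge filtration. Finally, the bookkeeping you defer is not harmless: with your stated normalization (de~Rham fiber of $K^\rH_{\zeta,\theta}$ jumping at $\epsilon/m$), the proposed tensor assembly would place the jumps of the left-hand side at $p+\epsilon/m$, which differs from the target $p-\epsilon/m$ by $2\epsilon/m$ and cannot be fixed by an integer re-indexing; in fact, with $\zeta=\exp(-2\pii\epsilon/m)$ the de~Rham fiber of $K^\rH_{\zeta,\theta}$ jumps at $1-\epsilon/m$, and the paper avoids this trap by never factoring the object as a tensor product, working instead directly with $\gr^p_F\psi_{\theta,\zeta}$ of the pushforward.
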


The above formula is stated in the setting of complex Hodge modules in order to make precise the shift by $-\epsilon/m$. Once this shift is understood, the proof takes place within the category of mixed Hodge modules and yields the formula for the dimension of the Hodge filtration of \eqref{eq:526}.

\begin{proof}
We first explain the identification of the underlying de~Rham cohomology spaces. If~$M$ is supported at the origin, \ie $M=i_+V$, we have
\[
\coH^r_\dR(\Gmt\times\Afu_\tau,j^+K_{\zeta,t}\boxtimes i_+V,t\tau)\simeq\coH^{r-1}_\dR(\Gmt, j^+K_{\zeta,t}\otimes_\CC V),
\]
which is zero for all $r$ since $\zeta\neq1$. We can thus assume that $M=j_+j^+M$, so that
\begin{align*}
\coH^r_\dR(\Gmt\times\Afu_\tau,j^+K_{\zeta,t}\boxtimes M^\rH,t\tau)&\simeq\coH^r_\dR(\Gmt\times\Gmtau,j^+K_{\zeta,t}\boxtimes j^+M,t\tau)\\
&\simeq\phi_\theta\bigl[\cH^{r-2}(t\tau)_+(j^+K_{\zeta,t}\boxtimes j^+M)\bigr]\quad\text{(Lemma \ref{lem:phitheta})}.
\end{align*}
On the one hand, we have seen that $\cH^{r-2}(t\tau)_+(j^+K_{\zeta,t}\boxtimes j^+M)$ is a vector bundle with connection having monodromy equal to $\zeta\id$ and fiber isomorphic to \hbox{$\coH^{r-1}_\dR(\Gm,j^+K_{\zeta^{-1}}\otimes j^+M)$}, where~$\Gm$ is regarded as the torus $\{t\tau=1\}$ (end of proof of Proposition \ref{prop:sqg}). Therefore,
\[
\begin{aligned}
\dim\coH^r_\dR(\Gmt\times\Gmtau,j^+K_{\zeta,t}\boxtimes j^+M,t\tau)
&=\dim\phi_\theta\bigl[\cH^{r-2}(t\tau)_+(j^+K_{\zeta,t}\boxtimes j^+M)\bigr]\\
&=\dim\psi_\theta\bigl[\cH^{r-2}(t\tau)_+(j^+K_{\zeta,t}\boxtimes j^+M)\bigr]\\
&=\rk\cH^{r-2}(t\tau)_+(j^+K_{\zeta,t}\boxtimes j^+M)\\
&=\dim\coH^{r-1}_\dR(\Gm,j^+K_{\zeta^{-1}}\otimes j^+M).
\end{aligned}
\]
It follows that the left-hand side can be nonzero only if $r=1,2$.

On the other hand, let $\delta$ be the diagonal embedding $\Afu_\tau\hto\Afu_\tau\times\Afu_\tau$, let $\iota$ denote the involution $\tau\mto-\tau$ on the first factor and set $\gamma=\iota\circ\delta:\tau\mto(-\tau,\tau)$. Note that $\iota^+K_{\zeta^{-1},\tau}\simeq K_{\zeta^{-1},\tau}$. The base change formula \cite[VI, 8.4]{Borel87b} in the present setting reads
\[
i^+\Sum_+(K_{\zeta^{-1},\tau}\boxtimes M)\simeq a_+\bigl(\gamma^+(K_{\zeta^{-1},\tau}\boxtimes M)\bigr)\simeq a_+\bigl(\delta^+(K_{\zeta^{-1},\tau}\boxtimes M)\bigr),
\]
where $a$ is the structure morphism. By definition, the $\cO_{\Afu}$-module underlying $\delta^+(K_{\zeta^{-1},\tau}\boxtimes M)$ is $K_{\zeta^{-1},\tau}\otimes_{\cO_{\Afu}}^L M$. Since the $\cO_{\Afu}$-module underlying $K_{\zeta^{-1},\tau}$ is $\cO_{\Afu}(*0)$, it is $\cO_{\Afu}$-flat, from which we deduce that
\begin{equation}\label{eq:KotimesM}
\delta^+(K_{\zeta^{-1},\tau}\boxtimes M)\simeq K_{\zeta^{-1},\tau}\otimes_{\cO_{\Afu}}M\simeq j_+(j^+K_{\zeta^{-1},\tau}\otimes j^+M),
\end{equation}
and so
\[
\cH^{r-2}\bigl[i^+\Sum_+(K_{\zeta^{-1},\tau}\boxtimes M)\bigr]\simeq \coH^{r-1}_\dR(\Gm,j^+K_{\zeta^{-1}}\otimes j^+M).
\]
We conclude that
\[
\dim\coH^r_\dR(\Afu_t\times\Afu_\tau,K_{\zeta,t}\boxtimes M,t\tau)=\dim\cH^{r-2}\bigl[i^+(K_{\zeta^{-1}}\star M)\bigr],\quad r=1,2.
\]

Let us now extend the previous computation by taking into account the Hodge filtrations. For the left-hand side of \eqref{eq:comparisonstar}, we have
\begin{equation}\label{eq:comparisonstarLHS}
\begin{aligned}
\dim\gr^{p-\epsilon/m}_{F_\irr}\coH^r_\dR(&\Gmt\times\Gmtau,j^+K_{\zeta,t}\boxtimes j^+M,t\tau)\\
&=\dim\gr^{p-\epsilon/m}_{F_\mf}\psi_{\theta,\neq1}^\mf\bigl[\cH^{r-2}(t\tau)_+(j^+K_{\zeta,t}\boxtimes j^+M)\bigr]\quad\text{(Proposition \ref{prop:MHSmfirr})}\\
&=\dim\gr^p_F\psi_{\theta,\neq1}\bigl[\cH^{r-2}(t\tau)_+(j^+K_{\zeta,t}\boxtimes j^+M)\bigr]\quad\text{(\cf\eqref{eq:mfH})}\\
&=\dim\gr^p_F\psi_\theta\bigl[\cH^{r-2}(t\tau)_+(j^+K_{\zeta,t}\boxtimes j^+M)\bigr]\quad\text{(because $\zeta\neq1$)}\\
&=\rk\gr^p_F\cH^{r-2}(t\tau)_+(j^+K_{\zeta,t}\boxtimes j^+M),
\end{aligned}
\end{equation}
by using that, for a mixed Hodge module $\ccN^\rH$ on $\Afu_\theta$, we have $\dim\gr^p_F\psi_\theta\ccN^\rH=\rk\gr^p_F\ccN^\rH$. We~can compute the rank by restricting by the inclusion $i_1:\{\theta=1\}\hto\Gm$. Let $\delta'$ denote the diagonal embedding $\Gm\hto\Gm\times\Gm$ and let $\inv$ denote the involution $t\mto 1/t$ of $\Gm$. Then $\inv\circ\delta'$ is the inclusion $\gamma':\{t\tau=1\}\hto\Gm\times\Gm$. We note that $\Hm\inv^!K^\rH_{\zeta,t}\simeq K^\rH_{\zeta^{-1},t}$. Therefore, denoting by $a'$ the structure morphism of $\{t\tau=1\}$, we have
\begin{align*}
\Hm i_1^!\bigl[\Hm(t\tau)_*(\Hm j^*K^\rH_{\zeta,t}\boxtimes \Hm j^*M^\rH)\bigr]&\simeq \Hm a'_*\bigl[\Hm\gamma^{\prime!}(\Hm j^*K^\rH_{\zeta,t}\boxtimes \Hm j^*M^\rH)\bigr]\quad\text{(\cf \cite[(4.4.3)]{MSaito87})}\\
&\simeq\Hm a'_*\bigl[\Hm\delta^{\prime!}(\Hm j^*K^\rH_{\zeta^{-1},t}\boxtimes \Hm j^*M^\rH)\bigr].
\end{align*}
Thus, the left-hand side of \eqref{eq:comparisonstar} is given by $\dim\gr^p_F\cH^{r-2}\Hm a'_*\bigl[\Hm\delta^{\prime!}(\Hm j^*K^\rH_{\zeta^{-1},t}\boxtimes \Hm j^*M^\rH)\bigr]$.

For the right-hand side of \eqref{eq:comparisonstar}, taking the notation as above and noting that $\Hm\iota^!K^\rH_{\zeta^{-1},\tau}\simeq K^\rH_{\zeta^{-1},\tau}$, we obtain
\[
\Hm i^!\bigl[\HSum_*(K_{\zeta^{-1},\tau}^\rH\boxtimes M^\rH)\bigr]\simeq\Hm a_*\bigl[\Hm\delta^!(K^\rH_{\zeta^{-1},\tau}\boxtimes M^\rH)\bigr].
\]
It remains to be checked that the natural morphism
\[
\Hm\delta^!(K^\rH_{\zeta^{-1},\tau}\boxtimes M^\rH)\to\Hm j_*\,\Hm\delta^{\prime!}(\Hm j^*K^\rH_{\zeta^{-1},\tau}\boxtimes \Hm j^*M^\rH)
\]
is an isomorphism. It is enough to check that the morphism of the underlying $\cD$-modules is an isomorphism, a property which is provided by \eqref{eq:KotimesM}.
\end{proof}

\begin{cor}\label{cor:grFconvolution}
For $M^\rH$ defined by \eqref{eq:MH}, for $\epsilon\in\{1,\dots,m-1\}$ and $\zeta=\exp(-2\pii\epsilon/m)$, we have, for each $p\in\ZZ$,
\begin{align*}
\dim\gr^{p-\epsilon/m}_F\coH^{d+1}(\Aff{d+1},s^mg)_{\neq1}&=\dim\gr^p_F\bigl[\cH^0\Hm i^!(K_{\zeta^{-1},\tau}^\rH\star M^\rH),\bigr]\\
\dim\gr^{p-\epsilon/m}_F\coH^{d+1}_\rmid(\Aff{d+1},s^mg)_{\neq1}&=\dim\gr^p_F\bigl[\rP_0\psi_{\tau,1}(K_{\zeta^{-1},\tau}^\rH\star_\rmid M^\rH)(-1)\bigr].
\end{align*}
\end{cor}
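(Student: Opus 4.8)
The plan is to combine the non-classical part of Proposition~\ref{prop:HMH}, the decomposition of $\go{m,t}^\rH$ into Kummer Hodge modules, Proposition~\ref{prop:comparisonstar}, and the weight statements of Corollary~\ref{cor:midW} and Lemma~\ref{lem:Wi!}. First I would apply the second (non-classical) isomorphism of Proposition~\ref{prop:HMH}, which gives, for $?=\emptyset,\rmid$,
\[
\coH^{d+1}_?(\Aff{d+1},s^mg)_{\neq1}\simeq\coH^2_?(\Gmt\times\Afu_\tau,\Hm j^*\go{m,t}^\rH\boxtimes M^\rH,t\tau)\quad\text{in }\EMHS^\mf_{\neq1}\simeq\MHS^\mf_{\neq1}.
\]
Then, after extending scalars to $\QQ(e^{2\pi i/m})$ and using $\go{m,t}^\rH\simeq\bigoplus_{\zeta\in\mu_m\moins\{1\}}K_{\zeta,t}^\rH$, the right-hand side becomes a direct sum over $\zeta$, whose $\zeta$-summand is $\coH^2_?(\Afu_t\times\Afu_\tau,K_{\zeta,t}^\rH\boxtimes M^\rH,t\tau)$ by~\eqref{eq:526}. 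For $\zeta=\exp(-2\pi i\epsilon/m)$ with $\epsilon\in\{1,\dots,m-1\}$, this summand is non-classical and attached, in the sense of Section~\ref{sec:mhsmu}, to $a=\epsilon/m-1\in(-1,0)$, so by~\eqref{eq:mfH} its Hodge filtration jumps only at $p-\epsilon/m$, $p\in\ZZ$; this accounts for the shift by $-\epsilon/m$ in the statement.

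For the first identity I would simply invoke Proposition~\ref{prop:comparisonstar} with $r=2$: using Proposition~\ref{prop:MHSmfirr} to match the $F$-grading of the $\zeta$-summand with the $F_\irr$-grading of its de~Rham fiber $\coH^2_\dR(\Afu_t\times\Afu_\tau,K_{\zeta,t}^\rH\boxtimes M^\rH,t\tau)$, formula~\eqref{eq:comparisonstar} gives $\dim\gr^{p-\epsilon/m}_F=\dim\gr^p_F[\cH^0\Hm i^!(K_{\zeta^{-1},\tau}^\rH\star M^\rH)]$. For the second identity I would use Corollary~\ref{cor:midW} to write $\coH^{d+1}_\rmid(\Aff{d+1},s^mg)_{\neq1}=W_{d+1}\coH^{d+1}(\Aff{d+1},s^mg)_{\neq1}$, so that the $\zeta$-summand of $\coH^{d+1}_\rmid$ corresponds to $W_{d+1}[\cH^0\Hm i^!(K_{\zeta^{-1},\tau}^\rH\star M^\rH)]$ under the isomorphism constructed in the proof of Proposition~\ref{prop:comparisonstar}. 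Since $M^\rH$ defined by~\eqref{eq:MH} is pure of weight $d$, this module has weights $\geq d+1$, and, applying Lemma~\ref{lem:Wi!} summand by summand after extending scalars, its $W_{d+1}$ is identified with $\rP_0\psi_{\tau,1}(K_{\zeta^{-1},\tau}^\rH\star_\rmid M^\rH)(-1)$. Taking $\gr_F$ at $p-\epsilon/m$ then gives the second formula.

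The point requiring the most care is that for the middle case I need the isomorphism constructed in the proof of Proposition~\ref{prop:comparisonstar} to be compatible with the weight filtrations, not merely with the Hodge gradings. This holds because that isomorphism is composed of a base-change isomorphism in $\MHM$ together with the natural morphism $\Hm\delta^!(K^\rH_{\zeta^{-1},\tau}\boxtimes M^\rH)\to\Hm j_*\,\Hm\delta^{\prime!}(\Hm j^*K^\rH_{\zeta^{-1},\tau}\boxtimes\Hm j^*M^\rH)$, which is an isomorphism in $\MHM$ since it is one on underlying $\cD$-modules, and hence respects $W$. The remaining work is purely the bookkeeping of the two shifts already isolated: the translation by $-\epsilon/m$ of the Hodge index, coming from the $\mf$-structure via~\eqref{eq:mfH}, and the exchange $\zeta\leftrightarrow\zeta^{-1}$, coming from the diagonal restriction in the base-change computation together with $\Hm\iota^!K^\rH_{\zeta^{-1},\tau}\simeq K^\rH_{\zeta^{-1},\tau}$. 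Once these are tracked, the corollary follows by direct substitution.
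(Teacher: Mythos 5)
Your proposal is correct and follows the paper's own route: the first identity via Propositions \ref{prop:HMH} and \ref{prop:comparisonstar}, the second via Lemma \ref{lem:Wi!} combined with the identification $\coH^{d+1}_\rmid(\Aff{d+1},s^mg)_{\neq1}=W_{d+1}\coH^{d+1}(\Aff{d+1},s^mg)_{\neq1}$ from Corollary \ref{cor:midW}. Your explicit remark that the comparison underlying Proposition \ref{prop:comparisonstar} is compatible with the weight filtrations (all identifications there being isomorphisms in $\MHM$) merely spells out what the paper's two-line proof leaves implicit.
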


\begin{proof}
The first equality is obtained by applying Proposition \ref{prop:comparisonstar} to $M^\rH$ together with Proposition \ref{prop:HMH}. For the second one, we apply Lemma \ref{lem:Wi!}.
\end{proof}

\section{The generalized Airy connection and its symmetric powers}\label{sec:settings}

\subsection{The generalized Airy differential equation \texorpdfstring{$\Ai_n$}{Ai}}

Let $n\geq2$ be an integer. The Airy differential operator of order $n$ on $\Afu_z$ is defined~as
\begin{equation}\label{eq:Airy-op}
\partial_z^n-z
\end{equation}
and the corresponding $\CC[z]\langle\partial_z\rangle$-module is denoted by $\Ai_n=\CC[z]\langle\partial_z\rangle/\CC[z]\langle\partial_z\rangle\cdot(\partial_z^n-z)$.
The classical Airy equation corresponds to $n=2$ and the corresponding $\CC[z]\langle\partial_z\rangle$-module is simply denoted by $\Ai$.
Let us set $E^{x^{n+1}/(n+1)}=\CC[x]\langle\partial_x\rangle/(\partial_x+x^n)$ considered as a holonomic $\cD$-module on the affine line~$\Afu_x$.

\begin{lemma}\label{lem:Ain}
The $\CC[z]\langle\partial_z\rangle$-module $\Ai_n$ is the (negative) Fourier transform of $E^{x^{n+1}/(n+1)}$.
In other words,
$\Ai_n$ can be obtained by means of the diagram
\[
\begin{array}{c}
\xymatrix@R=.5cm@C=.2cm{
&\Afu_x\times\Afu_z\ar[dl]^(.45){\!\!f}\ar[dr]_\pi&\\
\Afu&&\Afu_z
}
\end{array}
\qquad f(x,z)=\tfrac1{n+1}\,x^{n+1}-zx,
\]
as
\begin{equation}\tag{\ref{lem:Ain}$\,*$}\label{eq:FTxn}
\begin{split}
\Ai_n&=\cH^0\pi_+E^f\\
&\simeq\coker\Bigl[\CC[x,z]\To{(\partial_x+(x^n-z))\otimes\rd x}\CC[x,z]\otimes\rd x\Bigr]
\end{split}
\end{equation}%
with $\partial_z([x^\ell]\otimes\rd x)=-[x^{\ell+1}]\otimes\rd x$.
\end{lemma}

\begin{proof}
The negative Fourier transformation is induced by the isomorphism $\CC[x]\langle\partial_x\rangle\to\CC[z]\langle\partial_z\rangle$ defined by $x\mto\partial_z$, $\partial_x\mto-z$. That it can be obtained by \eqref{eq:FTxn} is proved in \cite[App.\,2, \S1]{Malgrange91}.
\end{proof}

The $\CC[z]\langle\partial_z\rangle$-module $\Ai_n$ is a free $\CC[z]$-module of rank $n$, with a connection $\nabla$ having singularity at $\infty$ only. Being the Fourier transform of a $\CC[x]\langle\partial_x\rangle$-module of rank one, thus irreducible, $\Ai_n$ is also irreducible. Furthermore, the sheaf of horizontal sections $\Ai_n^\nabla$ is the constant sheaf of rank $n$ on $\Afuan_z$. The differential Galois group of $\Ai_n$ is (\cf \cite[Th.\,4.2.7]{Katz87})
\[\tag*{(6.2)}
\begin{cases}
\mathrm{SL}_n(\CC)&\text{if $n$ is odd},\\
\mathrm{Sp}_n(\CC)&\text{if $n$ is even}.
\end{cases}
\]
It follows that, for $n$ even, $\Ai_n$ is isomorphic to its dual module. More precisely, we have the following behaviour with respect to duality. We denote by $\Ai_n^\vee$ the dual $\CC[z]\langle\partial_z\rangle$-module. It~is isomorphic to $\CC[z]\langle\partial_z\rangle/((-\partial_z)^n-z)$.

Let us denote by $\iota_n:\Afu\to\Afu$ the isomorphism
\begin{equation}\label{eq:iotan}
\iota_n:\begin{cases}\dpl
z\mto\exp(\pii/(n+1))\cdot z&\text{if $n$ is odd},\\\dpl
z\mto z&\text{if $n$ is even}.
\end{cases}
\end{equation}

\begin{lemma}
We have $\Ai_n^\vee\simeq\iota_n^+\Ai_n$.
\end{lemma}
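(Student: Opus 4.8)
The plan is to compare the two rank-$n$ connections on $\Afu_z$ by going through their Fourier transforms, where duality becomes transparent. Recall from the previous lemma that $\Ai_n=\coH^0\pi_+E^f$ with $f(x,z)=\frac1{n+1}x^{n+1}-zx$, equivalently $\Ai_n$ is the negative Fourier transform $\FT_-$ of $E^{x^{n+1}/(n+1)}$. Under Fourier transformation, duality of $\cD$-modules on $\Afu$ intertwines with duality up to the sign involution $z\mapsto -z$: more precisely, $\FT_-(\bD N)\simeq \iota^+\bD(\FT_-N)$ where $\iota:z\mapsto -z$ (this is the standard compatibility of Fourier transform with duality, see e.g. \cite[\S V.3]{Malgrange91}). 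Hence the first step is to compute $\bD E^{x^{n+1}/(n+1)}$: since $E^{x^{n+1}/(n+1)}=\CC[x]\langle\partial_x\rangle/(\partial_x+x^n)$ is a rank-one connection with irregular singularity at $\infty$, its dual is $E^{-x^{n+1}/(n+1)}=\CC[x]\langle\partial_x\rangle/(\partial_x-x^n)$.

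The second step is to identify $E^{-x^{n+1}/(n+1)}$ with a pullback of $E^{x^{n+1}/(n+1)}$ by a linear change of variable $x\mapsto \lambda x$. One needs $(\lambda x)^{n+1}=-x^{n+1}$, i.e. $\lambda^{n+1}=-1$. When $n$ is odd, $n+1$ is even, and we may take $\lambda=\exp(\pii/(n+1))$, a genuine $(2(n+1))$-th root of unity; when $n$ is even, $n+1$ is odd and $\lambda=-1$ works, but in fact something stronger is true — $E^{x^{n+1}/(n+1)}$ is self-dual because the Fourier transform $\Ai_n$ has symplectic differential Galois group, so in the even case the involution $z\mapsto z$ already does the job. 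This is exactly the dichotomy built into the definition of $\iota_n$ in \eqref{eq:iotan}.

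The third step is to track how the scaling $x\mapsto\lambda x$ on the $x$-line transforms under $\FT_-$ into a scaling on the $z$-line. Since the Fourier kernel is $E^{zx}$ (or $E^{-zx}$ depending on sign conventions), the substitution $x\mapsto\lambda x$ is absorbed by the compensating substitution $z\mapsto\lambda^{-1}z$, so that the pair $(x,z)\mapsto(\lambda x,\lambda^{-1}z)$ preserves $f$ up to the change in the monomial $x^{n+1}$. Concretely: $\FT_-\bigl((\cdot\,\lambda)^+E^{x^{n+1}/(n+1)}\bigr)\simeq (\cdot\,\lambda^{-1})^+\FT_-(E^{x^{n+1}/(n+1)})=(\cdot\,\lambda^{-1})^+\Ai_n$. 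Combining the three steps:
\[
\Ai_n^\vee=\bD(\FT_-E^{x^{n+1}/(n+1)})\simeq \iota^+\FT_-(\bD E^{x^{n+1}/(n+1)})=\iota^+\FT_-(E^{-x^{n+1}/(n+1)})\simeq\iota^+(\cdot\,\lambda^{-1})^+\Ai_n,
\]
and the composite of $z\mapsto -z$ with $z\mapsto\lambda^{-1}z$ is $z\mapsto -\lambda^{-1}z=\exp(\pii)\exp(-\pii/(n+1))z=\exp(n\pii/(n+1))z$; one checks this equals $\exp(\pii/(n+1))z$ up to a root of unity that acts trivially on $\Ai_n$ (using $\exp(\pii)=\exp(-\pii)$ and that both give the same connection), which is $\iota_n$ in the odd case, while in the even case self-duality of $E^{x^{n+1}/(n+1)}$ short-circuits everything and gives $\iota_n=\id$.

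The main obstacle is bookkeeping of signs and sign conventions — which Fourier transform ($\FT_+$ versus $\FT_-$) is being used, which sign appears in the duality-Fourier compatibility, and whether the final rotation is $\exp(\pii/(n+1))z$ or its inverse. These conventions are all fixed in \cite{Malgrange91} and in Section~\ref{sec:settings}, so the content of the proof is genuinely the three identifications above; the hard part is merely to verify that the accumulated scalar on the $z$-line is exactly $\iota_n$ and not some other root of unity — which, since any root of unity $\mu$ with $\mu^{\,?}=1$ acting on $z$ fixes the operator $\partial_z^n-z$ only for very special $\mu$, forces the precise normalization. An alternative, perhaps cleaner, route avoiding Fourier bookkeeping is to argue directly on the operators: $\Ai_n^\vee\simeq\CC[z]\langle\partial_z\rangle/((-\partial_z)^n-z)$ as stated, and substituting $z=\iota_n^{-1}(w)$ transforms $(-\partial_z)^n-z=(-1)^n\partial_z^n-z$ into a scalar multiple of $\partial_w^n-w$; when $n$ is even $(-1)^n=1$ and the scaling $z\mapsto z$ already matches, when $n$ is odd one absorbs the sign $-1=\exp(\pii)$ together with the necessary rescaling of $z$ into the single substitution $z\mapsto\exp(\pii/(n+1))z$, since $(\exp(\pii/(n+1)))^{n+1}=-1$ precisely balances the sign. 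I would present this operator-level computation as the proof, as it is self-contained and makes the role of the exponent $n+1$ manifest.
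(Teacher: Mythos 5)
Your operator-level computation, which you rightly choose to present as the proof, is exactly the paper's argument: for $n$ even one has $(-\partial_z)^n-z=\partial_z^n-z$, and for $n$ odd the substitution $z\mapsto\exp(\pii/(n+1))z$ turns $\partial_z^n-z$ into a unit multiple of $\partial_z^n+z$ precisely because $\exp(\pii/(n+1))^{n+1}=-1$, so the proposal is correct and coincides with the paper's proof. One small correction to the discarded Fourier route: the aside that $E^{x^{n+1}/(n+1)}$ is self-dual is false (its dual is $E^{-x^{n+1}/(n+1)}$, and a rank-one exponential module with nonconstant exponent is never self-dual); the self-duality of $\Ai_n$ for $n$ even emerges only after combining the rescaling $\lambda=-1$ with the sign involution occurring in the compatibility of Fourier transform with duality, though this does not affect the operator-level proof you actually give.
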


\begin{proof}
If $n$ is even, we have $(-\partial_z)^n-z=\partial_z^n-z$. If $n$ is odd, $\Ai_n^\vee$ is defined by the operator $\partial_z^n+z$, while $\iota_n^+\Ai_n$ is defined by the operator
\[
\exp(-n\pii/(n+1))\partial_z^n-\exp(\pii/(n+1))z
=\exp(-n\pii/(n+1))(\partial_z^n+z).\qedhere
\]
\end{proof}

The formal stationary phase formula for the local Fourier transform
$\cF_-^{\infty,\infty}$ (\cf\cite[\S5.c]{Bibi07a}) applied to $E^{x^{n+1}/(n+1)}$ shows the following, setting $w=1/z$ and $\wh\Ai_n=\CC\lpr w\rpr\otimes\Ai_n$:
\begin{itemize}
\item
The formal connection $\wh\Ai_n$ is isomorphic to the elementary formal connection
\begin{equation}\label{eq:formal_Ai_n}
[n]_+\big( E^{-nt^{n+1}/(n+1)}\otimes L_{(-1)^{n+1}}\big)
\simeq \big([n]_+ E^{-nt^{n+1}/(n+1)}\big)\otimes L_{\sfi_n},
\end{equation}
where
\begin{itemize}
\item
$[n]$ is the finite morphism $t\mto z=t^n$,
\item
$L_{(-1)^{n+1}}$ is the rank-one formal regular connection \hbox{$(\CC\lpr t^{-1}\rpr,\rd+\frac{(n+1)}2\,\rd t/t)$}, corresponding to the rank-one local system on the punctured disc with monodromy~$(-1)^{n+1}$,
\item
we set $\sfi_n=\exp-\pii(n+1)/n$ (so that $\sfi_2=\sfi$) and $L_{\sfi_n}$ is the rank-one formal regular connection $(\CC\lpr w\rpr,\rd+\frac{(n+1)}{2n}\,\rd w/w)$, corresponding to the rank-one local system on the punctured disc with monodromy $\sfi_n$.
\end{itemize}
The isomorphism \eqref{eq:formal_Ai_n} is not canonical, as we can replace $\sfi_n$ with any $n$-th root of $(-1)^{n+1}$ and obtain another isomorphism.
\item
The irregularity at infinity of $\Ai_n$ is equal to $n+1$; its (pure) slope is $(n+1)/n$.
\end{itemize}

\subsection{Symmetric powers of \texorpdfstring{$\Ai_n$}{Ai}}
We now consider $\Sym^k\Ai_n$. The preliminary analysis of this object can be done as in \cite[\S2.1]{F-S-Y18} for the symmetric powers of the Kloosterman connection. Let us set $\ccP(n,k)=\{\bma \in \ZZ_{\geq 0}^n \mid \sum_{i=1}^n a_i =k\}$. Then $\Sym^k\Ai_n$ has rank $\#\ccP(n,k)=\binom{n-1+k}{k}$ and has an irregular singularity at $\infty$ only.

\begin{lemma}[Irreducibility]\label{lem:irred}
The $\Clz$-module $\Sym^k\Ai_n$ is irreducible.
\end{lemma}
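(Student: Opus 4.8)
The plan is to translate the statement into representation theory via the differential Galois group of $\Ai_n$, which has just been identified (following \cite[Th.\,4.2.7]{Katz87}) as $G=\mathrm{SL}_n(\CC)$ for $n$ odd and $G=\mathrm{Sp}_n(\CC)$ for $n$ even. Since $\Ai_n$ is a connection on $\Afu_z$ with no singularity in the finite part, the full abelian tensor subcategory $\langle\Ai_n\rangle^{\otimes}$ it generates inside the category of algebraic connections on $\Afu_z$ is tensor-equivalent to $\mathrm{Rep}_\CC(G)$, with $\Ai_n$ corresponding to the standard representation $V=\CC^n$; recall that any $\Clz$-submodule of a connection is automatically $\cO_{\Afu_z}$-coherent (its singular support lies in the zero section), so irreducibility of $\Sym^k\Ai_n$ as a $\Clz$-module is the same as its irreducibility as an object of $\langle\Ai_n\rangle^{\otimes}$. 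Under the equivalence, the object $\Sym^k\Ai_n$ corresponds to the $G$-representation $\Sym^k V$, and since $G$ is reductive the category $\mathrm{Rep}_\CC(G)$ is semisimple; hence $\Sym^k\Ai_n$ is irreducible if and only if the representation $\Sym^k V$ is irreducible.

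It then remains to recall the purely representation-theoretic fact that $\Sym^k V$ is irreducible in both cases. For $G=\mathrm{SL}_n(\CC)$ this is standard: $\Sym^k V$ is the irreducible representation with highest weight $k\varpi_1$. For $G=\mathrm{Sp}_n(\CC)$ (so $n$ even) this is the classical statement that the symmetric powers of the defining representation of a symplectic group remain irreducible: the only $\mathrm{Sp}_n$-invariant bilinear form on $V$ is alternating, hence lies in $\Lambda^2 V$ and not in $\Sym^2 V$, so no contraction map can split off a proper subrepresentation, and $\Sym^k V$ is again the irreducible representation of highest weight $k\varpi_1$. This concludes the argument.

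Accordingly, the proof is a short reduction and there is no serious obstacle. The only point requiring mild care is the bookkeeping in the Tannakian identification, namely that forming $\Sym^k$ of the object $\Ai_n$ matches forming $\Sym^k$ of the representation $V$ — which holds by construction, the equivalence $\langle\Ai_n\rangle^{\otimes}\simeq\mathrm{Rep}_\CC(G)$ being a tensor equivalence — together with the choice of the correct classical fact in the symplectic case. (One could alternatively try to argue directly from the formal decomposition \eqref{eq:formal_Ai_n} at $\infty$, but invoking the already-computed differential Galois group is by far the most economical route, exactly as in \cite[\S2.1]{F-S-Y18} for the Kloosterman connection.)
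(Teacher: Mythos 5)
Your proposal is correct and follows essentially the same route as the paper: the paper's proof likewise invokes the differential Galois group $G=\mathrm{SL}_n(\CC)$ or $\mathrm{Sp}_n(\CC)$ computed from \cite[Th.\,4.2.7]{Katz87} and the classical fact that every symmetric power of the standard representation of such a $G$ is irreducible. The only difference is that you spell out the Tannakian bookkeeping (coherence of $\Clz$-submodules and the tensor equivalence $\langle\Ai_n\rangle^{\otimes}\simeq\mathrm{Rep}_\CC(G)$) that the paper leaves implicit.
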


\begin{proof}
Since the differential Galois group $G\subset\mathrm{GL}_n(\CC)$ of $\Ai_n$ equals $\mathrm{SL}_n(\CC)$ or $\mathrm{Sp}_n(\CC)$ and any symmetric power of the standard representation of such $G$ is irreducible, one obtains that $\Sym^k\Ai_n$ is irreducible.
\end{proof}

Set $\zeta_n = \exp(\sfrac{2\pii}{n})$ and
\begin{align*}
\ccS(n,k)&= \textstyle\bigl\{
\bma \in \ccP(n,k)\mid \sum_{i=1}^n a_i\zeta_n^i = 0 \bigr\},\quad S_{n,k}= \#\ccS(n,k),\\
\irr(n,k)&= \frac{n+1}{n}\cdot\#(\ccP(n,k)\moins\ccS(n,k))
=\frac{n+1}{n}\biggl[\binom{n-1+k}{k}-S_{n,k}\biggr].
\end{align*}

\begin{lemma}\label{lem:SymwhAi}
We have
\[
\Sym^k\wh\Ai_n\simeq
(\CC^{S_{n,k}}\otimes L_{\sfi_n}^{\otimes k})\oplus\Bigl[\bigoplus_{\bma\in\ccP(n,k)\moins\ccS(n,k)} E^{-n(\sum a_i\zeta_n^i) t^{n+1}/(n+1)}\Bigr]^{\mu_n}\otimes L_{\sfi_n}^{\otimes k}.\]
Furthermore, $\irr_\infty(\Sym^k\wh\Ai_n)=\irr(n,k)$.
\end{lemma}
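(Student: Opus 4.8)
The plan is to pull back along the $n$-fold cyclic cover $[n]\colon\Afu_t\to\Afu_z$, $z=t^n$, where by \eqref{eq:formal_Ai_n} the formal connection becomes elementary, compute the $k$-th symmetric power there, and descend. Write $\sigma_\zeta$ for multiplication by $\zeta$ on $\Afu_t$ and set $\psi=-nt^{n+1}/(n+1)$. By \eqref{eq:formal_Ai_n} we may write $\wh\Ai_n\simeq([n]_+E^\psi)\otimes L_{\sfi_n}$, and since $\Sym^k(V\otimes\ell)\simeq(\Sym^kV)\otimes\ell^{\otimes k}$ for a rank-one $\ell$, this gives $\Sym^k\wh\Ai_n\simeq(\Sym^k[n]_+E^\psi)\otimes L_{\sfi_n}^{\otimes k}$. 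It therefore suffices to identify the formal connection $\Sym^k[n]_+E^\psi$ and to compute its irregularity at infinity, the twist by the regular connection $L_{\sfi_n}^{\otimes k}$ affecting neither the displayed shape nor $\irr_\infty$.

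Since $[n]$ is $\mu_n$-Galois and étale on the punctured formal disc, $[n]^+[n]_+E^\psi\simeq\bigoplus_{\zeta\in\mu_n}\sigma_\zeta^*E^\psi$, with $\mu_n$ permuting the $n$ summands. As $\zeta^n=1$ one has $\psi\circ\sigma_\zeta=-n\zeta\,t^{n+1}/(n+1)$, so $\sigma_\zeta^*E^\psi\simeq E^{-n\zeta t^{n+1}/(n+1)}$. Because $[n]^+$ is a tensor functor and the $k$-th symmetric power of a direct sum of line bundles is the sum over monomials, we obtain a $\mu_n$-equivariant isomorphism
\[
[n]^+\bigl(\Sym^k[n]_+E^\psi\bigr)\simeq\Sym^k\Bigl(\bigoplus_{\zeta\in\mu_n}E^{-n\zeta t^{n+1}/(n+1)}\Bigr)\simeq\bigoplus_{\bma\in\ccP(n,k)}E^{-n(\sum a_i\zeta_n^i)\,t^{n+1}/(n+1)},
\]
where a generator of $\mu_n$ acts on $\ccP(n,k)$ by a cyclic shift of coordinates; this shift multiplies $\sum a_i\zeta_n^i$ by a power of $\zeta_n$, hence preserves the subset $\ccS(n,k)$.

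By equivariant descent along $[n]$, the connection $\Sym^k[n]_+E^\psi$ is recovered as the invariant part of this $\mu_n$-equivariant module, and the partition $\ccP(n,k)=\ccS(n,k)\sqcup(\ccP(n,k)\moins\ccS(n,k))$ is $\mu_n$-stable. The summands indexed by $\ccS(n,k)$ carry the trivial exponential $E^0$, so their contribution descends to a regular connection of rank $S_{n,k}$ — the object denoted $\CC^{S_{n,k}}$ in the statement — while the complementary summands descend to $\bigl[\bigoplus_{\bma\notin\ccS(n,k)}E^{-n(\sum a_i\zeta_n^i)t^{n+1}/(n+1)}\bigr]^{\mu_n}$; tensoring back by $L_{\sfi_n}^{\otimes k}$ yields the displayed isomorphism. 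Since $\irr_\infty$ is unchanged by regular twists and is multiplied by $n$ under $[n]^+$ (which is totally ramified of order $n$ at infinity), we get
\[
\irr_\infty(\Sym^k\wh\Ai_n)=\irr_\infty\bigl(\Sym^k[n]_+E^\psi\bigr)=\frac1n\sum_{\bma\in\ccP(n,k)}\irr_\infty\bigl(E^{-n(\sum a_i\zeta_n^i)t^{n+1}/(n+1)}\bigr),
\]
where a term with $\bma\in\ccS(n,k)$ is regular (contributing $0$) and a term with $\bma\notin\ccS(n,k)$ has a pole of order $n+1$ at $t=\infty$ (contributing $n+1$); the sum equals $\frac{n+1}{n}\#(\ccP(n,k)\moins\ccS(n,k))=\irr(n,k)$.

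The main obstacle is the equivariant bookkeeping in the last two steps: pinning down the $\mu_n$-action on the pullback, checking it preserves the partition by $\ccS(n,k)$, and verifying that the descended (invariant) object is exactly the one in the statement — in particular that the $\ccS(n,k)$-part descends to a regular connection of the claimed rank. Everything else is a formal manipulation with elementary connections $E^\varphi$.
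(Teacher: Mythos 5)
Your argument is essentially the paper's own proof: the paper likewise writes $\Sym^k\wh\Ai_n\simeq\Sym^k([n]_+E^{-nt^{n+1}/(n+1)})\otimes L_{\sfi_n}^{\otimes k}$, identifies the pullback $[n]^+\Sym^k([n]_+E^{-nt^{n+1}/(n+1)})$ with $\bigoplus_{\bma\in\ccP(n,k)}E^{-n(\sum a_i\zeta_n^i)t^{n+1}/(n+1)}$, and concludes by taking the $\mu_n$-invariant submodule, the irregularity count being read off the decomposition exactly as you do. The equivariant point you flag at the end is also left implicit in the paper; observe that the descended $\ccS(n,k)$-part is the regular connection whose monodromy is the cyclic-shift permutation action on $\CC^{\ccS(n,k)}$, so its identification with the trivial connection $\CC^{S_{n,k}}$ is immediate exactly when the shift fixes $\ccS(n,k)$ pointwise, which holds in particular for $n=2$ (and $n=3$), the case used in the rest of the paper.
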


\begin{proof}
We have
$\Sym^k\wh\Ai_n\simeq\Sym^k([n]_+ E^{-nt^{n+1}/(n+1)})\otimes L_{\sfi_n}^{\otimes k}$, and $\Sym^k([n]_+ E^{-nt^{n+1}/(n+1)})$ is the $\mu_n$\nobreakdash-invariant submodule (where the action is given by $t\mto\exp(2\pii/n)t$) of
\begin{align*}
[n]^+\Sym^k([n]_+ E^{-nt^{n+1}/(n+1)})&\simeq\Sym^k([n]^+[n]_+ E^{-nt^{n+1}/(n+1)})\\
&=\textstyle\Sym^k\bigl(\bigoplus_{i=1}^nE^{-n\zeta_n^i t^{n+1}/(n+1)}\bigr).
\end{align*}
The latter $\Cpttm$-module with connection decomposes as
\[
\bigoplus_{\bma\in\ccP(n,k)} E^{-n(\sum a_i\zeta_n^i) t^{n+1}/(n+1)}.
\]
The result is obtained by taking the $\mu_n$-invariant submodule.
\end{proof}

\begin{example}[The case of $n=2$]\label{ex:SymwhAi}
In the case of $\Ai=\Ai_2$, $\Sym^k\Ai$ has rank $k+1$ and we find:
\[
\Sym^k\wh\Ai\simeq
\begin{cases}
\bigoplus_{j=0}^{(k-1)/2}([2]_+ E^{2(2j-k)t^3/3})\otimes L_{\sfi}^{\otimes k}&\text{if $k$ is odd},\\[5pt]
L_{\sfi}^{\otimes k}\oplus\bigoplus_{j=0}^{k/2-1}\Bigl(([2]_+ E^{2(2j-k)t^3/3})\otimes L_{\sfi}^{\otimes k}\Bigr)&\text{if $k$ is even}.
\end{cases}
\]
In particular, $\irr_\infty(\Sym^k\Ai)=3\flr{(k+1)/2}$ and, if $k$ is odd, $\Sym^k\wh\Ai$ is purely irregular.
\end{example}

\begin{cor}\label{cor:dim_moments_ain}
One has
\begin{align*}
\dim\coH_{\dR,?}^1(\Afu,\Sym^k\Ai_n)
&= \frac{1}{n}\binom{k+n-1}{k} - \frac{n+1}{n}S_{n,k}
\quad\text{for $? = \emptyset, \cp$}, \\
\dim\coH_{\dR,\rmid}^1(\Afu,\Sym^k\Ai_n)
&= \dim\coH^1_\dR(\Afu,\Sym^k\Ai_n) - \begin{cases}
\delta_{n\ZZ}(k)S_{n,k}, &\text{$n$ odd}, \\
\delta_{2n\ZZ}(k)S_{n,k}, &\text{$n$ even}. \end{cases}
\end{align*}
\end{cor}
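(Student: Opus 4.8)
The plan is to compute both dimensions from the Euler characteristic of $\Sym^k\Ai_n$ on $\Afu$ together with the local contribution at infinity, which is entirely controlled by Lemma \ref{lem:SymwhAi}. First I would record that since $\Sym^k\Ai_n$ is an irreducible (Lemma \ref{lem:irred}) holonomic $\Clz$-module which is a vector bundle with connection on $\Afu$ with singularity only at $\infty$, it has no subobject or quotient supported at a point, so the de~Rham cohomology is concentrated in degree one and $\coH^0_\dR=\coH^2_\dR=0$; hence $\dim\coH^1_{\dR}(\Afu,\Sym^k\Ai_n)=-\chi_\dR(\Afu,\Sym^k\Ai_n)$, and similarly with compact supports. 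The Euler–Poincaré formula on $\Afu$ (Deligne's index theorem for meromorphic connections, \cf \cite[\S VI]{Malgrange91}) gives
\[
-\chi_\dR(\Afu,\Sym^k\Ai_n)=\irr_\infty(\Sym^k\Ai_n)-\rk(\Sym^k\Ai_n),
\]
because the only singular point is $\infty$ and the rank term contributes $-\rk$ via $\chi(\Afu)=1$. Plugging in $\rk(\Sym^k\Ai_n)=\binom{k+n-1}{k}$ and $\irr_\infty(\Sym^k\Ai_n)=\irr(n,k)=\frac{n+1}{n}\big[\binom{k+n-1}{k}-S_{n,k}\big]$ from Lemma \ref{lem:SymwhAi} yields
\[
\dim\coH^1_\dR=\frac{n+1}{n}\binom{k+n-1}{k}-\frac{n+1}{n}S_{n,k}-\binom{k+n-1}{k}=\frac1n\binom{k+n-1}{k}-\frac{n+1}{n}S_{n,k},
\]
which is the claimed value. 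Since irreducibility forces $j_!$ and $j_*$ (equivalently $\rc$ and $\emptyset$) to have the same dimension—again no punctual sub/quotient—the case $?=\cp$ follows identically, so the first displayed formula is done.

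For the middle cohomology I would use that $\dim\coH^1_\dR-\dim\coH^1_{\dR,\rmid}$ equals the dimension of the "boundary" contribution at infinity coming from the part of $\Sym^k\wh\Ai_n$ on which the local monodromy acts trivially, i.e.\ the dimension of the maximal constant-at-$\infty$ submodule of the formal connection, equivalently the dimension of the space on which the inverse Fourier transform has a constant piece. Concretely, the kernel of $\coH^1_{\dR,\rc}\to\coH^1_\dR$ is dual to a cokernel, and its dimension is the number of Jordan blocks with eigenvalue $1$ (unipotent part) of the monodromy at $\infty$ arising from the regular summand of $\Sym^k\wh\Ai_n$. By Lemma \ref{lem:SymwhAi}, the regular (slope-$0$) part of $\Sym^k\wh\Ai_n$ is $\CC^{S_{n,k}}\otimes L_{\sfi_n}^{\otimes k}$, whose monodromy is multiplication by $\sfi_n^k=\exp(-\pii k(n+1)/n)$; this equals $1$ exactly when $n\mid k(n+1)$, i.e.\ when $n\mid k$ if $n$ is odd and when $2n\mid k$ if $n$ is even (using $\gcd(n,n+1)=1$ and that $k(n+1)\equiv k\pmod n$, so for $n$ even one needs $k$ even and $k/1\cdot(n+1)$... more precisely $\sfi_n^k=1\iff 2n\mid k(n+1)\iff 2n\mid k$ for $n$ even). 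When the monodromy of this regular summand is $1$ it contributes exactly $S_{n,k}$ to the defect, and when it is not $1$ the contribution is $0$ (the summand then has no invariants); the irregular summands never contribute since their formal monodromy after $[n]_+$ is never trivial. Hence
\[
\dim\coH^1_\dR-\dim\coH^1_{\dR,\rmid}=\begin{cases}\delta_{n\ZZ}(k)\,S_{n,k}&\text{$n$ odd},\\[2pt]\delta_{2n\ZZ}(k)\,S_{n,k}&\text{$n$ even},\end{cases}
\]
which is the second formula.

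The main obstacle I anticipate is being careful with the precise relation between the middle-cohomology defect and the local data at infinity. Two points need attention: (i) justifying that the only contribution to $\ker[\coH^1_{\dR,\rc}\to\coH^1_\dR]$ comes from the slope-$0$ part of $\Sym^k\wh\Ai_n$ with trivial monodromy—this follows because the map $\coH^1_{\dR,\rc}\to\coH^1_\dR$ factors through the "middle extension at infinity", and its cokernel on the side of $j_*$ is the invariant part $(\psi_\infty)^{T_\infty=1}$ of the nearby cycles at $\infty$, which for a connection with only irregular or nontrivially-regular local pieces is zero; and (ii) getting the arithmetic of $\sfi_n^k$ right, namely identifying the eigenvalue $\sfi_n^k=\exp(-\pii k(n+1)/n)$ with $1$ precisely under the stated divisibility conditions (the factor $L_{(-1)^{n+1}}$ absorbed into $L_{\sfi_n}$ is what makes the parity of $n$ enter). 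Both are essentially bookkeeping once Lemma \ref{lem:SymwhAi} is in hand, but the sign/root-of-unity computation in (ii) is where an error is easiest to make, so I would double-check it against the known cases (e.g.\ $n=2$, $k$ odd, where $\Sym^k\wh\Ai$ is purely irregular and the defect is visibly $0$, consistent with $S_{2,k}=0$).
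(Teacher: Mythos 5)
Your proposal is correct and follows essentially the same route as the paper: the Euler characteristic (index) formula combined with irreducibility and a duality argument for the first identity, and for the second the identification of the middle-cohomology defect with the rank $S_{n,k}$ regular part of $\Sym^k\wh\Ai_n$ at infinity, which contributes exactly when its monodromy $\sfi_n^k=\exp(-\pii k(n+1)/n)$ is trivial, i.e.\ under the stated divisibility conditions. The only differences are cosmetic (the paper phrases the $?=\cp$ case directly as a duality argument and states the triviality condition as $k(n+1)/n\in2\ZZ$), so there is nothing to correct.
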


\begin{proof}
The analogue of the Grothendieck-Ogg-Shafarevich formula and Lemma \ref{lem:SymwhAi} give
\[
\chi_\dR(\Afu,\Sym^k\Ai_n)=\rk\Sym^k\Ai_n-\irr(n,k).
\]
By irreducibility of $\Sym^k\Ai_n$ and affinity of $\Afu$, the left-hand side is $-\dim\coH_\dR^1(\Afu,\Sym^k\Ai_n)$. This yields the first equality for $?=\emptyset$. A duality argument gives the case $?=\cp$. The regular part of $\Sym^k\Ai_n$ at infinity has rank $S(n,k)$ and monodromy $\sfi_n^k\id=\exp(-k(n+1)\pii/n)\id$. We thus have
\[
\dim\coH_{\dR,\rmid}^1(\Afu,\Sym^k\Ai_n)=\begin{cases}
\dim\coH^1(\Afu,\Sym^k\Ai_n)&\text{if }k(n+1)/n\notin2\ZZ,\\
\dim\coH^1(\Afu,\Sym^k\Ai_n)-S(n,k)&\text{if }k(n+1)/n\in2\ZZ,
\end{cases}
\]
which yields the second equality.
\end{proof}

Let $f_k:\Afu_z\times\Aff{k}_x\to\Afu$ be defined by
\[
f_k(z,x_1,\dots,x_k)=\sum_{i=1}^k(\tfrac1{n+1}\, x_i^{n+1}-zx_i)
\]
and let $\pi_k:\Afu_z\times\Aff{k}_x\to\Afu_z$ denote the projection.
The symmetric group $\symgp_k$ acts as automorphisms on $f_k$ and $\pi_k$ by permuting the variables $x_i$.

\begin{prop}\label{prop:Aikfk}
We have
\[
\Ai_n^{\otimes k}\simeq\pi_{k+}E^{f_k}\qand \Sym^k\Ai_n\simeq(\pi_{k+}E^{f_k})^{\symgp_k,\chi},
\]
where the latter term is the isotypic component of $\pi_{k+}E^{f_k}$ under the action of~$\symgp_k$ with respect to the sign character $\chi$ on $\symgp_k$.
\end{prop}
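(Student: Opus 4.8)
The plan is to recognize $f_k$ as a relative Thom-Sebastiani sum over $\Afu_z$ and to compute the direct image by a K\"unneth argument, in the spirit of the analysis of symmetric powers of Kloosterman connections in \cite[\S2.1]{F-S-Y18}. Set $f(x,z)=\frac1{n+1}x^{n+1}-zx$, so that $f_k(z,x_1,\dots,x_k)=\sum_{i=1}^kf(x_i,z)$. Since $\Afu_z\times\Aff{k}_x$ is the $k$-fold fibre product of $\Afu_x\times\Afu_z$ over $\Afu_z$ and each summand $f(x_i,z)$ is pulled back from the $i$-th factor, the $\cO$-module with connection $E^{f_k}$ is the relative external tensor product over $\Afu_z$ of $k$ copies of $E^f$. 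First I would apply the K\"unneth formula for $\cD$-module direct images along a fibre product to obtain, in $\catD^\rb(\cD_{\Afu_z})$, an isomorphism
\[
\pi_{k+}E^{f_k}\simeq(\pi_+E^f)^{\otimes k},
\]
where $\pi\colon\Afu_x\times\Afu_z\to\Afu_z$ is the projection, the tensor product is derived over $\cO_{\Afu_z}$, and the right-hand side carries the tensor-product connection.

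Next I would collapse the derived tensor product. By \eqref{eq:FTxn} the complex $\pi_+E^f$ is concentrated in degree~$0$, where it equals $\Ai_n$; moreover $\Ai_n$ is $\cO_{\Afu_z}$-free of rank~$n$, hence flat. Therefore $(\pi_+E^f)^{\otimes k}$ has cohomology only in degree~$0$, where it is $\Ai_n^{\otimes k}$, and $\pi_{k+}E^{f_k}$ is accordingly a single $\cD_{\Afu_z}$-module, isomorphic to $\Ai_n^{\otimes k}$ with its tensor-product connection. This is the first assertion.

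For the second assertion I would track the $\symgp_k$-action. Permuting the variables $x_i$ fixes $\pi_k$ and $f_k$, hence induces a $\symgp_k$-action on $\pi_{k+}E^{f_k}\simeq\Ai_n^{\otimes k}$. The delicate point---and the reason the \emph{sign}-isotypic part produces a symmetric rather than an exterior power---is that $\pi_{k+}$ is computed from the relative de~Rham complex of $E^{f_k}$, whose top term involves the invertible sheaf $\Omega^k_{\Afu_z\times\Aff{k}_x/\Afu_z}$ generated by $\rd x_1\wedge\cdots\wedge\rd x_k$, on which a permutation $\sigma$ acts through $\chi(\sigma)$. Consequently, under the identification with $\Ai_n^{\otimes k}$, the element $\sigma$ acts as $\chi(\sigma)$ times the permutation of the tensor factors, so the $\chi$-isotypic subspace $\{v\mid\sigma v=\chi(\sigma)v\text{ for all }\sigma\}$ is precisely the subspace of tensors fixed by the bare permutation action, that is, $\Sym^k\Ai_n\subset\Ai_n^{\otimes k}$. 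Since all identifications are of $\cD_{\Afu_z}$-modules, this yields $\Sym^k\Ai_n\simeq(\pi_{k+}E^{f_k})^{\symgp_k,\chi}$.

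I expect the K\"unneth step to be the only point requiring genuine care: it has to be set up relatively over $\Afu_z$ and, crucially, $\symgp_k$-equivariantly, so that the geometric $\symgp_k$-action on $\pi_{k+}E^{f_k}$ is transported exactly to the sign-twisted permutation action on $\Ai_n^{\otimes k}$; one should also record the degree-$0$ concentration of $\pi_+E^f$, which follows at once from the cokernel presentation in \eqref{eq:FTxn} (the differential there being injective on polynomials, by comparing $x$-degrees). The remaining ingredients---the $\cO_{\Afu_z}$-flatness of $\Ai_n$, the sign of $\sigma$ on $\rd x_1\wedge\cdots\wedge\rd x_k$, and the conversion of the sign-isotypic condition into symmetry of tensors---are routine.
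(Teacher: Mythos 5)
Your proposal is correct, but it takes a genuinely different route from the paper's. The paper's sketch stays inside the Fourier-transform formalism used to define $\Ai_n$: it writes $\Ai_n^{\otimes k}=\bigotimes^k\FT_-(E^{x^{n+1}/(n+1)})$, interprets this as $\FT_-$ of the $k$-fold additive convolution of $E^{x^{n+1}/(n+1)}$ with itself, computes that convolution as the pushforward of $E^{\sum_i x_i^{n+1}/(n+1)}$ by the sum map (this is where the $\cO_{\Afu_z}$-flatness of $\FT_-(E^{x^{n+1}/(n+1)})$ and of its dual, i.e.\ membership in Katz's category $\mathsf P$ \cite{Katz96}, is invoked so that the iterated convolution is unproblematic and concentrated in degree $0$), and then recognizes $\FT_-$ of this pushforward as $\pi_{k+}E^{f_k}$. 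You bypass the convolution dictionary entirely and prove the first isomorphism by a relative K\"unneth argument over $\Afu_z$: since everything is affine, $\pi_{k+}E^{f_k}$ is computed by the relative de~Rham complex, which is literally the tensor product over $\CC[z]$ of the $k$ two-term complexes of \eqref{eq:FTxn}; each is a complex of free $\CC[z]$-modules with cohomology concentrated in degree $0$ and free of rank $n$, so the derived tensor product collapses to $\Ai_n^{\otimes k}$ with its tensor-product connection. Both arguments ultimately rest on the same two facts (degree-$0$ concentration and $\cO_{\Afu_z}$-freeness of $\Ai_n$); the paper's packaging via $\FT$ and convolution matches the way $\Ai_n$ was introduced and the later uses of convolution in the article, while yours is more elementary and self-contained. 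A further merit of your write-up is that you make explicit what the paper's ``the second identification follows from the first one'' leaves implicit: under the identification, the geometric $\symgp_k$-action becomes the permutation of tensor factors twisted by $\chi(\sigma)$, coming from the action on $\rd x_1\wedge\cdots\wedge\rd x_k$ in the top term of the relative de~Rham complex, which is precisely why the $\chi$-isotypic component is $\Sym^k\Ai_n$ and not the exterior power.
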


\begin{proof}[Sketch]
The second identification follows from the first one. We write
\[
\Ai_n^{\otimes k}=\bigotimes^k\FT_-(E^{x^{n+1}/(n+1)}),
\]
which we interpret as the Fourier transform $\FT_-$ of the $k$-fold additive convolution product of $E^{x^{n+1}/(n+1)}$ with itself. We note that $\FT_-(E^{x^{n+1}/(n+1)})$ and its dual are $\cO_{\Afu_z}$-flat, which makes the computation of the iterated convolution easy ($E^{x^{n+1}/(n+1)}$ belongs to the category~$\mathsf P$ of Katz \cite{Katz96}). The iterated convolution can also be computed as the pushforward by the sum map $(x_1,\dots,x_k)\mto x_1+\cdots+x_k$ of $E^{{\scriptscriptstyle\sum}_ix_i^{n+1}/(n+1)}$. Then the Fourier transform $\FT_-$ of this convolution can be expressed as $\pi_{k+}E^{f_k}$. In~particular, $\pi_{k+}E^{f_k}=\cH^0\pi_{k+}E^{f_k}$.
\end{proof}

\begin{cor}\label{cor:isofk}
For $?=\emptyset,\rc$, the de~Rham cohomologies $\coH^r_{\dR,?}(\Afu,\Sym^k\Ai_n)$ vanish for $r\neq1$ and we have
\begin{starequation}\label{eq:isofk}
\coH^1_{\dR,?}(\Afu,\Sym^k\Ai_n)\simeq\coH^{k+1}_{\dR,?}(\Aff{k+1},E^{f_k})^{\symgp_k,\chi}.
\end{starequation}%
\end{cor}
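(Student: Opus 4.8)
The plan is to deduce the statement formally from Proposition~\ref{prop:Aikfk} by factoring the structure morphism of $\Aff{k+1}$ through $\pi_k$ and using that $\pi_{k+}E^{f_k}$ is concentrated in a single cohomological degree. Write the structure morphism $a$ of $\Aff{k+1}=\Afu_z\times\Aff{k}_x$ as $a=a_z\circ\pi_k$, with $a_z\colon\Afu_z\to\mathrm{pt}$. Then, for $?=\emptyset$, the de~Rham cohomology $\coH^\cbbullet_\dR(\Aff{k+1},E^{f_k})$ is computed by the $\cD$-module direct image $a_+E^{f_k}\simeq a_{z+}\bigl(\pi_{k+}E^{f_k}\bigr)$, compatibly with the action of $\symgp_k$ permuting the $x_i$ (which commutes with $\pi_k$ and leaves $f_k$ invariant). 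By Proposition~\ref{prop:Aikfk}, $\pi_{k+}E^{f_k}=\cH^0\pi_{k+}E^{f_k}$ is a connection on $\Afu_z$, $\symgp_k$-equivariantly isomorphic to $\Ai_n^{\otimes k}$ in such a way that its isotypic component for the sign character $\chi$ is $\Sym^k\Ai_n$.

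Since $\pi_{k+}E^{f_k}$ is concentrated in degree~$0$, composing the two direct images gives a $\symgp_k$-equivariant isomorphism $\coH^{r+k}_\dR(\Aff{k+1},E^{f_k})\simeq\coH^{r}_\dR(\Afu_z,\pi_{k+}E^{f_k})$ for every $r$, the index shift by the relative dimension $k$ of $\pi_k$ being the standard one. I would then apply the projector onto the $\chi$-isotypic part: it is an idempotent of $\QQ[\symgp_k]$, hence $(\cbbullet)^{\symgp_k,\chi}$ is exact and commutes with the $\symgp_k$-equivariant direct images $\pi_{k+}$, $a_{z+}$ and with passing to cohomology; together with $\bigl(\pi_{k+}E^{f_k}\bigr)^{\symgp_k,\chi}\simeq\Sym^k\Ai_n$ (Proposition~\ref{prop:Aikfk}), this yields \eqref{eq:isofk} for $?=\emptyset$. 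The case $?=\rc$ is entirely analogous, with $\pi_{k+}$, $a_{z+}$ replaced by the direct images with proper supports $\pi_{k\dag}$, $a_{z\dag}$: one only needs that $\pi_{k\dag}E^{f_k}$ is again a connection in degree~$0$ with sign-isotypic component $\Sym^k\Ai_n$, which holds because $\pi_{k\dag}E^{f_k}\simeq\bD\bigl(\pi_{k+}\bD E^{f_k}\bigr)=\bD\bigl(\pi_{k+}E^{-f_k}\bigr)$ and $-f_k$ is the pull-back of $f_k$ by a linear rescaling automorphism of $\Aff{k+1}$ (alternatively, the proof of Proposition~\ref{prop:Aikfk} applies verbatim to $-f_k$, using that $E^{\pm x^{n+1}/(n+1)}$ lies in Katz's category $\mathsf P$).

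It remains to check that $\coH^r_{\dR,?}(\Afu_z,\Sym^k\Ai_n)$ vanishes for $r\neq1$. As $\Afu_z$ is affine of dimension one and $\Sym^k\Ai_n$ is an integrable connection, $\coH^r_\dR$ (resp.\ $\coH^r_{\dR,\rc}$) can be nonzero only for $r\in\{0,1\}$ (resp.\ $r\in\{1,2\}$). Moreover $\coH^0_\dR(\Afu_z,\Sym^k\Ai_n)$, the space of horizontal sections, vanishes, because a nonzero one would span a constant rank-one sub-$\cD$-module, contradicting the irreducibility (Lemma~\ref{lem:irred}) of $\Sym^k\Ai_n$, whose rank $\binom{n-1+k}{k}$ is $\geq2$; dually, $\coH^2_{\dR,\rc}(\Afu_z,\Sym^k\Ai_n)\simeq\coH^0_\dR(\Afu_z,(\Sym^k\Ai_n)^\vee)^\vee=0$, the dual connection being also irreducible and non-constant.

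The genuinely new input is Proposition~\ref{prop:Aikfk}: it provides both the concentration of $\pi_{k,?}E^{f_k}$ in degree~$0$ (the case $?=\rc$ being reduced to it by duality above) and the fact that it is the sign character, rather than the trivial one, that selects $\Sym^k\Ai_n$ — reflecting the sign introduced when permuting the $k$ relative coordinates of $\pi_k$. Given these, the corollary is formal; the only mild bookkeeping is tracking the index shift between $\coH^{k+1}$ on $\Aff{k+1}$ and $\coH^1$ on $\Afu_z$.
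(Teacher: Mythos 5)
Your proposal is correct and follows essentially the same route as the paper: the case $?=\emptyset$ is the composition $a_{\Afu+}\circ\pi_{k+}\simeq a_{\Aff{k+1}+}$ combined with Proposition~\ref{prop:Aikfk}, the vanishing for $r\neq1$ comes from irreducibility, affinity and duality, and the case $?=\rc$ rests on duality together with the rescaling sending $f_k$ to $-f_k$. The only cosmetic difference is that you apply duality upstairs, via $\pi_{k\dag}=\bD\circ\pi_{k+}\circ\bD$ (note that identifying the $\chi$-isotypic part of $\pi_{k\dag}E^{f_k}$ with $\Sym^k\Ai_n$ implicitly uses $\Ai_n^\vee\simeq\iota_n^+\Ai_n$ as well), whereas the paper dualizes the global cohomologies on both sides of \eqref{eq:isofk} and reduces to the case $?=\emptyset$.
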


\begin{proof}
The first assertion for $\coH^r(\Afu,\Sym^k\Ai_n)$ follows from the irreducibility of $\Sym^k\Ai_n$ and $\coH^r(\Afu,\Sym^k\Ai_n)=0$ for $r\geq2$ as $\Afu$ is affine. Furthermore, Poincaré duality for $\cD_{\Afu}$-modules implies (\cf \eqref{eq:iotan} for $\iota_n$)
\begin{align*}
\coH^r_{\dR,\rc}(\Afu,\Sym^k\Ai_n)&\simeq\coH^{2-r}_\dR(\Afu,\Sym^k\Ai_n^\vee)^\vee\\
&\simeq\coH^{2-r}_\dR(\Afu,\iota_n^+\Sym^k\Ai_n)^\vee\simeq\coH^{2-r}_\dR(\Afu,\Sym^k\Ai_n)^\vee,
\end{align*}
hence the first assertion holds for $\coH^r_{\dR,\rc}(\Afu,\Sym^k\Ai_n)$.

The case $?=\emptyset$ in \eqref{eq:isofk} is a consequence of the proposition and of the isomorphism $a_{\Afu+}\circ\pi_{k+}\simeq a_{\Aff{k+1}+}$, if $a$ denotes the structure morphism. For the case $?=\rc$, we argue by duality, using the isomorphism above for the left-hand side. For the right-hand side, we~note that $\coH^{k+1}_{\dR,\rc}(\Aff{k+1},E^{f_k})$ is dual to $\coH^{k+1}_\dR(\Aff{k+1},E^{-f_k})$, and the latter space is isomorphic to $\coH^{k+1}_\dR(\Aff{k+1},E^{f_k})$ as is seen by using the change of variables (compatible with the action of~$\symgp_k$)
\begin{equation}\label{eq:variablechange}
\begin{cases}\dpl
x_i\mto-x_i,\;z\mto z&\text{if $n$ is even},\\\dpl
x_i\mto\exp(\pii/(n+1))x_i,\;z\mto \exp(-\pii/(n+1))z&\text{if $n$ is odd},
\end{cases}
\end{equation}
which changes $f_k$ to $-f_k$.
\end{proof}

\Subsection{The \texorpdfstring{$\mf$-exponential mixed Hodge structure on the de~Rham cohomology of $\Sym^k\Ai_n$ and $\Sym^k\wt\Ai_n$}{mf}}\label{subsec:FirrSymktildeAi}
In order to apply the results of Section \ref{sec:fmEMHS} to $\Sym^k\Ai_n$,
we first consider a ramification of order $n$ on the base affine line.
Consider the $n$-fold ramified cover $[n]:\Afu_s \to \Afu_z$ given by $s\mto z = s^n$. Let $\wt\pi:\Afu_s\times\Afu_x\to\Afu_s$ denote the first projection and let
\[
\wt{f} = \frac{1}{n+1}\,x^{n+1} - s^nx
\]
equipped with the $\mu_n$ action on the variable $s$.
We define
\begin{equation}\label{eq:wtAin}
\wt\Ai_n=\cH^0\wt\pi_+E^{\wt f}\simeq\wt\pi_+E^{\wt f}\simeq[n]^+\Ai_n,
\end{equation}
where the last isomorphism is compatible with the $\mu_n$ actions.
Then $\wt\Ai_n$ is smooth on $\Afu_s$ and its formal model at infinity is the pullback by $[n]$ of \eqref{eq:formal_Ai_n}, and we have
\[
\Ai_n=([n]_+\wt\Ai_n)^{\mu_n}.
\]
Setting
\[
\wt{f}_k = \sum_{i=1}^k\Bigl(\frac{1}{n+1}\,x_i^{n+1} - s^nx_i\Bigr),
\]
we obtain similarly
\[
\wt\Ai{}_n^{\otimes k}\!\simeq\!\wt\pi_{k+}E^{\wt f_k},\quad\Sym^k\wt\Ai_n\!\simeq\!(\wt\pi_{k+}E^{\wt f_k})^{\symgp_k,\chi},\quad\Sym^k\Ai_n\!\simeq\!(\Sym^k\wt\Ai_n)^{\mu_n}\!\simeq\!(\pi_{k+}E^{\wt f_k})^{\mu_n\times\symgp_k,\chi}.
\]

\begin{cor}\label{cor:isowtfk}
For $?=\emptyset,\rc$, the de~Rham cohomologies $\coH^r_{\dR,?}(\Afu,\Sym^k\wt\Ai_n)$ and $\coH^r_{\dR,?}(\Gm,j^+\Sym^k\wt\Ai_n)$ vanish for $r\neq1$ and we have
\[
\begin{aligned}
\coH^1_{\dR,?}(\Afu,\Sym^k\wt\Ai_n)&\simeq\coH^{k+1}_{\dR,?}(\Aff{k+1},E^{\wt f_k})^{\symgp_k,\chi},\\
\coH^1_{\dR,?}(\Gm,j^+\Sym^k\wt\Ai_n)&\simeq\coH^{k+1}_{\dR,?}(\Gm\times\Aff{k},E^{\wt f_k})^{\symgp_k,\chi}.
\end{aligned}
\]
\end{cor}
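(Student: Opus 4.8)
The plan is to mimic the proof of Corollary \ref{cor:isofk}, now keeping track of the $\mu_n$-action coming from the variable $s$. First I would note that $\Sym^k\wt\Ai_n$ is, by \eqref{eq:wtAin} and the subsequent displays, the pullback $[n]^+\Sym^k\Ai_n$, hence it is smooth on $\Afu_s$ with irregular singularity at $\infty$ only; together with the affinity of $\Afu_s$ (\resp $\Gm$) and the fact that $[n]^+$ preserves irreducibility up to the $\mu_n$-decomposition, one gets $\coH^r_{\dR,?}(\Afu,\Sym^k\wt\Ai_n)=0$ and $\coH^r_{\dR,?}(\Gm,j^+\Sym^k\wt\Ai_n)=0$ for $r\neq1$ and $?=\emptyset$. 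Alternatively, and perhaps more cleanly, one can deduce the vanishing directly from the right-hand side: the pushforward identity $a_{\Afu_s+}\circ\wt\pi_{k+}\simeq a_{\Aff{k+1}+}$ (\resp with $\Gm\times\Aff{k}$) reduces it to the vanishing of $\coH^r_{\dR}(\Aff{k+1},E^{\wt f_k})$ for $r\neq k+1$, which holds because $E^{\wt f_k}$ is a cohomologically tame-type situation (it is the exponential of a convenient nondegenerate function once we are on $\Afu_s$).

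Next, for $?=\emptyset$ the isomorphisms follow from Proposition \ref{prop:Aikfk} applied verbatim with $f_k$ replaced by $\wt f_k$, giving $\wt\Ai{}_n^{\otimes k}\simeq\wt\pi_{k+}E^{\wt f_k}$ and hence $\Sym^k\wt\Ai_n\simeq(\wt\pi_{k+}E^{\wt f_k})^{\symgp_k,\chi}$, and then from the Leray-type identity $a_{\Afu_s+}\circ\wt\pi_{k+}\simeq a_{\Aff{k+1}+}$, taking the $(\symgp_k,\chi)$-isotypic component (an exact operation, so it commutes with taking cohomology). For the $\Gm$-version one uses instead the restriction $j^+\Sym^k\wt\Ai_n$ and the structure morphism of $\Gm\times\Aff{k}$, i.e.\ one works over $\Gm_s\times\Aff{k}_x$ instead of $\Afu_s\times\Aff{k}_x$; the same isotypic-component argument applies since $j^+$ and $\wt\pi_{k+}$ (restricted over $\Gm$) commute.

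For the case $?=\rc$ I would argue by duality exactly as in the proof of Corollary \ref{cor:isofk}. On the left-hand side, Poincaré duality for $\cD$-modules on the relevant smooth affine curve identifies $\coH^1_{\dR,\rc}(\Afu,\Sym^k\wt\Ai_n)$ with the dual of $\coH^1_\dR(\Afu,\Sym^k\wt\Ai_n^\vee)$, and one checks $\Sym^k\wt\Ai_n^\vee\simeq[n]^+\Sym^k\Ai_n^\vee$ is isomorphic to a pullback of $\Sym^k\wt\Ai_n$ by the analogue of $\iota_n$ lifted to $\Afu_s$ (when $n$ is odd, lift $z\mto\exp(\pi i/(n+1))z$ to an $s$-automorphism), so the duality is internal. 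On the right-hand side, $\coH^{k+1}_{\dR,\rc}(\Aff{k+1},E^{\wt f_k})$ is dual to $\coH^{k+1}_\dR(\Aff{k+1},E^{-\wt f_k})$, and the change of variables \eqref{eq:variablechange} (with $z$ replaced by $s^n$, i.e.\ $s\mto\exp(\pi i/(n(n+1)))s$ in the odd case, $s\mto\pm s$ or rather a suitable root of unity in $s$ so that $s^n\mto -s^n$ is impossible for $n$ even --- here one instead uses $x_i\mto -x_i$, $s\mto s$) carries $\wt f_k$ to $-\wt f_k$; this is compatible with $\symgp_k$, hence with the $\chi$-isotypic component, and the same argument works over $\Gm\times\Aff{k}$. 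The one point requiring a little care --- and the step I expect to be the main technical nuisance rather than a genuine obstacle --- is making the sign-change of variables compatible simultaneously with the $\mu_n$-action on $s$ and with $\symgp_k$ when $n$ is odd, since the natural scaling of $z$ must be lifted through $z=s^n$; but since here we only need the existence of \emph{some} isomorphism (not a $\mu_n$-equivariant one, as the $\mu_n$-invariants are taken only later when passing back from $\Sym^k\wt\Ai_n$ to $\Sym^k\Ai_n$), choosing any $n$-th root of the scaling factor suffices.
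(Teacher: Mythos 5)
Your identifications for $?=\emptyset$ (via $\Sym^k\wt\Ai_n\simeq(\wt\pi_{k+}E^{\wt f_k})^{\symgp_k,\chi}$ and the Leray identity) and your duality argument for $?=\rc$ (lifting $\iota_n$ and the change of variables \eqref{eq:variablechange} through $z=s^n$, where indeed no $\mu_n$-equivariance is required) follow the same route as the paper. The genuine gap is in the vanishing for $r\neq1$, i.e.\ in degree $0$ (degree $2$ is free by affinity). One cannot simply transpose the proof of Corollary \ref{cor:isofk}, because $\Sym^k\wt\Ai_n=[n]^+\Sym^k\Ai_n$ is in general \emph{not} irreducible; what is needed, and what the paper actually proves, is that it has no nonzero constant submodule (hence, by its self-duality up to an automorphism of $\Afu_s$, no constant quotient either): if it had one, its pushforward by $[n]$ would produce, after taking $\mu_n$-invariants, a nonzero constant submodule of $\Sym^k\Ai_n$, contradicting Lemma \ref{lem:irred}. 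Your first argument, that ``$[n]^+$ preserves irreducibility up to the $\mu_n$-decomposition'', gestures at this but never rules out that one of the irreducible summands of $[n]^+\Sym^k\Ai_n$ is the trivial connection, which is exactly the point at issue. It can be repaired (by adjunction, a trivial summand would force $\Sym^k\Ai_n$ to be a direct summand of $[n]_+\cO_{\Afu_s}$, a regular holonomic module of rank $n$ with rank-one summands, which is absurd), but as written it is an assertion, not a proof.

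The alternative you call ``cleaner'' is in fact false. The function $\wt f_k$ is neither cohomologically tame nor convenient non-degenerate: its critical locus contains the curves $\{x_i=\zeta_is,\ 1\leq i\leq k\}$ for every $(\zeta_1,\dots,\zeta_k)\in\mu_n^k$ with $\sum_i\zeta_i=0$, and such tuples exist as soon as $\ccS(n,k)\neq\emptyset$ (e.g.\ $n=2$ and $k$ even). Moreover the concentration you want to invoke, namely $\coH^r_\dR(\Aff{k+1},E^{\wt f_k})=0$ for $r\neq k+1$, fails in general: for $n=2$, $k=2$, the determinant of $\wt\Ai$ is the trivial connection, so $\coH^0_\dR(\Afu_s,\wt\Ai^{\otimes2})\neq0$ and hence, via $\wt\pi_{2+}E^{\wt f_2}\simeq\wt\Ai^{\otimes2}$, one gets $\coH^{2}_\dR(\Aff{3},E^{\wt f_2})\neq0$. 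Only the $(\symgp_k,\chi)$-isotypic component is concentrated in degree $k+1$, and its concentration is equivalent to the absence of constant sub- and quotient modules in $\Sym^k\wt\Ai_n$ --- precisely the statement your proposal needs but does not establish.
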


\begin{proof}
We first claim that $\Sym^k\wt\Ai_n$ does not have any non trivial constant submodule: other\-wise its pushforward by $[n]$ would give rise, by taking $\mu_n$-invariants, to a non trivial constant submodule of $\Sym^k\Ai_n$, a contradiction. Then we can argue as in the proof of Corollary~\ref{cor:isofk}.
\end{proof}

As a consequence, we obtain:
\[
\coH^1_{\dR,?}(\Afu,\Sym^k\Ai_n)\simeq\coH^{k+1}_{\dR,?}(\Aff{k+1},E^{\wt f_k})^{\mu_n\times\symgp_k,\chi}.
\]

The isomorphisms \eqref{eq:isofk} together with \eqref{eq:HUf} lead us to define, for $?=\emptyset,\rc,\rmid$, the exponential mixed Hodge structures $\coH_?^1(\Afu, \Ai_n^{\otimes k})$ and $\coH_?^1(\Afu, \wt\Ai{}_n^{\otimes k})$ as
\begin{equation}\label{eq:Aifk}
\coH_?^1(\Afu, \Ai_n^{\otimes k}):= \coH_?^{k+1}(\Aff{k+1}, f_k),\quad\coH_?^1(\Afu, \wt\Ai{}_n^{\otimes k}):= \coH_?^{k+1}(\Aff{k+1}, \wt f_k)
\end{equation}
and then the exponential mixed Hodge structures $\coH_?^1(\Afu, \Sym^k\Ai_n)$ and $\coH_?^1(\Afu, \Sym^k\wt\Ai_n)$ as
\begin{equation}\label{eq:Aitensk}
\coH_?^1(\Afu, \Sym^k\Ai_n):= \coH_?^1(\Afu, \Ai_n^{\otimes k})^{\symgp_k,\chi},\quad\coH_?^1(\Afu, \Sym^k\wt\Ai_n):= \coH_?^1(\Afu, \wt\Ai{}_n^{\otimes k})^{\symgp_k,\chi},
\end{equation}
so that
\begin{equation}\label{eq:Aiftildek}
\coH_?^1(\Afu, \Sym^k\Ai_n)\simeq\coH_?^1(\Afu, \Sym^k\wt\Ai_n)^{\mu_n}.
\end{equation}
We can similarly define $\coH_?^1(\Gm,j^+\Sym^k\wt\Ai_n)$ in $\EMHS$ by replacing $\Aff{k+1}$ with $\Gm\times\Aff{k}$. Recall that $\coH^1_{\rmid}$ is defined as the image in $\EMHS$ of $\coH^1_{\rc}\to\coH^1$.

\begin{thm}\label{th:weightsMHM}
The exponential mixed Hodge structures
\[
\coH^1(\Afu,\Sym^k\Ai_n), \quad \coH^1_{\rc}(\Afu,\Sym^k\Ai_n), \quad \text{and} \quad \coH^1_{\rmid}(\Afu,\Sym^k\Ai_n)
\]
are $\mf$-exponential mixed Hodge structures of weights $\geq k+1$, $\leq k+1$, and $k+1$ respectively, and the natural morphisms between them are morphisms of $\mf$-exponential mixed Hodge structures.

Moreover, the induced morphism
\begin{equation}\label{eqn:isom-graded}
\gr^W_{k+1}\coH^1_{\rc}(\Afu,\Sym^k\Ai_n) \to \gr^W_{k+1}\coH^1(\Afu,\Sym^k\Ai_n)
\end{equation}
is an isomorphism, and $\coH^1_{\rmid}(\Afu,\Sym^k\Ai_n)$ is equal to its image.
This $\mf$-exponential pure Hodge structure of weight $k+1$ is equipped with a $(-1)^{k+1}$-symmetric pairing
\begin{equation}\label{eq:pairing-motive}
\coH^1_{\rmid}(\Afu,\Sym^k\Ai_n) \otimes \coH^1_{\rmid}(\Afu,\Sym^k\Ai_n) \to \QQ(-k-1).
\end{equation}
Similar properties hold for $\Sym^k\wt\Ai_n$.
\end{thm}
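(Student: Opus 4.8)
The plan is to realize $\coH^1_?(\Afu,\Sym^k\Ai_n)$ as an isotypic component of an $\mf$-exponential mixed Hodge structure coming from a function of the type $s^ng$ studied in Section~\ref{sec:fmEMHS}, and then to feed into Proposition~\ref{prop:weightsHdUf} and its equivalence \eqref{prop:weightsHdUf1}$\Leftrightarrow$\eqref{prop:weightsHdUf2}. First I would apply Proposition~\ref{prop:sqg} (with $\Afu_s$ replaced by $\Afu_s$, and $V=\Aff{k}$, $g=\sum_i(\tfrac1{n+1}x_i^{n+1}-x_i)$, $m=n$) and Remark~\ref{rem:sqg} to the function $\wt f_k=s^n\sum_i(\tfrac1{n+1}x_i^{n+1}-x_i)$. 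This gives that $\coH^{k+1}_?(\Aff{k+1},\wt f_k)$, hence after taking the $\symgp_k$-isotypic component for $\chi$ (which commutes with all functors in sight, being a direct summand) also $\coH^1_?(\Afu,\Sym^k\wt\Ai_n)$ by definition \eqref{eq:Aitensk}, is an object of $\EMHS^\mf$; the $\mu_n$-invariant part $\coH^1_?(\Afu,\Sym^k\Ai_n)$ via \eqref{eq:Aiftildek} is then again an object of $\EMHS^\mf$, since $\mu_n$-invariants is an exact functor preserving $\EMHS^\mf$ (the action of $\mu_n$ on $s$ being part of the $\mf$-structure). The naturality of the morphisms between the three cohomologies is immediate since they all arise from morphisms of mixed Hodge modules, and the functor $\Pi_\theta$ together with the isotypic-component and invariants functors are exact.

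Next I would settle the weight estimates. Here $N_U^\rH=\pQQ^\rH_{\Aff{k+1}}$ is pure of weight $k+1$ (with $d+1=k+1$, i.e.\ $d=k$), so Proposition~\ref{prop:weightsHdUf} directly gives that $\coH^{k+1}_\rc(\Aff{k+1},\wt f_k)$ has weights $\leq k+1$, $\coH^{k+1}(\Aff{k+1},\wt f_k)$ has weights $\geq k+1$, and $\coH^{k+1}_\rmid(\Aff{k+1},\wt f_k)$ is pure of weight $k+1$. Taking the $(\symgp_k,\chi)$-isotypic component and then $\mu_n$-invariants preserves these weight bounds (weight filtrations are strict, and both operations are taking direct summands in the respective categories of mixed Hodge modules / $\mf$-exponential mixed Hodge structures, using Proposition~\ref{prop:EMHSmfMHSmf} that weights in $\EMHS^\mf$ match weights in $\MHS^\mf$). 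So the three displayed objects have weights $\geq k+1$, $\leq k+1$, $k+1$ respectively.

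The key point is then the isomorphism \eqref{eqn:isom-graded} and the identification $\coH^1_\rmid=\image$, i.e.\ verifying condition~\eqref{prop:weightsHdUf1} of Proposition~\ref{prop:weightsHdUf}, which by that proposition is equivalent to \eqref{prop:weightsHdUf2}. By the self-duality hypothesis in Proposition~\ref{prop:weightsHdUf}, I would first check that $\pQQ^\rH_{\Aff{k+1}}$ is self-dual up to Tate twist (it is: $\bD\pQQ^\rH_{\Aff{k+1}}\simeq\pQQ^\rH_{\Aff{k+1}}(k+1)$) and that the function $\wt f_k$, or rather $f_k$, is compatible with a duality — this is exactly the change of variables \eqref{eq:variablechange} used in the proof of Corollary~\ref{cor:isofk}, which sends $f_k$ to $-f_k$ while respecting the $\symgp_k$- and $\mu_n$-actions (for $n$ odd one uses the twist $z\mapsto\exp(-\pii/(n+1))z$, i.e.\ $\iota_n$; for $n$ even the sign change $x_i\mapsto -x_i$). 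This furnishes the perfect pairing \eqref{eq:pairing-motive} valued in $\QQ(-k-1)$, with sign $(-1)^{k+1}$ coming from the cup product on $\coH^{k+1}$ of a $(k+1)$-dimensional space (alternatively the sign is dictated by whether $\Sym^k\Ai_n$ is orthogonal or symplectic, controlled by the differential Galois group $\mathrm{SL}_n$ or $\mathrm{Sp}_n$). Then the isomorphism \eqref{eqn:isom-graded} follows because under a self-dual situation the map $\gr^W_{k+1}\coH_\rc\to\gr^W_{k+1}\coH$ is dual to itself, hence is an isomorphism once one knows it is either injective or surjective; this in turn I would deduce from the concrete computation of the middle de~Rham cohomology, namely Corollaries~\ref{cor:midW} and \ref{cor:midWcl}, which after passing to the $(\symgp_k,\chi)$-isotypic component and $\mu_n$-invariants give precisely $\coH^1_\rmid(\Afu,\Sym^k\Ai_n)=W_{k+1}\coH^1(\Afu,\Sym^k\Ai_n)$ — applying Corollary~\ref{cor:midWcl} requires checking that $g=\sum_i(\tfrac1{n+1}x_i^{n+1}-x_i)$ is cohomologically tame on $\Aff{k}$, which holds since it is a convenient non-degenerate (Newton) polynomial in the sense of Kouchnirenko, and $d=k\geq2$ when $k\geq2$. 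The main obstacle is this last verification: one must check the tameness hypothesis carefully and make sure the isotypic-component and $\mu_n$-invariance operations commute with the weight filtration and with the formation of $W_{d+1}$ (which is clear by strictness, but needs to be said), and one must ensure that the equivalence \eqref{prop:weightsHdUf1}$\Leftrightarrow$\eqref{prop:weightsHdUf2} can be applied after these operations, i.e.\ that self-duality is preserved by them — for which the symmetry of the pairing and the $\mu_n$- and $\symgp_k$-equivariance of the duality isomorphism are exactly what is needed. The statement for $\Sym^k\wt\Ai_n$ is obtained by the same argument, simply omitting the final step of taking $\mu_n$-invariants.
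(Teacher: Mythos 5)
Your overall strategy---realize the de~Rham cohomology of the Airy moments as an isotypic component of the cohomology of a function of type $s^mg$, get $\EMHS^\mf$-membership from Proposition~\ref{prop:sqg}/Remark~\ref{rem:sqg}, the weight bounds from Proposition~\ref{prop:weightsHdUf}, the equality $\coH^1_\rmid=W_{k+1}\coH^1$ from Corollaries~\ref{cor:midW} and~\ref{cor:midWcl} (whose tameness hypothesis for $g_k$ you check correctly), and the pairing from a change of variables as in \eqref{eq:variablechange}---is exactly the paper's. But there is a genuine gap in how you set it up: $\wt f_k=\sum_i(\tfrac1{n+1}x_i^{n+1}-s^nx_i)$ is \emph{not} the product $s^n\sum_i(\tfrac1{n+1}x_i^{n+1}-x_i)$ you write, and it is not of the form $s^mg(x)$ on $\Aff{k+1}$ at all. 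The correct relation only holds over $\Gms$: the substitution $x_i=sy_i$ is an automorphism of $\Gm\times\Aff{k}$ (not of $\Aff{k+1}$) and turns $\wt f_k$ into $s^{n+1}g_k(y)$, with exponent $n+1$, not $n$. So Proposition~\ref{prop:sqg} and Corollaries~\ref{cor:midW}, \ref{cor:midWcl} apply directly to $\coH^{k+1}_?(\Gm\times\Aff{k},s^{n+1}g_k)$, which is not literally the object you need, namely $\coH^{k+1}_?(\Aff{k+1},\wt f_k)$.

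The missing step is the comparison between these two, which the paper carries out with the localization sequences \eqref{eq:sq_coh_wtf} along $s=0$ for $\wt f_k$: the extreme terms are $\coH^k_?(\Aff{k},\tfrac1{n+1}\sum_ix_i^{n+1})$, a $k$-th tensor power of $\coH^1(\Afu,\tfrac1{n+1}x^{n+1})$, hence pure of weight $k$ on the compactly supported side and of weight $k+2$ (after the Tate twist) on the other side. This purity is precisely what guarantees that $\gr^W_{k+1}$ and the middle cohomology are unchanged when passing from $\Aff{k+1}$ to $\Gm\times\Aff{k}$, and it also yields $\EMHS^\mf$-membership of the $\Aff{k+1}$-cohomologies as subobjects/extensions of objects already known to lie in $\EMHS^\mf$. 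As written, your appeals to Proposition~\ref{prop:sqg} and Corollary~\ref{cor:midWcl} are made for a function that is not $\wt f_k$, so the conclusion $\coH^1_\rmid(\Afu,\Sym^k\Ai_n)=W_{k+1}\coH^1(\Afu,\Sym^k\Ai_n)$ does not follow without inserting this localization argument (the wrong exponent $n$ versus $n+1$ is harmless for this theorem, since the Corollaries hold for every $m\geq1$, but it would corrupt the later Hodge-number computations). The remaining points---$(\symgp_k,\chi)$-isotypic parts and $\mu_n$-invariants as direct summands, the duality pairing, and the deduction of \eqref{eqn:isom-graded} from the middle-equals-weight statement via Proposition~\ref{prop:weightsHdUf}---are in line with the paper's proof.
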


\begin{proof}
The first statement (weight estimates) follows from Proposition \ref{prop:weightsHdUf}. For the second statement, we first prove the results for $\Sym^k\wt\Ai_n$ by applying Corollaries \ref{cor:midW} and \ref{cor:midWcl} as follows. For $?=\emptyset,\rc,\rmid$, we note that
\[ \coH_?^r(\Aff{k}, \textstyle\frac{1}{n+1}\sum_{i=1}^k x_i^{n+1})
= \begin{cases}
\coH^1(\Afu, \frac{1}{n+1}\,x^{n+1})^{\otimes k} & \text{if $r=k$}, \\
0 & \text{otherwise},
\end{cases}
\]
by the argument used in \cite[Ex.\,A.22]{F-S-Y18}. Furthermore, the exponential mixed Hodge structure $\coH^1(\Afu, \frac{1}{n+1}\,x^{n+1})$ is pure of weight $1$ and of rank $n$ according to Section \ref{subsec:KrH}, and belongs to $\EMHS^\mf$. On the other hand, arguing in a way similar to \cite[Ex.\,A.20]{F-S-Y18}, we have the short exact sequences in $\EMHS$ with a commutative square
\begin{equation}\label{eq:sq_coh_wtf}
\begin{array}{c}
\xymatrix@C=.55cm{
0&\ar[l]\coH_\rc^{k+1}(\Aff{k+1}, \wt{f}_k)
\ar[d]&\ar[l] \coH_\rc^{k+1}(\Gm\times\Aff{k}, \wt{f}_k)
\ar[d]&\ar[l]\coH_\rc^k(\Aff{k}, \textstyle\frac{1}{n+1}\sum_{i=1}^k x_i^{n+1})& \ar[l]0\\
0\ar[r]&\coH^{k+1}(\Aff{k+1}, \wt{f}_k)
\ar[r]& \coH^{k+1}(\Gm\times\Aff{k}, \wt{f}_k)
\ar[r]& \coH^k(\Aff{k}, \textstyle\frac{1}{n+1}\sum_{i=1}^k x_i^{n+1})(-1)
\ar[r]& 0
}
\end{array}
\end{equation}
and thus isomorphisms for $?=\emptyset,\rc$:
\[
\gr_{k+1}^W\coH^{k+1}_?(\Aff{k+1}, \wt{f}_k)\simeq\gr_{k+1}^W\coH^{k+1}_?(\Gm\times\Aff{k}, \wt{f}_k),
\]
since the upper \resp lower rightmost non-trivial term in the diagram is pure of weight $k$ \resp $k+2$. Furthermore,
\[
\coH^{k+1}_\rmid(\Aff{k+1}, \wt{f}_k)\simeq\coH^{k+1}_\rmid(\Gm\times\Aff{k}, \wt{f}_k).
\]
Let us also set
\[
g(y)=\frac{1}{n+1}\, y^{n+1} - y,
\]
and consider the convenient non-degenerate polynomial
\begin{equation}\label{eq:defgk}
g_k(y_1,\dots,y_k) = \sum_{i=1}^k g(y_i).
\end{equation}
Under the change of variables $x_i = sy_i$ on $\Gm\times\Aff{k}$, and arguing as for \eqref{eq:locsg} one has, for $?=\emptyset,\rc,\rmid$,
\begin{equation}\label{eq:coh_wtf_gk}
\coH^{k+1}_?(\Gm\times\Aff{k}, \wt{f}_k)
= \coH^{k+1}_?(\Gm\times\Aff{k}, s^{n+1}g_k),
\end{equation}
and a similar equality between the $\chi$-isotypical components with respect to the action of $\symgp_k$ permuting $y_1,\dots,y_k$. We can thus apply Corollaries \ref{cor:midW} and \ref{cor:midWcl} to obtain the second point for $\Sym^k\wt\Ai_n$.

Lastly, the pairing in the case of $\Sym^k\wt\Ai_n$ is obtained from the pairing
\[
\coH^{k+1}_\rc(\Aff{k+1}, \wt{f}_k)\otimes \coH^{k+1}(\Aff{k+1}, -\wt{f}_k)\to\QQ(-k-1)
\]
together with a change of variables similar to \eqref{eq:variablechange}, and passing to the $\chi$-isotypical component.

In order to deduce the results for $\Sym^k\Ai_n$, one uses \eqref{eq:Aiftildek} for $?=\emptyset,\rc,\rmid$.
\end{proof}

\begin{cor}\label{cor:iotastrict}
We have the following identifications in $\EMHS^\mf$:
\begin{align*}
\coH^1(\Gm,j^+\Sym^k\wt\Ai_n)
&\simeq \coH^{k+1}(\Aff{k+1}, s^{n+1}g_k)^{\symgp_k,\chi},\\
\coH_\rmid^1(\Afu,\Sym^k\wt\Ai_n)
&\simeq\coH_\rmid^{k+1}(\Aff{k+1}, s^{n+1}g_k)^{\symgp_k,\chi}= W_{k+1}\coH^{k+1}(\Aff{k+1}, s^{n+1}g_k)^{\symgp_k,\chi}.
\end{align*}
\end{cor}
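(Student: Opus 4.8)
The plan is to deduce Corollary~\ref{cor:iotastrict} directly from the chain of identifications already established in the proof of Theorem~\ref{th:weightsMHM}, reading them now at the level of objects of $\EMHS^\mf$ rather than just at the level of weights. First I would note that by Corollary~\ref{cor:isowtfk} together with the definitions \eqref{eq:Aifk}--\eqref{eq:Aitensk}, the object $\coH^1(\Gm,j^+\Sym^k\wt\Ai_n)$ of $\EMHS^\mf$ is by construction the $(\symgp_k,\chi)$-isotypical component of $\coH^{k+1}(\Gm\times\Aff{k},\wt f_k)$. Then I would apply the change of variables $x_i=sy_i$ on $\Gm\times\Aff{k}$, which is an isomorphism compatible with the $\symgp_k$-action permuting coordinates, to get the identification $\coH^{k+1}_?(\Gm\times\Aff{k},\wt f_k)=\coH^{k+1}_?(\Gm\times\Aff{k},s^{n+1}g_k)$ of \eqref{eq:coh_wtf_gk} in $\EMHS^\mf$, for $?=\emptyset,\rc,\rmid$, and hence on the $\chi$-isotypical components. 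Finally, the localization square \eqref{eq:locsg} (with $d=k$, $m=n+1$, $g=g_k$) has its maps $(*)$ being isomorphisms because $g_k$ is a convenient non-degenerate polynomial in the sense of Kouchnirenko, so that $\coH^{k+1}_?(\Gm\times\Aff{k},s^{n+1}g_k)\simeq\coH^{k+1}_?(\Aff{k+1},s^{n+1}g_k)$ in $\EMHS^\mf$ for $?=\emptyset,\rmid$; taking $\chi$-isotypical components gives the first displayed isomorphism of the corollary and, combined with \eqref{eq:Aiftildek}, the first part of the second displayed isomorphism.

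For the equality $\coH^{k+1}_\rmid(\Aff{k+1},s^{n+1}g_k)^{\symgp_k,\chi}=W_{k+1}\coH^{k+1}(\Aff{k+1},s^{n+1}g_k)^{\symgp_k,\chi}$, the plan is to invoke Corollaries~\ref{cor:midW} and~\ref{cor:midWcl}. Writing $U=\Aff{k+1}=\Afu_s\times\Aff{k}$, $m=n+1\geq2$, $d=k$, and $g=g_k:\Aff{k}\to\Afu$, the hypotheses of Section~\ref{subsec:tame} are met: $g_k$ is a convenient non-degenerate polynomial hence cohomologically tame, and $k=d\geq2$ (the case $k=0,1$ being handled separately, or being of no interest since we assume $k\geq2$ throughout). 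Corollary~\ref{cor:midW} gives $\coH^{k+1}_\rmid(U,s^{n+1}g_k)_{\neq1}=W_{k+1}\coH^{k+1}(U,s^{n+1}g_k)_{\neq1}$ in $\EMHS^\mf_{\neq1}$, and Corollary~\ref{cor:midWcl} gives $\coH^{k+1}_\rmid(U,s^{n+1}g_k)_\cl=W_{k+1}\coH^{k+1}(U,s^{n+1}g_k)_\cl$ in $\EMHS^\cst\simeq\MHS$. Since the decomposition $\EMHS^\mf=\EMHS^\cst\oplus\EMHS^\mf_{\neq1}$ is functorial and $W_\bbullet$ respects it, adding the two equalities yields $\coH^{k+1}_\rmid(U,s^{n+1}g_k)=W_{k+1}\coH^{k+1}(U,s^{n+1}g_k)$ in $\EMHS^\mf$.

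The last point is to check that this equality survives passage to the $(\symgp_k,\chi)$-isotypical component. This is routine: the $\symgp_k$-action (permuting the $y_i$, equivalently the $x_i$) is an action by automorphisms of $(U,s^{n+1}g_k)$ in the relevant geometric sense, hence induces an action on the whole diagram defining $\coH^{k+1}_?$, and taking the $\chi$-isotypical component is an exact functor on $\EMHS^\mf$ that commutes with $W_\bbullet$ and with the formation of images; so it carries the identity $\coH^{k+1}_\rmid=W_{k+1}\coH^{k+1}$ to the corresponding identity on $\chi$-isotypical components. Combined with the identification \eqref{eq:coh_wtf_gk} in $\EMHS^\mf$ and with Corollary~\ref{cor:isowtfk}, this gives $\coH^1_\rmid(\Afu,\Sym^k\wt\Ai_n)\simeq W_{k+1}\coH^{k+1}(\Aff{k+1},s^{n+1}g_k)^{\symgp_k,\chi}$, as claimed.

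I do not expect a serious obstacle here: every ingredient has already been assembled in the proof of Theorem~\ref{th:weightsMHM}, and the corollary is essentially a bookkeeping statement recording those identifications at the level of $\EMHS^\mf$-objects (not merely dimensions or weights). The only mild care required is to confirm that \eqref{eq:coh_wtf_gk} and the localization isomorphisms $(*)$ of \eqref{eq:locsg}, which were used in the proof of the theorem, are genuinely isomorphisms of objects of $\EMHS^\mf$ and not just of underlying vector spaces or graded pieces; this follows because the change of variables $x_i=sy_i$ and the excision maps underlying \eqref{eq:locsg} are induced by morphisms of mixed Hodge modules, hence of $\EMHS^\mf$ after applying $\Pi_\theta$.
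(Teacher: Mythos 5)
Your proposal is correct and follows essentially the same route as the paper's (terse) proof: localization along $s=0$, the change of variables giving \eqref{eq:coh_wtf_gk}, Corollaries \ref{cor:midW} and \ref{cor:midWcl} for the non-classical and classical parts, and compatibility with the $(\symgp_k,\chi)$-projector. One small correction: the maps $(*)$ in \eqref{eq:locsg} are isomorphisms simply because the extreme terms vanish when $d=k\geq2$ (they are cohomologies of $\Aff{k}$ in degrees where they are zero), not because $g_k$ is convenient and non-degenerate --- tameness of $g_k$ is needed only for Corollary \ref{cor:midWcl}.
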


\begin{proof}
Notice that the localization sequences analogous to \eqref{eq:locsg} (along $s=0$) yield, for $k\geq2$, the canonical isomorphisms
\begin{align*}
\coH^{k+1}(\Aff{k+1},s^{n+1}g_k)
&\isom \coH^{k+1}(\Gm\times\Aff{k}, s^{n+1}g_k), \\
\coH_\cp^{k+1}(\Gm\times\Aff{k}, s^{n+1}g_k)
&\isom \coH_\cp^{k+1}(\Aff{k+1},s^{n+1}g_k).
\end{align*}
From Corollaries \ref{cor:midW} and \ref{cor:midWcl} we have canonical identifications
\[ \coH_\rmid^{k+1}(\Aff{k+1}, s^{n+1}g_k)
\isom W_{k+1}\coH^{k+1}(\Aff{k+1}, s^{n+1}g_k). \]
The desired identifications are obtained from the lower line of \eqref{eq:sq_coh_wtf} together with \eqref{eq:Aifk}, \eqref{eq:Aitensk} and \eqref{eq:Aiftildek}.
\end{proof}

In order to apply Proposition \ref{prop:HMH} we set, in a way similar to \eqref{eq:MH},
\begin{equation}\label{eq:MkH}
\wt M_k^\rH=\image\bigl[(\cH^0\Hm g_{k,!*} \pQQ^\rH_{\Aff{k}})^{\symgp_k,\chi}\ra\Pi_\tau((\cH^0\Hm g_{k*} \pQQ^\rH_{\Aff{k}})^{\symgp_k,\chi})\bigr]\in\MHM(\Afu_\tau),
\end{equation}
which is pure of weight $k$, and we have $\Pi_\tau(\wt M_k^\rH)=\Pi_\tau((\cH^0\Hm g_{k*} \pQQ^\rH_{\Aff{k}})^{\symgp_k,\chi})$ which has weights $\geq k$. Let us consider the exact sequence in $\MHM(\Afu_\tau)$:
\begin{equation}\label{eq:exccMPiM}
0\to\wt M_k^\rH\to\Pi_\tau(\wt M_k^\rH)\to\wt M_k^{\prime\rH}\to0.
\end{equation}

\begin{lemma}\label{lem:Mkimage}
We have $\wt M_k^\rH=W_k\Pi_\tau(\wt M_k^\rH)$ and $\wt M_k^{\prime\rH}$ is constant of weights $\geq k+1$.
\end{lemma}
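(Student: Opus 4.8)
The plan is to prove the two assertions of Lemma \ref{lem:Mkimage} by unwinding the definition \eqref{eq:MkH} and invoking the tameness properties recalled just before Proposition \ref{prop:HMH}, applied to the convenient non-degenerate polynomial $g_k$ of \eqref{eq:defgk} (which is cohomologically tame, being a Thom--Sebastiani sum of the one-variable tame polynomial $g$).

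First I would establish that $\wt M_k^{\prime\rH}$ is constant. By tameness of $g_k$, the kernel and cokernel of the natural morphism $\cH^0\Hm g_{k,!}\pQQ^\rH_{\Aff k}\to\cH^0\Hm g_{k,*}\pQQ^\rH_{\Aff k}$ are constant mixed Hodge modules on $\Afu_\tau$; taking the $(\symgp_k,\chi)$-isotypical component preserves this, so the cokernel of $(\cH^0\Hm g_{k,!*}\pQQ^\rH_{\Aff k})^{\symgp_k,\chi}\to(\cH^0\Hm g_{k,*}\pQQ^\rH_{\Aff k})^{\symgp_k,\chi}$ is constant. Applying the exact functor $\Pi_\tau$, which kills constant objects, gives $(\cH^0\Hm g_{k,!*}\pQQ^\rH_{\Aff k})^{\symgp_k,\chi}\twoheadrightarrow\Pi_\tau\bigl((\cH^0\Hm g_{k,*}\pQQ^\rH_{\Aff k})^{\symgp_k,\chi}\bigr)$, so that $\wt M_k^\rH$ is by \eqref{eq:MkH} the quotient of $(\cH^0\Hm g_{k,!*}\pQQ^\rH_{\Aff k})^{\symgp_k,\chi}$ by the kernel of the projection to $\Pi_\tau$, which is a constant submodule. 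Hence $\wt M_k^{\prime\rH}$ in \eqref{eq:exccMPiM}, being the cokernel of $\wt M_k^\rH\hookrightarrow\Pi_\tau(\wt M_k^\rH)$, is a subquotient of that kernel, thus constant. For the weight bound, recall that $\cH^0\Hm g_{k,*}\pQQ^\rH_{\Aff k}$ has weights $\geq k$ and, after applying $\Pi_\tau$, its constant part is removed; since $\wt M_k^\rH$ is pure of weight $k$, any constant subquotient of weight $k$ in $\Pi_\tau(\wt M_k^\rH)$ would split off and lie inside $\wt M_k^\rH$ (the intersection-complex normalization $\cH^0\Hm g_{k,!*}$ has no constant sub nor quotient of the relevant kind once localized), forcing $\wt M_k^{\prime\rH}$ to have weights $\geq k+1$.

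For the first assertion $\wt M_k^\rH=W_k\Pi_\tau(\wt M_k^\rH)$, I would argue: the inclusion $\wt M_k^\rH\subseteq W_k\Pi_\tau(\wt M_k^\rH)$ is immediate since $\wt M_k^\rH$ is pure of weight $k$; conversely, $\gr^W_k$ of the exact sequence \eqref{eq:exccMPiM} gives $\gr^W_k\wt M_k^\rH\isom\gr^W_k\Pi_\tau(\wt M_k^\rH)$ because $\wt M_k^{\prime\rH}$ has weights $\geq k+1$, and as $\Pi_\tau(\wt M_k^\rH)$ has weights $\geq k$ (its lower weights having been killed by $\Pi_\tau$, which removes precisely the constant — hence in this tame situation the weight-${<}k$ — part) one gets $W_k\Pi_\tau(\wt M_k^\rH)=\gr^W_k\Pi_\tau(\wt M_k^\rH)=\wt M_k^\rH$.

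The main obstacle is the bookkeeping of which constant subquotients survive $\Pi_\tau$ and in which weights; concretely, one must be careful that $\Pi_\tau$ removes the maximal constant sub and the maximal constant quotient but may a priori leave constant \emph{subquotients} of intermediate weight. Here tameness saves us: the constant part of $\cH^0\Hm g_{k,*}\pQQ^\rH_{\Aff k}$ is a single constant Hodge module (coming from the generic fiber cohomology, cf. \cite{Bibi96bb}), concentrated in a single weight, so no such pathology occurs. I would make this precise by citing the structure of $\cH^0 g_+\cO_{\Aff k}$ for a cohomologically tame polynomial and the fact that, after taking $(\symgp_k,\chi)$-isotypical components, the constant part remains a constant Hodge structure in a single weight, which by the weight estimates above must be $\geq k+1$ in $\wt M_k^{\prime\rH}$.
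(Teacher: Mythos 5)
There is a genuine gap, at two places. First, the surjectivity you claim --- that $(\cH^0\Hm g_{k,!*}\pQQ^\rH_{\Aff{k}})^{\symgp_k,\chi}$ maps \emph{onto} $\Pi_\tau\bigl((\cH^0\Hm g_{k,*}\pQQ^\rH_{\Aff{k}})^{\symgp_k,\chi}\bigr)$ --- is false: the canonical morphism $N^\rH\to\Pi_\tau(N^\rH)$ has constant kernel and cokernel, but it is not onto in general, and if it were onto here the image $\wt M_k^\rH$ would be all of $\Pi_\tau(\wt M_k^\rH)$, forcing $\wt M_k^{\prime\rH}=0$; this contradicts \eqref{eq:exMPiM} and \eqref{eq:rkwMk} (case $\epsilon=0$), where this constant quotient has positive rank. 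The ensuing deduction that $\wt M_k^{\prime\rH}$ is ``a subquotient of that kernel'' is accordingly incoherent. Constancy of $\wt M_k^{\prime\rH}$ does hold, but for a simpler reason: it is the cokernel of the composite $(\cH^0\Hm g_{k,!*}\pQQ^\rH_{\Aff{k}})^{\symgp_k,\chi}\ra\Pi_\tau\bigl((\cH^0\Hm g_{k,!*}\pQQ^\rH_{\Aff{k}})^{\symgp_k,\chi}\bigr)\simeq\Pi_\tau\bigl((\cH^0\Hm g_{k,*}\pQQ^\rH_{\Aff{k}})^{\symgp_k,\chi}\bigr)$, hence a quotient of the constant cokernel of $N^\rH\to\Pi_\tau(N^\rH)$.

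Second, and more importantly, the heart of the lemma --- excluding a weight-$k$ part of $\wt M_k^{\prime\rH}$, equivalently proving $\wt M_k^\rH=W_k\Pi_\tau(\wt M_k^\rH)$ --- is not established. Your claim that a constant weight-$k$ subquotient ``would split off and lie inside $\wt M_k^\rH$'' has the wrong conclusion: splitting the cokernel of $\wt M_k^\rH\hto W_k\Pi_\tau(\wt M_k^\rH)$ realizes it as a \emph{complement} of $\wt M_k^\rH$, not a piece inside it; and the parenthetical justification is both beside the point and false as stated, since $\cH^0\Hm g_{k,!*}\pQQ^\rH_{\Aff{k}}$ does contain constant summands (already for $k=1$, the trace summand of the finite map $g$). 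The argument that works, and is the paper's: since $\Pi_\tau(\wt M_k^\rH)$ has weights $\geq k$, the cokernel of $\wt M_k^\rH\to W_k\Pi_\tau(\wt M_k^\rH)$ is constant and pure of weight $k$; as $W_k\Pi_\tau(\wt M_k^\rH)$ is pure, hence semisimple, this cokernel is a direct summand, so it would be a nonzero constant \emph{submodule} of $\Pi_\tau(\wt M_k^\rH)$, which is forbidden by the very construction of $\Pi_\tau$. Note this is strictly stronger than the only property you invoke ($\Pi_\tau$ kills constant objects), and it cannot be replaced by your aside that $\Pi_\tau$ removes ``precisely the weight-${<}k$ part'', which is also inaccurate; the weight estimate $\geq k$ stated just before the lemma is what should be used. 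Your final paragraph correctly deduces $\wt M_k^\rH=W_k\Pi_\tau(\wt M_k^\rH)$ from the weight bound on $\wt M_k^{\prime\rH}$, but as written the proposal assumes that bound exactly where the real work lies.
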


\begin{proof}
The cokernel of $\wt M_k^\rH\to W_k\Pi_\tau(\wt M_k^\rH)$ is a constant pure Hodge module of weight~$k$, which is thus a direct summand, hence contained in $\Pi_\tau(\wt M_k^\rH)$. But the $\Cltau$-module underlying the latter mixed Hodge module does not contain any constant nonzero submodule, by definition of $\Pi_\tau$.
\end{proof}

\begin{prop}\label{prop:HodgewtAin}
We have, for $p\in\ZZ$, $\zeta=\exp(-2\pii\epsilon/(n+1))\neq1$ with $\epsilon\in\{1,\dots,n\}$,
\begin{align*}
\dim\gr^p_F\coH^1_\rmid(\Afu_s,\Sym^k\wt\Ai_n)_\cl&=\dim\gr^p_F[\rP_0\psi_{\tau,1}\wt M_k^\rH(-1)],\\
\dim\gr^{p-\epsilon/(n+1)}_{F_\irr}\coH^1_\rmid(\Afu_s,\Sym^k\wt\Ai_n)_{\neq1}&=\dim\gr^p_F\bigl[\rP_0\psi_{\tau,1}(K_{\zeta^{-1}}^\rH\star_\rmid \wt M_k^\rH)(-1)\bigr].
\end{align*}
\end{prop}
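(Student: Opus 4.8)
The plan is to derive Proposition \ref{prop:HodgewtAin} by combining the two expressions for $\coH^1_\rmid(\Afu_s,\Sym^k\wt\Ai_n)$ established above with the convolution-theoretic computation of Section \ref{subsec:additive_convolution}, passing to the $\chi$-isotypical component with respect to the $\symgp_k$-action throughout. First I would recall from Corollary \ref{cor:iotastrict} that $\coH^1_\rmid(\Afu_s,\Sym^k\wt\Ai_n)\simeq W_{k+1}\coH^{k+1}(\Aff{k+1},s^{n+1}g_k)^{\symgp_k,\chi}$, and that according to the decomposition $\EMHS^\mf=\EMHS^\cst\oplus\EMHS^\mf_{\neq1}$ the right-hand side splits into a classical part and a non-classical part. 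By Proposition \ref{prop:HMH} (applied with $d=k$, $m=n+1$, $g=g_k$, which is a convenient non-degenerate polynomial in the sense of Kouchnirenko, hence cohomologically tame, so the hypotheses of Section \ref{subsec:tame} are satisfied for $k\geq2$), each part is computed by a two-dimensional de~Rham cohomology of $t\tau$ twisted by $\wt M_k^\rH$, respectively by $\Hm j^*\go{n+1,t}^\rH\boxtimes\wt M_k^\rH$; here $\wt M_k^\rH$ defined in \eqref{eq:MkH} is exactly the $\chi$-isotypical version of the module $M^\rH$ of \eqref{eq:MH}, so that all the results of Sections \ref{subsec:tame}--\ref{subsec:additive_convolution} apply verbatim after taking $\symgp_k$-invariants of the sign type (the functor of taking a fixed isotypical component of a finite group action is exact and commutes with all the operations used).

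For the classical component, I would invoke Proposition \ref{prop:weightssimple}, whose proof identifies $\coH^2_\rmid(\Gmt\times\Afu_\tau,\Hm j^*N^\rH,t\tau)$ (with $N^\rH=\pQQ^\rH_{\Afu_t}\boxtimes\wt M_k^\rH$) with the mixed Hodge structure whose Hodge numbers are those of the primitive part $\rP_0$ of weight $w-1=k-1$ in $\gr^W\psi_{\tau,1}\wt M_k^\rH$, Tate-twisted once; more precisely, from the displayed equality $\coH^2_\rmid(\Afu_t\times\Afu_\tau,N^\rH,t\tau)=\coker[\rN:\psi_{\tau,1}\wt M_k^\rH\to\psi_{\tau,1}\wt M_k^\rH(-1)]$ in the case $M^\rH=M_1^\rH$ and the weight filtration monodromy identification, one reads off $\gr^W_{k+1}$ of this cokernel as $\rP_0\psi_{\tau,1}\wt M_k^\rH(-1)$. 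Since $\wt M_k^\rH$ is pure of weight $k$ with no constant submodule or quotient and no submodule or quotient supported at $\tau=0$ (the $M_0$ and $M_\cst$ contributions vanish by Lemma \ref{lem:Hcstzero} and the support argument in the proof of Proposition \ref{prop:weightssimple}), the only surviving piece is $M_1$, which yields the first displayed formula for $\gr^p_F$.

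For the non-classical component, the key input is Corollary \ref{cor:grFconvolution}: its second equality states precisely that $\dim\gr^{p-\epsilon/m}_F\coH^{d+1}_\rmid(\Aff{d+1},s^mg)_{\neq1}=\dim\gr^p_F[\rP_0\psi_{\tau,1}(K_{\zeta^{-1},\tau}^\rH\star_\rmid M^\rH)(-1)]$ for $\zeta=\exp(-2\pii\epsilon/m)$, $\epsilon\in\{1,\dots,m-1\}$. Setting $d=k$, $m=n+1$ and replacing $M^\rH$ by its $\chi$-isotypical avatar $\wt M_k^\rH$, and using \eqref{eq:coh_wtf_gk} together with Corollary \ref{cor:iotastrict} to identify $\coH^{k+1}_\rmid(\Aff{k+1},s^{n+1}g_k)^{\symgp_k,\chi}$ with $\coH^1_\rmid(\Afu_s,\Sym^k\wt\Ai_n)$, we obtain the second displayed formula. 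The main obstacle — or rather the main point requiring care — is bookkeeping: one must check that taking the $(\symgp_k,\chi)$-isotypical component commutes with $\Hm\psi_{\tau,1}$, with $\rP_0$, with the additive convolution $\star_\rmid$ by the Kummer module $K^\rH_{\zeta^{-1},\tau}$ (on which $\symgp_k$ acts trivially), and with the formation of $\cH^0\Hm i^!$; all of these are standard since the group action is semisimple over $\QQ$, but the reader needs to be convinced that $\wt M_k^\rH$ as defined in \eqref{eq:MkH} really is the object to which Corollary \ref{cor:grFconvolution} and Proposition \ref{prop:weightssimple} apply. Once this is granted, the proof is a direct concatenation of the cited results, and I would present it in two short paragraphs, one per displayed formula, each ending with a reference to the relevant corollary.
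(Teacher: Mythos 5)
Your proposal is correct and follows essentially the same route as the paper's proof: the identification of $\coH^1_\rmid(\Afu_s,\Sym^k\wt\Ai_n)$ with $W_{k+1}\coH^{k+1}(\Aff{k+1},s^{n+1}g_k)^{\symgp_k,\chi}$ via Corollary \ref{cor:iotastrict} (and \eqref{eq:HrUMfsupindepr}), the argument of \cite[Th.\,3.2, Cor.\,A.31(1)]{F-S-Y18} applied to $\wt M_k^\rH$ for the classical part, and the second line of Corollary \ref{cor:grFconvolution} for the non-classical part. The only difference is that you spell out the $(\symgp_k,\chi)$-equivariance bookkeeping, which the paper absorbs by defining $\wt M_k^\rH$ in \eqref{eq:MkH} with the isotypical component already built in.
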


\begin{proof}
For the first equality, we identify $\coH^1_\rmid(\Afu_s,\Sym^k\wt\Ai_n)_\cl$ with $W_{k+1}\coH^{k+1}(\Aff{k+1}, tg_k)^{\symgp_k,\chi}$, according to the second line of Corollary \ref{cor:iotastrict} and \eqref{eq:HrUMfsupindepr}. Then we can argue as in the proof of \cite[Th.\,3.2]{F-S-Y18} by applying \cite[Cor.\,A.31(1)]{F-S-Y18} to $\wt M_k^\rH$.

The second equality follows from the second lines of Corollaries \ref{cor:grFconvolution} and \ref{cor:iotastrict}.
\end{proof}

Contrary to the case of the symmetric power of the Kloosterman connection $\Sym^k\wt{\mathrm{Kl}}_2$ considered in \cite{F-S-Y18}, $j^+\Sym^k\wt\Ai_n$ is not the restriction to $\Gm$ of the Fourier transform of a regular holonomic module like $\Pi_\tau(\wt M_k)$ because it is not of slope $1$ at infinity (as can be seen from Lemma \ref{lem:SymwhAi}). The Fourier transform of $\Pi_\tau(\wt M_k)$ will nevertheless prove useful in~\ref{subsec:Gg} when $n=2$. It also takes the form of a symmetric power $j_+\Sym^kG_g$, as~asserted by the following lemma. In other words, the rational slope of $\Sym^k\Ai_n$ at infinity leads to considering both expressions of $\Sym^k\wt\Ai_n$:
\[
[n]^+\Sym^k\Ai_n=\Sym^k\wt\Ai_n\qand j^+\Sym^k\wt\Ai_n\simeq[n+1]^+\Sym^kG_g.
\]

\begin{lemma}\label{lem:MkSymkGg}
Let $G_g=j^+\FT\wt M_1$ be the restriction to $\Gm$ of the Fourier transform of $\wt M_1$ and let $\FT\wt M_k$ be the Fourier transform of $\wt M_k$. We have
\[
\FT\Pi_\tau(\wt M_k)=j_+j^+\FT\wt M_k\simeq j_+\Sym^kG_g.
\]
Furthermore, $\FT\wt M_k\simeq j_{\dag+}\Sym^kG_g$.
\end{lemma}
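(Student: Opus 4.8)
The plan is to reduce everything to the Fourier-transform side and exploit the fact that, on $\Gm$, Fourier transformation turns additive convolution into tensor product. First I would establish the identity $\FT\Pi_\tau(\wt M_k)=j_+j^+\FT\wt M_k$. By definition $\Pi_\tau(\wt M_k)$ is the result of additively convolving $\wt M_k$ with $j_!\QQ_{\Gm}^\rH$; on the Fourier side this kills the part of $\FT\wt M_k$ supported at the origin, so $\FT\Pi_\tau(\wt M_k)$ is the middle extension ``from $\Gm$'' of $\FT\wt M_k$, which is exactly $j_+j^+\FT\wt M_k$ because $\FT\wt M_k$, being the Fourier transform of a regular holonomic module with no constant submodule, restricts to a local system on $\Gm$ with no nonzero invariants under the monodromy (the relevant formal data at infinity have no slope-$0$ piece with trivial monodromy). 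This is the formal analogue of \eqref{eq:523} applied with $\go{m,\tau}$ replaced by an object carrying the full regular part; I would simply cite \cite[\S1.1]{D-S12} and the discussion of $\Pi_\tau$ in Section~\ref{subsec:objregfun}.

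Next I would identify $j^+\FT\wt M_k$ with $\Sym^k G_g$. Here I use Proposition~\ref{prop:Aikfk} and its $\wt\Ai_n$-analogue together with the relation $\coH^1_?(\Afu,\wt\Ai{}_n^{\otimes k})=\coH^{k+1}_?(\Aff{k+1},\wt f_k)$ and the change of variables $x_i=sy_i$ that produced \eqref{eq:coh_wtf_gk}: after pushing forward by $g_k=\sum g(y_i)$ and applying Lemma~\ref{lem:Hcstzero} (which discards constant cohomology), the mixed Hodge module computing $j^+\Sym^k\wt\Ai_n$ is, up to constants, the $k$-fold additive self-convolution of $\wt M_1$, whose Fourier transform restricted to $\Gm$ is the $k$-fold tensor power $G_g^{\otimes k}$; taking the $(\symgp_k,\chi)$-isotypic component replaces $G_g^{\otimes k}$ by $\Sym^k G_g$ (since $\chi$ is the sign character, the $\chi$-isotypic part of the $k$-th tensor power of a module is its $k$-th symmetric power — I would spell out this Schur–Weyl bookkeeping exactly as in \cite[\S2.1]{F-S-Y18}). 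Flatness of the $\cO_{\Gm}$-module underlying $G_g$ makes the iterated convolution/tensor computation clean, just as in the proof sketch of Proposition~\ref{prop:Aikfk}.

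For the last assertion, $\FT\wt M_k\simeq j_{\dag+}\Sym^k G_g$, I would first note that $\wt M_k$ is pure (weight $k$) with no constant sub- or quotient-module by its very construction in \eqref{eq:MkH}, so $\wt M_k=j_{\dag+}j^+\wt M_k$, and Fourier transformation, being exact and compatible with duality up to sign of the variable, sends middle extensions to middle extensions at infinity in the monodromic setting; concretely $\FT$ intertwines $j_{\dag+}$ on the $\tau$-side with $j_{\dag+}$ on the $\wh\tau$-side for monodromic modules, by the slope-$1$/Katz $\mathsf P$-category formalism. Combining this with $j^+\FT\wt M_k\simeq\Sym^k G_g$ gives $\FT\wt M_k\simeq j_{\dag+}\Sym^k G_g$. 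I expect the main obstacle to be the bookkeeping needed to justify the middle-extension compatibility precisely: one must check that passing to the $(\symgp_k,\chi)$-isotypic component and taking Fourier transform both commute with $j_{\dag+}$, which requires knowing that $\Sym^k G_g$ has no nonzero monodromy invariants at $0$ and at $\infty$ — the statement at $0$ is clear from the construction of $\wt M_1$, while the statement at $\infty$ follows from the formal description of $\Sym^k\wt\Ai_n$ in Lemma~\ref{lem:SymwhAi} (equivalently, $j^+\Sym^k\wt\Ai_n\simeq[n+1]^+\Sym^k G_g$), so the regular part at infinity of $\Sym^k G_g$ carries a nontrivial scalar monodromy and hence no invariants. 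Once these ``no invariants'' statements are in hand the three displayed isomorphisms follow formally.
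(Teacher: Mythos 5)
The first half of your plan is essentially the paper's argument: $\Pi_\tau(\wt M_k)=\Pi_\tau\bigl((\cH^0 g_{k+}\cO_{\Aff{k}})^{\symgp_k,\chi}\bigr)$, Fourier transformation turns the Thom--Sebastiani sum $g_k$ into a tensor power of $G_g$, and the $(\symgp_k,\chi)$-bookkeeping gives $j^+\FT\wt M_k\simeq\Sym^kG_g$; moreover $\FT\Pi_\tau(\wt M_k)=j_+j^+\FT\wt M_k$ is a general property of $\Pi_\tau$, since $\FT$ exchanges constant $\Cltau$-modules with modules punctually supported at $t=0$ and $\wt M_k\to\Pi_\tau(\wt M_k)$ has constant cokernel. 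But several assertions you lean on are false, and one of them would contradict the lemma itself: $\Sym^kG_g$ \emph{does} have nonzero monodromy invariants at $t=0$ (for $n=2$ the eigenvalue $1$ occurs for every $k\geq2$, \cf \eqref{eq:psit1} and Proposition \ref{prop:Mkepsilon}); this is exactly why $j_{\dag+}\Sym^kG_g\subsetneq j_+\Sym^kG_g$, i.e.\ why $\wt M_k\subsetneq\Pi_\tau(\wt M_k)$ with constant cokernel of positive rank \eqref{eq:exMPiM}. If your ``no invariants at $0$'' claim held, the two displayed isomorphisms of the lemma would coincide and $\Pi_\tau(\wt M_k)=\wt M_k$, contradicting Lemma \ref{lem:Mkimage}. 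Likewise the regular part at infinity of $\Sym^kG_g$ has \emph{trivial} monodromy when $k$ is even ($\wh L_{-1}^{\otimes k}$), and the principle ``$\FT$ intertwines $j_{\dag+}$ with $j_{\dag+}$ in the monodromic setting'' does not apply, because $\wt M_k$ is not monodromic (its singularities are the nonzero critical values of $g_k$). Also note that $\Pi_\tau$ does not ``kill the part supported at the origin'' on the Fourier side: here it enlarges, by localizing $\FT\wt M_k$ at $t=0$. None of these conditions is what is needed; the correct mechanism is simply that $\FT\wt M_k\simeq j_{\dag+}\Sym^kG_g$ if and only if $\wt M_k$ has no nonzero constant submodule and no nonzero constant quotient.

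This reduction exposes the genuine gap: you assert that $\wt M_k$ has ``no constant sub- or quotient-module by its very construction in \eqref{eq:MkH}''. The submodule half is indeed immediate (it sits inside $\Pi_\tau(\wt M_k)$), but the quotient half is precisely the nontrivial content of the second assertion and does not follow from the construction: a constant quotient of the underlying $\Cltau$-module is not a priori a quotient in $\MHM(\Afu_\tau)$. The paper closes this point Hodge-theoretically: by the theorem of the fixed part, the maximal constant submodule of a pure Hodge module underlies a constant pure Hodge submodule; dualizing and using purity of $\wt M_k^\rH$ (so that a constant quotient in $\MHM$ would be a direct summand, hence a constant submodule, contradiction) one rules out constant $\Cltau$-quotients. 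You need this argument, or an equivalent semisimplicity statement for the $\cD$-module underlying the polarizable pure Hodge module $\wt M_k^\rH$; without it your plan for $\FT\wt M_k\simeq j_{\dag+}\Sym^kG_g$ does not close. (A minor point: the $\chi$-isotypic part of a naive $k$-th tensor power is the exterior power; it is the extra sign coming from permuting $\rd y_1\wedge\cdots\wedge\rd y_k$ in the pushforward that converts it into $\Sym^kG_g$, as in \cite[\S2.1]{F-S-Y18}, so state the bookkeeping accordingly.)
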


\begin{proof}
We have $\Pi_\tau(\wt M_k)=\Pi_\tau(\cH^0 g_{k+} \cO_{\Aff{k}})^{\symgp_k,\chi}$, so
\[
j^+\FT\wt M_k=j^+\FT\Pi_\tau(\wt M_k)\simeq j^+\FT(\cH^0 g_{k+} \cO_{\Aff{k}})^{\symgp_k,\chi}\simeq\Sym^kG_g.
\]
For the second point, let us define $\wt\ccM_k$ such that $\FT\wt\ccM_k=j_{\dag+}\Sym^kG_g$. Since $\FT\wt\ccM_k$ is the minimal extension of $G_g$ at the origin, it follows that $\FT\wt \ccM_k\subset\FT \wt M_k$ (because $\FT\wt\ccM_k=\FT\wt\ccM_k\cap \FT \wt M_k$ by minimality) and thus $\wt\ccM_k\subset \wt M_k$. We wish to prove equality. Since $\FT \wt M_k/\FT\wt\ccM_k$ is supported at the origin of $\Afu_t$, the quotient $\wt M_k/\wt\ccM_k$ is constant, and equality would follow from the property that $\wt M_k$ does not have any nonzero constant quotient $\Cltau$-module. In order to prove this property, it is enough to show that such a quotient would underlie a quotient in the category $\MHM(\Afu_\tau)$. Indeed, $\wt M_k^\rH$ being pure, this quotient would be a direct summand, hence contained in $\wt M_k^\rH$, and $\wt M_k$ is known to have no nonzero constant holonomic submodule, being contained in $\Pi_\tau(\wt M_k)$. Dually, it is enough to prove that the maximal constant $\Cltau$-submodule of a pure Hodge module on~$\Afu_\tau$ underlies the maximal constant pure Hodge module. This follows from the theorem of the fixed part. In such a way, we have identified $\wt M_k$ with $\wt\ccM_k$.
\end{proof}

\section{Proof of Theorem \ref{thm:Hodge_numbers}}\label{sec:proof_Hodge_numbers}
We consider from now on the classical Airy equation $\partial_z^2-z$ (the case $n=2$ in Section \ref{sec:settings}), and we set $\Ai=\Ai_2$ and $f(x,z)=\frac13 x^3-zx$. The strategy of the proof of Theorem \ref{thm:Hodge_numbers} is very similar to that developed in \cite{F-S-Y18} for the Kloosterman case. It consists of
\begin{enumerate}
\item
exhibiting a natural basis of $\coH^1_\dR(\Afu,\Sym^k\Ai)$ (Proposition \ref{prop:basisG} and Corollary \ref{cor:basis});
\item
showing that this basis is adapted to the irregular Hodge filtration by lifting it to a suitable compactification of $\Aff{k+1}$ and by computing order of poles of representative differential forms along components of the divisor at infinity;
\item
showing that this basis induces a basis of each graded piece of the irregular Hodge filtration by means of a duality argument.
\end{enumerate}

\begin{notation}\label{nota:k'}
We use the following notation and convention:
\begin{itemize}
\item
For integers $k\geq1$ and $m\in\ZZ$, we set
\[
k':=\flr{\psfrac{k-1}{2}}\text{ (\ie $k=2k'+1$ or $k=2(k'+1)$)},\quad m!\text{ and }m!!=1\text{ if $m\leq0$}.
\]
\item
Since the cases where $k$ is even and where $\kfour$
play a special role, we use the simplified common notation:
\[
[1,k'\rcr=
\begin{cases}
\{1,\dots,k'+1\}&\text{if $k$ is odd},\\
\{1,\dots,k'\}&\text{if $k$ is even},
\end{cases}
\]
\end{itemize}
\end{notation}

\subsection{De Rham cohomology of \texorpdfstring{$\Sym^k\Ai$ and $\Sym^k\wt\Ai$}{SkAi}}
One checks that $\Ai$ is the free $\CC[z]$\nobreakdash-mod\-ule generated by the classes $v_0,v_1$ of $\rd x,-x\rd x$ (\cf \eqref{eq:FTxn}). Moreover, $\partial_zv_0=-x\rd x=v_1$ and $\partial_z^2v_0=x^2\rd x\equiv\nobreak zv_0$, so that we recover the Airy equation $(\partial_z^2-z)v_0=0$.
Corollary \ref{cor:dim_moments_ain} in this case reads
\begin{equation}\label{eq:dimH1}
\begin{aligned}
\dim \coH^1_{\dR,?}(\Afu,\Sym^k\Ai)&=\begin{cases}
k'+1&\text{if $k$ is odd},\\
k'&\text{if $k$ is even},
\end{cases}
\quad \text{for $? = \emptyset, \cp$}, \\
\dim \coH^1_{\dR,\rmid}(\Afu,\Sym^k\Ai)&=\begin{cases}
\dim \coH^1_\dR(\Afu,\Sym^k\Ai) & \text{if $\knotfour$},\\
\dim \coH^1_\dR(\Afu,\Sym^k\Ai)-1 & \text{if $\kfour$}.
\end{cases}
\end{aligned}
\end{equation}

Let $[2]:s\mto s^2=z$ be the ramified covering of $\Afu_z$ of order $2$ given by $s\mto s^2=z$, and set $\wt\Ai=[2]^+\Ai$. Let $L=L_{-1} = (\cO_{\Gm}, \de + \frac{1}{2}\frac{\de z}{z})$. Recall that $j:\Gm\hto\Afu$ denotes the inclusion.
We will set $L\otimes\Sym^k\Ai:=j_+(L\otimes j^+\Sym^k\Ai)$.
Then
\[ \coH_\dR^i(\Afu,L\otimes\Sym^k\Ai)
= \coH_\dR^i(\Gm,L\otimes j^+\Sym^k\Ai), \]
which has dimension $3(k'+1)$ if $i=1$
and is zero otherwise.
We have, on $\Afu$,
\[
[2]_+\Sym^k\wt\Ai\simeq\Sym^k\Ai\oplus(L\otimes\Sym^k\Ai),
\]
and the decomposition
\begin{equation}\label{eq:symktildeAi}
\coH_\dR^1(\Afu, \Sym^k\wt\Ai)
= \coH_\dR^1(\Afu, \Sym^k\Ai)
\oplus \coH_\dR^1(\Afu, L\otimes\Sym^k\Ai)
\end{equation}
into $\mu_2$-character spaces.
More explicitly,
we have the isomorphism
\[ [2]^*L = \Bigl(\cO_{\Gm}, \de + \frac{\de s}{s}\Bigr)
\isom (\cO_{\Gm}, \de) \]
via multiplication by $s$.
Therefore, for an element of $\coH_\dR^1(\Gm,L\otimes j^+\Sym^k\Ai)$
represented by the global section
$\eta \in \Gamma(\Gm,L\otimes j^+\Sym^k\Ai\otimes\Omega^1)$,
we regard it as the class $s[2]^*\eta$ belonging to the image of $\coH_\dR^1(\Afu, \Sym^k\wt\Ai)$ in $\coH_\dR^1(\Gm, j^+\Sym^k\wt\Ai)$.

We use the decomposition \eqref{eq:symktildeAi}
to obtain a basis.
The connection on $\Ai$ is given by the matrix form
\[
\partial_z(v_0,v_1)=(v_0,v_1)\cdot\begin{pmatrix}0&z\\1&0\end{pmatrix}.
\]
To treat the symmetric power moments and the twists together
in this subsection,
for $k\geq1$ and $\rho = 0,1/2$,
we let $V_\rho$ be the connection on $\Gm$
\[ V_\rho = \begin{cases}
j^+\Sym^k\Ai, & \rho=0, \\
L\otimes j^+\Sym^k\Ai, & \rho=1/2.
\end{cases}
\]
Let $\Lambda_\rho = \Gamma(\Gm,V_\rho)$
be the differential module of global sections.
We fix the basis $\bmu:=(u_a)_{0\leq a\leq k}$ of $\Lambda_\rho$
where $u_a=v_0^{k-a}v_1^a$,
in which the differential structure reads
\begin{equation}\label{eq:definingua}
z\partial_z u_a = \rho u_a + (k-a)zu_{a+1}+az^2u_{a-1},
\quad0\leq a\leq k,
\end{equation}
with the convention $u_{-1} = u_{k+1} = 0$.
Then the de Rham cohomology of~$V_\rho$
is given by the cohomology of the two term complex
$z\partial_z:\Lambda_\rho\to \Lambda_\rho$
\[ \coH_\dR^i(\Gm,V_\rho)
= \HH^i(\Lambda_\rho \xrightarrow{z\partial_z} \Lambda_\rho). \]

\pagebreak[2]
\begin{prop}[{\cf\cite[Proof of Prop.\,4.14]{F-S-Y18}}]\label{prop:basisG}\mbox{}
\begin{enumerate}
\item\label{prop:basisG1}
Put $\Lambda_\rho^+ = \bigoplus_{a=0}^k\CC[z]u_a$.
Then the $\CC[z]$-module $\Lambda_\rho^+$ is stable under $z\partial_z$
and the inclusion of
$(\Lambda_\rho^+,z\partial_z)$ into $(\Lambda_\rho,z\partial_z)$
is a quasi-isomorphism.
\item\label{prop:basisG2}
The cokernel $\HH^1(\Lambda_\rho^+,z\partial_z)$
has the basis
\begin{align*}
z^{k'+1}u_0, \cdots, zu_0, u_0, u_1, \cdots, u_k
& \quad \text{if $k$ is odd}, \\
z^{k'}u_0, \cdots, zu_0, u_0, u_1, \cdots, u_k
& \quad \text{if $k$ is even}.
\end{align*}
\end{enumerate}
\end{prop}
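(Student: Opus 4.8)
The plan is to compute explicitly with the two-term complex $z\partial_z:\Lambda_\rho\to\Lambda_\rho$ and its subcomplex $\Lambda_\rho^+$. First I would establish part \eqref{prop:basisG1}. Stability of $\Lambda_\rho^+$ under $z\partial_z$ is immediate from \eqref{eq:definingua}, since the right-hand side involves only nonnegative powers of $z$ times the $u_b$'s. For the quasi-isomorphism, note that $\Lambda_\rho=\bigoplus_a\CC[z,z^{-1}]u_a$ while $\Lambda_\rho^+=\bigoplus_a\CC[z]u_a$, so the quotient complex is $(\bigoplus_a z^{-1}\CC[z^{-1}]u_a)\xrightarrow{z\partial_z}(\bigoplus_a z^{-1}\CC[z^{-1}]u_a)$ where the target is taken modulo $\Lambda_\rho^+$. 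On monomials $z^{-\ell}u_a$ with $\ell\geq1$, formula \eqref{eq:definingua} gives $z\partial_z(z^{-\ell}u_a)=(\rho-\ell)z^{-\ell}u_a+(k-a)z^{-\ell+1}u_{a+1}+az^{-\ell+2}u_{a-1}$; working in decreasing order of $\ell$ (a filtration argument, using that $\rho-\ell\neq0$ for $\ell\geq1$ since $\rho\in\{0,1/2\}$) shows $z\partial_z$ is bijective on the quotient complex, so the quotient is acyclic and the inclusion is a quasi-isomorphism. I would phrase this via the $z$-adic filtration at infinity, i.e.\ by the degree in $z^{-1}$, so that the associated graded of $z\partial_z$ is multiplication by the scalar $\rho-\ell$.

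Next, for \eqref{prop:basisG2} I would compute $\HH^0$ and $\HH^1$ of $(\Lambda_\rho^+,z\partial_z)$. Since $\Sym^k\Ai$ (hence $V_\rho$) is irreducible with irregular singularity only at infinity and $\Gm$ is affine, $\HH^0(\Gm,V_\rho)=\coH^0_\dR(\Gm,V_\rho)=0$, so $z\partial_z$ is injective on $\Lambda_\rho^+$ and $\HH^1(\Lambda_\rho^+,z\partial_z)=\coker(z\partial_z)$ has dimension equal to $\dim\coH^1_\dR(\Gm,V_\rho)$, which is $k'+1$ for $\rho=0$ and $3(k'+1)$ for $\rho=1/2$ (stated in the text). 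The candidate basis in the odd case is $\{z^iu_0:0\leq i\leq k'+1\}\cup\{u_a:1\leq a\leq k\}$, of cardinality $(k'+2)+k$; in the even case it is $\{z^iu_0:0\leq i\leq k'\}\cup\{u_a:1\leq a\leq k\}$, of cardinality $(k'+1)+k$. I would verify that these classes span $\HH^1$ by showing that every monomial $z^\ell u_a$ is congruent modulo $\image(z\partial_z)$ to a combination of the listed classes, using the two "reduction moves" extracted from \eqref{eq:definingua}: rewriting $z^2u_{a-1}$ in terms of $z\partial_z u_a$, $\rho u_a$, and $zu_{a+1}$ lets one lower the subscript $a$ at the cost of raising the power of $z$ on $u_0$; and for the remaining $z^\ell u_0$ with $\ell$ large, the relation $z\partial_z(z^{\ell-1}u_0)=(\rho+\ell-1)z^{\ell-1}u_0+kz^\ell u_1$ together with the relation coming from $z\partial_z$ on $z^{\ell}u_1,\dots$ gives a recursion expressing $z^\ell u_0$ for $\ell\geq k'+2$ (resp.\ $\ell\geq k'+1$) in terms of lower ones — this is where the rank $n=2$ and the precise slope $3/2$ enter, producing exactly the cutoff $k'+1$ (resp.\ $k'$). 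Finally, a dimension count (the spanning set has the right cardinality, equal to $\dim\HH^1$) forces it to be a basis.

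The main obstacle is the second reduction move: showing that $z^\ell u_0$ for $\ell$ above the stated threshold lies in the span of the lower-order terms, and that the threshold is sharp (no further relations among the listed classes). The cleanest route is probably to use the identification $\Lambda_\rho^+\cong\CC[z]^{\oplus(k+1)}$ with $z\partial_z$ acting via the matrix $z\cdot A_1+z^2\cdot A_2+\rho I$ read off from \eqref{eq:definingua}, and to compute $\coker$ by putting this matrix pencil into a normal form, or alternatively to run the recursion $\partial_z^2 v_0\equiv zv_0$ at the level of symmetric powers to see directly that $z^{k'+2}u_0$ (resp.\ $z^{k'+1}u_0$) reduces. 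One must also confirm, in the even case, that $u_0=v_0^k$ alone (not a further power of $z$ times it) survives — this matches $\dim\coH^1_\dR(\Afu,\Sym^k\Ai)=k'$ — and check the parity bookkeeping $k=2k'+1$ versus $k=2(k'+1)$ carefully, since the threshold differs by one between the two cases. Once spanning and the dimension equality are in hand, linear independence is automatic.
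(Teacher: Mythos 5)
Your part \eqref{prop:basisG1} is the paper's own argument: the filtration by the lattices $z^{-r}\Lambda_\rho^+$, on whose graded quotients $z\partial_z$ acts by the nonzero scalar $\rho-r-1$, is exactly what the paper uses, so that step is fine. For part \eqref{prop:basisG2} your overall strategy (spanning plus a dimension count) is also the one the paper follows, but as written it has two genuine problems. First, the dimension you feed into the count for $\rho=0$ is wrong: $\HH^1(\Lambda_0,z\partial_z)=\coH^1_\dR(\Gm,j^+\Sym^k\Ai)$ has dimension equal to the irregularity at infinity, namely $3(k'+1)$, i.e.\ $k+k'+2$ for $k$ odd and $k+k'+1$ for $k$ even; the number $k'+1$ you quote is $\dim\coH^1_\dR(\Afu,\Sym^k\Ai)$ (and only for $k$ odd), which differs from the $\Gm$-cohomology by the $(k+1)$-dimensional residue contribution. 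With your figure the cardinality of your candidate spanning set does not match the dimension, so the concluding "spanning set of the right cardinality is a basis" step collapses for $\rho=0$; the correct input is available from the Euler characteristic together with $\coH^0=0$, and it does match your cardinalities.

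Second, the heart of the matter — that every monomial $z^\ell u_a$ reduces to the listed classes, and in particular that $z^\ell u_0$ reduces for $\ell$ above the threshold — is precisely what you leave open, and your reduction moves do not obviously terminate: e.g.\ rewriting $z^m u_k$ via \eqref{eq:definingua} produces $z^{m+2}u_{k-1}$, which is larger in the natural weighting. The paper's device for organizing this is to introduce the grading $\deg z=2/3$, $\deg u_a=a/3$, so that the associated graded of $z\partial_z$ becomes a $\CC[z]$-linear map $\ol{z\partial_z}$ on $\ol{\Lambda_\rho^+}$; for $k$ odd its determinant is $(-1)^{k'+1}k!!\,z^{3k'+3}\neq0$, giving $\HH^1(\Lambda_\rho^+)\simeq\HH^1(\ol{\Lambda_\rho^+})$ at once, while for $k$ even one computes $\ker\ol{z\partial_z}=\CC[z]\,\ol u$ explicitly, analyzes the torsion in $\HH^1(\ol{\Lambda_\rho^+})$, and identifies $\HH^1(\Lambda_\rho^+)$ with the cokernel of $z\partial_z$ acting from $\HH^0(\ol{\Lambda_\rho^+})$ to $\HH^1(\ol{\Lambda_\rho^+})$, the dimension count then forcing the relevant constants $c_r$ to be nonzero. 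Your alternative suggestion (normal form of the pencil $\rho I+zA_1+z^2A_2$) could in principle substitute for this, but it is not carried out, so the proposal stops short exactly where the proof has to do real work.
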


\begin{remark}
When analyzing the symmetric products $\Sym^k\Kl_3$ of Kloosterman connections of rank three, Y.\,Qin \cite[\S3.2.1]{Qin22} has developed a slightly different method to obtain an analogous result.
\end{remark}

\begin{proof}\mbox{}\par
\eqref{prop:basisG1}
In fact, for any $r\geq 0$,
the lattice $z^{-r}\Lambda_\rho^+$
is stable under $z\partial_z$
and the induced map
\[ z\partial_z: z^{-r-1}\Lambda_\rho^+/z^{-r}\Lambda_\rho^+
\to z^{-r-1}\Lambda_\rho^+/z^{-r}\Lambda_\rho^+ \]
coincides with the multiplication by $\rho -r-1$
and is an isomorphism.

\eqref{prop:basisG2}
Define a degree map on $\Lambda_\rho^+$ by setting
\[ \deg z = \frac{2}{3},
\quad
\deg u_a = \frac{a}{3}. \]
Then $z\partial_z$ is inhomogeneous of degree one.
On the associated graded module $\ol{\Lambda_\rho^+}$,
the induced map $\ol{z\partial_z}$ is $\CC[z]$-linear
with
\[ \ol{z\partial_z} \bar{u}_a = (k-a)z\bar{u}_{a+1} + az^2\bar{u}_{a-1},
\quad
0\leq a\leq k, \]
where $\bar{u}_a$ denotes the image of $u_a$.

Assume $k$ is even.
It is clear that $\ker\ol{z\partial_z} = \CC[z]\ol u$
where
\[ \ol u = \sum_{i=0}^{k'+1} (-1)^i\binom{k'+1}{i}z^{k'+1-i}\bar{u}_{2i}. \]
Inside the $\CC[z]$-module $\HH^1(\ol{\Lambda_\rho^+},\ol{z\partial_z})$,
the class $\bar{u}_0$ is torsion-free,
$\bar{u}_{2r+1}$ are $z$-torsion for $0\leq r\leq k'$,
and
\[ z\bar{u}_{2r} \equiv \prod_{i=1}^r\frac{1-2i}{k+1-2i}\, z^{2r}\bar{u}_0,
\quad
1\leq r\leq k'+1. \]
Furthermore one has (\cf\cite[Proof of Lem.\,4.17]{F-S-Y18} for a similar argument)
\[ \coker\Big[ z\partial_z: \HH^0(\ol{\Lambda_\rho^+},\ol{z\partial_z})
\ra \HH^1(\ol{\Lambda_\rho^+},\ol{z\partial_z}) \Big]
\isom \HH^1(\Lambda_\rho^+,z\partial_z) \]
and
\begin{align*}
z\partial_z(z^r\ol u)
&= \sum_{i=0}^{k'+1} (-1)^i\binom{k'+1}{i}(r+k'+1-i+\rho)z^{r+k'+1-i}\bar{u}_{2i} \\
&\equiv \begin{cases}
c_0z^{k'+1}\bar{u}_0 + (-1)^{k'+1}\rho \bar{u}_k & \text{if $r=0$}, \\
c_rz^{r+k'+1}\bar{u}_0 & \text{if $r>0$},
\end{cases}
\end{align*}
for certain $c_0,c_r \in \CC$.
Since $\dim \HH^1(\Lambda_\rho^+,z\partial_z) = k+k'+1$,
one must have $c_r \neq 0$ for all $r\geq 0$
and the claim follows.

The case of $k$ odd can be treated similarly and is easier.
Indeed in this case,
$\det \ol{z\partial_z} = (-1)^{k'+1}k!!z^{3k'+3}$
and hence
$\coH^1(\Lambda_\rho^+,z\partial_z) \cong \coH^1(\ol{\Lambda_\rho^+},\ol{z\partial_z}$).
We omit the details.
\end{proof}

\pagebreak[2]
\begin{cor}\mbox{}\label{cor:basis}
\begin{enumerate}
\item
The classes
\[
\omega_i = z^{i-1}u_0\de z,
\quad
\eta_j = u_j\frac{\de z}{z}
\qquad
(i \in [1,k'\rcr,\, 0\leq j\leq k)
\]
(\cf Notation \ref{nota:k'}) form a basis of $\coH_\dR^1(\Gm,j^+\Sym^k\Ai)$.
\item
The classes
\begin{starequation}\label{eq:basisL}
\omega^-_i = z^iu_0\frac{\de z}{z},
\quad
\eta^-_j = u_j\frac{\de z}{z}
\qquad
(i \in [1,k'\rcr,\; 0\leq j\leq k)
\end{starequation}%
form a basis of $\coH_\dR^1(\Gm, L\otimes j^+\Sym^k\Ai)$.
\item\label{cor:basis3}
The family $\Basis_k=\{\omega_i \mid i\in[1,k'\rcr\}$ is a basis of $\coH^1_\dR(\Afu,\Sym^k\Ai)$.
\end{enumerate}
\end{cor}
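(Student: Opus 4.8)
The plan is to deduce all three assertions from Proposition~\ref{prop:basisG}. The common device is the identification, already used in the text, of $\coH^1_\dR(\Gm,V_\rho)$ with $\coker\bigl(z\partial_z\colon\Lambda_\rho^+\to\Lambda_\rho^+\bigr)$: trivializing $\Omega^1_{\Gm}$ by $\de z/z$ turns the de~Rham complex of $V_\rho$ into $\Lambda_\rho\xrightarrow{z\partial_z}\Lambda_\rho$, Proposition~\ref{prop:basisG}\eqref{prop:basisG1} replaces $\Lambda_\rho$ by the subcomplex $\Lambda_\rho^+=\bigoplus_{a=0}^k\CC[z]u_a$, and under the resulting isomorphism the class of a form $\lambda\cdot\de z/z$ with $\lambda\in\Lambda_\rho^+$ goes to $[\lambda]\in\coker(z\partial_z\colon\Lambda_\rho^+\to\Lambda_\rho^+)$, of which Proposition~\ref{prop:basisG}\eqref{prop:basisG2} exhibits a basis.

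For the first two assertions I would simply rewrite the proposed classes using $\de z/z$: $\omega_i=z^{i-1}u_0\,\de z=z^iu_0\cdot\de z/z$, $\omega^-_i=z^iu_0\cdot\de z/z$ (see \eqref{eq:basisL}), and $\eta_j=\eta^-_j=u_j\cdot\de z/z$. Under the identification above the family $\{\omega_i\mid i\in[1,k'\rcr\}\cup\{\eta_j\mid 0\le j\le k\}$ (resp.\ $\{\omega^-_i\}\cup\{\eta^-_j\}$) then corresponds to $\{z^iu_0\mid i\in\{0\}\cup[1,k'\rcr\}\cup\{u_j\mid 1\le j\le k\}$, the term $u_0$ being contributed by $\eta_0$ (resp.\ $\eta^-_0$) and $\{0\}\cup[1,k'\rcr$ being $\{0,\dots,k'+1\}$ for $k$ odd and $\{0,\dots,k'\}$ for $k$ even. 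By Proposition~\ref{prop:basisG}\eqref{prop:basisG2} this is exactly a basis of $\HH^1(\Lambda_\rho^+,z\partial_z)$, for $\rho=0$ in the untwisted case and $\rho=1/2$ in the twisted one (where \eqref{eq:definingua} with $\rho=1/2$ supplies the differential, so the proposition applies verbatim). This settles assertions (1) and (2).

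For the last assertion the crucial point is that the natural restriction
\[
r\colon\coH^1_\dR(\Afu,\Sym^k\Ai)\longrightarrow\coH^1_\dR(\Gm,j^+\Sym^k\Ai)
\]
is injective; this is not automatic, since $\Sym^k\Ai\subsetneq j_+j^+\Sym^k\Ai$ and the two cohomology spaces have different dimensions. I would argue directly: $\coH^1_\dR(\Afu,\Sym^k\Ai)=\coker\bigl(\partial_z\colon\Lambda_0^+\to\Lambda_0^+\bigr)$ (here $\Lambda_0^+=\Gamma(\Afu,\Sym^k\Ai)$ and $\Omega^1_\Afu$ is generated by $\de z$), the target is $\coker(z\partial_z\colon\Lambda_0^+\to\Lambda_0^+)$ as above, and $r$ is induced by $\lambda\,\de z\mapsto\lambda\,\de z=(z\lambda)\cdot\de z/z$, so $r[\lambda]=[z\lambda]$. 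If $z\lambda=z\partial_z\mu$ for some $\mu\in\Lambda_0^+$, then $\lambda=\partial_z\mu$ because $\Lambda_0^+$ is $\CC[z]$-torsion-free, hence $[\lambda]=0$; thus $\ker r=0$. (Equivalently, $\ker r$ is a quotient of $\coH^0_\dR(\Afu,\Sym^k\Ai(*0)/\Sym^k\Ai)$, which vanishes since that $\cD_{\Afu}$-module is supported at the origin.)

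Granting the injectivity of $r$, it sends each $\omega_i$ ($i\in[1,k'\rcr$) to the class $\omega_i$ of~(1), so these $\#[1,k'\rcr$ classes are linearly independent in $\coH^1_\dR(\Afu,\Sym^k\Ai)$; since \eqref{eq:dimH1} gives $\dim\coH^1_\dR(\Afu,\Sym^k\Ai)=k'+1$ for $k$ odd and $k'$ for $k$ even, which equals $\#[1,k'\rcr$, the family $\Basis_k$ is a basis. The computations here are routine; the only step that is not purely formal is the injectivity of $r$ in the last part, and the main thing to watch is keeping the index shifts between $z^{i-1}u_0\,\de z$, $z^iu_0\cdot\de z/z$ and the list of Proposition~\ref{prop:basisG}\eqref{prop:basisG2} consistent — in particular the roles of $\eta_0$ and of the set $[1,k'\rcr$.
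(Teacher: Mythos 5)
Your argument is correct, and for parts (1) and (2) it is exactly the paper's (implicit) deduction from Proposition \ref{prop:basisG}\eqref{prop:basisG2}; your index bookkeeping (with $\eta_0$ supplying $u_0$ and $\{0\}\cup[1,k'\rcr$ matching the proposition's list) checks out. For part (3) you deviate slightly: you establish injectivity of the restriction $r\colon\coH^1_\dR(\Afu,\Sym^k\Ai)\to\coH^1_\dR(\Gm,j^+\Sym^k\Ai)$ directly — via torsion-freeness of $\Lambda_0^+$, or equivalently via the vanishing of $\coH^0_\dR$ of the punctual module $\Sym^k\Ai(*0)/\Sym^k\Ai$ — and then conclude with the dimension formula \eqref{eq:dimH1}; this is sound and not circular, since Corollary \ref{cor:dim_moments_ain} was obtained earlier by an Euler characteristic/irregularity computation independent of any basis. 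The paper instead invokes the residue exact sequence $0\to\coH^1_\dR(\Afu,\Sym^k\Ai)\to\coH^1_\dR(\Gm,j^+\Sym^k\Ai)\To{\Res}(\Sym^k\Ai)_0\to0$ together with $\Res(\omega_i)=0$ and $\Res(\eta_j)=u_j$: since the $\eta_j$ map to a basis of the residue target, the $\omega_i$ automatically span the kernel, with no appeal to the dimension count. The two routes are essentially the same exact sequence seen from different ends (your parenthetical identification of $\ker r$ with a quotient of $\coH^0_\dR$ of the punctual module is precisely the degree-zero part of that sequence); the paper's version dispenses with the input \eqref{eq:dimH1} and makes the cokernel explicit, while yours trades that for an elementary torsion-freeness computation.
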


\begin{proof}
Let $(\Sym^k\Ai)_0 = \bigoplus_{j=0}^k\CC u_j$
be the fiber of $\Sym^k\Ai$ at $z=0$.
We have the exact sequence
\[ 0 \to \coH_\dR^1(\Afu,\Sym^k\Ai)
\to \coH_\dR^1(\Gm,j^+\Sym^k\Ai)
\To{\Res} (\Sym^k\Ai)_0\to 0 \]
where $\Res$ denotes the residue map.
Statement (3) follows by noticing that
$\Res(\omega_i)=0$
and
$\Res(\eta_j) = u_j$.
\end{proof}

\begin{remark}[{\cf\cite[Proof of Prop.\,4.21]{F-S-Y18}}]\label{rem:symkfk}
Under the isomorphism \eqref{eq:isofk} for $?=\emptyset$, the classes $\omega_i$ are mapped to
\[
w_i=[z^{i-1}\rd z\,\rd x_1\cdots\rd x_k].
\]
\end{remark}

\subsection{Middle de~Rham cohomology of \texorpdfstring{$\Sym^k\Ai$}{SkAi}}
By \eqref{eq:dimH1}, $\coH^1_{\dR,\rmid}(\Afu,\Sym^k\Ai)$ is equal to $\coH^1_{\dR}(\Afu,\Sym^k\Ai)$ if $\knotfour$ and is of codimension one in it if $\kfour$.
In order to compute a basis of $\coH^1_{\dR,\rmid}(\Afu,\Sym^k\Ai)$ in the latter case, we define a family of numbers $\gamma_{k,i}$ ($i\in\ZZ$) as follows. Consider the classical Airy functions $\Aif,\Bif$ (\cf \cite[\S10.4]{A-S72} and \cite{V-S04}) which are entire functions on $\CC_z$. Let $s = \frac{2}{3}z^{3/2}$.
As $z\to \infty$, we have the asymptotic expansions
\begin{equation}\label{eq:asympt-AB}
\begin{aligned}
\Aif(z) &\sim \frac{e^{-s}}{2\sqrt{\pi}z^{1/4}}
	\sum_{n=0}^\infty (-1)^n\frac{(n+\frac{1}{2})_{2k}}{54^nn!}\frac{1}{s^n},
&&
|\arg z| < \pi, \\
\Bif(z) &\sim \frac{e^{s}}{\sqrt{\pi}z^{1/4}}
	\sum_{n=0}^\infty \frac{(n+\frac{1}{2})_{2k}}{54^nn!}\frac{1}{s^n},
&&
|\arg z| < \frac{1}{3}\pi.
\end{aligned}
\end{equation}
Set $w = 1/z$. The formal asymptotic expansion of $2\pi(\Aif\Bif)$ at $z=\infty$ is an element in $\sqrt{w}\,(1+w^3\QQ\lcr w^3\rcr)$. It is the unique, up to scaling, formal power series solution in $\sqrt{w}$
to the second symmetric power of the Airy equation
and one checks directly that it has a positive coefficient
in each degree $\sqrt{w}\,w^{3i}$, $i\geq 0$. We define $\gamma_{k,i}$ by the formula
\[
(2\pi\Aif\Bif)^{k/2} \sim \sum_{i\gg -\infty}\gamma_{k,i}w^i.
\]
We then have
\begin{equation}\label{eq:gammaki2}
\gamma_{k,i}
\begin{cases}
= 0&\text{ for $i \not\in k/4+3\ZZ_{\geq 0}$},\\
= 1&\text{ for $i=k/4$},\\
> 0&\text{ for $i \in k/4+3\ZZ_{\geq 0}$}.
\end{cases}
\end{equation}

\begin{prop}[{\cf\cite[Cor.\,3.11]{F-S-Y20b}}]\label{prop:Bkmid}
If $\kfour$, the family
\[
\Basis_{k,\rmid}=\{\omega_i-\gamma_{k,i}\omega_{k/4} \mid i\in[1,k'\rcr,\,i\neq k/4\}
\]
is a basis of $\coH^1_{\dR,\rmid}(\Afu,\Sym^k\Ai)$, and $\omega_{k/4}$ induces a basis of
\[
\coH^1_{\dR}(\Afu,\Sym^k\Ai)/\coH^1_{\dR,\rmid}(\Afu,\Sym^k\Ai).
\]
\end{prop}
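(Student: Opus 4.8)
The plan is to identify $\coH^1_{\dR,\rmid}(\Afu,\Sym^k\Ai)$ inside $\coH^1_{\dR}(\Afu,\Sym^k\Ai)$ by exhibiting, when $\kfour$, the codimension-one subspace it must be. By \eqref{eq:dimH1} the middle cohomology is the kernel of a single linear functional on $\coH^1_{\dR}(\Afu,\Sym^k\Ai)$, namely the one that records the "regular part at infinity" of a de~Rham class. Concretely, the localization/restriction data at $z=\infty$ should be governed by the formal decomposition in Example~\ref{ex:SymwhAi}: for $\kfour$ the module $\Sym^k\wh\Ai$ has a one-dimensional regular summand $L_\sfi^{\otimes k}$, which contributes the single-dimensional obstruction to a class extending to the minimal extension. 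So the first step is to express, for each basis vector $\omega_i$, its image under the map $\coH^1_\dR(\Afu,\Sym^k\Ai)\to(\text{regular part of }\Sym^k\wh\Ai\text{ at }\infty)$, and to show this amounts to reading off a coefficient in the asymptotic expansion of a horizontal section.

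Next I would make precise which horizontal section is relevant. The second symmetric power of the Airy equation has the product $2\pi\Aif\Bif$ as a distinguished global (multivalued) solution, with the normalization in \eqref{eq:asympt-AB}, and its $k/2$-th power $(2\pi\Aif\Bif)^{k/2}$ is a horizontal section of $\Sym^k\Ai$ corresponding (after the substitution $w=1/z$, $s=\tfrac23 z^{3/2}$) precisely to the regular summand $L_\sfi^{\otimes k}$ at infinity when $\kfour$ — this is where the congruence $\kfour$ is used, so that $k/2$ is an even integer and the square root in $2\pi\Aif\Bif\in\sqrt w(1+w^3\QQ\lcr w^3\rcr)$ disappears, leaving a genuine power series in $w$ with leading term $w^{k/4}$. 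Pairing the cohomology classes $\omega_i=z^{i-1}u_0\,\rd z$ against this section (via the residue/pairing at infinity) produces, up to normalization, the coefficients $\gamma_{k,i}$ defined by $(2\pi\Aif\Bif)^{k/2}\sim\sum_i\gamma_{k,i}w^i$. The properties \eqref{eq:gammaki2} — that $\gamma_{k,i}$ is supported on $i\in k/4+3\ZZ_{\geq0}$, equals $1$ at $i=k/4$, and is positive elsewhere — then tell us exactly that the functional cutting out the middle cohomology sends $\omega_i\mapsto\gamma_{k,i}$ (with $\omega_{k/4}\mapsto1$). Hence $\omega_i-\gamma_{k,i}\omega_{k/4}$ lies in $\coH^1_{\dR,\rmid}$ for $i\neq k/4$, while $\omega_{k/4}$ maps to a generator of the quotient.

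To finish, I would check that the proposed family $\Basis_{k,\rmid}=\{\omega_i-\gamma_{k,i}\omega_{k/4}\mid i\in[1,k'\rcr,\,i\neq k/4\}$ is a basis: it has the right cardinality $\#[1,k'\rcr-1=k'-1$, which by \eqref{eq:dimH1} equals $\dim\coH^1_{\dR,\rmid}(\Afu,\Sym^k\Ai)$, and it is linearly independent because $\Basis_k=\{\omega_i\}$ is a basis of $\coH^1_\dR(\Afu,\Sym^k\Ai)$ by Corollary~\ref{cor:basis}\eqref{cor:basis3} and the change of basis from $\{\omega_i\}$ to $\Basis_{k,\rmid}\cup\{\omega_{k/4}\}$ is unipotent (triangular with $1$'s on the diagonal). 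Thus $\Basis_{k,\rmid}$ spans a subspace of the correct dimension contained in $\coH^1_{\dR,\rmid}$, hence equals it, and $\omega_{k/4}$ visibly induces a basis of the one-dimensional quotient.

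The main obstacle is the identification in the second step: matching the abstract one-dimensional cokernel $\coH^1_\dR/\coH^1_{\dR,\rmid}$ (equivalently, the regular part of $\Sym^k\wh\Ai$ at infinity, which by Corollary~\ref{cor:dim_moments_ain} is detected only when $k(n+1)/n\in2\ZZ$, i.e. $\kfour$ here) with the concrete functional "coefficient of $w^{k/4}$ in the expansion of a class paired against $(2\pi\Aif\Bif)^{k/2}$". This requires knowing that $2\pi\Aif\Bif$ generates the relevant rank-one piece of the local Stokes-filtered structure at $\infty$ and that the pairing of $\omega_i$ with it is computed by the asymptotic coefficient $\gamma_{k,i}$ — in other words, one must carefully track the normalization of the de~Rham pairing at infinity and the positivity statement \eqref{eq:gammaki2}, whose proof in turn relies on the positivity of the coefficients of $2\pi\Aif\Bif$ in each degree $\sqrt w\,w^{3i}$. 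Everything else (counting dimensions, unipotence of the base change) is routine.
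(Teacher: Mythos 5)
Your proposal is correct and follows essentially the same route as the paper: the codimension-one obstruction to lying in $\coH^1_{\dR,\rmid}$ is detected by the regular (non-exponential) part at infinity, which is governed by $(\Aif\Bif)^{k/2}$ — the unique product of Airy solutions without exponential factor — so the obstruction functional on $\omega_i$ is (up to a nonzero constant) the asymptotic coefficient $\gamma_{k,i}$, and one concludes by the dimension count \eqref{eq:dimH1} and the triangular change of basis. The ``main obstacle'' you flag is exactly what the paper supplies via the description of $\coH^1_{\dR,\rc}$ by pairs $(\wh m,\eta)$ with $\wh\nabla\wh m=\wh\eta$ (so that membership in the middle part reduces to the vanishing of $\res_w(\omega)_\reg$, computed in Lemma \ref{lem:solinfty}).
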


\begin{proof}
The second statement follows from the first one and Corollary \ref{cor:basis}\eqref{cor:basis3}. For the first part, let
\[
\iota_{\wh\infty}: \Sym^k\Ai \to (\Sym^k\Ai)_{\wh\infty}
\]
denote the formalization of $\Sym^k\Ai$ at infinity, and $\wh\nabla$ the induced connection. We can represent elements of $\coH^1_{\dR,\rc}(\Afu,\Sym^k\Ai)$ by pairs $(\wh m,\eta)$ as follows (\cf \cite[Cor.\,3.5]{F-S-Y20a}):
\begin{itemize}
\item
$\wh m$ is a formal germ in $(\Sym^k\Ai)_{\wh\infty}$, and
\item
$\eta$ belongs to $\Gamma(\Afu,\Omega^1_{\Afu}\otimes\Sym^k\Ai)$,
\end{itemize}
such that, denoting by $\wh\eta=\iota_{\wh\infty}\eta$ the formal germ of $\eta$ in $\bigl[\Omega^1_{\PP^1,\wh\infty}\otimes(\Sym^k\Ai)_{\wh\infty}\bigr]$, $\wh m$ and $\eta$ are related by $\wh\nabla\wh m=\wh\eta$.

We can regard $\coH^1_{\dR,\rmid}(\Afu,\Sym^k\Ai)$ as the image of the natural morphism
\[
\coH^1_{\dR,\rc}(\Afu,\Sym^k\Ai)\to \coH^1_{\dR}(\Afu,\Sym^k\Ai)
\]
sending a pair $(\wh m,\eta)$ to $\eta$. According to \cite[Rem.\,3.6]{F-S-Y20a}, there exists a basis of $\coH^1_{\dR,\rc}(\Afu,\Sym^k\Ai)$ consisting of
\begin{itemize}
\item
pairs $(\wh m_i,0)_i$ where $(\wh m_i)_i$ is a basis of $\ker\wh\nabla$
in $(\Sym^k\Ai)_{\wh\infty}$,
and
\item
a set of pairs $(\wh m_j,\eta_j)_j$,
of cardinality $\dim\coH^1_{\dR,\rmid}(\Afu,\Sym^k\Ai)$, related as above such that $(\eta_j)_j$ are linearly independent in $\coH^1_{\dR}(\Afu,\Sym^k\Ai)$.
\end{itemize}
Furthermore, such a family $(\eta_j)_j$ is a basis of~$\coH^1_{\dR,\rmid}(\Afu,\Sym^k\Ai)$. The proposition follows from Lemma \ref{lem:solinfty} below.
\end{proof}

\begin{lemma}\label{lem:solinfty}
Let us fix $i\geq1$. If $\kfour$, the subspace $\ker\wh\nabla\subset (\Sym^k\Ai)_{\wh\infty}$ has dimension one and the equation $\wh\nabla\wh m_i=\iota_{\wh\infty}(\omega_i-\gamma_{k,i}\omega_{k/4})$ has a solution (in fact a dimension-one affine space of solutions).
\end{lemma}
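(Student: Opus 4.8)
The plan is to localize the problem at infinity: a one-form $\wh\eta$ on the formal punctured disc is of the form $\wh\nabla\wh m$ if and only if its class in the formal de~Rham cohomology $\HH^1\bigl((\Sym^k\Ai)_{\wh\infty},\wh\nabla\bigr)$ vanishes, so everything reduces to understanding this cohomology and computing one class in it. By Example~\ref{ex:SymwhAi} with $n=2$ and $k$ even, $(\Sym^k\Ai)_{\wh\infty}$ splits as $L_\sfi^{\otimes k}\oplus(\text{purely irregular})$, where $L_\sfi^{\otimes k}$ is the rank-one regular summand with residue $\tfrac{3k}{4}$. A purely irregular formal connection has neither formal horizontal section nor formal de~Rham cohomology, so both $\ker\wh\nabla$ and $\HH^1$ are detected on $L_\sfi^{\otimes k}$, where $\wh\nabla(w^j e)=\bigl(j+\tfrac{3k}{4}\bigr)w^{j-1}\,\rd w\otimes e$ for a generator $e$. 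This is an isomorphism unless $\tfrac{3k}{4}\in\ZZ$, i.e.\ unless $\kfour$; under the hypothesis $\kfour$ we obtain $\dim\ker\wh\nabla=1$ (spanned by $w^{-3k/4}e$) and $\HH^1\simeq\CC$, generated by the class of $w^{-3k/4-1}\,\rd w\otimes e$. It then remains to show that, for every $i$, the class of $\iota_{\wh\infty}(\omega_i-\gamma_{k,i}\omega_{k/4})$ in this one-dimensional space is zero, for then $\wh\nabla\wh m_i=\iota_{\wh\infty}(\omega_i-\gamma_{k,i}\omega_{k/4})$ is solvable and its solution set is a torsor under $\ker\wh\nabla$, hence a one-dimensional affine space.

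To detect this class I would pair with a formal horizontal section of the dual connection. Since $n=2$ is even, $\iota_2=\id$, so $(\Sym^k\Ai)^\vee\simeq\Sym^k(\Ai^\vee)\simeq\Sym^k\Ai$; the dual formal connection has the same Levelt--Turrittin shape and, for $\kfour$, carries a nonzero formal horizontal section $\sigma$. Because the canonical pairing admits no nonzero flat morphism from a regular to a purely irregular connection, it is block diagonal for the decomposition above, so $\sigma$ lies in the dual of $L_\sfi^{\otimes k}$ and annihilates the irregular summand. As $\sigma$ is horizontal, $\langle\sigma,\wh\nabla\wh m\rangle=\rd\langle\sigma,\wh m\rangle$ for every $\wh m$; restricting the classical residue pairing to the rank-one regular summand shows that $\wh\eta\mapsto\Res_{w=0}\langle\sigma,\wh\eta\rangle$ is a \emph{nonzero} linear form on $\HH^1\simeq\CC$. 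Hence the class of $\wh\eta$ vanishes if and only if $\Res_{w=0}\langle\sigma,\wh\eta\rangle=0$, and we are reduced to computing $\Res_{w=0}\langle\sigma,\iota_{\wh\infty}(\omega_i-\gamma_{k,i}\omega_{k/4})\rangle$.

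For this, recall that $u_0,\partial_zu_0,\dots,\partial_z^ku_0$ form a $\CC[z]$-basis of $\Sym^k\Ai$ (an upper-triangular change of basis from the $u_a$'s), so $\Sym^k\Ai\simeq\CC[z]\langle\partial_z\rangle/\CC[z]\langle\partial_z\rangle L$ with cyclic vector $u_0=v_0^k$, where $L$ is the operator $\Sym^k(\partial_z^2-z)$ of order $k+1$ (its solutions being the products of $k$ Airy solutions). Since $\sigma$ is horizontal, $\psi:=\langle\sigma,u_0\rangle$ satisfies $L\psi=0$; it has moderate growth at infinity because $\sigma$ lives in the regular summand, and it is nonzero since $u_0$ is a cyclic vector. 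The moderate-growth formal solutions of $L$ form a one-dimensional space, spanned by the asymptotic expansion $\sum_i\gamma_{k,i}w^i$ of $(2\pi\Aif\Bif)^{k/2}$ — the unique, up to scaling, product of $k$ Airy solutions in which the exponential factors cancel — so $\psi=c'\sum_i\gamma_{k,i}w^i$ for some $c'\ne0$. Writing $z=1/w$, so that $\iota_{\wh\infty}\omega_i=-\,w^{-i-1}u_0\,\rd w$, we get
\[
\Res_{w=0}\langle\sigma,\iota_{\wh\infty}\omega_i\rangle=-\Res_{w=0}\Bigl(c'\,\textstyle\sum_{j}\gamma_{k,j}\,w^{j-i-1}\,\rd w\Bigr)=-c'\gamma_{k,i},
\]
whence $\Res_{w=0}\langle\sigma,\iota_{\wh\infty}(\omega_i-\gamma_{k,i}\omega_{k/4})\rangle=-c'\gamma_{k,i}+c'\gamma_{k,i}\gamma_{k,k/4}=0$ by \eqref{eq:gammaki2}. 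By the previous paragraph this gives $\iota_{\wh\infty}(\omega_i-\gamma_{k,i}\omega_{k/4})\in\image\wh\nabla$, as required.

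I expect the main obstacle to be the bookkeeping around $\sigma$ in the second step: verifying that the regular/irregular decomposition of $(\Sym^k\Ai)_{\wh\infty}$ is orthogonal for the duality pairing (so $\sigma$ genuinely only sees the regular summand), and that $\langle\sigma,\cdot\rangle$ can be computed entirely inside $\CC\lpr w\rpr$ — the hypothesis $\kfour$ being exactly what makes the relevant series single-valued rather than living in $\sqrt w\,\CC\lpr w\rpr$. The identification $\psi=c'\sum_i\gamma_{k,i}w^i$ with $c'\ne0$ is then forced by the one-dimensionality of the moderate-growth formal solution space of $L$ together with the normalization $\gamma_{k,k/4}=1$.
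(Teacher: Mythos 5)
Your proposal is correct and follows essentially the same route as the paper: both reduce the solvability of $\wh\nabla\wh m_i=\iota_{\wh\infty}(\omega_i-\gamma_{k,i}\omega_{k/4})$ to the vanishing of a one-dimensional residue obstruction carried by the regular summand of the formal decomposition of Example \ref{ex:SymwhAi}, and both identify that obstruction with $\gamma_{k,i}$ through the unique exponential-free product $(\Aif\Bif)^{k/2}$. The only difference is in execution: you access the regular part of $\omega_i$ by pairing with a formal flat section $\sigma$ of the dual, identifying $\langle\sigma,u_0\rangle$ via the one-dimensionality of the formal solution space of the symmetric-power operator together with cyclicity of $u_0$, whereas the paper expands $u_0=v_0^k$ directly in the formal model and takes the residue of its regular component --- two equivalent implementations of the same residue criterion.
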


\begin{proof}
Assume $4\mid k$.
In this case,
the factor $L_\sfi^{\otimes k}$ in the (unique) formal decomposition of $(\Sym^k\Ai)_{\wh\infty}$ in Example \ref{ex:SymwhAi} is the trivial meromorphic connection $(\CC\lpr w\rpr,\rd)$, so $\ker\wh\nabla$ has dimension one.
For a $1$-form $\omega\in\Gamma(\Afu,\Sym^k\Ai\otimes\Omega^1_{\Afu})$,
we let
$(\omega)_\reg \in \CC\lpr w\rpr\rd w$
be the regular part of~$\iota_{\wh\infty}(\omega)$ in the decomposition of $(\Sym^k\Ai)_{\wh\infty}$ (twisted by $\rd w$). Then $\iota_{\wh\infty}(\omega)\in\image(\wh\nabla)$ if~and only if $\res_w(\omega)_\reg=0$. In order to compute $\res_w(\omega_i)_\reg$, we consider the horizontal basis $\{e_0,e_1\}$ of $\Ai^\nabla$
given by
\[
e_0 = \Aif(z)v_1-\Aif'(z)v_0,
\quad
e_1 = \Bif(z)v_1-\Bif'(z)v_0.
\]
Then $v_0 = \pi(\Bif e_0 - \Aif e_1)$. We note that, for any $i\geq1$,
\begin{align*}
(\omega_i)_\reg=(-w^{-i}u_0\rd w/w)_\reg
&= \pi^k\big([\Bif e_0 - \Aif e_1]^k\big)_\reg \cdot(-w^{-i}\rd w/w)\\
&= \pi^k\binom{k}{k/2}(-\Aif\Bif)^{k/2}(e_0e_1)^{k/2} \cdot(-w^{-i}\rd w/w)
\end{align*}
since $(\Aif\Bif)^{k/2}$ is the only product among $\Aif^i\Bif^{k-i}$
which has no exponential factor in its asymptotic expansion
by \eqref{eq:asympt-AB}. Therefore
\[ \res_w (\omega_i)_\reg
= -(-\pi/2)^{k/2}\binom{k}{k/2}\gamma_{k,i}(e_0e_1)^{k/2} \]
and consequently, for $i\neq k/4$, there exists $\wh{m}_i$ satisfying
$\wh\nabla\wh{m}_i
= \iota_{\wh\infty}(\omega_i-\gamma_{k,i}\omega_{k/4})$.
\end{proof}

\subsection{Proof of Theorem \ref{thm:Hodge_numbers} for \texorpdfstring{$k$}{k} odd}
According to Theorem \ref{th:weightsMHM}, the de~Rham cohomology $\coH^1_{\dR}(\Afu,\Sym^k\Ai)$ underlies a $\mf$-exponential mixed Hodge structure $\coH^1(\Afu,\Sym^k\Ai)$ of weights $\geq k+1$, which is pure of weight $k+1$ and equal to $\coH^1_\rmid(\Afu,\Sym^k\Ai)$ if $\knotfour$. In the case $k$ is odd, we will first relate the irregular Hodge filtration $F_\irr^\cbbullet\coH^1_\dR(\Afu,\Sym^k\Ai)$ of $\coH^1_\dR(\Afu,\Sym^k\Ai)$ with the basis $\Basis_k$.

\begin{lemma}\label{lem:Hodge_numbers_odd}
If $k$ is odd, we have
\[
\omega_i\in F_\irr^{(k+1)-(k+2i)/3}\coH^1_\dR(\Afu,\Sym^k\Ai)\quad(1\leq i\leq k'+1).
\]
\end{lemma}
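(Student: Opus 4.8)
The plan is to compute the irregular Hodge filtration $F_\irr^\cbbullet$ on $\coH^1_\dR(\Afu,\Sym^k\Ai)$ via the geometric model of Remark \ref{rem:symkfk}, namely the identification $\coH^1_\dR(\Afu,\Sym^k\Ai)\simeq\coH^{k+1}_\dR(\Aff{k+1},f_k)^{\symgp_k,\chi}$ with $f_k=\sum_{i=1}^k(\tfrac13 x_i^3-zx_i)$, under which $\omega_i$ corresponds to $w_i=[z^{i-1}\,\rd z\,\rd x_1\cdots\rd x_k]$. Recall from Remark \ref{rem:EMHSMHStg} that the irregular Hodge filtration on $\coH^{k+1}_\dR(\Aff{k+1},E^{f_k})$ is computed, after choosing a compactification $\ov U\to\PP^1$ of $f_k$, by means of the Deligne filtration $F_\Del^\cbbullet\DR(\cO_{\ov U}\otimes E^{f_k})$ (a meromorphic version thereof), which is indexed by $\QQ$. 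The strategy is therefore to exhibit an explicit algebraic differential form representing $w_i$, track its pole order along the boundary divisor of a suitable smooth compactification, and read off the resulting Deligne degree; this degree is exactly $(k+1)-(k+2i)/3$, placing $\omega_i$ in the asserted step of $F_\irr^\cbbullet$.

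Concretely, I would take the naive compactification $\PP^1_z\times(\PP^1_x)^k\to\PP^1$ of $f_k$, or rather a toric / weighted blow-up of $\PP^{k+1}$ adapted to the quasi-homogeneous structure $\deg z=2$, $\deg x_i=1$ (so that $f_k$ has "weighted degree $3$"), as alluded to in the introduction where the geometry of $s^3\sum(\tfrac13 x_i^3-x_i)$ is mentioned. On such a model the function $f_k$ has a pole of a definite order along each boundary component, and the pole divisor $P$ of $f_k$ together with the reduced boundary divisor $D$ determines, by the recipe of \cite{Bibi08} (see the discussion preceding Proposition \ref{prop:bifilt}), the Deligne filtration: a form $z^{i-1}\,\rd z\wedge\rd x_1\wedge\cdots\wedge\rd x_k$ with a pole of order $a$ along a component of $D$ contributes, at that component, a jump of the Deligne index governed by the fractional part of $a$ over the pole order of $f_k$ there. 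The key computation is thus: the top form $z^{i-1}\rd z\,\rd x_1\cdots\rd x_k$ has weighted degree $(2i-2)+2+k = k+2i$ in the numerator, and $f_k$ contributes pole order $3$ per unit of the relevant local coordinate, so the naive "weighted pole count" gives the Deligne degree $(k+2i)/3$ below the top, i.e. the form lies in $F_\Del^{(k+1)-(k+2i)/3}$. Since the range $1\le i\le k'+1=(k+1)/2$ forces $k+2i\le 2k+1<3(k+1)$, these exponents are genuinely $\le k+1$ and the filtration steps are the claimed rational numbers $(k+1)-(k+2i)/3\in\QQ$, consistent with the monodromy data of Example \ref{ex:SymwhAi} (for $k$ odd, $\Sym^k\wh\Ai$ is purely irregular of slope $3/2$, so the fractional parts $\{(k+2i)/3\}$ are exactly $\{1/3,2/3\}$ as $i$ varies, matching the Stokes structure).

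The main obstacle, I expect, is twofold. First, one must check that the chosen compactification $\ov U$ is sufficiently "good" — smooth, with $D$ and $P_{\mathrm{red}}$ normal crossings, and with the multiplicities of $f_k$ along $D$ arranged so that the Deligne filtration is genuinely computed by the naive pole-order count rather than by a strictly smaller subcomplex (this is the content of the condition that the compactification be "nondegenerate" in the sense needed for \cite{S-Y14,Bibi08}); constructing such a model may require an explicit sequence of weighted blow-ups of $\PP^{k+1}$ along the coordinate strata, and verifying the normal-crossing condition is where the real work lies. Second, one must make sure the $(\symgp_k,\chi)$-isotypic projection and the passage to the $z$-line are compatible with $F_\irr^\cbbullet$ — but this is guaranteed by the strictness statements in Proposition \ref{prop:bifilt} (strict compatibility of $F_\irr$ with morphisms in $\MHM$, hence with the idempotent cutting out the $\chi$-isotypic part) and by Corollary \ref{cor:isofk}. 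Once the model is in place, the pole-order bookkeeping is routine, and the inclusion $\omega_i\in F_\irr^{(k+1)-(k+2i)/3}\coH^1_\dR(\Afu,\Sym^k\Ai)$ follows; the reverse inclusion (that this is exactly the jumping index, not merely a lower bound) is not needed for the lemma but will be recovered afterwards by the dimension count of \eqref{eq:dimH1} together with Hodge symmetry (Remark \ref{rem:HodgesymEmf}).
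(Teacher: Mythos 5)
Your overall strategy is the one the paper uses: represent $\omega_i$ by the explicit top form $z^{i-1}\rd z\wedge\rd x_1\wedge\cdots\wedge\rd x_k$, choose a non-degenerate compactification of the exponential geometry, bound the Yu--Deligne pole order to obtain a lower bound on the $F_\irr$-level, transfer it through the $(\symgp_k,\chi)$-projection by strictness (Proposition \ref{prop:bifilt}), and leave exactness of the jumps to the later dimension count plus Hodge symmetry; your weighted-degree count $(k+2i)/3$ is exactly the paper's exponent. The difference in route is that the paper does not compactify $f_k$ on a weighted model of $\PP^{k+1}$: it first passes, via Corollary \ref{cor:iotastrict} (i.e.\ the substitution $z=s^2$, $x_i=sy_i$), to the strict inclusion $\iota\colon\coH^1(\Afu,\Sym^k\Ai)\hto\coH^{k+1}(\Aff{k+1},s^3g_k)$ in $\EMHS^\mf$, and then takes the single blowup $X$ of $\PP^1_s\times\PP^k$ along $\{0\}\times\Aff{k-1}_\infty$, for which the pole divisor is $3(\{\infty\}\times\PP^k+P')$ and the membership $z^{i-1}\rd z\wedge\rd x_1\wedge\cdots\wedge\rd x_k\in\Gamma\bigl(X,\Omega^{k+1}(\log D)(\flr{\tfrac13(k+2i)P})\bigr)$ is immediate; this substitution also avoids the quotient singularities your weighted compactification would carry.

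The genuine gap is that your plan never invokes the hypothesis that $k$ is odd, and it must: the corresponding inclusion is false for $k$ even beyond $i=\flr{k/4}$. For instance, for $k=6$ the Hodge jumps given by Theorem \ref{thm:Hodge_numbers} occur only at $\sfp=8/3$ and $13/3$, so $\omega_2\in F_\irr^{11/3}$ would force $\dim F_\irr^{11/3}\geq2$, a contradiction; accordingly Lemma \ref{lem:Hodge_numbers_even} claims the inclusion only for $i\leq\flr{k/4}$. Hence the ``naive weighted pole count'' cannot be validated by routine bookkeeping on any adapted compactification: the non-degeneracy you defer to a technical verification is the decisive point, and it is exactly where parity enters. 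In the paper's model, for $k$ odd the hypersurface $g_k=0$ has no singular points (one cannot have $\epsilon_i=\pm1$ with $\sum_i\epsilon_i=0$), so the single blowup $X$ is already non-degenerate and the naive count stands; for $k$ even the extra blowups needed along $\{\infty\}\times S$ introduce exceptional divisors whose multiplicities correct the count and restrict the admissible range of $i$ (the condition $k\geq4r+2\nu$ in Section \ref{subsec:proofHodge_numbers_even}). To complete your proof you must either exhibit your weighted/toric compactification explicitly and prove it is non-degenerate for odd $k$, or pass to the paper's $s^3g_k$ model. A minor slip: your consistency check that the fractional parts of $(k+2i)/3$ are exactly $1/3,2/3$ is incorrect (for $k=5$, $i=2$ the exponent is an integer), though nothing in the argument depends on it.
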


\begin{proof}
From Corollary \ref{cor:iotastrict} we deduce an inclusion
\begin{equation}\label{eq:iota}
\iota: \coH^1(\Afu,\Sym^k\Ai) \hto \coH^{k+1}(\Aff{k+1}, s^3g_k)
\end{equation}
in $\EMHS^\mf$. This inclusion is strict for the irregular Hodge filtrations on each term, hence it is enough to check that $\iota(\omega_i)\in F_\irr^{(k+1)-(k+2i)/3}\coH^{k+1}(\Aff{k+1}, s^3g_k)$ for $i$ in the given range.

We begin with the compactification $\PP^k$ of $\Aff{k}$
with the boundary $\Aff{k-1}_\infty$.
In fact, $\PP^k$ is the toric variety
associated with the Newton polytope of the Laurent polynomial $g_k$ on $\Gm^k$
and it is non-degenerate
(in the sense of Mochizuki, see \cite[\S A.2]{F-S-Y18})
for the pair $(\Aff{k},g_k)$.
Let $X$ be the blowup of $\PP^1_s\times\PP^k$
along $\{0\}\times\Aff{k-1}_\infty$
(the intersection of the divisors
$\{0\}\times\PP^k$ where~$s^3g_k$ vanishes with order 3
and $\PP^1\times\Aff{k-1}_\infty$
where $s^3g_k$ has triple pole).
Let $P'$ be the proper transform of $\PP^1\times\Aff{k-1}_\infty$
in $X$.

Since $k$ is odd, it is readily checked that $X$ is a non-degenerate compactification
of $(\Aff{k+1}, s^3g_k)$ with boundary divisor $D=X\setminus \Aff{k+1}$.
On $X$,
the pole divisor $P$ of $s^3g_k$ equals
$3(\{\infty\}\times\PP^k + P')$. The cohomology $\coH_\dR^{k+1}(\Aff{k+1},s^3g_k)$ is computed as the hypercohomology on $X$ of the twisted $P$-meromorphic de~Rham complex
\[
(\Omega^\cbbullet_X(\log D)(*P),\rd+\rd(s^3g_k))
\]
and the irregular filtration $F^\cbbullet_\irr\coH_\dR^{k+1}(\Aff{k+1},s^3g_k)$ on the hypercohomology is induced by the filtration $F^\cbbullet_\Yu(\Omega^\cbbullet_X(\log D)(*P),\rd+\rd(s^3g_k))$ by subcomplexes defined by (\cf\cite[(6)]{Yu12})
\[
F^\lambda_\Yu\Omega^p_X(\log D)(*P)=\begin{cases}
0&\text{if }\lambda<0,\\
\Omega^p_X(\log D)(\flr{(p-\lambda) P})&\text{if }\lambda\geq0.
\end{cases}
\]
In particular, the image of the natural map
\[ \Gamma(X,\Omega^{k+1}(\log D)(\flr{\mu P}))
\to \coH_\dR^{k+1}(\Aff{k+1},s^3g_k) \]
lies in $F^{k+1-\mu}_\irr\coH_\dR^{k+1}(\Aff{k+1},s^3g_k)$. Let us consider the image by $\iota$ of the basis $\Basis_k$. We~note that
\[
z^{i-1}\rd z\wedge\rd x_1\wedge\cdots\wedge\rd x_k \in \Gamma(X,\Omega^{k+1}(\log D)(\flr{\tfrac13(k+2i)P})).
\]
It follows that $\iota(\omega_i)$ ($i=1,\dots,k'+1$), which is equal to the image of $w_i$ (\cf Remark \ref{rem:symkfk}) in $\coH_\dR^{k+1}(\Aff{k+1},s^3g_k)$, belongs to
\[
F_\irr^{(k+1)-(k+2i)/3}\coH_\dR^{k+1}(\Aff{k+1},s^3g_k).\qedhere
\]
\end{proof}

\begin{remark}
When $k$ is odd, the functions $f_k$ and $s^3g_k$ are non-degenerate as Laurent polynomials. Therefore to conclude the above Lemma, one can also use the toric method of \cite{A-S97,Yu12} via the inclusions \eqref{eq:isofk} and \eqref{eq:iota} respectively and restriction of the base space from $\Aff{k+1}$ to $\Gm^{k+1}$ as adapted in \cite[\S4.3.1]{F-S-Y18}. Instead of describing explicitly the Newton polytope of~$f_k$ or $s^3g_k$ needed in the toric approach, here we compute the Hodge filtration by exhibiting a non-degenerate compactification which will appear again in the case of even $k$ in Section~\ref{subsec:proofHodge_numbers_even}.
\end{remark}

It will be convenient to define the decreasing filtration $G^\sfp\coH^1_\dR(\Afu,\Sym^k\Ai)$ ($\sfp\in\QQ$) as
\[
G^{(k+1)-(k+2i)/3}\coH^1_\dR(\Afu,\Sym^k\Ai)=\bigl\langle\omega_j\in\Basis_k\mid j\leq i\bigr\rangle
\quad
(1\leq i\leq k'+1),
\]
and thus
\begin{equation}\label{eq:grpG}
\dim\gr^\sfp_G\coH^1_\dR(\Afu,\Sym^k\Ai)=\begin{cases}
1&\text{if $\sfp=(k+1)-(k+2i)/3$ for $1\leq i\leq k'+1$},\\
0&\text{otherwise}.
\end{cases}
\end{equation}
The statement of the lemma above amounts then to
\begin{equation}\label{eq:GpFp}
G^\sfp\coH^1_\dR(\Afu,\Sym^k\Ai)\subset F^\sfp_\irr\coH^1_\dR(\Afu,\Sym^k\Ai)\quad
\text{for all $\sfp$ and $k$ odd}.
\end{equation}

\begin{proof}[Proof of Theorem \ref{thm:Hodge_numbers} when $k$ is odd]
Let us set $d_\sfp=\dim\gr^\sfp_{F_\irr}$ and $\delta_\sfp=\dim\gr^\sfp_G$. Then, by~the above inclusion, $\sum_{\sfq\geq \sfp}\delta_\sfq\leq\sum_{\sfq\geq \sfp}d_\sfq$ for each $\sfp$ with equality for $\sfp$ small and for $\sfp$ large. Furthermore, by~Hodge symmetry (Remark \ref{rem:HodgesymEmf}) we have $d_{k+1-\sfq}=d_\sfq$ and, noticing that for~$k$ odd the symmetry $i\mto j=k'+2-i$ amounts to $k+1-(k+2i)/3\mto k+1-(k+2j)/3$,
we deduce from Formula \eqref{eq:grpG} that $\delta_{k+1-\sfq}=\delta_\sfq$. As a consequence, we also have $\sum_{\sfq\leq \sfp}\delta_\sfq\leq\sum_{\sfq\leq \sfp}d_\sfq$ for all $\sfp$, and it follows that $d_\sfp=\delta_\sfp$ for all $\sfp$. Since $\delta_\sfp=1$ for $\sfp$ as described in the theorem, and zero otherwise, this concludes the proof.
\end{proof}

\subsection{Synopsis of the proof of Theorem \ref{thm:Hodge_numbers} when \texorpdfstring{$k$}{k} is even}\label{subsec:synopsis}
For $k$ even, the formulas in the theorem are equivalent to $\dim\gr_F^\sfp\coH^1_\dR(\Afu,\Sym^k\Ai)=1$ for
\begin{equation}\label{eq:dimgrpF}
\sfp=
\begin{cases}
\begin{cases}
k/2-(2i-1)/3,\\
k/2+1+(2i-1)/3,
\end{cases}
& 1\!\leq\! i\!<\! k/4 \quad\text{if $\ktwofour$},
\\[12pt]
\begin{cases}
k/2-2i/3,\\
k/2+1+2i/3,
\end{cases}
& 1\leq i< k/4 \quad\text{if $\kfour$},
\\
\quad k/2+1 & \text{if $\kfour$},
\end{cases}
\end{equation}
and $\dim\gr_F^\sfp\coH^1_\dR(\Afu,\Sym^k\Ai)=0$ otherwise.

The argument used for $k$ odd cannot be extended to this case since we are not able to prove in a similar way that the inclusion \eqref{eq:GpFp} holds for \emph{every} $\sfp$ (\cf \eqref{eq:GpFpeven} below), and indeed $g_k$ is a~degenerate Laurent polynomial when $k$ is even.
We will thus develop another method, similar to that used in \cite{F-S-Y18} for the moments of the Kloosterman connection, relying on (ramified) Fourier transformation. This will need much more material, which we develop in Sections \ref{subsec:Gg}--\ref{subsec:HodgeAitilde}.
Let us emphasize a new tool with respect to \loccit, namely the inverse stationary phase formula with filtration, as expressed in \cite[(7)]{S-Y18} (\cf the proof of Proposition \ref{prop:rkFMkepsilon}). We explain below the main steps.

\subsubsection*{Step 1}
In this step (realized in Sections \ref{subsec:Gg}--\ref{subsec:HodgeAitilde}), we consider the pullback $\Sym^k\wt\Ai$ and the decomposition \eqref{eq:symktildeAi} and its mid analogue
\[
\coH^1_{\dR,\rmid}(\Afu_s,\Sym^k\wt\Ai)\simeq\coH^1_{\dR,\rmid}(\Afu_z,\Sym^k\Ai)\oplus\coH^1_\dR(\Afu_z,L\otimes\Sym^k\Ai).
\]
We directly compute the irregular Hodge numbers $\dim\gr^\sfp_{F_\irr}\coH^1_\dR(\Afu_s,\Sym^k\wt\Ai)$ for all $\sfp$, relative to the irregular Hodge filtration considered in Section \ref{subsec:FirrSymktildeAi}.

For $\epsilon\in\{0,1,2\}$, we consider the map
\begin{equation}\label{eq:rhoeps}
\begin{aligned}
\rho_\epsilon:\{j\mid0\leq j\leq k,\;k+j+\epsilon\not\equiv0\bmod3\}&\to\NN\\
j&\mto\flr{\sfrac{(k+j+\epsilon)}3}.
\end{aligned}
\end{equation}
Note that, for $p\in\NN$, we have $\#\rho_\epsilon^{-1}(p)\in\{0,1,2\}$.

\begin{prop}[{analogous to {\cite[Prop.\,4.20(2)]{F-S-Y18}}}]\label{prop:dimgrFtildeAi}
If $k$ is even, we have, for $\epsilon=0,1,2$ and $p\in\ZZ$,
\[
\dim\gr^{p-\epsilon/3}_{F_\irr}\coH_{\dR,\rmid}^1(\Afu_s,\Sym^k\wt\Ai)=\begin{cases}
1&\text{if $\epsilon=0$ and $p=k/2,k/2+1$},\\
1&\text{if $\epsilon\neq0$ and $p=k/2+1$},\\
\#\rho_\epsilon^{-1}(p-1)&\text{otherwise}.
\end{cases}
\]
\end{prop}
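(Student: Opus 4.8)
The plan is to compute the irregular Hodge numbers of $\coH^1_{\dR,\rmid}(\Afu_s,\Sym^k\wt\Ai)$ by splitting into the classical and non-classical parts, as permitted by the decomposition $\EMHS^\mf=\EMHS^\cst\oplus\EMHS^\mf_{\neq1}$, and applying Proposition \ref{prop:HodgewtAin} in the case $n=2$, $g=g_1$, $g_k$ the convenient non-degenerate polynomial of \eqref{eq:defgk}. Concretely, for the classical part we must compute $\dim\gr^p_F[\rP_0\psi_{\tau,1}\wt M_k^\rH(-1)]$, and for the non-classical part, with $\zeta=\exp(-2\pii\epsilon/3)\neq1$ for $\epsilon\in\{1,2\}$, we must compute $\dim\gr^p_F[\rP_0\psi_{\tau,1}(K_{\zeta^{-1}}^\rH\star_\rmid\wt M_k^\rH)(-1)]$. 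Here $\wt M_k^\rH$ is the pure Hodge module of weight $k$ built from $g_k$ in \eqref{eq:MkH}. The shift by $-\epsilon/3$ is exactly the $\epsilon/(n+1)$ shift with $n=2$, which is why the statement is indexed by $p-\epsilon/3$.

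The first main step is to identify $\wt M_k^\rH$ and its nearby cycles at $\tau=0$ explicitly. Since $g(y)=\tfrac13y^3-y$ is a convenient non-degenerate polynomial with two nondegenerate critical points, $\wt M_1^\rH=M^\rH$ (notation of \eqref{eq:MH}) is understood, and $g_k=\sum_i g(y_i)$ is a Thom--Sebastiani sum; by Lemma \ref{lem:MkSymkGg} its Fourier transform is $j_{\dag+}\Sym^kG_g$ with $G_g=j^+\FT\wt M_1$. The Hodge/weight data of $\psi_{\tau,1}(K_{\zeta^{-1}}^\rH\star_\rmid\wt M_k^\rH)$ can then be read off from a local (monodromic) computation at the origin after Fourier transform: the multiplicative convolution with a Kummer sheaf corresponds, under the identifications of Section \ref{sec:perv0mf}, to tensoring the local system $j^+\Sym^kG_g$ at infinity by a Kummer local system, so that the spectrum at $\tau=0$ of the convolution is computed from the spectrum of $\Sym^kG_g$ shifted by $\epsilon/3$. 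I would make this precise using the inverse stationary phase formula (as in \cite{S-Y18}, already invoked in the proof of Proposition \ref{prop:MHSmfirr}) to transfer the Hodge filtration from the $F_\irr$ side to the $F$-side on nearby cycles, and then use the explicit slope $3/2$ structure of $\Sym^k\wt\Ai$ at infinity from Example \ref{ex:SymwhAi} and Lemma \ref{lem:SymwhAi} to count. The function $\rho_\epsilon$ of \eqref{eq:rhoeps}, $j\mapsto\flr{(k+j+\epsilon)/3}$ on the set of $j$ with $k+j+\epsilon\not\equiv0\bmod 3$, is precisely the bookkeeping device: the classes $u_j\,\de z/z$ (for $0\le j\le k$) carry irregular Hodge weight determined by $j$, and the non-classical part picks out exactly those $j$ with $k+j+\epsilon\not\equiv0$, with the Kummer twist contributing the $\epsilon/3$; hence $\#\rho_\epsilon^{-1}(p-1)$ counts the multiplicity of $\gr^{p-\epsilon/3}_{F_\irr}$ away from the exceptional values.

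The second step is to pin down the exceptional values $p=k/2$ and $p=k/2+1$. These come from the ``extra'' part of the cohomology: the basis of Corollary \ref{cor:basis} for $\coH^1_\dR(\Gm,j^+\Sym^k\wt\Ai)$ consists of the $\omega_i$, $\eta_j^-$ for the $L$-twisted piece and $\omega_i$, $\eta_j$ for $\Sym^k\Ai$; the classes $z^{k'}u_0\,\de z$ (and the corresponding one on the twisted side) sit at the symmetric position and, after passing to the mid cohomology and accounting for Hodge symmetry $d_{\sfp}=d_{k+1-\sfp}$ (Remark \ref{rem:HodgesymEmf}), contribute one-dimensional graded pieces at $\sfp=k/2$ and $\sfp=k/2+1$. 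Concretely I would argue as in the proof of Theorem \ref{thm:Hodge_numbers} for $k$ odd: the basis filtration $G^\cbbullet$ is contained in $F_\irr^\cbbullet$, and comparison of total dimensions (via \eqref{eq:dimH1} and Corollary \ref{cor:dim_moments_ain}) plus Hodge symmetry forces equality degree by degree, the only discrepancy being concentrated at the central degrees where the $\epsilon=0$ and $\epsilon\ne0$ cases overlap. I expect the main obstacle to be the $\epsilon\ne0$ part: unlike the classical component, which is handled by the Kloosterman-type machinery of \cite{F-S-Y18} verbatim, here one must control the Hodge filtration on $\psi_{\tau,1}(K_{\zeta^{-1}}^\rH\star_\rmid\wt M_k^\rH)$ through the ramified Fourier transform, and the delicate point is to verify that the local spectrum of $K_{\zeta^{-1}}^\rH\star_\rmid\wt M_k^\rH$ at $\tau=0$ is given exactly by the shifted count $\#\rho_\epsilon^{-1}(p-1)$ with no collapsing — i.e. that the primitive part $\rP_0$ extracts precisely the stated dimensions. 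This requires the weight-monodromy structure of the convolution (Lemmas on $\star$ and $\star_\rmid$ in Section \ref{subsec:additive_convolution}) together with the strictness statement in Corollary \ref{cor:iotastrict} to ensure no unexpected cancellation when passing to the $\symgp_k$-isotypic $\chi$-component.
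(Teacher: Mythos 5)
You set up the computation the same way the paper does: decompose into classical and non-classical parts and apply Proposition \ref{prop:HodgewtAin}, so that everything reduces to the Hodge numbers of $\rP_0\psi_{\tau,1}\wt M_{k,\epsilon}(-1)$ with $\wt M_{k,0}=\wt M_k$ and $\wt M_{k,\epsilon}=K_{\zeta^{-1},\tau}\star_\rmid\wt M_k$ for $\epsilon\neq0$; and the inverse stationary phase formula of \cite{S-Y18}, the Thom--Sebastiani behaviour of $F_\irr$, and the Kummer shift by $\epsilon/3$ are indeed the ingredients that produce the generic count $\#\rho_\epsilon^{-1}(p)$ (this is Proposition \ref{prop:rkFMkepsilon}). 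Two points of detail: the convolution is additive (its Fourier transform is the tensor product with $j^+K_{\zeta,t}$), and what stationary phase transfers is the limit Hodge filtration of $\wt M_{k,\epsilon}$ at $\tau=\infty$ — hence, by semisimplicity of the monodromy there, the generic ranks $\rk\gr^p_F\wt M_{k,\epsilon}$ — not a spectrum at $\tau=0$; moreover the numerical input is the Hodge filtration of $G_g$, which jumps at $1/3$ and $2/3$, rather than the slope-$3/2$ formal structure of Example \ref{ex:SymwhAi}, which records exponential factors but carries no Hodge data.

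The genuine gap is in your second step, the exceptional values $p=k/2$ and $p=k/2+1$. You propose to pin them down by the odd-$k$ mechanism: containment of the basis filtration $G^\cbbullet$ in $F_\irr^\cbbullet$ plus Hodge symmetry and a count of total dimensions. For $k$ even this is not available and is precisely where the odd-$k$ argument breaks down: the compactification used for odd $k$ is degenerate along the singular locus of $g_k$ (nonempty exactly when $k$ is even), and even after the blowup $\wt X$ one only gets $G^\sfp\subset F_\irr^\sfp$ for $\sfp\geq k/2+1$ (Lemma \ref{lem:Hodge_numbers_even}), which covers only half the range, including only $\omega_i$ with $i\leq\flr{k/4}$ and the twisted classes in a restricted range. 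In the paper's logic this half-containment is \emph{combined with} the present proposition in Step 3 to conclude; using a symmetry-plus-dimension count to prove the proposition itself would invert that logic. What actually produces the central jumps is local Hodge theory at $\tau=0$ of $\wt M_{k,\epsilon}$: for $\epsilon=0$, the Lefschetz decomposition $\rP_1\oplus\rP_0\oplus\rN_\tau(\rP_1)$ of $\gr^W\psi_{\tau,1}\wt M_k^\rH$, with $\rP_1$ one-dimensional (since $\dim\phi_{\tau,1}\wt M_k=1$ and $\rN^2=0$, Proposition \ref{prop:Mkepsilon}) and Hodge jumps at $p=k/2$ and $k/2-1$, which after the Tate twist give the two exceptional degrees; for $\epsilon\neq0$, the identification $\dim\gr^p_F\psi_{\tau,\exp(2\pii\epsilon/3)}\wt M_{k,\epsilon}=\dim\gr^p_F\phi_{\tau,1}\wt M_k$ (a $(k/2,k/2)$ class, proved as in \cite[Th.\,3.1.2(2)]{D-S12} via \cite[Th.\,1.2]{M-S-S16}), which removes exactly one unit from $\#\rho_\epsilon^{-1}(k/2)$ (Proposition \ref{prop:rkpsiFMkepsilon}). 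None of this weight--monodromy analysis appears in your plan, so the first two lines of the asserted formula are not justified by it.
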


At this step, we use the techniques explained in \cite[App.]{F-S-Y18} together with Proposition \ref{prop:HodgewtAin}. Since the irregular Hodge filtration is compatible with the decomposition \eqref{eq:symktildeAi}, it also remains to understand the action of $\mu_2$ on the filtered vector space $\coH_{\dR,\rmid}^1(\Afu_s,\Sym^k\wt\Ai)$ and, if $\kfour$, to determine the irregular Hodge filtration on the dimension-one quotient space $\coH_\dR^1/\coH_{\dR,\rmid}^1$ of $\Sym^k\Ai$. We will more precisely relate the irregular Hodge filtration with the filtration $G^\cbbullet$ defined below by the basis constructed in Corollary \ref{cor:basis}.

\subsubsection*{Step 2}
We define the filtration $G^\sfp\coH_\dR^1(\Afu,\Sym^k\wt\Ai)$ as the filtration defined by the basis made of $\omega_i,\omega_i^-,\eta_j^-$ for $i\in[1,k']$ and $0\leq j\leq k$, such that
\[
\gr_G^{k+1-(k+2i)/3} = \CC\omega_i,
\quad
\gr_G^{k+1-(k+2i+1)/3} = \CC\omega_i^-,
\quad
\gr_G^{k+1-(k+j+1)/3} = \CC\eta_j^-.
\]

\begin{lemma}\label{lem:dimgrGp}
If $k$ is even, we have, for $\sfp>k/2+1$,
\[
G^\sfp\coH_\dR^1(\Afu_s,\Sym^k\wt\Ai)\subset\coH_{\dR,\rmid}^1(\Afu_s,\Sym^k\wt\Ai),
\]
and for $\epsilon=0,1,2$ and $k/2+1<p\in\ZZ$,
\[
\dim\gr^{p-\epsilon/3}_{G}\coH_\dR^1(\Afu_s,\Sym^k\wt\Ai)=\dim\gr^{p-\epsilon/3}_{G}\coH_{\dR,\rmid}^1(\Afu_s,\Sym^k\wt\Ai)=\#\rho_\epsilon^{-1}(p-1).
\]
\end{lemma}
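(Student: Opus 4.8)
The two assertions are linked: once the inclusion $G^\sfp\coH_\dR^1(\Afu_s,\Sym^k\wt\Ai)\subset\coH_{\dR,\rmid}^1(\Afu_s,\Sym^k\wt\Ai)$ is known for every $\sfp>k/2+1$, then in degrees $>k/2+1$ the filtration induced by $G$ on $\coH_{\dR,\rmid}^1$ coincides with $G$ on $\coH_\dR^1$, so the two displayed dimensions agree and it remains only to compute $\dim\gr^{p-\epsilon/3}_G\coH_\dR^1(\Afu_s,\Sym^k\wt\Ai)$. The plan is therefore to prove the inclusion first and then to carry out a combinatorial count using the explicit basis $\{\omega_i,\omega^-_i,\eta^-_j\}$ of $\coH_\dR^1(\Afu_s,\Sym^k\wt\Ai)$ furnished, via \eqref{eq:symktildeAi}, by Corollary \ref{cor:basis}.

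For the inclusion I would argue $\mu_2$-equivariantly. Since $\coH_{\dR,\rmid}^1$ is the image of $\coH_{\dR,\rc}^1\to\coH_\dR^1$ and both sides split under $\mu_2$ according to \eqref{eq:symktildeAi} and its analogue for $\coH_{\dR,\rc}^1$, one gets $\coH_{\dR,\rmid}^1(\Afu_s,\Sym^k\wt\Ai)=\coH_{\dR,\rmid}^1(\Afu_z,\Sym^k\Ai)\oplus\coH_{\dR,\rmid}^1(\Afu_z,L\otimes\Sym^k\Ai)$. On the anti-invariant summand, $\coH_{\dR,\rmid}^1(\Afu_z,L\otimes\Sym^k\Ai)$ equals all of $\coH_\dR^1(\Afu_z,L\otimes\Sym^k\Ai)$ when $\kfour$ (the regular part of $L\otimes\Sym^k\Ai$ at $z=\infty$ then has nontrivial monodromy), so all $\omega^-_i,\eta^-_j$ are middle; when $\ktwofour$ it has codimension one, and I would rerun the computation of Lemma \ref{lem:solinfty} for $L\otimes\Sym^k\Ai$ to see that the one missing direction is spanned by a combination of classes of $G$-degree $\le k/2+1$, hence is avoided by $G^{>k/2+1}$. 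On the invariant summand, the definition of $G$ forces any $\omega_i$ lying in $G^\sfp$ with $\sfp>k/2+1$ to satisfy $k+1-(k+2i)/3>k/2+1$, i.e. $i<k/4$; if $\knotfour$ then $\coH_{\dR,\rmid}^1(\Afu_z,\Sym^k\Ai)=\coH_\dR^1(\Afu_z,\Sym^k\Ai)$ by \eqref{eq:dimH1} and there is nothing to prove, while if $\kfour$ then by Proposition \ref{prop:Bkmid} the class of $\omega_i$ modulo $\coH_{\dR,\rmid}^1(\Afu_z,\Sym^k\Ai)$ is $\gamma_{k,i}\,\omega_{k/4}$, and $\gamma_{k,i}=0$ for $i<k/4$ by \eqref{eq:gammaki2} since $k/4+3\ZZ_{\geq0}$ meets only $\{i\geq k/4\}$. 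Hence every basis vector generating $G^{>k/2+1}\coH_\dR^1(\Afu_s,\Sym^k\wt\Ai)$ lies in $\coH_{\dR,\rmid}^1(\Afu_s,\Sym^k\wt\Ai)$.

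For the count I would write every jump of $G$ as $k+1-m/3$ with $m\in\ZZ$: one has $m=k+2i$ for $\omega_i$ and $m=k+2i+1$ for $\omega^-_i$ ($1\le i\le k/2-1$), and $m=k+j+1$ for $\eta^-_j$ ($0\le j\le k$). Given $\epsilon\in\{0,1,2\}$ and $p\in\ZZ$ with $p>k/2+1$, the equation $k+1-m/3=p-\epsilon/3$ singles out the unique $m_0:=3(k+1-p)+\epsilon$, the only integer $\equiv\epsilon\pmod 3$ with $\flr{m_0/3}=k+1-p$, and $p>k/2+1$ forces $m_0<3k/2$; thus $\dim\gr^{p-\epsilon/3}_G\coH_\dR^1$ is the number of basis vectors with that $m_0$. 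Writing $m_0=k+\ell+1$, the class $\eta^-_\ell$ occurs when $0\le\ell\le k$, and in addition $\omega^-_{\ell/2}$ occurs when $\ell$ is even with $2\le\ell\le k-2$, while $\omega_{(\ell+1)/2}$ occurs when $\ell$ is odd with $1\le\ell\le k-3$; since $m_0<3k/2$ means $\ell\le k/2-2$ and since $k/2-2\le k-3$ for $k\ge2$, the count is $2$ when $k+2\le m_0\le 3k/2-1$, it is $1$ when $m_0=k+1$, and it is $0$ when $m_0\le k$. On the other hand $\rho_\epsilon^{-1}(p-1)$ consists of the $j\in\{0,\dots,k\}$ with $\flr{(k+j+\epsilon)/3}=p-1$ and $k+j+\epsilon\not\equiv0\pmod 3$; the floor condition gives $j\in\{2k-m_0,\,2k+1-m_0,\,2k+2-m_0\}$, the value $j=2k-m_0$ is removed by the congruence (as $m_0\equiv\epsilon\bmod 3$), hence $\rho_\epsilon^{-1}(p-1)=\{2k+1-m_0,\,2k+2-m_0\}\cap\{0,\dots,k\}$, which has $2$ elements for $k+2\le m_0\le 2k+1$, $1$ for $m_0=k+1$, and $0$ for $m_0\le k$. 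Since $3k/2-1\le 2k+1$, in the range $m_0<3k/2$ the two counts coincide, giving $\dim\gr^{p-\epsilon/3}_G\coH_\dR^1(\Afu_s,\Sym^k\wt\Ai)=\#\rho_\epsilon^{-1}(p-1)$, and by the inclusion the same for $\coH_{\dR,\rmid}^1$. The only genuinely delicate point is the inclusion in the $\ktwofour$ case of the anti-invariant summand, which requires re-examining the asymptotics at infinity exactly as in Lemma \ref{lem:solinfty}; the combinatorial count is then routine bookkeeping of the parity of $m_0$ and of the extreme indices.
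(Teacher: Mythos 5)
Your plan follows the same route as the paper's proof: reduce the lemma to (i) the inclusion $G^{>k/2+1}\subset\coH^1_{\dR,\rmid}$ and (ii) a count of the $G$-degrees of the basis $\omega_i,\omega^-_i,\eta^-_j$, matched against $\#\rho_\epsilon^{-1}(p-1)$. Your counting argument is complete and correct, and in fact more uniform than the paper's: the paper records $\dim\gr^\sfp_G$ for $\sfp>k/2+1$ (the values $2,1,0$ according to $\sfp\leq 2k/3+1/3$, $\sfp=2k/3+2/3$, $\sfp>2k/3+2/3$) and then verifies the equality with $\#\rho_\epsilon^{-1}(p-1)$ separately in the three cases $k\equiv0,1,2\bmod 3$, whereas your parametrization by $m_0=3(k+1-p)+\epsilon$ treats all residues at once; I checked the bookkeeping and it reproduces exactly the paper's table. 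Your treatment of the inclusion on the invariant summand (nothing to prove if $\knotfour$; Proposition \ref{prop:Bkmid} together with $\gamma_{k,i}=0$ for $i<k/4$ from \eqref{eq:gammaki2} if $\kfour$) is also precisely the paper's argument.

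The one genuine gap is the point you flag yourself: the inclusion on the anti-invariant summand when $\ktwofour$, where the middle part has codimension one. You defer this to a rerun of Lemma \ref{lem:solinfty} for $L\otimes\Sym^k\Ai$, which is indeed the right kind of computation, but the conclusion you propose to extract from it --- that ``the one missing direction is spanned by a combination of classes of $G$-degree $\le k/2+1$'' --- is not the statement you need: a complement of a codimension-one subspace is far from unique, so producing one non-middle vector of low $G$-degree does not force the high-degree basis vectors to lie in $\coH^1_{\dR,\rmid}$. What must be shown is that the linear form cutting out the middle subspace (the residue of the regular part at infinity, as in the proof of Lemma \ref{lem:solinfty}) vanishes on \emph{each} of the classes $\omega^-_i$, $\eta^-_j$ of $G$-degree $>k/2+1$, i.e.\ the twisted analogue of the vanishing $\gamma_{k,i}=0$ for $i<k/4$; this asymptotic computation is neither carried out nor correctly formulated in your write-up. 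For comparison, the paper's own proof disposes of the case $\knotfour$ with the single word ``clear'', leaning on the description of $\coH^1_{\dR,\rmid}(\Afu_s,\Sym^k\wt\Ai)$ given in Step 1 of the synopsis; so you have correctly located the delicate point rather than missed it, but to turn your plan into a proof you must either invoke that identification of the middle part or actually perform the Lemma \ref{lem:solinfty}-type computation in the corrected form just described.
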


\begin{proof}
The first point is clear if $\knotfour$ and follows, if $\kfour$, from Proposition \ref{prop:Bkmid} and from the equivalence
\[
k+1-(k+2i)/3>k/2+1\iff i<k/4.
\]
For the second point, a direct check shows that
\[
\dim\gr^\sfp_G\coH_{\dR,\rmid}^1(\Afu_s,\Sym^k\wt\Ai)=
\begin{cases}
2&\text{for }\sfp\in(k/2+1,2k/3+1/3]\cap\frac13\ZZ,\\
1&\text{for }\sfp=2k/3+2/3,\\
0&\text{for }\sfp>2k/3+2/3.
\end{cases}
\]
Assume for example $k\equiv0\bmod3$. Then one finds, for $p\in\ZZ$,
\begin{align*}
\rho_0^{-1}(p-1)&=
\begin{cases}
2&\text{for }p\in[k/3+1,2k/3]\cap\ZZ,\\
0&\text{otherwise},
\end{cases}\\
\rho_1^{-1}(p-1)&=
\begin{cases}
2&\text{for }p\in[k/3+1,2k/3]\cap\ZZ,\\
1&\text{for }p=2k/3+1,\\
0&\text{otherwise},
\end{cases}\\
\rho_2^{-1}(p-1)&=
\begin{cases}
2&\text{for }p\in[k/3+2,2k/3+1]\cap\ZZ,\\
1&\text{for }p=k/3+1,\\
0&\text{otherwise}.
\end{cases}
\end{align*}
The desired equality is easily checked in this case. The other cases $k\equiv1,2\bmod3$ are checked similarly.
\end{proof}

When $k$ is even, the analogue of Lemma \ref{lem:Hodge_numbers_odd} holds only partially.

\begin{lemma}\label{lem:Hodge_numbers_even}
If $k$ is even, we have
\begin{align*}
\omega_i&\in F_\irr^{(k+1)-(k+2i)/3}\coH^1_\dR(\Afu,\Sym^k\Ai)\quad\text{if }1\leq i\leq\flr{k/4},\\
\omega^-_i &\in F_\irr^{(k+1)-(k+2i+1)/3}\coH_\dR^1(\Gm, L\otimes j^+\Sym^k\Ai)\quad\text{if }1\leq i\leq k'/2,\\
\eta^-_j &\in F_\irr^{(k+1)-(k+j+1)/3}\coH_\dR^1(\Gm, L\otimes j^+\Sym^k\Ai)\quad\text{if }0\leq j\leq k'.
\end{align*}
\end{lemma}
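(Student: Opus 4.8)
The plan is to follow the proof of Lemma~\ref{lem:Hodge_numbers_odd}, transporting the three families of classes into the de~Rham cohomology of $s^3g_k$ and bounding their pole orders on a non-degenerate compactification. First I would combine Corollary~\ref{cor:iotastrict} with Proposition~\ref{prop:bifilt}: via the chain
\[
\coH^1_\dR(\Afu_s,\Sym^k\wt\Ai)\hto\coH^1_\dR(\Gm,j^+\Sym^k\wt\Ai)\isom\coH^{k+1}_\dR(\Aff{k+1},s^3g_k)^{\symgp_k,\chi},
\]
which is injective, strict for the irregular Hodge filtrations, and equivariant for the $\mu_2$-action $s\mto-s$, the character decomposition~\eqref{eq:symktildeAi} embeds $\coH^1_\dR(\Afu,\Sym^k\Ai)$ and $\coH^1_\dR(\Gm,L\otimes j^+\Sym^k\Ai)$ into the two $\mu_2$-character subspaces of $\coH^{k+1}_\dR(\Aff{k+1},s^3g_k)$. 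Tracing these maps through the ramification $[2]:s\mto z=s^2$, the substitution $x_i=sy_i$ of~\eqref{eq:coh_wtf_gk}, and the description of the $L$-twisted classes via $\eta\mto s\,[2]^*\eta$ recalled earlier in this section, one checks that $\omega_i$, $\omega_i^-$ and $\eta_j^-$ are carried, up to nonzero scalars and $\symgp_k$-symmetrization, to the classes of
\[
s^{2i-1}\,\rd s\wedge\rd x_1\wedge\cdots\wedge\rd x_k,\qquad s^{2i}\,\rd s\wedge\rd x_1\wedge\cdots\wedge\rd x_k,\qquad (x_1\cdots x_j)\,\rd s\wedge\rd x_1\wedge\cdots\wedge\rd x_k,
\]
in the coordinates $(s,x_1,\dots,x_k)$ for which $\wt f_k=\sum_i(\tfrac13x_i^3-s^2x_i)$.

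Next I would produce a non-degenerate compactification of $(\Aff{k+1},s^3g_k)$ adapted to even $k$. The model $X$ used for odd $k$ — the blow-up of $\PP^1_s\times\PP^k$ along $\{0\}\times\Aff{k-1}_\infty$ — is no longer non-degenerate when $k$ is even, so I would perform further blow-ups along the loci where the strict transform of the zero divisor of $g_k$ meets the toric boundary of $\PP^k$ non-transversally, obtaining a non-degenerate $X'$ with reduced boundary divisor $D=X'\moins\Aff{k+1}$ and pole divisor $P$ of $s^3g_k$. On $X'$ the filtration $F^\cbbullet_\irr\coH^{k+1}_\dR(\Aff{k+1},s^3g_k)$ is induced by the Yu filtration $F^\cbbullet_\Yu$ of $(\Omega^\cbbullet_{X'}(\log D)(*P),\rd+\rd(s^3g_k))$, so the image in $\coH^{k+1}_\dR$ of $\Gamma\bigl(X',\Omega^{k+1}_{X'}(\log D)(\flr{\mu P})\bigr)$ lies in $F^{k+1-\mu}_\irr$.

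Then I would carry out the pole count for the three families, the relevant exponent being $\mu=\tfrac13(k+2i)$ for $\omega_i$, $\mu=\tfrac13(k+2i+1)$ for $\omega_i^-$ and $\mu=\tfrac13(k+j+1)$ for $\eta_j^-$. Along the components of $P$ inherited from $\PP^1_s\times\PP^k$ the pole orders are bounded by those prescribed by $\flr{\mu P}$ for all $i,j$, exactly as in the odd case, once one incorporates the extra $s$-factor carried by the $L$-twisted classes and the factor $s^k$ relating $\rd s\wedge\rd x_1\wedge\cdots\wedge\rd x_k$ to $\rd s\wedge\rd y_1\wedge\cdots\wedge\rd y_k$. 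Along $P'$, the proper transform of $\PP^1\times\Aff{k-1}_\infty$, and the new exceptional divisors of $X'$, however, the form remains a section of $\Omega^{k+1}_{X'}(\log D)(\flr{\mu P})$ precisely when $\mu\leq k/2$, i.e. $k+1-\mu>k/2$, and this is exactly the content of the ranges $i\leq\flr{k/4}$, $i\leq k'/2$ and $j\leq k'$ in the statement. Combining with the $\mu_2$-equivariant splitting then yields the three asserted inclusions.

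The main obstacle will be the construction of the non-degenerate model $X'$ for even $k$ together with the local pole-order analysis on it: one must determine the extra blow-ups needed to restore non-degeneracy, and then check that the transforms of $\omega_i$, $\omega_i^-$ and $\eta_j^-$ acquire no unexpected poles along the new exceptional divisors, nor a larger pole along $P$, within the stated ranges. This is precisely where the parity of $k$ enters, and it is why the irregular Hodge numbers outside these ranges are not accessible this way but are instead obtained from the convolution / inverse stationary phase computation of Propositions~\ref{prop:HodgewtAin} and~\ref{prop:dimgrFtildeAi}.
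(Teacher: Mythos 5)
Your general strategy is the paper's: transport $\omega_i$, $\omega_i^-$, $\eta_j^-$ into $\coH^{k+1}_\dR(\Aff{k+1},s^3g_k)$ via the strict inclusion coming from Corollary \ref{cor:iotastrict}, and bound pole orders on a non-degenerate compactification so that Yu's filtration $F_\Yu^\cbbullet$ gives the stated memberships. The identification of the image classes (up to nonzero scalars and symmetrization) is also fine. But the substance of the lemma is exactly the part you defer as ``the main obstacle'', and the geometric picture you sketch for it is wrong. The compactification $X$ of the odd case does not fail to be non-degenerate along the toric boundary of $\PP^k$: since $g_k$ is convenient and non-degenerate in Kouchnirenko's sense, the closure of $\{g_k=0\}$ is smooth near and transverse to the hyperplane at infinity, so there is nothing to correct there. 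The failure for even $k$ occurs at $\{\infty\}\times S$, where $S\subset\Aff{k}$ is the set of $\binom{k}{k/2}$ ordinary double points of $\{g_k=0\}$ (the points $y_i=\epsilon_i=\pm1$ with $\sum_i\epsilon_i=0$, which exist only for $k$ even): there the triple pole along $\{s=\infty\}$ meets the singular zero divisor of $g_k$. Blowing up ``where the strict transform of $(g_k)$ meets the toric boundary non-transversally'' therefore addresses a non-existent problem and does not produce a non-degenerate model.

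Consequently your key claim — that along $P'$ and the new exceptional divisors the forms stay in $\Omega^{k+1}(\log D)(\flr{\mu P})$ precisely when $\mu\leq k/2$ — is asserted rather than derived, and is tied to the wrong divisors. In the paper one performs a specific three-step blow-up over $\{\infty\}\times S$ (first the points, then the quadratic point on each component of $E_1$, then $E_2\cap\wt E_1$), computes $\ord_{\wt E_1}s^3g_k=-1$, $\ord_{\wt E_2}s^3g_k=1$, $\ord_{\wt E_3}s^3g_k=0$, and checks the order estimate \eqref{eq:varpi_omega} for $\varpi^*\omega$ with $\omega=z^{r-1}x^n\rd z\wedge\rd x_1\wedge\cdots\wedge\rd x_k$ (and its twisted analogue with the extra factor $s$); the admissibility of the pole along $\wt E_1$ is what forces $k\geq 4r+2\nu$ (resp.\ $k\geq 4r+2\nu+2$), which is equivalent to your numerical threshold $\mu\leq k/2$ and yields the ranges $i\leq\flr{k/4}$, $i\leq k'/2$, $j\leq k'$. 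So while your threshold happens to be numerically correct, the proposal neither constructs the correct non-degenerate model nor carries out the pole-order computation that produces it, and as it stands the proof of the three inclusions is missing.
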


The statement of this lemma, which is proved in Section \ref{subsec:proofHodge_numbers_even}, amounts then to
\begin{equation}\label{eq:GpFpeven}
\begin{aligned}
G^\sfp\coH^1_\dR(\Afu,\Sym^k\Ai)&\subset F^\sfp_\irr\coH^1_\dR(\Afu,\Sym^k\Ai)\\
G^\sfp\coH^1_\dR(\Afu,\Sym^k\wt\Ai)&\subset F^\sfp_\irr\coH^1_\dR(\Afu,\Sym^k\wt\Ai)
\end{aligned}
\quad
\text{for $\sfp\geq k/2+1$}.
\end{equation}

\subsubsection*{Step 3: End of the proof}

We conclude the proof of Theorem \ref{thm:Hodge_numbers} in the case of even $k$ as follows. Firstly, Proposition \ref{prop:dimgrFtildeAi} and Lemmas \ref{lem:dimgrGp} and \ref{lem:Hodge_numbers_even} imply the equalities
\[
F^\sfp_\irr\coH^1_{\dR,\rmid}(\Afu_s,\Sym^k\wt\Ai)= G^\sfp\coH^1_{\dR,\rmid}(\Afu_s,\Sym^k\wt\Ai) \quad(\sfp> k/2+1).
\]
Restricting to $\coH^1_{\dR,\rmid}(\Afu,\Sym^k\Ai)$ yields
\[
F^\sfp_\irr\coH^1_{\dR,\rmid}(\Afu,\Sym^k\Ai)= G^\sfp\coH^1_{\dR,\rmid}(\Afu,\Sym^k\Ai)\quad(\sfp> k/2+1).
\]
Since $\dim\gr_G^\sfp\coH^1_{\dR,\rmid}(\Afu,\Sym^k\Ai)=1$ for $\sfp\in(k/2+1,2k/3]$ and is equal to zero for $\sfp>2k/3$, we find that the same property holds for $\gr_{F_\irr}^\sfp\coH^1_{\dR,\rmid}(\Afu,\Sym^k\Ai)$.

If $\ktwofour$, $\coH^1(\Afu,\Sym^k\Ai)=\coH^1_\rmid(\Afu,\Sym^k\Ai)$ is pure of weight $k+1$, and Hodge symmetry implies that $\dim\gr_{F_\irr}^\sfp\coH^1_\dR(\Afu,\Sym^k\Ai)=1$ for $\sfp\in[k/3+1,k/2)$. The theorem is proved in this case.

If $\kfour$, Hodge symmetry yields the result for $\coH^1_\rmid(\Afu,\Sym^k\Ai)$ and it remains to obtain the last line in \eqref{eq:dimgrpF}. In~that case, the underlying vector space $\coH_\dR$ of the quotient
\[
\coH:=\coH^1(\Afu,\Sym^k\Ai)/W_{k+1}\coH^1(\Afu,\Sym^k\Ai)=\coH^1(\Afu,\Sym^k\Ai)/\coH^1_{\rmid}(\Afu,\Sym^k\Ai)
\]
has dimension one with basis $\omega_{k/4}$
(Proposition \ref{prop:Bkmid}).
Thus this $\mf$-mixed Hodge structure is a pure Hodge structure.
Since its weight is $\geq k+2$, the Hodge structure is of type $(p,p)$ with $p\geq k/2+1$.
The first line of Lemma \ref{lem:Hodge_numbers_even} shows that
$\omega_{k/4}\in F^{k/2+1}\coH_\dR$,
hence the previous inequality is in fact an equality, yielding the last line of \eqref{eq:dimgrpF}.

This completes the proof of Theorem \ref{thm:Hodge_numbers} when $k$ is even.\qed

\subsection{The differential module \texorpdfstring{$G_g$}{Gg} and its symmetric powers}\label{subsec:Gg}
We continue assuming $k$ even, although this is not important in this section. We consider the function $g:\Afu_y\to\Afu_\tau$, $g(y)=\frac{1}3\,y^3-y$ and define $g_k(y_1,\dots,y_k)=\sum_{i=1}^kg(y_i)$. The function $g$ has two simple critical points $y=\pm1$ with critical values $\tau=\pm2/3$,
upon which the vanishing cycle space is of dimension one and local monodromy equals $-\id$.

We take up the notation used in Lemma \ref{lem:MkSymkGg}. Let $\wt M_1=\cH^0g_+\cO_{\Afu_y}$, that we regard as a $\Cltau$-module. The localized Fourier transform~$G_g=j^+\FT\wt M_1$ of $\wt M_1$
on $\Gmt$ is the cokernel of the $\Ctm$-linear morphism
\[
\Ctm[y]\To{\partial_y+(y^2-1)t}\Ctm[y]
\]
and is equipped with the connection $\nabla_{\partial_t}$ induced by $\partial_t+g$. Then $G_g$ is $\Ctm$-free of rank two with basis $\{\wt v_0,\wt v_1\}$, where $\wt v_0$ is the class of $1$ and $\wt v_1$ that of $y$. The matrix of the connection in this basis is given by
\[
\nabla_{\partial_t}(\wt v_0,\wt v_1)=(\wt v_0,\wt v_1)\cdot\biggl[-t^{-1}\begin{pmatrix}1/3&0\\0&2/3\end{pmatrix}-\frac23\begin{pmatrix}0&1\\1&0\end{pmatrix}\biggr].
\]
One can check that it is irreducible (\eg argue as in \cite[Ex.\,8.19]{S-vdP01}). It has a regular singularity at the origin with semi-simple monodromy having eigenvalues $\exp(\pm2\pii/3)$. On the other hand, the formalized module $\wh G_g$ at infinity decomposes as $(E^{2t/3}\oplus E^{-2t/3})\otimes L_{-1}$, where~$L_{-1}$ is the rank-one $\CC\lpr1/t\rpr$-vector space with connection having monodromy equal to~$-\id$.

Since the determinant of $G_g$ is the trivial rank-one bundle with connection, the differential Galois group of $G_g$ is contained in $\mathrm{SL}_2(\CC)$. It is in fact equal to this group (argue as in \loccit). It follows that $\Sym^k G_g$ is also irreducible and its monodromy at the origin is semi-simple with eigenvalues $1$, $\exp2\pii/3$ and $\exp-2\pii/3$. Indeed, the eigenvalues are the numbers (counted with multiplicities) \hbox{$\exp(i+2j)2\pii/3$}, for $i,j\in[0,k]$ and $i+j=k$, that is, the numbers $\exp2(k+j)\pii/3$ with $j\in[0,k]$.

We will implicitly identify $\ZZ/3\ZZ$ with $\{0,1,2\}$. For $\epsilon\in\ZZ/3\ZZ$ and $\zeta=\exp(-2 \pii\epsilon/3)$, we denote by $K_{\zeta,t}$ the Kummer sheaf on $\Afu_t$ with monodromy $\zeta$ around $t=0$ (hence $\zeta^{-1}$ around $t=\infty$) and we also consider similarly $K_{\zeta^{-1},\tau}$. We set $\wt M_{k,0}=\wt M_k$ and $\wt M_{k,\epsilon}=K_{\zeta^{-1},\tau}\star_\rmid\wt M_k$ for $\epsilon\neq0$. We note that, according to \cite[Prop.\,1.18]{D-S12}, we have
\[
\FT\wt M_{k,\epsilon}\simeq j_{\dag+}(j^+K_{\zeta,t}\otimes\Sym^kG_g).
\]

From the above computation of the monodromy of $G_g$ we deduce, for $\epsilon,\epsilon'\in\ZZ/3\ZZ$,
\begin{equation}\label{eq:psit1}
\dim\psi_{t,\exp 2\epsilon'\pii/3}(j^+K_{\zeta,t}\otimes\Sym^k G_g)=
\#\{j\in[0,k]\mid k+j\equiv \epsilon'-\epsilon\bmod3\}.
\end{equation}
We note that, due to semi-simplicity of the monodromy at the origin, the minimal extension $j_{\dag+}(j^+K_{\zeta,t}\otimes\Sym^k G_g)$ at the origin of $j^+K_{\zeta,t}\otimes\Sym^k G_g$ satisfies
\begin{align*}
\phi_{t,1}\bigl[j_{\dag+}(j^+K_{\zeta,t}\otimes\Sym^k G_g)\bigr]&=0,\\
\phi_t\bigl[j_{\dag+}(j^+K_{\zeta,t}\otimes\Sym^k G_g)\bigr]&=\psi_{t,\neq1}\bigl[j_{\dag+}(j^+K_{\zeta,t}\otimes\Sym^k G_g)\bigr]
=\psi_{t,\neq1}(j^+K_{\zeta,t}\otimes\Sym^k G_g).
\end{align*}

At infinity, the formalized module satisfies
\[
(j^+K_{\zeta,t}\otimes \Sym^kG_g)^\wedge\simeq
\bigoplus_{j=0}^k\wh E^{2(2j-k)t/3}\otimes (\wh L_{-1}^{\otimes k}\otimes \wh K_{\zeta^{-1},1/t}).
\]
(Note that on the right-hand side, the monodromy of $\wh K$ is computed with center at infinity, while on the left-hand side it is computed with center at the origin, hence the change from $\zeta$ to $\zeta^{-1}$.) This formal module has a nonzero regular part at $t=\infty$ if and only if $k$ is even, and in that case the regular component has rank one and is isomorphic to $\wh K_{\zeta^{-1},1/t}$.

The monodromy at infinity (in the $\tau$ coordinate) of $\wt M_{k,\epsilon}$ is semi-simple and is isomorphic to the monodromy of \hbox{$\phi_t(j_{\dag+}(j^+K_{\zeta,t}\otimes\Sym^k G_g)$} (according to the inverse stationary phase formula, which is a $\cD$-module variant of \cite[Cor.\,5.20]{Bibi05b}), and the latter space is nothing but the space $\psi_{t,\neq1}(j^+K_{\zeta,t}\otimes\Sym^k G_g)$. In~particular,
\begin{equation}\label{eq:rkwMk}
\begin{split}
\rk \wt M_{k,\epsilon}&=\#\{j\in[0,k]\mid k+j\not\equiv-\epsilon\bmod3\}\\
&=\begin{cases}
\begin{cases}
2(\lfloor k/3\rfloor+1)&\text{if }k\not\equiv0\bmod3,\\
2\lfloor k/3\rfloor&\text{if }k\equiv 0\bmod3,
\end{cases}
&\text{if }\epsilon=0,\\[20pt]
\begin{cases}
2\lfloor k/3\rfloor+1&\text{if }k\not\equiv2\bmod3,\\
2(\lfloor k/3\rfloor+1)&\text{if }k\equiv 2\bmod3,
\end{cases}
&\text{if }\epsilon\neq0.
\end{cases}
\end{split}
\end{equation}
On the other hand, since $\FT(\Pi_\tau(\wt M_{k,\epsilon}))=j_+(j^+K_{\zeta,t}\otimes\Sym^k G_g)$, we have
\begin{align*}
\rk\Pi_\tau(\wt M_{k,\epsilon})&=\dim\psi_{1/\tau}\bigl[\Pi_\tau(\wt M_{k,\epsilon})\bigr]=\dim\phi_t\bigl[j_+(j^+K_{\zeta,t}\otimes\Sym^k G_g)\bigr]\\
&=\dim\psi_t\bigl[j_+(j^+K_{\zeta,t}\otimes\Sym^k G_g)\bigr]=\rk\Sym^k G_g=k+1.
\end{align*}
Let us set $\wt M_{k,\bbullet}=\bigoplus_{\epsilon\in\ZZ/3\ZZ}\wt M_{k,\epsilon}$. We then find
\[
\rk\wt M_{k,\bbullet}=2(k+1),\quad\rk\Pi_\tau(\wt M_{k,\epsilon})=3(k+1).
\]

In the exact sequence
\[
0\to j_{\dag+}(j^+K_{\zeta,t}\otimes\Sym^k G_g)\to j_+(j^+K_{\zeta,t}\otimes\Sym^k G_g)\to C_\epsilon\to0,
\]
the cokernel $C_\epsilon$ is supported at the origin and is isomorphic to $\CC[\partial_t]^{\nu_\epsilon}$, where
\[
\nu_\epsilon=\dim\psi_{t,1}(j^+K_{\zeta,t}\otimes\Sym^k G_g)=\dim\psi_{t,\exp{2\pii\epsilon/3}}\Sym^k G_g.
\]
By taking the inverse Fourier transform of the above exact sequence, we obtain the exact sequence
\begin{equation}\label{eq:exMPiM}
0\to\wt M_{k,\epsilon}\to\Pi_\tau(\wt M_{k,\epsilon})\to\wt M'_{k,\epsilon}\to0,
\end{equation}
where $\wt M'_{k,\epsilon}$ is constant of rank $\nu_\epsilon=\#\{j\in[0,\dots,k\}\mid k+j\equiv\epsilon\bmod3\}$. As a consequence, there is an exact sequence
\[
0\to\wt M_{k,\bbullet}\to\Pi_\tau(\wt M_{k, \bbullet})\to\wt M'_{k, \bbullet}\to0,
\]
where $\wt M'_{k, \bbullet}$ is constant of rank $k+1=\sum_{\epsilon\in\ZZ/3\ZZ}\nu_\epsilon$.

The structure of $\wt M_{k,\epsilon}$ as a $\Cltau$-module is described by the following proposition.

\pagebreak[2]
\begin{prop}\label{prop:Mkepsilon}
For $\epsilon\in\ZZ/3\ZZ$ the following properties hold.
\begin{enumerate}
\item\label{prop:Mkepsilon1}
The regular holonomic $\Cltau$-module $\wt M_{k,\epsilon}$ is irreducible of rank given by \eqref{eq:rkwMk}.
\item\label{prop:Mkepsilon2}
The monodromy of $\wt M_{k,\epsilon}$ at infinity is semi-simple with eigenvalues $\exp(\pm2\pii/3)$.
\item\label{prop:Mkepsilon3}
The singular points $\tau_j$ of $\wt M_{k,\epsilon}$ are the complex numbers $2(2j-k)/3$, $j=0,\dots,k$, and for each such $j$, $\phi_{\tau-\tau_j}\wt M_{k,\epsilon}$ has dimension one and monodromy $(-1)^k\exp(2\pii\epsilon/3)\id$. In particular, since $k$ is even, $0$ is a singular point of $\wt M_{k,\epsilon}$.
\item\label{prop:Mkepsilon4}
Furthermore,
\begin{itemize}
\item\label{prop:Mkepsilon4a}
if $\epsilon\neq0$, $\phi_{\tau,1}\wt M_{k,\epsilon}=0$ and $\psi_{\tau,1}\wt M_{k,\epsilon}$ has dimension equal to $\rk \wt M_{k,\epsilon}-1$ and monodromy equal to $\id$; on the other hand, $\psi_{\tau,\exp(2\pii\epsilon/3)}\wt M_{k,\epsilon}$ has dimension one;
\item\label{prop:Mkepsilon4b}
if $\epsilon=0$, $\phi_{\tau,1}\wt M_k$ has dimension one, $\psi_{\tau,1}\wt M_k$ has dimension equal to $\rk \wt M_k$ and the nilpotent part $\rN$ of the monodromy on $\psi_{\tau,1}\wt M_k$ satisfies $\rN^2=0$.
\end{itemize}
\end{enumerate}
\end{prop}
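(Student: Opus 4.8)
The plan is to deduce all four assertions from the Fourier-transform description $\FT\wt M_{k,\epsilon}\simeq j_{\dag+}(j^+K_{\zeta,t}\otimes\Sym^kG_g)$ recalled above, together with the structure of $\Sym^kG_g$ (irreducibility, regular and semi-simple monodromy at $t=0$, and the explicit formal decomposition at $t=\infty$) and the (inverse) stationary phase formula. For \eqref{prop:Mkepsilon1}, I first note that $\wt M_{k,\epsilon}$ is regular holonomic: for $\epsilon=0$ this is clear since $\wt M_k=\cH^0g_+\cO_{\Afu_y}$, and for $\epsilon\neq0$ it follows because the middle additive convolution with the regular holonomic Kummer module $K_{\zeta^{-1},\tau}$ preserves regular holonomicity (alternatively, because $\FT\wt M_{k,\epsilon}$ has only slope-$\leq1$ formal type at $t=\infty$ and is regular at $t=0$). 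Irreducibility then follows since $\FT$ is an exact auto-equivalence of holonomic $\cD$-modules on the affine line, $j^+K_{\zeta,t}\otimes\Sym^kG_g$ is irreducible on $\Gmt$ (a rank-one twist of the irreducible $\Sym^kG_g$), and the minimal extension of an irreducible connection on $\Gmt$ is an irreducible holonomic module; the rank is the one already computed in \eqref{eq:rkwMk}.

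For \eqref{prop:Mkepsilon3}, I would use the local Fourier transform $\cF^{(c,\infty)}$ at the finite points. The formal decomposition of $j^+K_{\zeta,t}\otimes\Sym^kG_g$ at $t=\infty$ recalled above is $\bigoplus_{j=0}^k\wh E^{2(2j-k)t/3}\otimes(\wh L_{-1}^{\otimes k}\otimes\wh K_{\zeta^{-1},1/t})$; the $k+1$ linear forms $2(2j-k)t/3$ are pairwise distinct, each occurs with multiplicity one, and the common rank-one regular companion has monodromy $(-1)^k\zeta^{-1}=(-1)^k\exp(2\pii\epsilon/3)$ around $t=\infty$. Since $\wt M_{k,\epsilon}$ is regular at $\tau=\infty$, the formal type of $\FT\wt M_{k,\epsilon}$ at $t=\infty$ is the direct sum of $\cF^{(c,\infty)}$ of the formal types of $\wt M_{k,\epsilon}$ at its finite singular points $c$; matching exponential factors shows the singular set is $\{\tau_j=2(2j-k)/3\mid 0\leq j\leq k\}$ and that at each $\tau_j$ the module $\wt M_{k,\epsilon}$ is, formally, the minimal extension of a rank-one regular connection, whence $\dim\phi_{\tau-\tau_j}\wt M_{k,\epsilon}=1$ with monodromy $(-1)^k\exp(2\pii\epsilon/3)$ (the sign twist of $\cF^{(c,\infty)}$ being exactly accounted for by the $\wh L_{-1}$ factors). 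As $k$ is even, $j=k/2$ gives the singular point $\tau_{k/2}=0$.

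Assertion \eqref{prop:Mkepsilon2} has essentially been obtained above: by the inverse stationary phase formula, the monodromy of $\wt M_{k,\epsilon}$ at $\tau=\infty$ is isomorphic to that of $\psi_{t,\neq1}(j^+K_{\zeta,t}\otimes\Sym^kG_g)$, which is semi-simple since the monodromy of $\Sym^kG_g$ at $t=0$ is, and whose eigenvalues $\exp(\pm2\pii/3)$ (and their multiplicities) are read off from \eqref{eq:psit1}. Finally, \eqref{prop:Mkepsilon4} is obtained by localising \eqref{prop:Mkepsilon3} at $\tau_{k/2}=0$ and invoking the standard description of vanishing cycles of a minimal extension, namely $\phi_{\tau,\lambda}=\psi_{\tau,\lambda}$ for $\lambda\neq1$ and $\phi_{\tau,1}$ is the image of the nilpotent endomorphism $\rN$ of $\psi_{\tau,1}$. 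If $\epsilon\neq0$, the monodromy of the one-dimensional space $\phi_{\tau-\tau_{k/2}}\wt M_{k,\epsilon}$ equals $\exp(2\pii\epsilon/3)\neq1$; hence $\phi_{\tau,1}\wt M_{k,\epsilon}=0$, so $\rN=0$ on $\psi_{\tau,1}\wt M_{k,\epsilon}$, which is therefore semi-simple with monodromy $\id$ and dimension $\rk\wt M_{k,\epsilon}-1$, the complementary line being $\psi_{\tau,\exp(2\pii\epsilon/3)}\wt M_{k,\epsilon}$. If $\epsilon=0$, no eigenvalue $\neq1$ occurs in $\phi_\tau\wt M_k$, hence the local monodromy at $\tau=0$ is unipotent, so $\dim\psi_{\tau,1}\wt M_k=\rk\wt M_k$ and $\dim\phi_{\tau,1}\wt M_k=1$; the latter forces $\rN$ to have rank one on $\psi_{\tau,1}\wt M_k$, and a rank-one nilpotent is automatically of square zero.

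The main obstacle is the bookkeeping in \eqref{prop:Mkepsilon3}: extracting from the explicit formal decomposition at $t=\infty$ the exact local monodromy at each $\tau_j$---in particular tracking the sign (quadratic) twist introduced by the local Fourier transform and seeing it cancelled against the $\wh L_{-1}^{\otimes k}$ factor---and checking that each linear form $2(2j-k)t/3$ contributes a one-dimensional vanishing-cycle space at the corresponding $\tau_j$, which hinges on their distinctness and multiplicity-one occurrence. Once \eqref{prop:Mkepsilon3} is established, the remaining assertions are a routine assembly of facts already proved in this section.
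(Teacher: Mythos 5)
Your proposal is correct and takes essentially the same route as the paper, whose proof consists exactly of the stationary phase formula applied to $\wt M_{k,\epsilon}$ together with the formal decomposition of $(j^+K_{\zeta,t}\otimes\Sym^k G_g)^\wedge$ at $t=\infty$; the details you add (irreducibility via the Fourier equivalence and minimal extension, the local analysis at $\tau=0$) are the routine steps left implicit there. One small imprecision: for regular germs $\cF^{(c,\infty)}$ introduces no extra sign twist, so the factor $(-1)^k$ in the vanishing-cycle monodromy comes directly from $\wh L_{-1}^{\otimes k}$ rather than from a cancellation, but the value $(-1)^k\exp(2\pii\epsilon/3)$ you state is the correct one.
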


\begin{proof}
This follows from the stationary phase formula applied to $\wt M_{k,\epsilon}$ (\cf\eg \cite{Bibi07a}) and the formal decomposition of $(j^+K_{\zeta,t}\otimes\Sym^k G_g)^\wedge$ at $t=\infty$ computed above.
\end{proof}

\subsection{Weight properties of \texorpdfstring{$\wt M_{k,\epsilon}$}{Mk}}

Recall the exact sequence \eqref{eq:exccMPiM}, with $\wt M_k^\rH$ pure of weight~$k$.

\begin{prop}[{analogous to {\cite[Prop.\,2.21]{F-S-Y18}}}]\label{prop:rkWMkepsilon}\mbox{}
For $\epsilon\neq0$, the $\Cltau$-module $\wt M_{k,\epsilon}$ underlies a pure Hodge module on~$\Afu_\tau$ of weight $k+1$.
\end{prop}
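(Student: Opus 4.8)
The plan is to exhibit $\wt M_{k,\epsilon}$ as the $\Cltau$-module underlying a middle additive convolution of pure Hodge modules, and then to read off its weight from the estimates already recorded in Section \ref{subsec:additive_convolution}. Recall that for $\epsilon\neq0$ and $\zeta=\exp(-2\pii\epsilon/3)$ one has, \emph{by definition}, $\wt M_{k,\epsilon}=K_{\zeta^{-1},\tau}\star_\rmid\wt M_k$, and that as $\epsilon$ ranges over $\{1,2\}$ the characters $\zeta^{-1}$ range exactly over the two primitive cube roots of unity. By \eqref{eq:MkH} the $\Cltau$-module $\wt M_k$ underlies the pure Hodge module $\wt M_k^\rH$ of weight $k$, while the Kummer Hodge module $K_{\zeta^{-1},\tau}^\rH$ is pure of weight $1$: indeed, after extension of scalars to $\QQ(e^{2\pii/3})$ it is a direct summand of $\go{3,\tau}^\rH$, which is pure of weight $1$ by Section \ref{subsec:KrH}.

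First I would note that $K_{\zeta^{-1},\tau}^\rH\star_\rmid\wt M_k^\rH$ is a well-defined mixed Hodge module on $\Afu_\tau$, concentrated in degree $0$, whose underlying $\Cltau$-module is $K_{\zeta^{-1},\tau}\star_\rmid\wt M_k=\wt M_{k,\epsilon}$: the forgetful functor to regular holonomic $\Cltau$-modules is exact and commutes with the pushforwards $\Sum_\dag,\Sum_+$, hence with the formation of $\star_!$, of $\star$, and therefore of $\star_\rmid=\image[\star_!\to\star]$. Then I would repeat the argument proving that $\go{m,\tau}^\rH\star_\rmid M^\rH$ is pure (Section \ref{subsec:additive_convolution}) with $\go{m,\tau}^\rH$ replaced by $K_{\zeta^{-1},\tau}^\rH$ — this is legitimate because that argument uses nothing about $\go{m,\tau}^\rH$ beyond its purity of weight $1$. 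Concretely: $K_{\zeta^{-1},\tau}^\rH\boxtimes\wt M_k^\rH$ is pure of weight $k+1$, so $K_{\zeta^{-1},\tau}^\rH\star_!\wt M_k^\rH$ has weights $\leq k+1$ and $K_{\zeta^{-1},\tau}^\rH\star\wt M_k^\rH$ has weights $\geq k+1$; being simultaneously a quotient of the first and a submodule of the second, the image $\wt M_{k,\epsilon}^\rH:=K_{\zeta^{-1},\tau}^\rH\star_\rmid\wt M_k^\rH$ is pure of weight $k+1$. Equivalently, one may apply that lemma verbatim to $m=3$ and $M^\rH=\wt M_k^\rH$ and observe that over $\QQ(e^{2\pii/3})$ the operation $\star_\rmid$ respects the splitting $\go{3,\tau}^\rH\simeq\bigoplus_\zeta K_{\zeta,\tau}^\rH$, so that $\wt M_{k,\epsilon}^\rH$ is a direct summand of the pure Hodge module $\go{3,\tau}^\rH\star_\rmid\wt M_k^\rH$ and hence pure of the same weight.

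I do not expect a real obstacle here: the statement is a bookkeeping consequence of the additive-convolution weight formalism of Section \ref{subsec:additive_convolution}. The only two points meriting a line of justification are the purity of weight $1$ of the Kummer Hodge module $K_{\zeta^{-1},\tau}^\rH$ and the compatibility of the Hodge-theoretic middle convolution with the $\Cltau$-theoretic one; both are immediate from the material already developed. Note that the hypothesis $\epsilon\neq0$ is essential: for $\epsilon=0$ one has $\wt M_{k,0}=\wt M_k$, which is pure of weight $k$ rather than $k+1$.
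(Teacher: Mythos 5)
Your proposal is correct and follows essentially the same route as the paper: the paper also enriches $\wt M_{k,\epsilon}$ via $\go{3,\tau}^\rH\star_\rmid\wt M_k^\rH$ (equivalently the complex Hodge summand $K_{\zeta^{-1},\tau}^\rH\star_\rmid\wt M_k^\rH$) and invokes the additivity of weights under middle convolution, exactly the weight estimate you reproduce from Section~\ref{subsec:additive_convolution}. The only difference is that the paper's proof goes on to record the further identification $\wt M_{k,\epsilon}^\rH=W_{k+1}\Pi_\tau(\wt M_{k,\epsilon}^\rH)$ (via irreducibility and the argument of Lemma~\ref{lem:Mkimage}), which is not needed for the purity statement itself.
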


\begin{proof}
Since $\wt M_{k,\epsilon}=K_{\zeta^{-1},\tau}\star_{\rmid}\wt M_k$ for $\epsilon\neq0$, so that we can write $\wt M_{k,1}\oplus\wt M_{k,2}$ as $\go{m,\tau}\star_{\rmid}\wt M_k$, we can enrich it with the structure of a pure Hodge module $\go{m,\tau}^\rH\star_{\rmid}\wt M_k^\rH$ of weight $k+1$ (the weights behave in an additive way under middle convolution). It follows that~$\wt M_{k,\epsilon}$ ($\epsilon\neq0$) can be enriched with the structure of a pure complex Hodge module $\wt M_{k,\epsilon}^\rH$ of weight $k+1$. Interpreting $\Pi_\tau$ as additive convolution with $j_\dag\cO_{\Gmtau}$, and noting that $\Hm j_!(\pQQ^\rH_{\Gmtau})\in\MHM(\Afu_\tau)$ has weights $0,1$, we conclude that
\[
\Pi_\tau(\wt M_{k,\epsilon}^\rH):=\Hm j_!(\pQQ^\rH_{\Gmtau})\star\wt M_{k,\epsilon}^\rH
\]
has weights $\geq k+1$.
Furthermore, due to the irreducibility of $\wt M_{k,\epsilon}$ (Proposition \ref{prop:Mkepsilon}\eqref{prop:Mkepsilon1}),
we can argue as in Lemma \ref{lem:Mkimage}
to conclude that $\wt M_{k,\epsilon}^\rH=W_{k+1}\Pi_\tau(\wt M_{k,\epsilon}^\rH)$.
\end{proof}

\subsection{Proof of Proposition \ref{prop:dimgrFtildeAi}}\label{subsec:HodgeAitilde}
Recall the map $\rho_\epsilon$ defined by \eqref{eq:rhoeps}.

\begin{prop}\label{prop:rkFMkepsilon}
The Hodge bundles $F^\cbbullet(\wt M_{k,\epsilon})$ satisfy, for $\epsilon\in\ZZ/3\ZZ$,
\[
\rk\gr^p_F\wt M_{k,\epsilon}=\#\rho_\epsilon^{-1}(p)\quad\forall p.
\]
\end{prop}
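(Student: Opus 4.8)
The plan is to compute $F^\cbbullet\wt M_{k,\epsilon}$ by transporting, through the Fourier transform and the stationary phase formula, the irregular Hodge filtration of $\Sym^kG_g$, which itself is read off from that of $G_g$. \emph{Reduction to nearby cycles at $\tau=\infty$.} Since $\wt M_{k,\epsilon}$ is regular holonomic and underlies a pure Hodge module --- of weight $k$ if $\epsilon=0$ and of weight $k+1$ if $\epsilon\neq0$ (Proposition \ref{prop:rkWMkepsilon}) --- whose monodromy at $\tau=\infty$ is semi-simple with eigenvalues $\exp(\pm2\pii/3)$, hence without eigenvalue $1$ (Proposition \ref{prop:Mkepsilon}\eqref{prop:Mkepsilon2}), the Deligne canonical extension at $\tau=\infty$ is $\cO$-locally free and its Hodge bundles are subbundles. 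Consequently $\rk\gr^p_F\wt M_{k,\epsilon}=\dim\gr^p_F\psi_{1/\tau}\wt M_{k,\epsilon}$ for every $p$, and it suffices to determine the Hodge filtration on the nearby cycles of $\wt M_{k,\epsilon}$ at $\tau=\infty$.

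\emph{Irregular Hodge filtration of $\Sym^kG_g$.} I would first compute $F_\irr^\cbbullet G_g$. Using the identification $[3]^+G_g\simeq j^+\wt\Ai=j^+[2]^+\Ai$ together with the irregular Hodge numbers of the classical Airy connection (whose two jumps differ by $1/2$, as reflected in the $\sqrt w$ appearing in \eqref{eq:asympt-AB}), or, alternatively, a direct computation of $F_\irr^\cbbullet(\FT\wt M_1)$ from the weight-one Hodge module $\wt M_1$ through the compatibility of the irregular Hodge filtration with Fourier transform, one obtains that $F_\irr^\cbbullet G_g$ has exactly two jumps $\alpha<\beta$ with $\beta-\alpha=1/3$, each graded piece being a line; moreover, in the semi-simple decomposition $G_g|_{\wh{0}}=\ell_+\oplus\ell_-$ at $t=0$ into the eigenlines for the monodromy eigenvalues $\exp(\pm2\pii/3)$, the lines $\ell_\pm$ are $F_\irr$-homogeneous of the two types $\alpha,\beta$. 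By functoriality of $F_\irr$ under $\Sym^k$ and under tensoring with the Kummer module $K_{\zeta,t}$, the restriction of $j^+K_{\zeta,t}\otimes\Sym^kG_g$ near $t=0$ splits into $k+1$ lines $K_{\zeta,t}\otimes\ell_+^{\otimes(k-j)}\otimes\ell_-^{\otimes j}$, $0\le j\le k$, the $j$-th being of monodromy eigenvalue $\exp(2\pii(k+j+\epsilon)/3)$ (\cf\ \eqref{eq:psit1}) and of $F_\irr$-type affine in $j$; passing to the minimal extension $j_{\dag+}$ retains, in the $\neq1$ part, exactly the lines with $k+j+\epsilon\not\equiv0\bmod3$.

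\emph{Inverse stationary phase.} Finally I would invoke the Hodge-theoretic inverse stationary phase formula of \cite[\S5]{S-Y18} (\cf\ also the proof of Proposition \ref{prop:MHSmfirr}). Since $\FT\wt M_{k,\epsilon}\simeq j_{\dag+}(j^+K_{\zeta,t}\otimes\Sym^kG_g)$ and $\wt M_{k,\epsilon}$ is regular at $\tau=\infty$, the formal germ of $\wt M_{k,\epsilon}$ at $\tau=\infty$ with its Hodge filtration is obtained by the local Fourier transform from the formal regular germ of $\FT\wt M_{k,\epsilon}$ at $t=0$; this local Fourier transform annihilates the eigenvalue-$1$ summand (which accounts for the drop from rank $k+1$ to $\rk\wt M_{k,\epsilon}$) and sends the $j$-th line, of $F_\irr$-type affine in $j$ and with monodromy angle governed by $k+j+\epsilon\bmod 3$, to a line of integral Hodge type $\flr{(k+j+\epsilon)/3}=\rho_\epsilon(j)$. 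Summing over $j\in\{0,\dots,k\}$ with $k+j+\epsilon\not\equiv0\bmod3$ gives $\dim\gr^p_F\psi_{1/\tau}\wt M_{k,\epsilon}=\#\rho_\epsilon^{-1}(p)$, which together with the first step proves the proposition.

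\emph{The main obstacle} is the last step: one must pin down the precise jumps $\alpha,\beta$ of $F_\irr^\cbbullet G_g$ and, above all, the exact normalization of the shift in the Hodge-theoretic local Fourier transform, so that the floor arithmetic reproduces $\flr{(k+j+\epsilon)/3}$ exactly; one must also treat the case $\epsilon=0$ separately --- where the eigenvalue-$1$ part is present and the monodromy of $\wt M_k$ at the finite point $\tau=0$ has a nontrivial nilpotent part with $\rN^2=0$ (Proposition \ref{prop:Mkepsilon}\eqref{prop:Mkepsilon4b}) --- as well as the rank-one regular part at $t=\infty$ arising from the index $j=k/2$.
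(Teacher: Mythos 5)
Your plan is essentially the paper's proof: reduce to the limit Hodge filtration on $\psi_{1/\tau}\wt M_{k,\epsilon}$ via semi-simplicity of the monodromy at $\tau=\infty$, compute the jumps of $F_\irr^\cbbullet(j^+K_{\zeta,t}\otimes\Sym^kG_g)$ by Thom--Sebastiani from those of $G_g$, and transport them back through the Hodge-theoretic stationary phase comparison of \cite[(7)]{S-Y18}, discarding the integral jumps (eigenvalue one at $t=0$). The normalization you flag as the main obstacle is settled in the paper exactly by your second suggested route: the non-constant part of $\Hm g_*\pQQ^\rH_{\Afu_y}$ is pure of weight one with monodromy eigenvalues $\exp(\pm2\pii/3)$ at $\tau=\infty$, so \cite[(7)]{S-Y18} places the jumps of $F_\irr^\cbbullet G_g$ at $1/3$ and $2/3$, and the same formula read in the reverse direction fixes the shift; the nilpotent part of $\wt M_k$ at $\tau=0$ and the rank-one regular part at $t=\infty$ that you worry about play no role in this proposition (they only enter in Proposition \ref{prop:rkpsiFMkepsilon} and afterwards).
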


\begin{proof}
Since the monodromy of $\wt M_k$ at infinity is semi-simple, the limit mixed Hodge structure of $\wt M_k^\rH$ at infinity is pure, and the graded pieces of the limit Hodge filtration have the same dimension as the corresponding graded pieces at a generic point of~$\Afu_\tau$. In order to compute $\rk\gr_F^p\wt M_k$, we are thus led to computing the ranks for this limit Hodge filtration.

For that purpose, we use the comparison with the limit at $t=0$ of the irregular Hodge filtration of $\Sym^k G_g$, as proved in \cite{S-Y14}\footnote{We used this argument in the opposite direction in \cite[(7)]{S-Y18}.} in order to take advantage of the property that the irregular Hodge filtration on $\Sym^k G_g$ is easily computed by means of the Thom-Sebastiani formula from that of~$ G_g$, hence is more directly accessible than the Hodge filtration of $\wt M_k$.

We first compute the jumps of the irregular Hodge filtration $F^\cbbullet_\irr G_{g,1}$ of~$ G_g$ at $t=1$. Since $\Hm g_*\pQQ^\rH_{\Afu}$ is a pure Hodge module of weight $1$, it corresponds, away from the singularities, to a polarizable variation of pure Hodge structure of weight zero on $\Afu_\tau$. The eigenvalues of the monodromy at $\tau=\infty$ of the non constant part of $g_+\cO_{\Afu}$ (the constant part has rank one and is given by the trace) being $\exp(\pm2\pii/3)$, the irregular Hodge filtration $F^\cbbullet_\irr G_{g,1}$ jumps at $1/3$ and~$2/3$ (\cf \cite[(7)]{S-Y18}).

In order to compute the jumps of the irregular Hodge filtration of $j^+K_{\zeta,t}\otimes\Sym^k G_g$, we use that this filtration is obtained by tensor product from the irregular Hodge filtration of $K_{\zeta,t}$ and that of $\Sym^k G_g$ (Thom-Sebastiani).

On the one hand, by the same Thom-Sebastiani argument and the computation above, the jumps of $F_\irr^\cbbullet\Sym^k G_{g,1}$ occur exactly at $\lambda=(i+2j)/3$ where $i,j$ vary from~$0$ to~$k$ and $i+j=k$. We write $\lambda=k/3+j/3$ with $j$ varying from $0$ to $k$.

On the other hand, if $\epsilon\neq0$, we regard $K_{\zeta^{-1},\tau}$ on $\Afu_\tau$ as a complex Hodge module of rank one with Hodge filtration jumping at $p=0$ only. Then, by \cite[(7)]{S-Y18}, the irregular Hodge filtration of its localized Fourier transform $K_{\zeta,t}$ on $\Gmt$ jumps at $\alpha$ only, with $\alpha\in[0,1)$ and $\alpha\equiv\epsilon/3\bmod1$. By using once more the good behaviour of $F_\irr$ under tensor product, we~find similarly that the jumps of $F_\irr^\cbbullet(j^+K_{\zeta,t}\otimes\Sym^k G_g)$ occur exactly at $\lambda=k/3+j/3+\epsilon/3$, where~$j$ varies from $0$ to $k$ and $\epsilon/3\in(0,1)$.

Since $\rk\Sym^k G_g=k+1$, this implies that the jumps are all equal to one. For applying the inverse stationary phase formula, we do not care of the jumps at integers, and the one-dimensional jumps of $F_\irr^\cbbullet(j^+K_{\zeta,t}\otimes\Sym^k G_g)$ ($\epsilon\in\ZZ/3\ZZ$) at $p+1/3$, \resp $p+2/3$ for $p\in\ZZ$ occur respectively for
\begin{align}\label{eq:p+1/3}
p&=\frac13(k+j+\epsilon-1),\quad\text{for }j\in\{0,\dots,k\}\text{ such that }k+j+\epsilon\equiv1\bmod3,\\
\label{eq:p+2/3}
p&=\frac13(k+j+\epsilon-2),\quad\text{for }j\in\{0,\dots,k\}\text{ such that }k+j+\epsilon\equiv2\bmod3.
\end{align}
Using \cite[(7)]{S-Y18} once more in the other direction, we find that the limit Hodge filtration on $\psi_{1/\tau}\wt M_{k,\epsilon}$ satisfies, for $\epsilon\in\ZZ/3\ZZ$,
\begin{align*}
\dim \gr^p_F\psi_{1/\tau,\exp(2\pii/3)}\wt M_{k,\epsilon}&=
\begin{cases}
1&\text{in Case \eqref{eq:p+1/3}},\\
0&\text{otherwise},
\end{cases}\\
\dim \gr^p_F\psi_{1/\tau,\exp(-2\pii/3)}\wt M_{k,\epsilon}&=
\begin{cases}
1&\text{in Case \eqref{eq:p+2/3}},\\
0&\text{otherwise}.
\end{cases}\end{align*}
We thus find the desired formula.
\end{proof}

\begin{prop}[Hodge filtration on $\psi_{\tau,1}\wt M_{k,\epsilon}$]
\label{prop:rkpsiFMkepsilon}
Assume $k=2(k'+1)$ is even.
\begin{enumerate}
\item\label{prop:rkpsiFMkepsilon1}
If $\epsilon=0$, $\dim\gr^p_F\psi_{\tau,1}\wt M_k=\rk\gr^p_F\wt M_k=\#\rho_0^{-1}(p)$.
\item\label{prop:rkpsiFMkepsilon2}
If $\epsilon\neq0$, $\dim\gr^p_F\psi_{\tau,1}\wt M_{k,\epsilon}=\begin{cases}
\#\rho_\epsilon^{-1}(p)&\text{if $p\neq k/2$},\\
\#\rho_\epsilon^{-1}(k/2)-1&\text{if $p= k/2$}.
\end{cases}$
\end{enumerate}
\end{prop}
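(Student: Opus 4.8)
The plan is to deduce the statement about the Hodge filtration on the nearby cycles $\psi_{\tau,1}\wt M_{k,\epsilon}$ from the global rank computation of Proposition \ref{prop:rkFMkepsilon} together with the local structure of $\wt M_{k,\epsilon}$ at the origin $\tau=0$, as recorded in Proposition \ref{prop:Mkepsilon}\eqref{prop:Mkepsilon3}\eqref{prop:Mkepsilon4}. The key point is that, for a pure Hodge module whose underlying $\cD$-module is \emph{smooth away from finitely many points}, one can read off $\dim\gr^p_F\psi_{\tau,1}$ from the generic rank $\rk\gr^p_F$ and a correction term supported at each singular point, the correction being governed by the dimension of the vanishing cycles $\phi_{\tau,1}$ there and the associated Hodge data. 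Concretely, at a singular point $\tau_j\neq0$ the monodromy acts on $\phi_{\tau-\tau_j,1}$ only if $(-1)^k\exp(2\pii\epsilon/3)=1$, i.e. only if $\epsilon=0$; so for $\epsilon\neq0$ only the point $\tau=0$ contributes to the discrepancy between $\psi_{\tau,1}$ and the generic fiber, while for $\epsilon=0$ one must also track the points $\tau_j$, $j=0,\dots,k$.

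First I would treat the case $\epsilon\neq0$. By Proposition \ref{prop:Mkepsilon}\eqref{prop:Mkepsilon4a}, $\phi_{\tau,1}\wt M_{k,\epsilon}=0$ and $\dim\psi_{\tau,1}\wt M_{k,\epsilon}=\rk\wt M_{k,\epsilon}-1$; the missing one-dimensional piece is precisely the contribution of the regular eigenvalue at $\tau=0$ (eigenvalue $\exp(2\pii\epsilon/3)$, occurring with multiplicity one). The Hodge-theoretic content is to identify the Hodge type of this missing line. Using the identification $\wt M_{k,\epsilon}=K_{\zeta^{-1},\tau}\star_\rmid\wt M_k$ and the compatibility of the irregular Hodge filtration with Fourier transform (inverse stationary phase, as in the proof of Proposition \ref{prop:rkFMkepsilon}), the eigenvalue-$\exp(2\pii\epsilon/3)$ subspace of $\psi_{\tau,1}$ near $\tau=0$ corresponds, on the Fourier side $j^+K_{\zeta,t}\otimes\Sym^k G_g$, to a rank-one summand; its Hodge weight is determined by the jump of $F_\irr^\cbbullet$ on that summand. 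Tracking this jump through the tensor-product (Thom–Sebastiani) computation already carried out in the proof of Proposition \ref{prop:rkFMkepsilon} shows that the line that is dropped from $\psi_{\tau,1}$ sits in $\gr^{k/2}_F$; hence $\dim\gr^p_F\psi_{\tau,1}\wt M_{k,\epsilon}=\#\rho_\epsilon^{-1}(p)$ for $p\neq k/2$ and $\#\rho_\epsilon^{-1}(k/2)-1$ for $p=k/2$, which is assertion \eqref{prop:rkpsiFMkepsilon2}.

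Next I would treat $\epsilon=0$. Here Proposition \ref{prop:Mkepsilon}\eqref{prop:Mkepsilon4b} gives $\dim\phi_{\tau,1}\wt M_k=1$, $\dim\psi_{\tau,1}\wt M_k=\rk\wt M_k$, and $\rN^2=0$ on $\psi_{\tau,1}\wt M_k$. The first equality $\dim\psi_{\tau,1}\wt M_k=\rk\wt M_k$ together with the purity of the limit mixed Hodge structure (the monodromy of $\wt M_k$ at $\infty$ being semisimple, as used in the proof of Proposition \ref{prop:rkFMkepsilon}) is what forces $\dim\gr^p_F\psi_{\tau,1}\wt M_k$ to equal the generic rank $\rk\gr^p_F\wt M_k=\#\rho_0^{-1}(p)$: indeed the $\psi_{\tau,1}$-space at $\tau=0$ has the same dimension as the generic fiber, and by local invariance of the Hodge numbers of the (pure) limit filtration under the variation of Hodge structure of weight zero on $\Gm$, the graded dimensions agree. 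This gives assertion \eqref{prop:rkpsiFMkepsilon1}. The main obstacle is the $\epsilon\neq0$ case: verifying that the one-dimensional defect is located exactly in degree $p=k/2$ and not elsewhere. This requires carefully matching the Hodge normalization of the Kummer module $K_{\zeta^{-1},\tau}$ (Hodge filtration jumping at $p=0$) through the convolution $\star_\rmid$ and the Fourier transform — i.e. making sure the shift conventions of \cite[(7)]{S-Y18} are applied consistently — so that the regular eigenvalue contributing at $\tau=0$ is pinned to the middle Hodge degree $k/2$; once the bookkeeping of these shifts is settled, the conclusion follows from the already-established rank formula of Proposition \ref{prop:rkFMkepsilon}.
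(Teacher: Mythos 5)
Your case $\epsilon=0$ is essentially right, although the justification is garbled: what makes it work is not purity of the limit mixed Hodge structure at $\tau=\infty$, but simply that for $\epsilon=0$ and $k$ even the monodromy at the singular point $\tau=0$ has all eigenvalues equal to $1$ (Proposition \ref{prop:Mkepsilon}), so $\psi_\tau\wt M_k=\psi_{\tau,1}\wt M_k$, and then the standard fact $\dim\gr^p_F\psi_\tau=\rk\gr^p_F$ (already used in the proof of Proposition \ref{prop:comparisonstar}) gives the statement at once.

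For $\epsilon\neq0$, however, there is a genuine gap, and it is exactly at the point you flag as ``bookkeeping'': locating the one-dimensional defect at $p=k/2$ is the whole content of assertion \eqref{prop:rkpsiFMkepsilon2}, and it does \emph{not} follow from the rank formula of Proposition \ref{prop:rkFMkepsilon} plus shift conventions. The missing line is $\psi_{\tau,\exp(2\pii\epsilon/3)}\wt M_{k,\epsilon}$ at $\tau=0$, which on the Fourier side corresponds to the rank-one \emph{regular} formal summand of $j^+K_{\zeta,t}\otimes\Sym^kG_g$ at $t=\infty$; but the comparison used in Proposition \ref{prop:rkFMkepsilon} (via \cite[(7)]{S-Y18}) relates the limit at $t=0$ of $F_\irr$ to the limit of $\wt M_{k,\epsilon}$ at $\tau=\infty$, so no computation ``already carried out'' there says anything about the Hodge degree carried by the regular summand at $t=\infty$ or about the Hodge filtration of the vanishing cycles at the finite point $\tau=0$; making your route work would require a Hodge-theoretic stationary phase statement at $t=\infty$ that you neither state nor prove. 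The paper instead pins down $p=k/2$ by a weight argument combined with a local middle-convolution comparison: $\phi_{\tau,1}\wt M_k$ is one-dimensional and underlies a pure (rational) Hodge structure of weight $k$, hence is of type $(k/2,k/2)$ by Hodge symmetry, and since $\wt M_{k,\epsilon}^\rH=K^\rH_{\zeta^{-1},\tau}\star_\rmid\wt M_k^\rH$, the Thom--Sebastiani-type result for middle convolution (\cite[Th.\,3.1.2(2)]{D-S12}, via \cite[Th.\,1.2]{M-S-S16}) yields $\dim\gr^p_F\psi_{\tau,\exp(2\pii\epsilon/3)}\wt M_{k,\epsilon}=\dim\gr^p_F\phi_{\tau,1}\wt M_k$. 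Your proposal never invokes this purity/symmetry input, which is the only place the number $k/2$ can come from, so as written the key step of \eqref{prop:rkpsiFMkepsilon2} remains unproved.
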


\begin{proof}\mbox{}
\begin{enumerate}
\item
In this case, $\psi_\tau\wt M_k=\psi_{\tau,1}\wt M_k$ (\cf Proposition \ref{prop:Mkepsilon}\eqref{prop:Mkepsilon4}), so the conclusion follows.
\item
In this case, $\psi_{\tau,1}\wt M_{k,\epsilon}$ has codimension one in $\psi_\tau\wt M_{k,\epsilon}$
(Proposition \ref{prop:Mkepsilon}\eqref{prop:Mkepsilon4})
and we have to determine which value of $p$ as given by Proposition \ref{prop:rkFMkepsilon} to put aside. We know that $\dim\phi_{\tau,1}\wt M_k=1$, and this space underlies a pure Hodge structure of weight~$k$, hence of type $(k/2,k/2)$, so that $\gr^p_F\phi_{\tau,1}\wt M_k=0$ for $p\neq k/2$. Recalling that $\wt M_{k,\epsilon}^\rH=\Hm j_*K^\rH_{\zeta^{-1},\tau}\star_\rmid\wt M_k^\rH$, a proof similar to that of \cite[Th.\,3.1.2(2)]{D-S12} using \cite[Th.\,1.2]{M-S-S16} implies that
\[
\dim\gr^p_F\psi_{\tau,\exp(2\pii\epsilon/3)}\wt M_{k,\epsilon}=\dim\gr^p_F\phi_{\tau,1}\wt M_k,
\]
and the conclusion follows.\qedhere
\end{enumerate}
\end{proof}

\begin{proof}[Proof of Proposition \ref{prop:dimgrFtildeAi}]
We continue assuming $k$ even and we recall that we have set $k=2(k'+1)$. We apply Proposition \ref{prop:HodgewtAin}.

\begin{enumerate}
\item
If $\epsilon=0$, the Lefschetz decomposition of $\gr^W\psi_{\tau,1}\wt M_k^\rH$ reads $\rP_1^\rH\oplus\rP_0^\rH\oplus\rN_\tau(\rP_1^\rH)$, and $\rP_1^\rH$ has weight $k/2=k'+1$ and dimension one. Therefore, the Hodge jump on $\rP_1^\rH$ is at $p=k'+1$ and that on $\rN_\tau(\rP_1^\rH)$ is at $k'$. The remaining jumps on $\rP_0^\rH$ are of size $1$ if $p=k',k'+1$ and of size $\#\rho^{-1}(p-1)$ for the other values of $p$, according to Proposition \ref{prop:rkpsiFMkepsilon}\eqref{prop:rkpsiFMkepsilon1}. We conclude with Proposition \ref{prop:HodgewtAin}:
\[
\dim\gr^p_{F_\irr}\coH_{\dR,\rmid}^1(\Afu_s,\Sym^k\wt\Ai)=
\begin{cases}
1&\text{if }p= k/2,k/2+1,\\
\#\rho_0^{-1}(p-1)&\text{if }p\neq k/2,k/2+1.
\end{cases}
\]
\item
If $\epsilon\neq0$, the monodromy of $\psi_{\tau,1}\wt M_{k,\epsilon}$ is equal to the identity, so that $\psi_{\tau,1}\wt M_{k,\epsilon}=\rP_0\psi_{\tau,1}\wt M_{k,\epsilon}$. Propositions \ref{prop:HodgewtAin} and \ref{prop:rkpsiFMkepsilon}\eqref{prop:rkpsiFMkepsilon2} give
\[
\dim\gr^{p-\epsilon/3}_{F_\irr}\coH_{\dR,\rmid}^1(\Afu_s,\Sym^k\wt\Ai)=\begin{cases}
1&\text{if $p=k/2+1$},\\
\#\rho_\epsilon^{-1}(p-1)&\text{otherwise}.
\end{cases}
\qedhere
\]
\end{enumerate}
\end{proof}

\subsection{Proof of Step 2}\label{subsec:proofHodge_numbers_even}
We prove Lemma \ref{lem:Hodge_numbers_even} (in a way similar to that of \cite[\S4.3.3]{F-S-Y18}).

We start with $\omega_i$. We take up the construction and the notation of Lemma \ref{lem:Hodge_numbers_odd}. The singular locus $S\subset \Aff{k}$ of $(g_k)\subset\Aff{k}$
consists of the $\binom{k}{k/2}$ points
$y_i = \epsilon_i$
with
$\epsilon_i = \pm 1$
and
$\sum_{i=1}^k\epsilon_i = 0$
(where $g_k=0$ has ordinary quadratic singularity).
Then $X$ is non-degenerate away from $\{\infty\}\times S$.
Let $\varpi:\wt{X}\to X$ be the 3-step blowup of $X$ constructed as follows.
Let~$X_1$ be the blowup of $X$ along $\{\infty\}\times S$.
Then on each component of the exceptional divisor~$E_1$,
the rational function $s^3g_k$
possesses an ordinary quadratic point.
Let~$X_2$ be the blowup of~$X_1$ along these points
with exceptional divisor $E_2$;
let $\wt{X}$ be the blowup of $X_2$,
with exceptional divisor $\wt{E}_3$,
along the intersection of $E_2$ and the proper transform of~$E_1$.
Then by a direct checking,
$\wt{X}$ is a non-degenerate compactification of $(\Aff{k+1},s^3g_k)$.
In $\wt{X}$,
let $\wt{D} = \wt{X}\setminus\Aff{k+1}$ be the boundary,
$\wt{P}$ the pole divisor of $s^3g_k$
and $\wt{E}_j$ the proper transform of~$E_j$
for $j=1,2$.
We~moreover have
\[ \ord_{\wt{E}_1} s^3g_k = -1,
\quad
\ord_{\wt{E}_2} s^3g_k = 1,
\quad
\ord_{\wt{E}_3} s^3g_k = 0. \]
Given a form
\[
\omega = z^{r-1}x^n\rd z\wedge\rd x_1\wedge\cdots\wedge\rd x_k,
\quad
r\geq 1,\
n = (n_i) \in\{0,1\}^k,\ \nu = \textstyle\sum_{i=1}^k n_i,
\]
one checks that the pullback satisfies
\begin{equation}\label{eq:varpi_omega}
\begin{split}
\varpi^*\omega &\in
\Gamma\big(\wt{X},\Omega^{k+1}(\log\wt{D})
	(\lfloor m_{r,\nu}\wt{P}\rfloor - (k-2r-\nu)\wt{E}_2 - (k-4r-2\nu)\wt{E}_3)\big),
\\
m_{r,\nu} &= \frac{k+2r+\nu}{3},
\end{split}
\end{equation}
provided that $k\geq 4r+2\nu$ (in order to have allowable pole order along $\wt{E}_1$).
In particular, if~$\mu\geq\frac13(k+2i)$ and $i\leq k/4$ (and since $k-2i\geq2$),
\[
\varpi^*(z^{i-1}\rd z\wedge\rd x_1\wedge\cdots\wedge\rd x_k) \in
\Gamma\big(\wt{X},\Omega^{k+1}(\log\wt{D})
	(\lfloor\mu\wt{P}\rfloor)\big),
\]
and thus $\iota(\omega_i)$
defines an element in
$F^{k+1-(k+2i)/3}\coH_\dR^{k+1}(\Aff{k+1},s^3g_k)$.

Let us now consider the basis $\omega^-_i,\eta^-_j$ of $\coH^1_{\dR}(\Afu,L\otimes\Sym^k\Ai)$. As in the proof of Lemma \ref{lem:Hodge_numbers_odd},
let $\iota: \coH^1(\Afu,L\otimes\Sym^k\Ai) \to \coH^{k+1}(\Aff{k+1}, s^3g_k)$
denote the inclusion in $\EMHS^\mf$.
In this setting, given a form
\[
\omega = z^rx^n\frac{\rd z}{z}\wedge\rd x_1\wedge\cdots\wedge\rd x_k,
\quad
r\geq 0,\
n = (n_i) \in\{0,1\}^k,\ \nu = \textstyle\sum_{i=1}^k n_i,
\]
one has
\begin{align*}
s\varpi^*\omega &\in
\Gamma\big(\wt{X},\Omega^{k+1}(\log\wt{D})
	(\lfloor m^-_{r,\nu}\wt{P}\rfloor - (k-2r-\nu-1)\wt{E}_2 - (k-4r-2\nu-2)\wt{E}_3)\big), \\
m^-_{r,\nu} &= \frac{k+2r+\nu+1}{3},
\end{align*}
provided that $k\geq 4r+2\nu+2$.
In particular,
\[ [s\varpi^*\omega]
\in F_\irr^{k+1-m^-_{r,\nu}}\coH_\dR^{k+1}(\Aff{k+1},s^3g_k)
\quad
\text{if $k\geq 4r+2\nu+2$}. \]
Now we have $\iota(\omega^-_i) = \varpi^*\omega$
with $r=i$, $\nu=0$,
while $\iota(\eta^-_j) = \varpi^*\omega^-$
where $\omega^-$ is the average
(i.e., the symmetric projection of any)
of the forms $\omega$
with $r=0$, $\nu=j$.
The claim follows.\qed

\backmatter
\providecommand{\sortnoop}[1]{}\providecommand{\eprint}[1]{\href{http://arxiv.org/abs/#1}{\texttt{arXiv\string:\allowbreak#1}}}

\end{document}